\newcommand{\MK}{\mathbb{K}}
\newcommand{\cp}{characteristic polynomial }
\newcommand*\bigcdot{\mathpalette\bigcdot@{.5}}
\newcommand*\bigcdot@[2]{\mathbin{\vcenter{\hbox{\scalebox{#2}{$\m@th#1\bullet$}}}}}
\newtheorem{Ex}{Example}
\newtheorem{Conj}{Conjecture}
\newtheorem{theorem}{Theorem}
\newtheorem{lemma}[theorem]{Lemma}
\newtheorem{corollary}[theorem]{Corollary}
\theoremstyle{definition}
\theoremstyle{definition}
\newtheorem{definition}[theorem]{Definition}
\theoremstyle{definition}
\theoremstyle{definition}
\newtheorem{example}[theorem]{Example}
\title{Describing subalgebras of $\MK[x]$ using derivatives}
\author{Rode Gr\"onkvist, Erik Leffler, Anna Torstensson, Victor Ufnarovski}
\begin{document}

\maketitle



\chapter*{Abstract}
\vspace{-1cm}
We introduce the concept of subalgebra spectrum, $Sp(A)$, for a subalgebra $A$ of finite codimension in $\mathbb{K}[x]$. The spectrum is a subset of the underlying field. We also introduce a tool, the characteristic polynomial of $A$, which has the spectrum as its set of zeroes. The characteristic polynomial can be computed from the generators of $A$, thus allowing us to find the spectrum of an algebra given by generators. We proceed by using the spectrum to get descriptions of subalgebras of finite codimension. More precisely we show that $A$ can be described by a set of conditions that each is either of the type $f(\alpha)=f(\beta)$ for $\alpha,\beta$ in $Sp(A)$ or of the type stating that some sum of derivatives of different orders evaluated in elements of $Sp(A)$ equals zero. We use this type of conditions to, by an inductive process, find explicit descriptions of subalgebras of codimension up to three. These descriptions also include SAGBI bases for each family of subalgebras.

\tableofcontents

\chapter{Introduction}

\section{Introductory Examples}

Let $\MK$ be an algebraically closed field of characteristic zero and $A$ a subalgebra in $\MK[x].$ To begin with we give several non-trivial examples of such subalgebras.

\begin{Ex}\label{ex:monomial}$A=\{f(x)|f'(0)=f''(0)=f^{(5)}(0)=0\}.$
\end{Ex}
\begin{Ex}\label{ex:spectere4} Let $\varepsilon$ be a primitive root of order $8.$ $$A=\{f(x)|f(1)=f(-1),f(\varepsilon)=f(\varepsilon^7),f(\varepsilon^3)=f(\varepsilon^{5})\}.$$
\end{Ex}
\begin{Ex}Let $\varepsilon$ be a primitive root of order $12.$ $$A=\{f(x)|f'(0)=0,f(\varepsilon)=f(\varepsilon^5),f(\varepsilon^7)=f(\varepsilon^{11})\}.\label{ex:ex3}$$
\end{Ex}

\begin{Ex}\label{ex:monomial4}Let $\varepsilon$ be a primitive root of order $3.$ $$A=\{f(x)|f(1)=f(\varepsilon)=f(\varepsilon^2),f'(1)+\varepsilon^2f'(\varepsilon)+\varepsilon f'(\varepsilon^{2})=0\}.$$
\end{Ex}
It is not difficult to verify directly that we really get  subalgebras. One can check that in fact, if given by generators, they are:
$$1)\langle x^3,x^4\rangle\ \hspace{1cm} 2)\langle x^4,x^3-x\rangle\ $$ $$3)\langle x^4-x^2,x^3\rangle\ \hspace{1cm} 4)\langle x^4-x,x^3\rangle.$$
We want to find general principles for how descriptions of the type in our examples relates to descriptions in forms of generators and other characteristics of subalgebras.

We restrict ourselves to subalgebras of finite codimension $n$ and give a classification for small $n.$
\section{SAGBI bases}

One of our aims is to get a deeper understanding for the structure of SAGBI bases, for example to find ways to add an extra element to a SAGBI basis in ways that result in a new SAGBI basis.
For this reason we remind the reader of some definitions. When possible we adapt them to our univariate situation. More general definitions  can be found for example in \cite{tor} or \cite{Rob}

If $A$ is a subalgebra in $\MK[x]$ the set $S$ of all possible degrees of the non-constant polynomials in  $A$ form a \textbf{numerical semigroup}\index{numerical semigroup} (that is an additive semigroup consisting of positive integers).
It is well-known that such a semigroup is finitely generated. For any finite generating set we can find a finite set of polynomials $G$  such that our set is exactly $\{\deg g_i|g_i\in G\}.$ We call $G$ \textbf{SAGBI basis}\index{SAGBI basis} for $A.$ A proper subset of $G$ can be a SAGBI basis itself, but if there are no such subsets we say that $G$ is \textbf{minimal}.\index{minimal SAGBI basis}

For any non-constant polynomial $f$ of the degree $s\in S$ we can find a product $g=\prod_{g_i\in G} g^{c_i}$ such that $\deg g=\sum c_i\deg g_i=s.$ Forming $f-\alpha g$ with a suitable constant $\alpha\in \MK$ we can obtain a polynomial of smaller degree. We call this operation \textbf{subduction}.\index{subduction}  If the degree of the obtained polynomial still belongs to $S,$ then we can use make another subduction. The importance of SAGBI basis lies in the fact that $f\in A$ if and only if there exists a sequence of subductions reducing $f$ to a constant.

\section{Monomial subalgebras}

As we have seen Example \ref{ex:monomial} in fact describes the subalgebra generated by $x^3$ and $x^4$. This result can easily be generalized.
\begin{theorem}\label{th:monomial} Let $A$ be a \textbf{monomial subalgebra},\index{monomial subalgebra}  thus $A$ is spanned over $\MK$ by monomials $\{x^s,s\in S\},$ where $S$ is a numerical semigroup.  Then
	$f(x)\in A$ if and only if $f^{(i)}(0)=0$ for each $i$ that does not belong to $S.$
\end{theorem}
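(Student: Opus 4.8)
The plan is to reduce the statement to a coefficient-wise comparison, exploiting the fact that in characteristic zero the derivatives of a polynomial at $0$ recover its Taylor coefficients up to nonzero scalar factors. First I would write an arbitrary $f \in \MK[x]$ as $f(x) = \sum_{i \ge 0} a_i x^i$ and compute $f^{(i)}(0) = i!\, a_i$, since every term with $j \ne i$ either differentiates to zero or vanishes at $x=0$. Because $\MK$ has characteristic zero, $i! \ne 0$ for every $i$, so the condition $f^{(i)}(0) = 0$ is equivalent to $a_i = 0$. Thus the derivative conditions in the statement translate exactly into the vanishing of the coefficients $a_i$ for all $i \notin S$.

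Next I would unwind the definition of a monomial subalgebra. By assumption $A$ is the $\MK$-span of $\{x^s : s \in S\}$ (with $0 \in S$, since $A$ contains the constants), so $f \in A$ if and only if $f$ is a $\MK$-linear combination of these monomials, that is, if and only if $a_i = 0$ for every $i \notin S$. Combining this with the previous paragraph yields both implications simultaneously: $f \in A$ iff $a_i = 0$ for all $i \notin S$ iff $f^{(i)}(0) = 0$ for all $i \notin S$.

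There is essentially no hard step here; the only point to watch is the role of the hypothesis that $\MK$ has characteristic zero, which is precisely what makes the passage from $f^{(i)}(0)$ to $a_i$ reversible — in positive characteristic $p$ one would have $p! = 0$ and the equivalence would break down. I would also remark that closure under multiplication is not used in this particular argument: the equivalence holds for any monomial-spanned subspace, so the semigroup structure of $S$ enters only to guarantee that $A$ is genuinely a subalgebra, not in the characterization itself.
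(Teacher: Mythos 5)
Your proposal is correct, but it takes a genuinely more elementary route than the paper. You reduce everything to linear algebra: since $f^{(i)}(0)=i!\,a_i$ and $\operatorname{char}\MK=0$, the derivative conditions are literally the vanishing of coefficients outside $S$, and membership in the span of $\{x^s : s\in S\}$ is the same vanishing condition, so the equivalence is immediate. The paper instead runs a two-step argument: it first verifies, via the Leibniz formula $(fg)^{(i)}=\sum_j\binom{i}{j}f^{(j)}g^{(i-j)}$ and the semigroup property of $S$, that the derivative conditions define a subalgebra $A'$; it then shows $A\subseteq A'$ on monomials and gets $A'\subseteq A$ by subducting an element of $A'$ against $A$, leaving a remainder supported on exponents outside $S$ that must vanish. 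Your argument buys brevity and isolates precisely where characteristic zero is used (invertibility of $i!$), and your closing remark is accurate: closure under multiplication plays no role in the characterization itself, only in making $A$ an algebra. What the paper's longer route buys is thematic: the Leibniz computation is the prototype of the ``subalgebra conditions'' machinery developed in Section 2.2 (it shows directly that such derivative conditions are closed under products, which is the part that generalizes to conditions at points other than $0$), and the subduction step previews the technique used repeatedly later. Both proofs share the same implicit convention that $0$ is treated as belonging to $S$ (so constants are allowed), which you handle explicitly and the paper leaves tacit.
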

\begin{proof} First we check that the derivative conditions describe a subalgebra $A'$. The conditions are linear so we need only to be sure that if $f(x)$ and $g(x)$ satisfy the conditions then the same is true for the product $f(x)g(x).$
	Indeed if $i\not\in S$ then we have
	$$(fg)^{(i)}=\sum_j \binom{i}{ j}f^{(j)}g^{(i-j)}$$
	and either $j$ or $i-j$ does not belong to $S$ (otherwise $i\in S)$ and in any case $f^{(j)}(0)g^{(i-j)}(0)=0.$
	Secondly we see directly that any monomial $x^s,s\in S$ satisfies the conditions. In fact only the monomials $x^i$ with $i\not\in S$ do not satisfy the conditions. So certainly $A\subseteq A'$, but we can say more:
	if $f(x)\in A'$ then subduction by $A$ reduces $f(x)$ to another polynomial that satisfies the conditions but is a linear combination of the monomials $x^i$ with $i\not\in S$. Such a polynomial must be zero and therefore
	$f(x)\in A$ and $A'\subseteq A.$ We conclude that $A'=A$.
	
\end{proof}

Here is another useful property of monomial algebras.
\begin{theorem} \label{th:monsols}  Let  $A=\langle x^{a_1},x^{a_2}, \ldots , x^{a_t} \rangle$ be  a monomial subalgebra. There exists $\alpha\neq\beta$ such that $f(\alpha)=f(\beta)$ for all $f(x) \in A$ if and only if $d=gcd(a_1,a_2, \ldots , a_t) > 1$.
\end{theorem}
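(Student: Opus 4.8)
The plan is to translate the evaluation condition into a purely multiplicative one and then to invoke the structure of the numerical semigroup $S$ generated by $a_1,\ldots,a_t$. Since $A$ is spanned over $\MK$ by the monomials $\{x^s : s\in S\}$ together with the constants, and since $\alpha^0=\beta^0$ holds automatically, the requirement that $f(\alpha)=f(\beta)$ for every $f\in A$ is equivalent — by testing it on each spanning monomial $x^s$ and then extending by linearity — to the condition $\alpha^s=\beta^s$ for all $s\in S$. I will also use that $\gcd(S)=d$, which holds because $d$ divides every element of $S$ while the generators $a_i$ themselves lie in $S$.

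For the ($\Leftarrow$) direction, suppose $d>1$. Because $\MK$ is algebraically closed of characteristic zero it contains a primitive $d$-th root of unity $\zeta$, and $\zeta\neq 1$. Take $\alpha=1$ and $\beta=\zeta$. Every $s\in S$ is a multiple of $d$, say $s=dm$, so $\zeta^s=(\zeta^d)^m=1=1^s$; hence $\alpha^s=\beta^s$ for all $s\in S$, and the pair $(\alpha,\beta)$ witnesses the desired property with $\alpha\neq\beta$.

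For the ($\Rightarrow$) direction, suppose $\alpha\neq\beta$ satisfy $\alpha^s=\beta^s$ for all $s\in S$. First, neither can be zero, since $\alpha=0$ would give $\beta^{a_1}=0$ and hence $\beta=0=\alpha$; so I may set $\gamma=\beta/\alpha$, which satisfies $\gamma\neq 1$ and $\gamma^{a_i}=1$ for every $i$. Thus $\gamma$ is a root of unity whose order divides each $a_i$ and therefore divides $d=\gcd(a_1,\ldots,a_t)$. If we had $d=1$, this would force $\gamma=1$, contradicting $\gamma\neq 1$; hence $d>1$.

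The two directions mirror each other, so no step is genuinely hard; the only points requiring care are the reduction from ``for all $f\in A$'' to ``for all monomials'', where one must note that the constant term contributes nothing, and the observation that $\gcd(S)=d$, so that the order of $\gamma$ is controlled by $d$ rather than by the individual generators.
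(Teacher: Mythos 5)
Your proof is correct and takes essentially the same approach as the paper: both directions hinge on reducing to the ratio $\gamma=\beta/\alpha$ being a $d$-th root of unity, with the converse using a primitive $d$-th root of unity (the paper invokes Bezout, $d=\sum c_ia_i$, where you use the equivalent fact that the order of $\gamma$ divides each $a_i$ and hence divides $d$). Your explicit reduction to the monomial conditions $\alpha^s=\beta^s$, $s\in S$, is a detail the paper leaves implicit, but it is not a different argument.
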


\begin{proof}   If $d>1$ let $\varepsilon$ is a primitive $d$-th root of unity, $\varepsilon^d=1.$ Then for any nonzero $\beta$ we can find $\alpha=\varepsilon\beta$ such that $f(\alpha)=f(\beta)$.

	If $f(\alpha)=f(\beta)$ for all $f(x)\in A$ with $\alpha\neq \beta$ then $\beta\neq 0.$ Let $d=\sum c_ia_i.$ Then
	$$\left(\frac{\alpha}{\beta}\right)^{a_i}=1\Rightarrow \left(\frac{\alpha}{\beta}\right)^{d}=\prod\left(\left(\frac{\alpha}{\beta}\right)^{a_i}\right)^{c_i}=1\Rightarrow d>1.$$
\end{proof}

Note that if $d>1$ then the subalgebra $A$ is contained in $\MK[x^d]$ and therefore it has infinite codimension. Such $A$ are outside the scope of our work.

\section{Subalgebras of codimension one}
Next, let us look at subalgebras of codimension one (in $\MK[x]).$ Although relatively simple, these algebras give some insight. Obviously such subalgebra cannot contain $x,$ but do contain polynomials of degree $2$ and $3$, which generate our subalgebra. Using variable substitution we can restrict ourselves to the case where the polynomial of degree two is $x^2$. (Note that all constants are always in any subalgebra). Now the polynomial of degree three can be chosen as $x^3-ax$. (Again, the constants are not essential and $bx^2$ can be subtracted). If $a=0$ then we get a monomial case and know how to describe it from Theorem \ref{th:monomial}.

If $a\neq 0$ then the replacement $x\rightarrow\alpha x$ with $\alpha^2= a$ reduces the situation to the case $x^3-x.$ So it is sufficient to study subalgebra $A=\langle x^3-x,x^2\rangle .$ Note that for each odd $k>1$ we have $x^k-x=(x^{k-2}-x)x^2+(x^3-x)\in A$ by induction. So
$f(x)=\sum a_ix^i$ can be subduced to $ax$ where $a=a_1+a_3+a_5+\cdots$. Thus $f(x)\in A\Leftrightarrow a=0\Leftrightarrow f(1)-f(-1)=0.$ This gives us the following result:

\begin{theorem} \label{th:codm1} For any subalgebra $A$ of codimension one either the there exists $\gamma$ such that $f(x)\in A\Leftrightarrow f'(\gamma)=0$ or there exists $\alpha\neq\beta$ such that
	$f(x)\in A\Leftrightarrow f(\alpha)=f(\beta)$.
\end{theorem}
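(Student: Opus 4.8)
The plan is to lean on the reduction already carried out before the statement and to reduce the general claim to the two concrete algebras analysed there. First I would make explicit why a codimension-one subalgebra is generated by one polynomial of degree two and one of degree three. The degrees of the non-constant elements of $A$ form a numerical semigroup $S$, and choosing one monic polynomial $p_s$ of each degree $s\in S$ yields a basis of $A$: any $f\in A$ of degree $s$ can be reduced by subtracting a scalar multiple of $p_s$, staying in $A$. The set $\{p_s:s\in S\}\cup\{x^i:i\notin S\}$ then has exactly one element in each degree, so it is a basis of $\MK[x]$ and $\MK[x]=A\oplus\operatorname{span}\{x^i:i\notin S\}$. Hence the codimension equals the number of gaps of $S$, and a single gap forces $S=\{0,2,3,4,\dots\}=\langle 2,3\rangle$, so $A$ is generated by elements of degree two and three.

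Next I would apply the affine substitution $x\mapsto\alpha x+\beta$ exactly as in the preceding discussion to bring these generators into the normal form $x^2$ and $x^3-ax$, which splits the problem into two cases. When $a=0$ the algebra is the monomial algebra $\langle x^2,x^3\rangle$; applying Theorem~\ref{th:monomial} with $S=\langle 2,3\rangle$, whose only positive gap is $1$, gives the condition $f'(0)=0$. When $a\neq 0$ a further scaling $x\mapsto\sqrt{a}\,x$ reduces to $A=\langle x^3-x,x^2\rangle$, for which the subduction computation already displayed shows $f\in A\Leftrightarrow f(1)=f(-1)$.

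It then remains to transport each condition back through the normalising automorphism $\sigma(p)(x)=p(\alpha x+\beta)$. Writing $A=\sigma^{-1}(B)$ for the normal-form algebra $B$, a membership test $g\in B$ becomes $\sigma(f)\in B$ for $f\in A$. For the derivative condition, $(\sigma f)'(0)=\alpha f'(\beta)$, so $g'(0)=0$ pulls back to $f'(\gamma)=0$ with $\gamma=\beta$. For the evaluation condition, $\sigma(f)(\pm 1)=f(\pm\alpha+\beta)$, so $g(1)=g(-1)$ pulls back to $f(\alpha+\beta)=f(-\alpha+\beta)$, where the two evaluation points are distinct because $\alpha\neq 0$. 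Thus every codimension-one subalgebra satisfies exactly one of the two stated descriptions.

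The step I expect to be the main obstacle is making this transport genuinely rigorous, that is, verifying that both families of conditions are stable under the affine automorphism and that the dichotomy $a=0$ versus $a\neq 0$ is exhaustive. The latter rests on always being able to normalise the degree-two generator to $x^2$ (by completing the square and rescaling) and to clear the $x^2$-term from the degree-three generator by subduction; I would state these normalisation steps explicitly rather than treat them as routine, since the entire case split hinges on them.
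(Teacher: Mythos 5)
Your proposal is correct and follows essentially the same route as the paper: the paper's one-line proof ("recover the old variable") rests on exactly the preceding reduction you spell out — generators of degrees $2$ and $3$, normalisation to $x^2$ and $x^3-ax$, the case split $a=0$ (Theorem~\ref{th:monomial}) versus $a\neq 0$ (subduction in $\langle x^3-x,x^2\rangle$), and transport back through the affine substitution. You have merely made explicit the gap-counting argument for the semigroup and the pullback computations $(\sigma f)'(0)=\alpha f'(\beta)$ and $\sigma(f)(\pm 1)=f(\pm\alpha+\beta)$, which the paper leaves implicit; these are filled in correctly.
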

\begin{proof} We only need to recover the old variable. Then the monomial case corresponds to the first case and $f(1)=f(-1)$ to the second.
\end{proof}

The above theorem already displays some ideas that we will try to generalize later on.
\chapter{The Subalgebra Spectrum}

\section{Derivations}\begin{definition}Let $\alpha\in \MK.$ A linear map $D:A\rightarrow \MK$ is called an $\alpha-$\textbf{derivation} \index{derivation} \index{$\alpha-$derivation} if it satisfies the condition
	$$D(f(x)g(x))= D(f(x))g(\alpha)+f(\alpha)D(g(x))$$
	for any $f(x),g(x)\in A.$ We simply call it a \textbf{derivation} if it is an $\alpha-$derivation for some $\alpha.$
\end{definition}
Note that the set of $\alpha-$derivations is a vector space over $\MK,$ but the set of all derivations is not.
Nevertheless it is important for the future to note that a $\beta-$derivation is also an $\alpha-$derivation if $f(\alpha)=f(\beta)$ for any $f(x)\in A.$

Now we can formulate an important result obtained in \cite{Gorin}, that will turn out to be pivotal for our continued exploration.
\begin{theorem} \label{th:codimGorin} Any subalgebra $A$ of codimension $n>1$  is contained in  subalgebra $B$ of   codimension $n-1.$ Moreover $A$ can be defined in $B$ either as the kernel of some $\alpha-$derivation of $B$ or as  $A=\{f(x)\in B|f(\alpha)=f(\beta)\}$ for some $\alpha,\beta\in\MK.$
\end{theorem}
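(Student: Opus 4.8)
The plan is to realise the inclusion $A\subset B$ as a \emph{minimal} step and then read off the structure of its single defining condition from a short multiplicativity computation. Throughout I will use two standard facts about a subalgebra $C\subseteq\MK[x]$ of finite codimension. First, $C$ contains a nonzero ideal of $\MK[x]$ (its conductor $(q)$), so $\MK[x]=\sum_{j<\deg q}Cx^{j}$ is a finite $C$-module and hence integral over $C$; second, as a consequence every $\MK$-algebra homomorphism $\lambda\colon C\to\MK$ is an evaluation, $\lambda=\mathrm{ev}_{\gamma}|_{C}$ for some $\gamma\in\MK$ (lying-over, together with the fact that the maximal ideals of $\MK[x]$ are the $(x-\gamma)$). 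I also use repeatedly that $A$ acts on any quotient $B/A$ by multiplication, and that commuting operators on a finite-dimensional space over the algebraically closed field $\MK$ share an eigenvector.

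First I would produce the over-algebra. Since $\MK[x]/A$ is finite-dimensional there are only finitely many subalgebras between $A$ and $\MK[x]$, so a subalgebra $B$ \emph{minimal} among those strictly containing $A$ exists; the goal is to show $\dim_{\MK}(B/A)=1$. Here I pass to the conductor quotient $R=\MK[x]/(q)\cong\prod_i\MK[x]/\big((x-\gamma_i)^{e_i}\big)$ and analyse the subalgebra $A/(q)\subseteq R$. Two model constructions lower the codimension by exactly one: if some distinct spectrum points satisfy $f(\gamma_i)=f(\gamma_j)$ for all $f\in A$, then separating them (splitting a block of the associated partition of the reduced points) yields a subalgebra $B=A+\MK b$ with $A=\{f\in B:f(\gamma_i)=f(\gamma_j)\}$; otherwise some point has multiplicity $e_i\ge 2$, and $h=q/(x-\gamma_i)$ satisfies $Ah\subseteq (q)+\MK h$ and $h^{2}\in(q)$, so $B=A+\MK h$ is again a subalgebra with $\dim_\MK(B/A)=1$.

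\emph{The main obstacle} is exactly this step: showing that one of these reductions always applies and genuinely drops the codimension by one --- equivalently, that the common eigenvector $\bar b$ of the $A$-action on $B/A$ can be chosen so that $A+\MK b$ (and not merely the generated algebra $A[b]$) is already a subalgebra, i.e.\ so that $b^{2}\in A+\MK b$. The eigenvector relation $\mathfrak m_{\alpha}b\subseteq A$ handles the products $ab$ with $a\in A$ but says nothing about $b^{2}$, and it is this quadratic term that must be controlled; this is where the local structure of $\MK[x]/(q)$ (or, if one prefers to quote it, the Ferrand--Olivier classification of minimal ring extensions, or Gorin's argument directly) is essential. For a fully self-contained treatment I would organise the above into a case analysis according to whether the singularity being removed is a pair of separated points or an infinitesimal (multiple) point.

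Once $\dim_\MK(B/A)=1$ is secured, the dichotomy is a direct calculation. Write $B=A\oplus\MK b$ and let $\phi\colon B\to\MK$ be the projection onto the $\MK b$-coordinate, so $\ker\phi=A$ and $\phi(b)=1$. The one-dimensional action of $A$ on $B/A$ is a character, hence equals $\mathrm{ev}_{\alpha}|_{A}$ for some $\alpha$, giving $ab\equiv a(\alpha)b \pmod A$ for all $a\in A$. Expanding a product via $f=f_{0}+\phi(f)b$, $g=g_{0}+\phi(g)b$ with $f_0,g_0\in A$ and applying $\phi$ yields
\[
\phi(fg)=f(\alpha)\phi(g)+\phi(f)g(\alpha)+c\,\phi(f)\phi(g),\qquad c:=\phi(b^{2})-2b(\alpha).
\]
If $c=0$ this says precisely that $\phi$ is an $\alpha$-derivation of $B$ with kernel $A$. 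If $c\ne 0$, then one checks that $\mathrm{ev}_{\alpha}+c\,\phi$ is multiplicative, hence a character of $B$, and therefore equals $\mathrm{ev}_{\beta}|_{B}$ for some $\beta$; solving gives $\phi=\tfrac1c(\mathrm{ev}_{\beta}-\mathrm{ev}_{\alpha})$, so $A=\{f\in B:f(\alpha)=f(\beta)\}$, with $\beta\ne\alpha$ since $\phi\ne 0$. This is exactly the stated alternative, and it simultaneously reproves that every minimal step is of one of these two shapes.
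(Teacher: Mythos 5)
Your second half is fine; the genuine gap is in the first half, precisely the step you yourself flag as ``the main obstacle'': producing $B\supseteq A$ with $\dim_\MK(B/A)=1$. You defer that step to Ferrand--Olivier or to Gorin, and the two ``model constructions'' you sketch in its place are organised by the wrong dichotomy --- the first of them is false as stated. Concretely, let $A=\{f\mid f(1)=f(-1),\ f'(1)+f'(-1)=0\}$, a codimension-two subalgebra (these are exactly the subalgebra conditions discussed in Section~\ref{sec:lincond}). The distinct points $1,-1$ are identified by $A$, so your recipe promises a subalgebra $B=A+\MK b$ with $A=\{f\in B\mid f(1)=f(-1)\}$. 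No such $B$ exists: $A$ contains $x^2$ and $x(x^2-1)^2$, and checking these two elements against the classification in Theorem~\ref{th:codm1} shows that the \emph{only} codimension-one subalgebra containing $A$ is $B=\{f\mid f(1)=f(-1)\}$, for which $\{f\in B\mid f(1)=f(-1)\}=B\neq A$; inside this $B$ the algebra $A$ is the kernel of the derivation $f\mapsto f'(1)+f'(-1)$, i.e.\ only the other branch of the dichotomy occurs. The correct case split is on the conductor $(q)$, not on whether two points are identified: if $q$ has a multiple root $\gamma$, your $h=q/(x-\gamma)$ construction always works (note $h\notin A$ is automatic, since $h\in A$ would force the ideal $(h)\subseteq \MK h+(q)\subseteq A$, contradicting maximality of the conductor); only when $q$ is squarefree is $A/(q)$ a partition subalgebra of $\prod_i\MK$, and only then does separating two identified points genuinely produce the required $B$. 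In my example the conductor is $((x^2-1)^2)$, not squarefree, which is exactly why separation fails there. A further small error: there are in general infinitely many intermediate subalgebras --- between $\MK+(x^2(x-1)^2)$ and $\MK[x]$ sits the one-parameter family $\{f\mid f(0)=f(1),\ b_2f'(0)=b_1f'(1)\}$ --- though minimal over-algebras still exist simply because codimensions are nonnegative integers.

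For perspective on how this compares with the paper: the paper does not prove this theorem at all; it quotes the existence of $B$ and the dichotomy from Gorin's article, and its only original contribution is the remark that the homomorphism occurring in Gorin's general notion of derivation must be an evaluation $f\mapsto f(\alpha)$, argued there by induction on codimension. Your final computation covers precisely this content, correctly and more cleanly: given $\dim_\MK(B/A)=1$, the identity $\phi(fg)=f(\alpha)\phi(g)+\phi(f)g(\alpha)+c\,\phi(f)\phi(g)$ together with the observation that $\mathrm{ev}_\alpha+c\,\phi$ is a character, hence an evaluation by integrality and lying-over, is a complete and self-contained proof that a minimal step is either a derivation kernel or an identification $f(\alpha)=f(\beta)$. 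So the part of the argument the paper actually supplies you reprove well; the part the paper outsources to Gorin your proposal also ultimately outsources, and the constructions you offer in its place would fail as written. If you reorganise the first half around the squarefree/non-squarefree dichotomy of the conductor as above, the whole proof becomes self-contained.
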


Note that in \cite{Gorin} derivations are defined in a more general way, by the condition $D(fg)= D(f)\varphi(g)+\varphi(f)D(g),$ for some ring homomorphism $\varphi: B\rightarrow \MK$.
But in the same article is shown that any homomorphism $A\rightarrow \MK$ can be lifted to a homomorphism $B\rightarrow \MK$.  Induction over codimension shows that
in our situation such homomorphism is simply a homomorphism $\MK[x]\rightarrow \MK$ which is nothing else than a map $f(x)\rightarrow f(\alpha)$ for some $\alpha\in\MK.$ For that reason we can use $\alpha-$derivation in our reformulation.

 \section{Subalgebras conditions}\label{sec:lincond}

A straightforward induction argument using Theorem \ref {th:codimGorin} shows that any subalgebra $A$ of codimension $n$ can be described by $n$ linear conditions $L_i(f)=0$ where $L_i$ is either a derivation of some subalgebra containing $A$ or has the form $L(f)=f(\alpha_i)-f(\beta_i)$ for some constants $\alpha_i,\beta_i \in \MK.$

 Our main hypothesis when initiating this work (which will proved later) was that linear conditions defining subalgebras can be stated in a neater way. Namely we hoped that for any subalgebra of finite codimension $m$ there would exist a finite set, which we will call the spectrum of algebra, and $m$ linear conditions expressed in terms of $f(x)$ and finitely many derivatives $f^{(k)}$ evaluated in the elements of the spectrum which determine if $f(x)\in A.$ We have seen such conditions in Theorem \ref{th:monomial} and Theorem \ref{th:codm1} and in Examples \ref{ex:monomial}-\ref{ex:monomial4} and want to understand their nature.

 We want them to be \textbf{subalgebra conditions}, \index{subalgebra conditions} i.e. that the set of all polynomials satisfying the conditions form  a subalgebra. Since our conditions are linear we only need to demand two things for them to be subalgebra conditions.
 Firstly, a trivial one: that constants should satisfy the conditions. Secondly, a non-trivial one: that whenever $f(x)$ and $g(x)$ satisfy the conditions, so does the product $f(x)g(x)$.

 For example the condition $f(\alpha)=0$ is not an subalgebra condition, because the non-zero constants does not satisfy it. But the condition $f(\alpha)=f(\beta)$ is a subalgebra condition. The same is true for the condition $f'(\alpha)=0.$

 The singe condition $f'(\alpha)+f'(\beta)=0$ is not subalgebra condition, but together the conditions $f(\alpha)=f(\beta),f'(\alpha)+f'(\beta)=0$ are subalgebra conditions. As this example shows being subalgebra conditions is a property of a {\it set} of conditions. (The set may, however, as in the first two examples, consist of just one element.)

 In general,  any condition
 $\sum c_if'(\alpha_i)=0$
  combined with  $f(\alpha_1)=f(\alpha_2)=\cdots =f(\alpha_k)$  gives subalgebra conditions.

 Indeed since the conditions are linear we only need to check that if $f(x)$ and $g(x)$ satisfies the conditions then the same is true for $f(x)g(x).$ We have
 $$\sum c_i(fg)'(\alpha_i)=\sum c_if'(\alpha_i)g(\alpha_i)+c_if(\alpha_i)g'(\alpha_i)=$$
 $$\left(\sum c_if'(\alpha_i)\right)g(\alpha_1)+f(\alpha_1)\left(\sum c_ig'(\alpha_i)\right)=0.$$
 One can find generalisations including derivatives of higher order, but we skip this for now and show only one spectacular example of subalgebra conditions:
 $$f'(0)=0;\quad f'''(0)=3f''(0);\quad f^{(5)}(0)=10f^{(4)}(0).$$

\section{Spectrum} Now we want to introduce the main definition of this chapter.
\begin{definition} Let $A$ be a subalgebra of finite codimension. Its  \textbf{spectrum} \index{spectrum} consists of $\alpha\in\MK$ such that either $f'(\alpha)=0$ for all $f(x)\in A$ or there exists $\beta\neq\alpha$ such that $f(\alpha)=f(\beta)$ for all $f(x)\in A$. In the second case $\beta$ obviously belongs to the spectrum as well. We write $Sp(A)$ to denote the spectrum of the algebra $A$.
 \end{definition}

  Unfortunately the word spectrum already has a specific meaning, so it would be more correct to use something like ``subalgebra spectrum'', but because we believe that this notion is very important and that the word spectrum reflects this concept very well we use the word ``spectrum''.  This makes our article more readable and in our context the interpretation should be unambiguous.

 We have already seen in  Theorem \ref{th:codm1} how the spectrum naturally arises in the description of subalgebras of codimension one.

  One trivial but useful remark is the following.
\begin{theorem}\label{lm:reversing} If $A\subseteq B$  are two subalgebras  in $\MK[x]$ then $Sp(B) \subseteq Sp(A)$. Thus the spectrum has the reversing inclusions property.
\end{theorem}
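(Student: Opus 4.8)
The plan is to unwind the definition of the spectrum directly; the statement will then follow immediately from the observation that both defining conditions are universally quantified over the elements of the algebra, so enlarging the algebra can only make those conditions harder to satisfy. First I would note that for the two spectra to even be defined we need finite codimension, and since $A\subseteq B$ implies that the codimension of $B$ is at most that of $A$, it suffices to assume $A$ has finite codimension; then $B$ automatically does too, and both $Sp(A)$ and $Sp(B)$ make sense.

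Next I would take an arbitrary $\alpha\in Sp(B)$ and split into the two cases of the definition. In the first case, $g'(\alpha)=0$ for every $g\in B$; since every $f\in A$ is in particular an element of $B$, we obtain $f'(\alpha)=0$ for all $f\in A$, which is exactly the first condition placing $\alpha$ in $Sp(A)$. In the second case there is some $\beta\neq\alpha$ with $g(\alpha)=g(\beta)$ for all $g\in B$; the very same $\beta$ then satisfies $f(\alpha)=f(\beta)$ for all $f\in A$, again because $A\subseteq B$, so once more $\alpha\in Sp(A)$. As $\alpha$ was arbitrary, this gives $Sp(B)\subseteq Sp(A)$.

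There is no genuine obstacle here: the argument is a one-line consequence of the universal quantifiers in the definition, and the only point requiring care is to treat both clauses of the definition separately and to respect the direction of the quantifier. The requirement ``for all $f\in B$'' is strictly stronger than ``for all $f\in A$'', which is precisely the reason that the larger algebra carries the smaller spectrum — exactly the reversing-inclusion behaviour asserted.
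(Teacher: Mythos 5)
Your proof is correct and is essentially the paper's own argument: the paper's entire proof reads ``Each condition that holds in $B$ holds in $A$ as well,'' which is precisely your observation about the universal quantifiers, spelled out case by case. Your additional remark about finite codimension (needed for the spectra to be defined, and inherited by $B$ from $A$) is a sound bit of extra care, not a different route.
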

\begin{proof}Each condition that holds in $B$ hold in $A$ as well.
\end{proof}

 \begin{theorem} Each proper subalgebra $A$ in $\MK[x]$ has non-empty spectrum. 
 \end{theorem}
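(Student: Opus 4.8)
The plan is to argue by induction on the codimension $n=\dim_{\MK}(\MK[x]/A)$, descending one step at a time with Gorin's theorem (Theorem~\ref{th:codimGorin}) and then transporting a nonempty spectrum back up the chain by the reversing-inclusion property (Theorem~\ref{lm:reversing}). Throughout I take $A$ to have finite codimension, since that is the setting in which the spectrum is defined; a proper subalgebra then has codimension $n\geq 1$.

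For the base case $n=1$ there is nothing new to prove, as Theorem~\ref{th:codm1} already supplies the needed element directly. That theorem asserts that a codimension-one subalgebra either admits a point $\gamma$ with $f'(\gamma)=0$ for every $f\in A$, in which case $\gamma\in Sp(A)$, or admits a pair $\alpha\neq\beta$ with $f(\alpha)=f(\beta)$ for every $f\in A$, in which case $\alpha,\beta\in Sp(A)$. In either case $Sp(A)$ is nonempty.

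For the inductive step I would assume $n>1$ and that the claim holds for every proper subalgebra of codimension strictly less than $n$. By Theorem~\ref{th:codimGorin}, $A$ is contained in a subalgebra $B$ of codimension $n-1$. Since $1\leq n-1<n$, this $B$ is again a proper subalgebra of finite codimension, so the inductive hypothesis gives $Sp(B)\neq\emptyset$. Because $A\subseteq B$, the reversing-inclusion property yields $Sp(B)\subseteq Sp(A)$, and hence $Sp(A)\neq\emptyset$.

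I expect the argument to be short precisely because its weight has been front-loaded into the cited results: the genuine content lives in Gorin's theorem and in the codimension-one analysis of Theorem~\ref{th:codm1}. The one step that deserves real attention is the bookkeeping in the descent, namely checking that each intermediate algebra $B$ produced by Theorem~\ref{th:codimGorin} remains proper (so the induction hypothesis applies) and remains of finite codimension (so that $Sp(B)$ is even defined). Once that is granted no further computation is required, and the two possible forms of condition in Theorem~\ref{th:codimGorin} (kernel of an $\alpha$-derivation, or an equality $f(\alpha)=f(\beta)$) need not be distinguished, since the proof uses only the \emph{existence} of the smaller-codimension overalgebra $B$.
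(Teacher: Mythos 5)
Your proposal is correct and follows essentially the same route as the paper: both use Theorem~\ref{th:codimGorin} inductively to place $A$ inside algebras of smaller codimension, invoke Theorem~\ref{th:codm1} at codimension one, and transport nonemptiness of the spectrum back to $A$ via the reversing-inclusion property of Theorem~\ref{lm:reversing}. The only cosmetic difference is that the paper descends all the way to a codimension-one overalgebra and applies reversing inclusion once, while you apply it at each inductive step; the substance is identical.
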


 \begin{proof} Induction and Theorem \ref{th:codimGorin} shows that $A$ is a subalgebra of an subalgebra of codimension $1.$ Then theorems \ref{lm:reversing} and \ref{th:codm1} finish the proof.
 \end{proof}

 One of our main results can be formulated as follows.
 \begin{theorem}\label{th:main} If $A$ is a proper subalgebra of finite codimension  then   only the values of $f(x)$ and finitely many of its derivatives $f^{(j)}(x)$ in the elements of the spectrum determine if $f(x)\in A.$
\end{theorem}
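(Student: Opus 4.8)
The plan is to argue by induction on the codimension $n$ of $A$, with Gorin's Theorem~\ref{th:codimGorin} as the engine and the reversing-inclusion property (Theorem~\ref{lm:reversing}) used to carry spectra up the chain. The base case $n=1$ is exactly Theorem~\ref{th:codm1}, whose two alternatives, $f'(\gamma)=0$ and $f(\alpha)=f(\beta)$, already have the required shape with $\gamma$, respectively $\alpha,\beta$, in $Sp(A)$. For $n>1$, Theorem~\ref{th:codimGorin} produces a subalgebra $B\supseteq A$ of codimension $n-1$ inside which $A$ is defined by a single additional condition. By the inductive hypothesis, membership in $B$ is decided by $f$ together with finitely many derivatives evaluated at the points of $Sp(B)$, and since $A\subseteq B$ we have $Sp(B)\subseteq Sp(A)$; thus the entire inductive description of $B$ is already written at points of $Sp(A)$. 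Everything therefore reduces to rewriting the one extra condition.

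If that condition is $f(\alpha)=f(\beta)$ with $\alpha\neq\beta$, then all $f\in A$ satisfy it, so $\alpha,\beta\in Sp(A)$ by definition and the condition is of the required (zeroth-order) form. The interesting case is $A=\ker D$ for an $\alpha$-derivation $D$ of $B$. From the Leibniz identity one sees at once that $D$ annihilates the constants and the square $\mathfrak m_\alpha^2$ of the ideal $\mathfrak m_\alpha=\{g\in B:g(\alpha)=0\}$; hence $D$ factors through the space $B/(\MK+\mathfrak m_\alpha^2)$, which has finite dimension because $\mathfrak m_\alpha^2$ has finite codimension in $B$. This already guarantees that only finitely many derivatives, of bounded order, can enter any expression for $D$.

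The heart of the argument --- and the step I expect to be the main obstacle --- is to realise $D$ concretely as a combination $\sum_{i,j}c_{ij}f^{(j)}(\alpha_i)$ whose evaluation points all lie in $Sp(A)$. I would organise this around the fibre of $\alpha$ under the inclusion $B\hookrightarrow\MK[x]$, namely the points $\alpha=\alpha_1,\dots,\alpha_k$ characterised by $g(\alpha_i)=g(\alpha)$ for all $g\in B$. Using that $B$ contains a nonzero ideal $(h)$ of $\MK[x]$ (the conductor), one can extend $D$ to a functional on $\MK[x]$ vanishing on a finite-codimension ideal whose zeros are exactly these $\alpha_i$; since functionals on $\MK[x]$ killing such an ideal are precisely the linear combinations of derivative evaluations $f^{(j)}(\alpha_i)$, this exhibits $D$ in the desired form. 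The delicate part here is controlling the support: one must check that the local nature of $D$ (it factors through $\mathfrak m_\alpha/\mathfrak m_\alpha^2$, which only sees the localisation of $B$ at the point of $\Spec B$ below $\alpha$) forces the extension to be supported over the fibre and nowhere else.

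It then remains to place the $\alpha_i$ in $Sp(A)$. If $k\geq2$ the points $\alpha_1,\dots,\alpha_k$ are pairwise identified by every $g\in B\supseteq A$, so each lies in $Sp(A)$ through the equality clause of the definition. If $k=1$ the point $\alpha$ is unibranch: either $B$ is regular there, in which case the only $\alpha$-derivation is a multiple of $f\mapsto f'(\alpha)$ and $A=\ker D$ forces $f'(\alpha)=0$ for all $f\in A$, or $\alpha$ is a singular unibranch point, in which case $g'(\alpha)=0$ already holds for every $g\in B$; either way $\alpha\in Sp(A)$ via the derivative clause. Combining the rewritten extra condition with the inductive description of $B$ expresses membership in $A$ using only $f$ and finitely many of its derivatives evaluated on $Sp(A)$, which closes the induction.
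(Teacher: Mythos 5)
Your induction skeleton (Gorin's Theorem~\ref{th:codimGorin} plus reversing inclusion, rewriting the single extra condition) is sound, and you correctly isolate the hard case: expressing an $\alpha$-derivation $D$ of $B$ through derivative evaluations at admissible points. But the step you defer as ``the delicate part'' is not a technical check --- it is the entire content of the theorem, and as you formulate it, it is strictly stronger than the theorem. Your target claim is that $D$ is supported on the fibre of $\alpha$ \emph{and nowhere else}; since $D(1)=0$ forces the order-zero coefficients to sum to zero and the fibre points are identified by every element of $B$, the order-zero part then vanishes identically on $B$, so your claim (when $\alpha\in Sp(B)$) is precisely the paper's main Conjecture~\ref{conj:mainD} applied to $B$ --- a statement the paper leaves open and verifies only for special families and small codimension. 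The paper proves Theorem~\ref{th:main} exactly by \emph{not} demanding cluster support: it first establishes Theorem~\ref{th:power} (there is $N$ with $x^i\pi_A^N\in A$ for all $i$), and then runs a global dimension count (Theorem~\ref{th:main2}) in the $Ns$-dimensional space of functionals $p\mapsto\sum_{i<N}\sum_j c_{ij}p^{(i)}(\alpha_j)$, producing all $n$ conditions at once with evaluation points anywhere in the spectrum.

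Your conductor idea can be repaired, but only by importing the very lemma you never prove. It suffices to get support inside $Sp(B)\cup\{\alpha\}\subseteq Sp(A)$: the Leibniz rule shows $D$ kills the $\MK[x]$-ideal $(\mathfrak{c}\cap\mathfrak{m}_\alpha)^2$, where $\mathfrak{c}$ is the conductor of $B$, so extending $D$ by zero and dualizing the Chinese Remainder decomposition writes $D$ as derivative evaluations at $Z(\mathfrak{c})\cup\{\alpha\}$. The induction then closes if and only if $Z(\mathfrak{c})\subseteq Sp(B)$, and this is \emph{equivalent} to Theorem~\ref{th:power} for $B$ (if $x^i\pi_B^N\in B$ for all $i$ then $\pi_B^N\in\mathfrak{c}$, so $Z(\mathfrak{c})\subseteq Sp(B)$; conversely $Z(\mathfrak{c})\subseteq Sp(B)$ forces $\pi_B^N\in\mathfrak{c}$ for large $N$). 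That inclusion is where all the work lies: the paper's induction for Theorem~\ref{th:power} needs Theorem~\ref{th:derivativenotinspectrum}, proved there by the resultant machinery of $\chi_{p,q}$. The same theorem is hidden, unproved, in your final dichotomy: ``singular unibranch implies $g'(\alpha)=0$ for every $g\in B$'' is the contrapositive of ``$\alpha\notin Sp(B)$ implies $k_\alpha=1$,'' i.e.\ Theorem~\ref{th:derivativenotinspectrum}. It is true (one can prove it by completing $B$ at $\alpha$ inside $\MK[[x-\alpha]]$ and using that a valuation-one element generates the completed maximal ideal), but it is nothing like the one-line assertion you make. Without these two inputs your argument cannot exclude ``ghost'' evaluation points outside the spectrum (compare Theorem~\ref{th:ghost}), and the induction does not close.
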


We will prove this later.  We already have done it for monomial subalgebras and for subalgebras of codimension one.

Before moving on we give some equivalent definitions of the spectrum.

\begin{theorem}\label{th:spectrumdef} Let $A$ be a subalgebra of finite codimension and $\alpha\in \MK$. The following is equivalent.
\item[(i)] $\alpha$ belongs to the spectrum of $A.$
   \item[(ii)] There exists $\beta\in \MK$ such that $(x-\alpha)(x-\beta)$ divides $f(x)-f(\alpha)$ for any $f(x)\in A$.
   \item[(iii)] There exists $\beta\in \MK$ and a SAGBI basis $G$ of $A$ such that $(x-\alpha)(x-\beta)$ divides each element in $G.$
   \item[(iv)] $\alpha$ belongs to the spectrum of the subalgebra $\langle p(x),q(x) \rangle$ for each pair of monic $p(x),q(x)\in A$ with relatively prime degrees.
\end{theorem}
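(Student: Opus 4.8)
My plan is to route all four statements through the single divisibility condition in (ii), which simultaneously encodes both alternatives in the definition of the spectrum, and to treat (iii) and (iv) relative to it. The equivalence (i)$\Leftrightarrow$(ii) is purely a root count: for $\beta\neq\alpha$ the relation $(x-\alpha)(x-\beta)\mid f(x)-f(\alpha)$ merely says that $\beta$ is a second root of $f(x)-f(\alpha)$, i.e. $f(\beta)=f(\alpha)$, while for $\beta=\alpha$ it says $\alpha$ is a double root, i.e. $f'(\alpha)=0$. Quantifying over all $f\in A$ and over $\beta\in\MK$ thus reproduces verbatim the two cases ``$f'(\alpha)=0$ for all $f$'' and ``$f(\alpha)=f(\beta)$ for all $f$'' in the definition of $Sp(A)$.

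For (ii)$\Leftrightarrow$(iii) I would, in the direction (ii)$\Rightarrow$(iii), take any SAGBI basis $\{g_1,\dots,g_t\}$ and replace each $g_i$ by $g_i-g_i(\alpha)$: this lies in $A$, has the same degree (so the set is still a SAGBI basis), and is divisible by $(x-\alpha)(x-\beta)$ by (ii). For (iii)$\Rightarrow$(ii) I would use that a SAGBI basis generates the algebra, $A=\MK[g_1,\dots,g_t]$ (each $f\in A$ subduces to a constant, hence equals a constant plus a polynomial in the $g_i$). Reduction modulo $(x-\alpha)(x-\beta)$ is a ring homomorphism $\Phi$ into the two-dimensional algebra $R=\MK[x]/((x-\alpha)(x-\beta))$; if every $g_i\in\ker\Phi$ then $\Phi(A)=\MK$, so each $f\in A$ is congruent to a constant modulo $(x-\alpha)(x-\beta)$, and evaluating at $\alpha$ identifies that constant as $f(\alpha)$.

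For (iv), the implication (i)$\Rightarrow$(iv) is immediate: when $\gcd(\deg p,\deg q)=1$ the numerical semigroup of $\langle p,q\rangle$ has finite complement, so $Sp(\langle p,q\rangle)$ is defined, and $\langle p,q\rangle\subseteq A$ gives $Sp(A)\subseteq Sp(\langle p,q\rangle)$ by Theorem~\ref{lm:reversing}. The substantive converse I would reformulate through the finite sets $\Gamma(f)=\{\gamma\in\MK:(x-\alpha)(x-\gamma)\mid f(x)-f(\alpha)\}$. The same product computation used earlier for subalgebra conditions shows $\gamma\in\Gamma(p)\cap\Gamma(q)$ forces $\gamma\in\Gamma(h)$ for every $h\in\langle p,q\rangle$; together with (ii) this gives $\alpha\in Sp(\langle p,q\rangle)\Leftrightarrow\Gamma(p)\cap\Gamma(q)\neq\varnothing$, and likewise $\alpha\in Sp(A)\Leftrightarrow\bigcap_{g\in G}\Gamma(g)\neq\varnothing$ for a SAGBI basis $G$.

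The heart of the proof, and the main obstacle, is the passage from pairwise coprime-degree intersections to the global one. I would argue (iv)$\Rightarrow$(i) by contradiction: assuming $\alpha\notin Sp(A)$, fix a monic $p_0\in A$ of degree $a$, so $\Gamma(p_0)=\{\gamma_1,\dots,\gamma_s\}$ is finite, and for each $i$ choose $f_i\in A$ with $\gamma_i\notin\Gamma(f_i)$. Choosing $N$ in the semigroup of $A$ with $\gcd(N,a)=1$ and $N>\max_i\deg f_i$, I would work in the affine space $\mathcal{M}_N$ of monic degree-$N$ elements of $A$: each requirement $\gamma_i\in\Gamma(q)$ (that is $q(\gamma_i)=q(\alpha)$, or $q'(\alpha)=0$ when $\gamma_i=\alpha$) is a linear condition cutting out an affine subspace of $\mathcal{M}_N$, and this subspace is proper because $q_0+tf_i$ violates it for all but one value of $t$. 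Since $\MK$ is infinite and $\dim\mathcal{M}_N\geq 1$, the space $\mathcal{M}_N$ is not the union of these finitely many proper affine subspaces, so some $q\in\mathcal{M}_N$ has $\Gamma(p_0)\cap\Gamma(q)=\varnothing$; as $\deg p_0=a$ and $\deg q=N$ are coprime, this contradicts (iv). The bookkeeping (choosing $N$ coprime to $a$ and large, checking each subspace is proper) is routine; the genuinely delicate point is this covering/genericity argument over the infinite field $\MK$, which is what upgrades the coprime-pairwise hypothesis to a statement about all of $A$.
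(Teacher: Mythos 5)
Your proof is correct. For (i)$\Leftrightarrow$(ii), (ii)$\Leftrightarrow$(iii) and (i)$\Rightarrow$(iv) it is essentially identical to the paper's: (ii) as a double-root restatement of (i), the shift $g\mapsto g-g(\alpha)$ for (ii)$\Rightarrow$(iii), the fact that a SAGBI basis generates $A$ for (iii)$\Rightarrow$(ii) (your quotient homomorphism $\Phi$ into $\MK[x]/((x-\alpha)(x-\beta))$ is a repackaging of the paper's subduction argument), and the reversing-inclusion property of Theorem~\ref{lm:reversing} for (i)$\Rightarrow$(iv). The genuine difference is in (iv)$\Rightarrow$(i), where both proofs share the same mechanism --- the candidate $\beta$'s form a finite set attached to one fixed element of $A$, and a second element of coprime degree avoiding all candidates is produced generically over the infinite field --- but execute it differently. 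The paper first disposes of the case where $f'(\alpha)=0$ for all $f\in A$, then fixes a monic $q$ with $q(\alpha)=0$ and $q'(\alpha)\neq 0$, whose remaining roots $\beta_1,\dots,\beta_k$ are the only possible $\beta$'s, and constructs its partner explicitly in two genericity steps (a generic linear combination $g$ of separators $g_i$, then $p=h+cg$ for generic $c$); the hypothesis $q'(\alpha)\neq 0$ is what forces the $\beta$ supplied by (iv) to be a root of $q$, which $p$ separates. You instead let the fixed polynomial $p_0$ be arbitrary, encode both spectrum alternatives uniformly in $\Gamma(p_0)$ (the value $\gamma=\alpha$ absorbing the derivative condition, so no case distinction is needed), and find the partner $q$ by the covering fact that an affine space of positive dimension over an infinite field is not a finite union of proper affine subspaces. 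Your intermediate equivalence $\alpha\in Sp(\langle p,q\rangle)\Leftrightarrow\Gamma(p)\cap\Gamma(q)\neq\emptyset$ cleanly isolates the algebraic input (the subalgebra-condition product computation) from the genericity argument, and swapping which polynomial is fixed and which is generic removes the paper's need to start from an element with nonvanishing derivative at $\alpha$; the cost is invoking the affine-covering principle, where the paper gets by with two explicit scalar choices. Both are complete proofs of the hard implication.
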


\begin{proof} (ii) is a simple reformulation of (i). (Note that we can take $\beta=\alpha$ when the condition is $f'(\alpha)=0).$

(ii) implies  (iii) almost directly. We choose any SAGBI basis and replace each element $g$ by $g-g(\alpha)$ obtaining a new SAGBI basis.

(iii) implies (ii) because any $f(x) \in A$ can be subduced to a constant $c$. In each subduction step a polynomial divisible by $(x-\alpha)(x-\beta)$ is subtracted. Hence $f(x)-c$ is divisible by $(x-\alpha)(x-\beta)$. It is easy to see that we must have $c=f(\alpha)$.

By theorem  \ref{lm:reversing}  (i) implies  (iv). The opposite, that  (iv) implies (i)  is more difficult.
  If there exists $f(x)\in A$  such that $f'(\alpha)\neq 0$ we need to find $\beta.$ Subtracting a constant we can suppose that $f(\alpha)=0$ and let $\beta_1,\ldots,\beta_k$ be the other roots of $f(x),$
  which exist because $A$ is a proper subalgebra. Then $\beta$ should equal some $\beta_i.$ If the implication does not hold then for each $i$ there exists $g_i(x)\in A$ such that $g_i(\beta_i)\neq g_i(\alpha).$ Subtracting a constant we can suppose that
  $g_i(\alpha)=0, $ but $g_i(\beta_i)\neq 0.$ Now, using that our field is infinite, we can easily construct a linear combination $g(x)$ of the $g_i$, such that $g(\alpha)=0$ but $g(\beta_i)\neq 0$ for each $i.$ Since $A$ has a finite codimension we can for each large degree find a polynomial that belongs to $A.$ We choose such a monic polynomial $h(x)$ that has degree larger than $\deg g(x)$ and relatively prime to $\deg f(x).$ We can also suppose that $h(\alpha)=0.$

  The next step is to construct a polynomial $p(x)=h(x)+cg(x)$ that has the same property as $g(x),$ namely $p(\alpha)=0$ but $p(\beta_i)\neq 0$ for each $i.$ Again, this is possible because our field is infinite. Let $q(x)$ be $f(x)$ divided by its leading coefficient. Consider the subalgebra $\langle p(x),q(x)\rangle.$ Because $\alpha$ belongs to its spectrum and $q'(\alpha)\neq 0$ there exists $\beta$ such that $p(\alpha)=p(\beta)$ and $q(\alpha)=q(\beta).$
  But $q(\alpha)=0$ so $\beta=\beta_i$ for some $i.$ On the other hand $0=p(\alpha)\neq p(\beta_i)$ and we get a contradiction. This proves that our assumption that (iv) does not imply (i) must have been wrong.

\end{proof}

\section{The size of the spectrum}
How large can the spectrum of a subalgebra of finite codimension $n$ be? To answer this question we first prove an important statement, which essentially says that elements in the spectrum appears in a natural way and there are no ``ghost'' elements in the spectrum.
\begin{theorem}\label{th:ghost}Suppose that the subalgebra $A$ is obtained from the subalgebra $B$  by adding an extra condition $L(f(x))=0$ where  either  $L(f(x))=f(\alpha)-f(\beta)$ or
 $L$ is some $\alpha-$derivation. If $\lambda \not \in Sp(B) \cup \{\alpha,\beta\}$ then $\lambda \not \in Sp(A).$
\end{theorem}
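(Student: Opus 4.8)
The plan is to argue by contradiction: assume $\lambda\in Sp(A)$ while $\lambda\notin Sp(B)\cup\{\alpha,\beta\}$, and deduce a linear relation on $B$ that forces $\lambda$ into $Sp(B)\cup\{\alpha,\beta\}$ after all. We may assume $L$ does not vanish identically on $B$ (otherwise $A=B$ and the claim is immediate from $\lambda\notin Sp(B)$), so $A$ has codimension one in $B$ and $B=A+\MK b$ for some $b$ with $L(b)\neq0$; every $g\in B$ then writes as $g=\tilde g+\tfrac{L(g)}{L(b)}b$ with $\tilde g\in A$. By Theorem \ref{th:spectrumdef}(ii) there is $\mu\in\MK$ with $(x-\lambda)(x-\mu)\mid f(x)-f(\lambda)$ for all $f\in A$. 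Reducing $h(x)-h(\lambda)$ modulo $(x-\lambda)(x-\mu)$ and using that $(x-\lambda)$ always divides it, the remainder equals $\ell(h)\,(x-\lambda)$ for a linear functional $\ell$, where $\ell(h)=h'(\lambda)$ if $\mu=\lambda$ and $\ell(h)=\tfrac{h(\mu)-h(\lambda)}{\mu-\lambda}$ otherwise. Applying the divisibility to $\tilde g$ gives $\ell(g)=\tfrac{\ell(b)}{L(b)}L(g)$ for all $g\in B$. Since $\lambda\notin Sp(B)$ the chosen $\mu$ cannot witness $\lambda$ for $B$, so $\ell$ is not identically zero on $B$; by the displayed relation $\ell(b)\neq0$. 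Writing $s=\ell(b)/L(b)\neq0$ we obtain the central relation $\ell(g)=s\,L(g)$ for all $g\in B$, call it $(\mathrm{H})$.

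The engine is the following elementary fact: over the infinite field $\MK$ a vector space is not a union of two proper subspaces, so if a product $M_1(g)M_2(g)$ of two linear functionals vanishes for all $g\in B$, then $M_1$ or $M_2$ vanishes on all of $B$. I will combine this with multiplicativity by squaring in $(\mathrm{H})$. Here I use only the Leibniz rule for $\ell$ (giving $\ell(h^2)=2h(\lambda)\ell(h)$ when $\mu=\lambda$, or the factorization of $h(\mu)^2-h(\lambda)^2$ otherwise) and for $L$ (an $\alpha$-derivation yields $L(h^2)=2h(\alpha)L(h)$ by definition, while $L(h)=h(\alpha)-h(\beta)$ yields $L(h^2)=(h(\alpha)+h(\beta))L(h)$). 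Substituting $(\mathrm{H})$, each of the four cases collapses to a product of two linear functionals that vanishes for all $h\in B$. By the fact above one factor vanishes on $B$: one option reads $g(\alpha)=g(\beta)$ for all $g\in B$, i.e. $A=B$, which is excluded; the other option reads $g(\mu)=g(\lambda)$ (with $\mu\neq\lambda$) or $g'(\lambda)=0$ for all $g\in B$, each of which places $\lambda\in Sp(B)$ by definition, a contradiction.

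In the remaining cases the surviving factor together with $(\mathrm{H})$ gives two linear relations on $B$ which solve to $g(\lambda)=p\,g(\gamma_1)+q\,g(\gamma_2)$ for all $g\in B$, where $\{\gamma_1,\gamma_2\}\subseteq\{\alpha,\beta,\mu\}$ and $p+q=1$ (the last from taking $g=1$). Squaring this relation and applying the fact once more yields $p(1-p)\,(g(\gamma_1)-g(\gamma_2))(h(\gamma_1)-h(\gamma_2))=0$ for all $g,h\in B$; since not every $g\in B$ satisfies $g(\gamma_1)=g(\gamma_2)$ (else $A=B$ or $\lambda\in Sp(B)$ as above), we conclude $p\in\{0,1\}$. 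Hence $g(\lambda)=g(\gamma_1)$ or $g(\lambda)=g(\gamma_2)$ for every $g\in B$, which forces $\lambda\in Sp(B)$ unless $\lambda$ equals the relevant point of $\{\alpha,\beta\}$; both possibilities are excluded by hypothesis. This contradiction shows $\lambda\notin Sp(A)$.

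The main obstacle is uniform bookkeeping rather than a single hard idea: one must treat the derivation witness ($\mu=\lambda$) and the two-point witness ($\mu\neq\lambda$) for $A$ simultaneously with the two possible forms of the defining functional $L$, and crucially handle the abstract $\alpha$-derivation $L$ using only its Leibniz identity, since no explicit formula for it is available on the subalgebra $B$. The conceptual content is concentrated in the repeated use of squaring together with the infiniteness of $\MK$ (so that $B$ cannot be covered by two hyperplanes); verifying that each of the four combinations indeed funnels into the single relation $g(\lambda)=p\,g(\gamma_1)+q\,g(\gamma_2)$ is the step requiring care.
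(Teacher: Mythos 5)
Your proof is correct, and it takes a genuinely different route from the paper's. Both arguments start identically: since $A=\ker L|_B$ has codimension one in $B$, the witness functional for $\lambda\in Sp(A)$ must be proportional to $L$ on $B$ (your relation $(\mathrm{H})$ is exactly the paper's identity $L(f)l(g)=L(g)l(f)$). From there the paper picks a SAGBI basis of $B$ inside $M_\lambda$, normalizes it so that $l$ kills every element but one, and derives contradictions in four separate cases by explicit computations with $L(g_i^kg_j)$ and $L(g_i^k)$ for small $k$ (identities such as $a^k=b^k$, $c=a+b$, $c^2=a^2+ab+b^2$). You avoid bases entirely: applying $(\mathrm{H})$ to squares $h^2$ and using the Leibniz rule on both sides collapses every case to a vanishing product $\ell(h)\bigl[E(h)-F(h)\bigr]=0$ of two linear functionals; the covering fact (a vector space is never the union of two proper subspaces --- true over any field, so infiniteness of $\MK$ is not actually needed there) forces one factor to vanish on $B$, and the surviving branch funnels into the affine relation $g(\lambda)=p\,g(\gamma_1)+q\,g(\gamma_2)$ with $p+q=1$, which one more squaring destroys. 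This buys a shorter, uniform treatment of all four cases that needs only the abstract Leibniz identity for $L$ and no SAGBI machinery; what the paper's version buys is explicit information about values of SAGBI basis elements at $\alpha,\beta$, in line with the basis-centric toolkit it reuses throughout. Two minor points of care in your write-up: in the branch where the surviving relation is $g(\lambda)=2g(\alpha)-g(\mu)$, placing $\lambda$ in $Sp(B)$ requires combining $g(\alpha)=g(\mu)$ with that relation (your ``as above'' glosses this, though it is immediate); and your dichotomy listing ``$g(\alpha)=g(\beta)$'' and ``$g(\mu)=g(\lambda)$ or $g'(\lambda)=0$'' as two options describes the same subspace twice, since $\ell$ and $L$ have equal kernels on $B$ by $(\mathrm{H})$ --- harmless, as both readings are excluded by hypothesis.
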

\begin{proof} Suppose the opposite. Then for any $f=f(x)\in A$ we have $l(f)=0,$ where either $l(f)=f(\lambda)-f(\mu)$ or $l(f)=f'(\lambda).$

 We need to consider four different situations (two alternatives for $L$ and two for $l$). Let us first see what they all have in common. First of all $A=\ker L$, and we have supposed that $A=\ker l$ as well and want to get a contradiction.

Note that for any $f(x),g(x)\in B$ we have $$L(f)g-L(g)f\in \ker L=\ker l\Rightarrow L(f)l(g)-L(g)l(f)=0\Rightarrow$$
\begin{equation}\label{eq:Ll}
L(f)l(g)=L(g)l(f).
\end{equation}

 Our next step is to choose a SAGBI  basis $\{g_j\}$ for $B$ inside $M_\lambda=\{f(x)\in B|f(\lambda)=0\}.$ Because $\lambda$ is not in the spectrum we can find $g_i$ of minimal degree such that $l(g_i)\neq 0.$ Subtracting  it we can suppose WLOG that $l(g_j)=0$ for all $j\neq i.$ Note first that $L(g_i)\neq 0,$ otherwise $g_i\in A,$ which is impossible because $l(g_i)\neq 0.$

On the other hand for $j\neq i$ we have  $g_j \in \ker l=\ker L$, thus $L(g_j)=0.$ Note also that in any of the two alternatives $l(g_i^kg_j)=0$ because $g_i,g_j\in M_\lambda.$ Using (\ref{eq:Ll}) we get
  \begin{equation} \label{eq:Lk}L(g_i^kg_j)l(g_i)=l(g_i^kg_j)L(g_i)=0\Rightarrow L(g_i^kg_j)=0.\end{equation}

Now it is time to consider different alternatives. Suppose first that $L(f)=f(\alpha)-f(\beta).$ Then we get that for each $k$ we have
$$g_i(\alpha)^kg_j(\alpha)=g_i(\beta)^kg_j(\beta).$$
 For $k=0$ we get $g_j(\alpha)=g_j(\beta).$ Because $g_i(\alpha)\neq g_i(\beta)$ (otherwise $g_i\in A$) we should have
$$g_j(\alpha)=g_j(\beta)=0$$ for each $j\neq i.$ This implies $g_i(\alpha)\neq 0$, otherwise  we would have $f(\lambda)=f(\alpha)$ for all elements in our SAGBI basis and $\lambda$ would be an element of the spectrum of $B.$ Similarly we have $g_i(\beta)\neq 0.$

Let $k\ge 2.$ Using (\ref{eq:Ll}) again we get
$$L(g_i)l(g_i^k)=L(g_i^k)l(g_i).$$
Now we consider alternatives for $l.$ If $l$ is $\lambda$-derivation we get $l(g_i^k)=kg_i^{k-1}(\lambda)l(g_i)=0,$ because $g_i(\lambda)=0.$ Thus
$$L(g_i^k)=0\Rightarrow g_i(\alpha)^k=g_i(\beta)^k.$$ Let $a=g_i(\alpha),b=g_i(\beta).$ We have $a\neq b$ but $a^k=b^k$ for all $k\ge 2$ which is impossible and we get our first contradiction.

If instead $l(f)=f(\lambda)-f(\mu)$ then $c=l(g_i)=-g_i(\mu)$ and $c\neq 0.$
Then
we get from the equation above that
$$(a-b)c^{k}=(a^k-b^k)c\Rightarrow(a-b)c^{k-1}=a^k-b^k.$$ Because $a\neq b$ we get from $k=2,3$:
$$c=a+b; c^2=a^2+ab+b^2\Rightarrow (a+b)^2=a^2+ab+b^2\Rightarrow ab=0$$ which contradicts $a\neq0,b\neq 0$ obtained above.

Consider now the case when $L=D$ is some $\alpha-$derivation.

 Condition (\ref{eq:Lk}) now looks as $D(g_i^kg_j)=0$ and for $k=0$ implies $D(g_j)=0$ for $j\neq i.$ Because $g_i$ does not belong to $A=\ker D$
we have from $k=1$ that $$0=D(g_ig_j)=D(g_i)g_j(\alpha)+g_i(\alpha)D(g_j)\Rightarrow g_j(\alpha)=0.$$
This implies $a=g_i(\alpha)\neq 0,$ otherwise we would have $f(\alpha)=f(\lambda)$ in $B.$

Suppose first that $l(f)=f(\lambda)-f(\mu),$ thus and $l(g_i^k)=(-g_i(\mu))^k=c^k$ if we put  $c=l(g_i)=-g_i(\mu)\neq 0.$

Equation (\ref{eq:Ll}) gives now for $k\ge 2$ that
 $D(g_i^k)l(g_i)=D(g_i)l(g_i^k).$ Applying the same notations as above we rewrite this as
$$D(g_i^k)=D(g_i)c^{k-1}.$$ For $k=2,3$ we obtain
$$2D(g_i)a=D(g_i)c;\quad  3D(g_i)a^2=D(g_i)c^2\Rightarrow c=2a, c^2=3a^2$$ and we again get $a=0$ which is a contradiction.

It remains only the case where $l(f)=f'(\lambda).$ Here we use equation (\ref{eq:Ll}) again. Because $l(g_i^2)=2g_i(\lambda)l(g_i)=0$ we have
$$0=D(g_i)l(g_i^2)=l(g_i)D(g_i^2)=2l(g_i)g_i(\alpha)D(g_i)\Rightarrow g_i(\alpha)=0,$$ which is the last contradiction we needed.
\end{proof}

Now we get a nice corollary.
\begin{theorem}\label{th:spectrumsize} Let $A$ be a
 subalgebra  in $\MK[x]$ of codimension $n.$ Then
 \begin{itemize}
   \item $|Sp(A)| \leq 2n.$
   \item $|Sp(A)|=2n$ if and only if $A$ can be described by $n$ conditions of the form $f(\alpha_i)=f(\beta_i), i=1,\ldots, n$,  all $\alpha_i,\beta_i$ being different.
   \item $|Sp(A)|=2n-1$ if and only if $A$ can be described by $n-1$ conditions of the form $f(\alpha_i)=f(\beta_i), i=1,\ldots, n-1$ and one extra condition either of the form
    $f'(\alpha_0)=0$ or of the form $f(\alpha_0)=f(\alpha_1),$ all $\alpha_i,\beta_i$ being different. The second alternative is possible only if $n>1.$
 \end{itemize}

\end{theorem}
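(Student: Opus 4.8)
The engine for the whole statement is the flag of subalgebras produced by Theorem~\ref{th:codimGorin} together with the ``no ghosts'' result, Theorem~\ref{th:ghost}. The plan is to iterate Theorem~\ref{th:codimGorin} (and invoke Theorem~\ref{th:codm1} for the final step into $\MK[x]$) to fix a chain $\MK[x]=A_0\supset A_1\supset\cdots\supset A_n=A$, where each $A_i$ arises from $A_{i-1}$ by one condition $L_i$ that is either a two-point condition $f(\alpha_i)=f(\beta_i)$ or the kernel of an $\alpha_i$-derivation of $A_{i-1}$. Now $Sp(\MK[x])=\emptyset$ (the polynomial $f=x$ violates both alternatives), while $A_i\subseteq A_{i-1}$ gives $Sp(A_{i-1})\subseteq Sp(A_i)$ by Theorem~\ref{lm:reversing} and Theorem~\ref{th:ghost} gives $Sp(A_i)\subseteq Sp(A_{i-1})\cup\{\alpha_i,\beta_i\}$, or $Sp(A_i)\subseteq Sp(A_{i-1})\cup\{\alpha_i\}$ in the derivation case. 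So each step enlarges the spectrum by at most two points, and a derivation step by at most one; an immediate induction gives $|Sp(A)|\le 2n$, the first bullet.

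For the two ``if'' directions I would run the same flag construction on the explicitly given conditions. Distinctness of the listed points makes the difference functionals (optionally together with the derivative functional $f\mapsto f'(\alpha_0)$) linearly independent, so each described set is a subalgebra of codimension exactly $n$; meanwhile every listed point visibly lies in $Sp(A)$ (paired points by definition, and $\alpha_0$ either as a partner or because $f'(\alpha_0)=0$). Applying Theorem~\ref{th:ghost} along this flag bounds $Sp(A)$ from above by exactly the listed points, so the spectrum equals that set and has the asserted size $2n$ or $2n-1$.

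The ``only if'' directions are where the counting turns rigid. If $|Sp(A)|=2n$, then the $n$ steps contribute at most two new points each and the total is $2n$, so every step contributes exactly two; in particular no step is a derivation step, each $L_i$ is $f(\alpha_i)=f(\beta_i)$, and the $2n$ points are pairwise distinct. Then $A$ is contained in the codimension-$n$ algebra cut out by these $n$ conditions, and equal codimension forces equality. If instead $|Sp(A)|=2n-1$, the deficit from $2n$ is exactly one, so exactly one step is \emph{deficient}, contributing a single new point $\alpha_0$, while the other $n-1$ steps are two-point steps contributing two fresh points each.

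The delicate part, and the step I expect to be the main obstacle, is turning a deficient \emph{derivation} step into one of the two clean normal forms, since Theorem~\ref{th:codimGorin} only supplies an abstract $\alpha_0$-derivation rather than the specific functional $f\mapsto f'(\alpha_0)$. I would argue through the definition of the spectrum instead: the new point $\alpha_0\in Sp(A)$ means either $f'(\alpha_0)=0$ for all $f\in A$, or $\alpha_0$ has a partner $\beta\in Sp(A)\setminus\{\alpha_0\}$, which must then be one of the already-present points $\alpha_1,\dots,\beta_{n-1}$. In the first case $A$ lies in the codimension-$n$ algebra defined by the $n-1$ pair conditions and $f'(\alpha_0)=0$, hence equals it; in the second, relabelling $\beta=\alpha_1$, it lies in the codimension-$n$ algebra defined by the pair conditions and $f(\alpha_0)=f(\alpha_1)$, hence equals it. A deficient two-point step is handled the same way, one of its points being old. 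The clause ``$n>1$'' for the second alternative falls out because a shared point needs at least one earlier pair, i.e.\ $n-1\ge 1$; for $n=1$ the flag has a single step and $|Sp(A)|=1$ forces the form $f'(\alpha_0)=0$, matching Theorem~\ref{th:codm1}. The only recurring routine point is the codimension bookkeeping: a system of evaluation-difference functionals at distinct points, optionally with one derivative functional, is independent exactly because the associated graph of paired points is a forest, so each described system has codimension equal to its number of conditions.
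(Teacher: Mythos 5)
Your argument is correct, and its skeleton --- the flag from Theorem~\ref{th:codimGorin} (with Theorem~\ref{th:codm1} at the top), monotonicity of spectra from Theorem~\ref{lm:reversing}, and the counting supplied by Theorem~\ref{th:ghost} --- is the same as the paper's. The genuine divergence is exactly at the step you single out as delicate. The paper proves the third bullet by induction and, when the extra condition defining $A$ inside $B$ is an $\alpha$-derivation at a point outside $Sp(B)$, it invokes Theorem~\ref{th:derivativenotinspectrum} (flagged there as ``which we will prove later'') to replace that abstract derivation by $f\mapsto f'(\alpha_0)$, so the description of $A$ is read off condition by condition along the flag. You never identify the derivation at all: from the counting you extract only that the deficient step contributes a single new spectrum point $\alpha_0$, you then apply the \emph{definition} of the spectrum to $\alpha_0$ (either $f'(\alpha_0)=0$ on all of $A$, or $\alpha_0$ has a partner, which must lie among the $2n-2$ paired points), and you close with a codimension count: $A$ is contained in the explicitly cut-out subspace, the $n$ functionals are independent (Hermite interpolation, or your forest observation), so both sides have codimension $n$ and must coincide. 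What your route buys is the elimination of the forward dependency on the classification of trivial derivations --- a nontrivial result the paper only establishes later via subalgebras generated by two polynomials --- so the theorem becomes self-contained at the point where it appears, at the cost of a routine linear-independence check. The paper's route yields slightly more structural information (it identifies what the added condition in the flag literally is), but none of that is needed for the statement as formulated.
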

\begin{proof}The first two statements follow directly by induction from the previous theorem and Theorem \ref{th:codimGorin}. For the last statement we need to describe the induction in greater detail. For $n=1$ the statement is trivial. If $n>1$ and $A$ is obtained from $B$ by an extra condition
  then $|Sp(B)| \geq 2n-3$. If $|Sp(B)|=2n-3$ the extra condition is of the form $f(\alpha)=f(\beta),$ where $\alpha,\beta$ does not belong to the spectrum of $B$ and we can simply use the induction hypothesis. If $|Sp(B)|>2n-3$  then by Theorem~\ref{th:ghost} it must be  $2n-2$. If the extra condition is of the form $f(\alpha)=f(\beta)$ exactly one of $\alpha$ or $\beta$ should belong to the spectrum of $B.$ WLOG it coincides with $\alpha_1.$   Otherwise the extra condition is an $\alpha-$derivation for some $\alpha$ that does not belong to $Sp(B).$ Using
  Theorem \ref{th:derivativenotinspectrum} (which we will prove later) we can replace it by $f(x)\rightarrow f'(\alpha)$ and it remains to rename $\alpha$ to $\alpha_0.$
\end{proof}

\section{Clusters}
Let us now introduce a natural equivalence. For a given algebra $A$ we define $\alpha\sim \beta$ \index{$\alpha\sim \beta$} if and only if $f(\alpha)=f(\beta)$ is valid for all $f \in A.$ Then the spectrum of the subalgebra $A$ is a disjoint union of equivalency classes that we call \textbf{clusters.} \index{cluster} If $A$ is obtained from $B$ by a linear condition $L(f)=0$ then Theorem \ref{th:ghost} gives us a simple connection between clusters in $B$ and $A.$

If $L$ is an $\alpha-$derivation then the clusters are the same if $\alpha \in Sp(B)$ and $\{\alpha\}$ constitutes an additional cluster in $A$ if $\alpha \not \in Sp(B)$.

If $L(f)=f(\alpha)-f(\beta)$ there are several possibilities. If neither $\alpha$ nor $\beta$ belong to the spectrum of $B$ then they together form a new cluster.

If exactly one of them (say $\alpha$)  belongs to the spectrum of $B$ then we simply add $\beta$ to the cluster containing $\alpha.$

At last if both $\alpha$ and $\beta$ belong to the spectrum of $B$ then they should lie in different clusters and as a result those two clusters will be joined in $A.$


From now on we will use the notion
$A(C) = \{f(x) | f(\alpha)=f(\beta) \mbox{ for all } \alpha,\beta \in C\}$
for the subalgebra defined by the fact that all its elements have the same value on the cluster $C$.
\section{The Main Theorem }
Now we want to prove Theorem \ref{th:main}. We begin with the following.
\begin{theorem}\label{th:power} Let $A$ be a proper subalgebra of $\MK[x]$ with $Sp(A)=\{\alpha_1,\ldots,\alpha_s\}$ and let $\pi_A=(x-\alpha_1)\cdots(x-\alpha_s).$ Then there exists $N>1$
such that $x^i\pi_A^N\in A$ for any $i\ge 0.$
\end{theorem}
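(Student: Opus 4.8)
The plan is to prove the equivalent statement that the ideal $\pi_A^N\MK[x]$ is contained in $A$ for some $N>1$, by induction on the codimension $n$ of $A$ along the flag $A=A_n\subseteq A_{n-1}\subseteq\cdots\subseteq A_0=\MK[x]$ supplied by Theorem~\ref{th:codimGorin} (with Theorem~\ref{th:codm1} covering the bottom step). The base case $n=0$ is trivial, since then $Sp(A)=\emptyset$, $\pi_A=1$ and $\pi_A^N\MK[x]=\MK[x]$. For the inductive step I assume the result for a subalgebra $B$ of codimension $n-1$, so $\pi_B^M\MK[x]\subseteq B$ for some $M\ge 1$, and that $A$ is cut out inside $B$ by a single condition $L$, which is either $L(f)=f(\alpha)-f(\beta)$ or an $\alpha$-derivation of $B$. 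I will show $N=2M$ works.

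First I record two facts that make powers of $\pi_A$ usable. By Theorem~\ref{lm:reversing} we have $Sp(B)\subseteq Sp(A)$, so $\pi_B\mid\pi_A$; together with the induction hypothesis this gives, for $N\ge M$, that $\pi_A^N\MK[x]\subseteq\pi_B^M\MK[x]\subseteq B$, so every candidate $g=x^i\pi_A^N$ already lies in $B$ and it remains only to check $L(g)=0$. Second, since $\pi_B^M\MK[x]\subseteq B$, the polynomial $\pi_B^M$ and all products $\pi_B^M r(x)$ with $r\in\MK[x]$ lie in $B$; this is the raw material for factoring $g$ inside $B$.

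Now I treat $L$ by cases, using the cluster analysis following Theorem~\ref{th:ghost} to locate $\alpha,\beta$ relative to $Sp(A)$. If $L(f)=f(\alpha)-f(\beta)$, then in every subcase (neither, one, or both of $\alpha,\beta$ lie in $Sp(B)$) both $\alpha$ and $\beta$ end up in $Sp(A)$, hence are roots of $\pi_A$; consequently $g(\alpha)=\pi_A(\alpha)^N\alpha^i=0=g(\beta)$ and $L(g)=0$ immediately. The delicate case is $L=D$ an $\alpha$-derivation, where I cannot evaluate but must exploit the Leibniz rule $D(g_1g_2)=D(g_1)g_2(\alpha)+g_1(\alpha)D(g_2)$. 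Here $\alpha\in Sp(A)$ as well (either $\alpha\in Sp(B)$ and $\pi_A=\pi_B$, or $\alpha$ forms a new singleton cluster and $\pi_A=\pi_B(x-\alpha)$), so $\alpha$ is a root of $\pi_A$. The key step is then to factor $g=x^i\pi_A^N=g_1g_2$ with both factors in $B$ and vanishing at $\alpha$: taking $g_1=\pi_B^M(x-\alpha)$ (which lies in $B$ as a combination of $\pi_B^M x$ and $\pi_B^M$, and vanishes at $\alpha$) and $g_2=g/g_1=\pi_B^{\,N-M}(x-\alpha)^{N-1}x^i=\pi_B^M\cdot\big(\pi_B^{\,N-2M}(x-\alpha)^{N-1}x^i\big)\in\pi_B^M\MK[x]\subseteq B$, both factors vanish at $\alpha$ once $N\ge 2M$, so $D(g)=0$. (When $\alpha\in Sp(B)$ one may instead take $g_1=\pi_B^M$ directly, since then $\alpha$ is already a root of $\pi_B$.) This factorisation trick is the heart of the argument and the step I expect to require the most care, as it is where the codimension drop, the inclusion $\pi_B^M\MK[x]\subseteq B$, and the derivation property must be made to interact; the evaluation cases are comparatively automatic. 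With $N=2M>1$ all cases give $g=x^i\pi_A^N\in\ker L\cap B=A$, completing the induction.
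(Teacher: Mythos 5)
Your proposal is correct in substance and follows the same road as the paper: induction along the flag of Theorem~\ref{th:codimGorin}, evaluation at roots of $\pi_A$ when $L(f)=f(\alpha)-f(\beta)$, and the Leibniz rule for the derivation case. Two points of comparison are worth recording. First, an improvement: in the subcase where $L$ is an $\alpha$-derivation with $\alpha\in Sp(B)$, the paper's computation formally applies $L$ to $x^i$ and to $\pi_A$, which need not lie in $B$; your factorisation $x^i\pi_A^N=g_1g_2$ with $g_1=\pi_B^M$ or $\pi_B^M(x-\alpha)$ and $g_2\in\pi_B^M\MK[x]$, both visibly in $B$ and both vanishing at $\alpha$, is the rigorous way to run that computation, and your uniform choice $N=2M$ replaces the paper's case-by-case choice of $N$.

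Second, a caveat you should repair before regarding the proof as complete: in the subcase where $L$ is an $\alpha$-derivation with $\alpha\notin Sp(B)$, everything rests on the claim that $\alpha\in Sp(A)$, so that $(x-\alpha)$ divides $\pi_A$. You attribute this to ``the cluster analysis following Theorem~\ref{th:ghost}'', but Theorem~\ref{th:ghost} only \emph{excludes} spectrum elements outside $Sp(B)\cup\{\alpha,\beta\}$; it never puts $\alpha$ \emph{into} $Sp(A)$. The cluster discussion's assertion that $\{\alpha\}$ becomes a new cluster is itself justified only by Theorem~\ref{th:derivativenotinspectrum}: a nonzero $\alpha$-derivation with $\alpha\notin Sp(B)$ equals $cf'(\alpha)$ with $c\neq 0$, so its kernel satisfies $f'(\alpha)=0$ and hence has $\alpha$ in its spectrum. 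Without that theorem nothing forbids, a priori, an exotic $\alpha$-derivation whose kernel has spectrum exactly $Sp(B)$; in that event $\pi_A(\alpha)=\pi_B(\alpha)\neq 0$ and your Leibniz computation does not close. So your proof does not eliminate the forward reference to Theorem~\ref{th:derivativenotinspectrum}; it only relocates it out of sight. This is the same (non-circular) forward dependency that the paper's own proof declares openly in its trivial-derivation case, so your argument is sound relative to the paper's logical structure --- but you should cite Theorem~\ref{th:derivativenotinspectrum} explicitly at that point rather than let it hide inside the cluster analysis.
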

\begin{proof}We use induction on the codimension $n.$ The base for the induction is guaranteed by  theorem  \ref{th:codm1} so let $n\ge 2.$ Let $A$ be obtained from $B$ as the kernel of $L.$ Let $C=Sp(B),$ $\pi_B=\prod_{\gamma\in C}(x-\gamma)$ and $N_B$ be the  number $N$ for the subalgebra $B$  existing by the induction hypothesis. We consider several different cases.

Suppose first that $L(p)=p(\alpha)-p(\beta).$ We put $N=N_B.$

 If both $\alpha,\beta\in C$ then $\pi_A=\pi_B$. Because $N>0$ we get that all $x^i\pi_A^N\in \ker L=A$ directly.

If neither $\alpha$ nor $\beta$ belongs to the spectrum of $B$ then  $\pi_A=\pi_B(x-\alpha)(x-\beta).$  Note that $x^i\pi_A^N\in B$ and $x^i\pi_A^N\in \ker L=A.$

If only $\alpha\in C$ then $\pi_A=\pi_B(x-\beta)$ and  again  $x^i\pi_A^N \in \ker L=A$ directly.

If $L$ is an $\alpha-$derivation and $\alpha\not\in C$ then, as we will show in theorem  \ref{th:derivativenotinspectrum} later, $L(f)=cf'(\alpha).$ We have that $\pi_A=\pi_B(x-\alpha)$ and  put $N=N_B.$ Because $N\ge 2$
we get that the multiplicity of $\alpha$ is at least  two and $x^i\pi_A^N\in \ker L=A.$

At last if $L$ is an $\alpha-$derivation and $\alpha\in C$ then $\pi_A=\pi_B$ and we put $N=2N_B.$
Then $$L(x^i\pi_A^{2N})=L(x^i)\pi_A(\alpha)^{2N}+\alpha^i2N\pi_A(\alpha)^{2N-1}L(\pi_A)=0.$$

In all cases we get that $x^i\pi_A^N\in \ker L=A.$

\end{proof}

\begin{theorem}\label{th:main2} Let $A$ be a subalgebra of codimension $n> 1$  with $Sp(A)=\{\alpha_1,\ldots,\alpha_s\}.$
\begin{enumerate}
\item Then there exists $N>1$ such that $A$ can be described by $n$ conditions of the form
$$\sum_{i=0}^{N-1}\sum_{j=1}^s c_{ij}p^{(i)}(\alpha_j)=0.$$ Thus $p(x)\in A$ if and only if all $n$ conditions are valid.
\item If $A$ has only one cluster then we can choose $s-1$ conditions as $f(\alpha_1)=f(\alpha_j)$ for $j>1$ and the remaining as
$$\sum_{i=1}^{N-1}\sum_{j=1}^s c_{ij}p^{(i)}(\alpha_j)=0,$$ thus using pure derivatives (of some order).
\end{enumerate}

\end{theorem}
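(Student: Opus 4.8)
My plan is to reduce both statements to finite-dimensional linear algebra on a ``jet space'', with Theorem \ref{th:power} doing the essential work. First I would fix the $N>1$ supplied by that theorem, so that $x^i\pi_A^N\in A$ for all $i\ge 0$; since $A$ is a linear subspace closed under multiplication, every multiple of $\pi_A^N$ then lies in $A$. Next I would introduce the truncated Taylor map
$$\Phi:\MK[x]\to\MK^{sN},\qquad \Phi(p)=\bigl(p^{(i)}(\alpha_j)\bigr)_{0\le i\le N-1,\ 1\le j\le s}.$$
By Hermite interpolation through the $s$ distinct points $\alpha_1,\dots,\alpha_s$ the map $\Phi$ is surjective, and its kernel is exactly the set of polynomials vanishing to order at least $N$ at every $\alpha_j$, i.e.\ the multiples of $\pi_A^N$. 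Hence $\ker\Phi\subseteq A$, which is the one nontrivial input and the place where Theorem \ref{th:power} is used.

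To prove part (1), I would observe that $\ker\Phi\subseteq A$ forces $A=\Phi^{-1}(\Phi(A))$, so that $A$ is completely determined by the subspace $W:=\Phi(A)\subseteq\MK^{sN}$ via $p\in A\iff\Phi(p)\in W$. Because $\Phi$ is surjective it induces an isomorphism $\MK[x]/\ker\Phi\cong\MK^{sN}$ carrying $A/\ker\Phi$ onto $W$, so $\dim(\MK^{sN}/W)=\dim(\MK[x]/A)=n$ and $W$ has codimension $n$. Cutting $W$ out by $n$ linearly independent functionals on $\MK^{sN}$ and pulling them back along $\Phi$ yields precisely $n$ conditions of the form $\sum_{i,j}c_{ij}p^{(i)}(\alpha_j)=0$, with $p\in A$ iff all of them hold.

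For part (2) I would use the single-cluster hypothesis to strip off the zeroth-order coordinates. Let $\pi_0:\MK^{sN}\to\MK^s$ be the projection onto the coordinates with $i=0$. Single cluster says $p(\alpha_1)=\dots=p(\alpha_s)$ for all $p\in A$, so $\pi_0(W)$ lies in the diagonal line $\Delta\subseteq\MK^s$; since constants lie in $A$, in fact $\pi_0(W)=\Delta$ and $W\subseteq\pi_0^{-1}(\Delta)$, the latter being exactly the subspace cut out by the $s-1$ equalities $p(\alpha_1)=p(\alpha_j)$, $j=2,\dots,s$. On $\pi_0^{-1}(\Delta)$ I would use coordinates $(t,y)$, where $t$ is the common zeroth-order value and $y=\bigl(p^{(i)}(\alpha_j)\bigr)_{i\ge 1}$. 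The key point is that $W$ contains the whole $t$-axis $\{(t,0)\}$, since $(t,0)=\Phi(c)$ for the constant polynomial $c=t\in A$. Therefore $W=\MK\times W'$ for some $W'\subseteq\MK^{s(N-1)}$, and the remaining $\operatorname{codim}_{\pi_0^{-1}(\Delta)}W=n-(s-1)$ conditions may be chosen to cut out $W'$, hence to involve only the coordinates $y$, i.e.\ only derivatives of order $\ge 1$.

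I expect the main obstacle to be purely conceptual rather than computational: recognising that $\ker\Phi\subseteq A$ is what converts the problem into linear algebra on $\MK^{sN}$, and verifying the two codimension counts ($\operatorname{codim}W=n$ and $\operatorname{codim}_{\pi_0^{-1}(\Delta)}W=n-(s-1)$). Once those are in place, the single-cluster reduction in part (2) collapses to the trivial remark that the constants supply the entire $t$-axis, and everything else is routine.
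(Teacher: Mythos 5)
Your proposal is correct and is essentially the paper's own argument: both hinge on Theorem \ref{th:power}, which puts every multiple of $\pi_A^N$ inside $A$, and then reduce the statement to linear algebra on the $sN$-dimensional space of $(N-1)$-jets at the spectrum points, whose dual is exactly the space of functionals $p\mapsto\sum_{i,j}c_{ij}p^{(i)}(\alpha_j)$. The only difference is packaging: the paper completes $\{x^i\pi_A^N\}$ to a basis of $A$ by a finite set $Q$ and looks for functionals annihilating $Q$ (checking separately, via the split $D=D_0+D_1$, that they also kill the $x^i\pi_A^N$), whereas you pass to the quotient by $\ker\Phi$ and take the annihilator of $\Phi(A)$ directly -- a cleaner route to the same dimension count -- and your observation that constants fill the $t$-axis, forcing $W=\MK\times W'$, is the dual of the paper's remark that $D(1)=0$ lets the zeroth-order part $D_0$ be absorbed into the functionals $f(\alpha_1)-f(\alpha_j)$.
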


\begin{proof} $(1).$ We use the same notations as in Theorem \ref{th:power}. According to that theorem we have polynomials in $A$ of each degree greater then $Ns-1.$  If we complete them to a linear basis in
$A$ we get a set $Q,$ consisting of exactly $Ns-n$ new polynomials $q$ and we can suppose that $1\in Q.$

 Consider the vector space $V$ consisting of linear
 maps $$D:p(x)\rightarrow \sum_{i=0}^{N-1}\sum_{j=1}^s c_{ij}p^{(i)}(\alpha_j).$$  We have that $\dim V=Ns$.
 Consider its subspace $W$ of those maps that annihilate all $q\in Q.$ The subspace $W$ has dimension $n$ (because the condition $D(q)=0$ is a homogeneous linear equation on the set of the coefficients $c_{ij}).$
 We choose a basis  in $W$ consisting  of $n$ maps $D$ and claim that the conditions $D(p)=0$ for each $D$ from this basis describes $A.$ Indeed those conditions by construction describes exactly the subspace
 generated by $q\in Q$ in the subspace of all the polynomials of the degree less then $Ns.$ It remains to show that each $x^i\pi_A^N$ is annihilated by $D.$

 Let $D_0$ be the map
 $$D_0:p(x)\rightarrow \sum_{j=1}^s c_{0j}p(\alpha_j).$$ Because $\pi_A(\alpha_j)=0$ for each $j$ we have that $D_0(x^i\pi_A^N)=0$ and it is sufficient to consider $D_1=D-D_0$ consisting of only the derivatives.
   $D_1$ annihilate all the elements of the form $x^i\pi_A^N$ because it has derivatives of degree at most $N-1$ and the same is true for $D.$

   Thus our conditions are valid on all basis elements in $A$ and describe the vector space they generate, which is $A.$ In other words the conditions that $E_i(p(x))=0$ for our basis elements $E_i\in W$ determine the subalgebra $A$. Note that this automatically implies that we get subalgebra conditions.

   $(2)$ If $A$ has only one cluster then we have conditions $F_j(p(x))=0,$ where $F_j:p(x)\rightarrow p(\alpha_1)-p(\alpha_j)$ for all $j>1.$ This means that $F_j\in W$ and we can choose them as a part of the basis in $W$ (no one if $s=1$).

 Because $D(1)=0$ we get that $\sum_j c_{0j}=0$ and therefore the part $D_0$ in the previous proof can be written as $\sum c_{0j}F_j$. Thus we can replace $D$ by $D-D_0,$ which is a linear combination of pure derivatives of some order, so our $k$ elements in the basis of $W$ are exactly what we are looking for: either $F_j$ or linear combinations of pure derivatives (of some order $>0$).

 \end{proof}

 So, assuming Theorem \ref{th:derivativenotinspectrum} which we will prove later, we have now proven our main theorem.

 \chapter{Characteristic polynomial}
 Now we want to understand how to find the spectrum. We start from a special case.
 \section{Subalgebra $<p,q>$}

 Let $p(x),q(x)$ be two monic polynomials. Consider the following polynomials in two variables:
 $$P(x,y)=\frac{p(x)-p(y)}{x-y},\ Q(x,y)= \frac{q(x)-q(y)}{x-y}.$$
 We now introduce a notation that will be helpful when searching the spectrum of the subalgebra generated by $p$ and $q$.
 \begin{definition}The \textbf{characteristic polynomial} \index{$\chi_{p,q}$} \index{characteristic polynomial} $\chi_{p,q}$ is the resultant  $$\chi_{p,q}(x)=Res_y(P(x,y),Q(x,y))$$ of polynomials $P$ and $Q$ considered as polynomials in $y.$
 \end{definition}

For example, if $p(x)=x^3-x,q(x)=x^2$ then  $P(x,y)=y^2+yx+x^2-1, Q(x,y)=y+x$ and
 $$\chi_{p,q}(x)=\left|
               \begin{array}{ccc}
                 1   & x & x^2-1 \\
                 1 & x & 0 \\
                 0 & 1 & x \\
               \end{array}
             \right|=x^2-1.$$
 Its roots are $1$ and $-1$ and this gives some insight into why $f(1)=f(-1)$ was the subalgebra condition for $A=\langle x^3-x,x^2\rangle.$

 It is easy to check that get $\chi_{x^3,x^2}(x)=x^2$ and this can be easily generalised, as shown below.

 \begin{theorem} If  $(m,n)=1$ then $\chi_{x^m,x^n}(x)=x^{(m-1)(n-1)}$.
 \end{theorem}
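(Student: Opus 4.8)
The plan is to compute the resultant directly through its product-over-roots formula. First I would write out the two forms explicitly: since $p(x)=x^m$ and $q(x)=x^n$,
$$P(x,y)=\frac{x^m-y^m}{x-y}=\sum_{i=0}^{m-1}x^iy^{m-1-i},\qquad Q(x,y)=\frac{x^n-y^n}{x-y}=\sum_{j=0}^{n-1}x^jy^{n-1-j},$$
both monic in $y$, of degrees $m-1$ and $n-1$ respectively. Letting $\varepsilon$ be a primitive $m$-th root of unity, the identity $\frac{t^m-1}{t-1}=\prod_{k=1}^{m-1}(t-\varepsilon^k)$ applied with $t=y/x$ gives $P(x,y)=\prod_{k=1}^{m-1}(y-\varepsilon^k x)$, so the roots of $P$ in $y$ are exactly $y=\varepsilon^k x$ for $k=1,\dots,m-1$, and each $\varepsilon^k\neq 1$.

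Next I would invoke the standard formula expressing the resultant of a monic polynomial via its roots, namely $Res_y(P,Q)=\prod_{k=1}^{m-1}Q(x,\varepsilon^k x)$. Evaluating each factor,
$$Q(x,\varepsilon^k x)=\frac{x^n-\varepsilon^{kn}x^n}{x-\varepsilon^k x}=x^{n-1}\,\frac{1-\varepsilon^{kn}}{1-\varepsilon^k},$$
so that
$$\chi_{x^m,x^n}(x)=x^{(m-1)(n-1)}\prod_{k=1}^{m-1}\frac{1-\varepsilon^{kn}}{1-\varepsilon^k}.$$
It then remains to show the residual product of roots of unity equals $1$, and this is exactly where the hypothesis $(m,n)=1$ enters: since $n$ is invertible modulo $m$, the map $k\mapsto kn \pmod m$ permutes the nonzero residues $\{1,\dots,m-1\}$, whence $\{\varepsilon^{kn}\}_{k=1}^{m-1}=\{\varepsilon^{k}\}_{k=1}^{m-1}$ as sets and the numerator product equals the denominator product. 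Thus the quotient is $1$ and $\chi_{x^m,x^n}(x)=x^{(m-1)(n-1)}$.

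The only genuine subtlety — the step I would be most careful about — is the legitimacy of the product-over-roots formula, since the description of the roots as $\varepsilon^k x$ is valid only for $x\neq 0$ (at $x=0$ both forms degenerate to $y^{m-1}$ and $y^{n-1}$). I would dispose of this by observing that $Res_y(P,Q)$ and the product $\prod_{k=1}^{m-1} Q(x,\varepsilon^k x)$ are both polynomials in $x$ that agree for every $x\neq 0$, hence agree identically. Everything else is routine: the factor $x^{(m-1)(n-1)}$ emerges automatically from the $n-1$ powers of $x$ contributed by each of the $m-1$ roots — which already fixes the degree — while the coprimality argument accounts for the leading constant. As a sanity check one can verify the computation against the worked case $\chi_{x^3,x^2}(x)=x^2$ already recorded in the text.
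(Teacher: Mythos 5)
Your proof is correct, but it takes a genuinely different route from the paper's. The paper works directly with the Sylvester determinant: it performs row operations that subtract the $Q$-rows from the $P$-rows, extracts a factor $x^m$ from each of $m-1$ rows, and recognizes the remaining block determinant as $\chi_{x^m,x^{n-m}}$, so the whole argument is an induction modelled on the subtractive Euclidean algorithm, with $\chi_{x^m,x^n}=x^{m(m-1)}\chi_{x^m,x^{n-m}}$ as the recursion and $m=1$ as the base case; coprimality enters only to keep the induction hypothesis applicable, since $(n-m,m)=(n,m)=1$. You instead compute the resultant in closed form via the product-over-roots formula, using the factorization $P(x,y)=\prod_{k=1}^{m-1}\bigl(y-\varepsilon^k x\bigr)$ and the permutation $k\mapsto kn \pmod m$ of the nonzero residues. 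Your approach is shorter, pins down the leading constant in one stroke, and makes completely transparent where $(m,n)=1$ is used — indeed it shows as a byproduct that if $(m,n)=d>1$ some factor $1-\varepsilon^{kn}$ vanishes and the resultant is $0$, consistent with Theorem \ref{chizeroforcomp}. The paper's approach buys something else: it stays entirely inside the determinant formalism that the rest of the chapter relies on (for instance the degree computation in Theorem \ref{th:partialderivative} reuses exactly that matrix picture), and it needs no roots of unity or infinitude of the field. Your handling of the one genuine subtlety — that the root description $y=\varepsilon^k x$ degenerates at $x=0$ — is sound: since $P$ and $Q$ are monic in $y$, the resultant specializes correctly at every $x=a$, and two polynomials agreeing off $x=0$ over an infinite field agree identically; alternatively you could avoid the issue entirely by treating $x$ as an indeterminate and taking the roots over $\MK(x)$.
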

 \begin{proof}
 Assume without loss of generality that $n>m$. First note that the polynomials $P(x,y) = \frac{x^n - y^n}{x-y}$ and $Q(x,y) = \frac{x^m - y^m}{x-y}$ can be expressed as $P = \sum_{i=0}^{n-1} y^ix^{n-1-i}$, $Q = \sum_{i=0}^{m-1} y^ix^{m-1-i}$ respectively. This means that
 $$\chi_{p,q}(x) = \left|
 		\begin{array}{ccccccc}
 		1 & x & \ldots & x^{n-1} & 0 & 0 & \ldots \\
 		0 & 1 & \ldots	& x^{n-2} & x^{n-1} & 0 & \ldots \\
 		\vdots & \ddots &  & \ddots & \ddots & \ddots & \\
 		0 & \ldots & \ldots & 1 & x & \ldots & x^{n-1} \\	
 		1 & x & \ldots & x^{m-1} & 0 & 0 & \ldots \\
 		0 & 1 & \ldots	& x^{m-2} & x^{m-1} & 0 & \ldots \\
 		\vdots & \ddots &  & \ddots & \ddots & \ddots & \\
 		0 & \ldots & \ldots & 1 & x & \ldots & x^{m-1} \\	
 		\end{array}
 \right|.
$$
If $m = 1$, this determinant is upper triangular and equal to $1 = x^{(m-1)(n-1)}$. This will be the base case for a proof by induction. If $m \neq 1$, for $i \in \{1, ..., m-1\}$ subtract row $m - 1 + i$ from row $i$. Now rows $1, ..., m-1$ will have $x^{m}$ as first nonzero element, in column $m+i$. Break out a factor $x^m$ from each of these rows. Now, after rearranging, $\chi_A(x)$ is a block determinant on the form
$$
\left| \begin{array}{cc}
A & B \\
0 & C \\
\end{array}
\right|
$$
where A is an upper triangular $(m-1)$-matrix with ones on the main diagonal. Expanding the determinant along the first column $m-1$ times and rearranging gives
$$
(x^m)^{m-1}\left|
\begin{array}{ccccccc}
1 & x & \ldots & x^{m-1} & 0 & 0 & \ldots \\
0 & 1 & \ldots	& x^{m-2} & x^{m-1} & 0 & \ldots \\
\vdots & \ddots &  & \ddots & \ddots & \ddots & \\
0 & \ldots & \ldots & 1 & x & \ldots & x^{m-1} \\	
1 & x & \ldots & x^{n-m-1} & 0 & 0 & \ldots \\
0 & 1 	& x & \ldots & x^{n-m-1} & 0 & \ldots \\
\vdots & \ddots &  & \ddots & \ddots & \ddots & \\
0 & \ldots & \ldots & 1 & x & \ldots & x^{n-m-1} \\	
\end{array}
\right|
$$
which is of size $(n-2)$. Note that this is exactly the characteristic polynomial of $\langle x^m, x^{n-m}\rangle $ multiplied by $(x^m)^{m-1}$. Assuming, by induction hypothesis, that $\chi_{\langle x^m, x^{n-m}\rangle }(x) = x^{(m-1)(n-m-1)}$ gives $\chi_A(x) = x^{m(m-1)}x^{(m-1)(n-m-1)} = x^{(m-1)(n-1)}$.
The induction hypothesis can be used since
$(n-m,m)=(n,m)=1.$
 \end{proof}

\begin{theorem}\label{th:partialderivative} If $m=\deg p(x), n=q(x)$  and $(m,n)=1$ then
\begin{itemize}
  \item $\chi_{p,q}(x)$ is a polynomial of degree $(m-1)(n-1).$
  \item If $F(p,q)$ is the resultant of $p(x)-p,q(x)-q$ then $$\frac{\partial F}{\partial p}|_{p=p(x),q=q(x)}=\pm\chi_{p,q}(x)q'(x).$$
   $$\frac{\partial F}{\partial q}|_{p=p(x),q=q(x)}=\mp\chi_{p,q}(x)p'(x).$$

\end{itemize}

 \end{theorem}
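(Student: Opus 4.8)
The plan is to route both statements through the product (Poisson) formula for resultants, which converts each resultant into a product over roots and thereby makes the appearance of $\chi_{p,q}$, $p'$ and $q'$ transparent. Throughout I note that $P(x,y)$ is monic in $y$ of degree $m-1$ and $Q(x,y)$ is monic in $y$ of degree $n-1$, since $p,q$ are monic; this is what lets me drop all leading-coefficient prefactors in the product formula.

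For the degree, the product formula gives
$$\chi_{p,q}(x)=\prod_{k=1}^{m-1}Q(x,\beta_k),$$
where $\beta_1,\dots,\beta_{m-1}$ are the roots in $y$ of $P(x,\cdot)$, i.e. the solutions of $p(\beta)=p(x)$ other than $\beta=x$. A B\'ezout-type bound already yields $\deg_x\chi_{p,q}\le(m-1)(n-1)$, so the task is to show the bound is attained. I would do this by examining $x\to\infty$, where $\beta_k=\omega_k x+o(x)$ with $\omega_k$ running over the $m$-th roots of unity different from $1$, so that
$$Q(x,\beta_k)=\frac{q(x)-q(\beta_k)}{x-\beta_k}\sim\frac{1-\omega_k^{\,n}}{1-\omega_k}\,x^{\,n-1}.$$
This is exactly where $\gcd(m,n)=1$ is used: it forces $\omega_k^{\,n}\neq1$, so no factor degenerates, and the product has degree $(m-1)(n-1)$ in $x$. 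The leading constant $\prod_k\frac{1-\omega_k^{\,n}}{1-\omega_k}$ in fact equals $1$ (raising to the $n$-th power permutes the nontrivial $m$-th roots of unity), so $\chi_{p,q}$ is even monic, consistent with the monomial case $\chi_{x^m,x^n}=x^{(m-1)(n-1)}$.

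For the derivative formulas I apply the same product formula to $F$. Writing $\xi_1,\dots,\xi_m$ for the roots of $p(X)=p$ (which depend on the parameter $p$ but not on $q$), monicity gives $F(p,q)=Res_x\big(p(x)-p,\,q(x)-q\big)=\prod_{i=1}^m\big(q(\xi_i)-q\big)$, and with $\xi_i'(p)=1/p'(\xi_i)$ (from $p(\xi_i)=p$) I obtain
$$\frac{\partial F}{\partial p}=\sum_{i=1}^m\frac{q'(\xi_i)}{p'(\xi_i)}\prod_{k\neq i}\big(q(\xi_k)-q\big),\qquad\frac{\partial F}{\partial q}=-\sum_{i=1}^m\prod_{k\neq i}\big(q(\xi_k)-q\big).$$
The decisive observation is the diagonal evaluation: setting $p=p(x),\,q=q(x)$ makes $x$ itself one of the roots, say $\xi_1=x$, so the factor $q(\xi_1)-q=0$ annihilates every summand except $i=1$, leaving $\prod_{k=2}^m\big(q(\xi_k)-q(x)\big)$ (times $q'(x)/p'(x)$ in the first case, times $-1$ in the second). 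It then remains to identify this product: the roots $\xi_2,\dots,\xi_m$ are precisely the roots in $y$ of $P(x,\cdot)$, so $\chi_{p,q}(x)=\prod_{k=2}^m Q(x,\xi_k)=\prod_{k=2}^m\frac{q(x)-q(\xi_k)}{x-\xi_k}$, while $\prod_{k=2}^m(x-\xi_k)=P(x,x)=p'(x)$. Combining gives $\prod_{k=2}^m\big(q(\xi_k)-q(x)\big)=(-1)^{m-1}\chi_{p,q}(x)\,p'(x)$, hence $\partial F/\partial p=(-1)^{m-1}\chi_{p,q}(x)q'(x)$ and $\partial F/\partial q=(-1)^{m}\chi_{p,q}(x)p'(x)$, the two claimed identities with opposite signs.

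The main obstacle, and the place demanding the most care, is rigour in the limiting and factorisation steps: the product formula presumes the roots $\beta_k$ (resp. $\xi_i$) are simple and distinct, and the asymptotics $\beta_k\sim\omega_k x$ must be justified, e.g. by continuity of roots or by comparing $P,Q$ with their top-degree parts (which are exactly the monomial $P,Q$). Since every identity in sight is polynomial in $x$ and in the parameters $p,q$, I would make a single genericity reduction at the outset --- restricting to the dense locus where $p(X)=p$ has distinct roots and $x$ is a simple root --- and reuse it for both parts, the formulas then extending by continuity.
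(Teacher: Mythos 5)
Your proof is correct, but it takes a genuinely different route from the paper's, most visibly for the degree statement. There the paper never leaves the Sylvester determinant: replacing $x^m,x^n$ by $p,q$ only adds lower-degree terms to each entry, so the coefficient of $x^{(m-1)(n-1)}$ equals the one in the monomial case $\chi_{x^m,x^n}=x^{(m-1)(n-1)}$ computed in the preceding theorem (this is where coprimality enters for the paper). You instead use the Poisson factorization $\chi_{p,q}(x)=\prod_k Q(x,\beta_k)$ together with the expansion $\beta_k\sim\omega_k x$ of the roots at infinity, coprimality appearing as $\omega_k^n\neq 1$; this is self-contained and yields monicity directly, but over an abstract algebraically closed field the asymptotics must be justified by Puiseux series at infinity (or by working over the algebraic closure of $\MK(x)$), not by ``continuity''. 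For the derivative formulas both arguments run on the same engine --- expand $F$ as a product over roots, differentiate, and observe that evaluation at $p=p(x)$, $q=q(x)$ kills every term except the one attached to the root $y=x$ --- but they differ in execution. The paper expands over the roots of $q(y)-q$, which do not depend on $p$, so $\partial/\partial p$ is term-wise trivial; it then recognizes the surviving term as $\operatorname{Res}_y\bigl(p(y)-p(x),\tfrac{q(y)-q(x)}{y-x}\bigr)$, splits it by multiplicativity of the resultant into $\chi_{p,q}(x)q'(x)$, and obtains the opposite signs of the two formulas from the chain-rule identity $F'_pp'(x)+F'_qq'(x)=0$. You expand once over the roots of $p(X)-p$ and read off both partials from that single product, at the cost of the implicit differentiation $\xi_i'=1/p'(\xi_i)$ (which over $\MK$ should be justified by extending $d/dp$ to a separable closure, or by Hensel's lemma applied to $p(X)-p(x)-t$ over $\MK(x)[[t]]$), and you then factor the surviving product by hand via $q(x)-q(\xi_k)=Q(x,\xi_k)(x-\xi_k)$ and $P(x,x)=p'(x)$. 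Your version buys a uniform treatment of both formulas from one factorization, with both signs explicit; the paper's avoids differentiating algebraic functions altogether. Your closing genericity reduction is sound once ``dense locus'' and ``continuity'' are read as Zariski density over the infinite field $\MK$ --- or, most cleanly, observe that for $x$ transcendental over $\MK$ the polynomial $p(X)-p(x)$ is automatically separable with $x$ a simple root, so no genericity assumption is needed at all and the identities hold in $\MK[x]$ outright.
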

 \begin{proof}
 Let us look at the monomial case above again. In a complete expansion of the determinant we choose in each column $j$ either $x^{j-i}$ (if we choose  row $i$ from the first $m-1$ rows
 ) or we choose
 $x^{j-i+(m-1)}$ (if we choose a row $i$ between the last $n-1$ rows). Because $\sum j=\sum i $ we get a total degree in the product equal to $(n-1)(m-1).$ We can never get larger degree. The difference when we use $p(x)$ and $q(x)$
 instead is that we add some terms of smaller degree  in each element of the matrix. But they cannot effect  our maximum total degree term $x^{ (n-1)(m-1)}$ so the highest coefficient in $\chi_{p,q}(x)$ at $x^{ (n-1)(m-1)}$
 is the same as for the monomial case.

 To prove the second statement we use a well-known fact (see \cite{Bur}) that
  $$F(p,q)=\prod_{\alpha}p(\alpha)-p$$ where the product is taken over all roots of $q(y)-q$ in some field extension and multiplicity.  When we evaluate this for $p=p(x)$ and $q=q(x)$ we get zero because $y=x$ is one of the roots. If we take a partial derivative over $p$ first and evaluate in  $p=p(x)$ and $q=q(x)$ after that we get a sum over roots where all terms except one (corresponding the root  $y=x$) are zero.  But we can get this remaining term in another way
 if we replace $q(x)-q$ by $\frac{q(y)-q(x)}{y-x}$ and $p(y)-p$ by $p(y)-p(x)$ in our resultant.
 Thus (up to sign) we get the resultant $\operatorname{Res}_y\left(p(y)-p(x), \frac{q(y)-q(x)}{y-x}\right).$

Now, using another property of the resultant we get
 $$\operatorname{Res}_y\left(p(y)-p(x), \frac{q(y)-q(x)}{y-x}\right)=$$
 $$\operatorname{Res}_y\left(\frac{p(y)-p(x)}{y-x}, \frac{q(y)-q(x)}{y-x}\right)\operatorname{Res}_y\left(y-x,\frac{q(y)-q(x)}{y-x}\right)$$
 $$=\chi_{p,q}(x)q'(x)
,$$ where all resultants above are evaluated in $y.$ Here we have also used that for any polynomial $f(x)$ we have $f'(x)=\frac{f(x)-f(y)}{x-y}|_{y=x}$ because this is obviously true  for $f(x)=x^k.$
The second formula we obtain in a similar way and  the signs should be opposite because $(F(p(x),q(x))'=F'_pp'(x)+F'_qq'(x)$ should be zero.

 \end{proof}

 We have seen that $(m,n)>1$ then $\chi_{x^m,x^n}=0$, and we will now generalize this result.

 \begin{theorem}\label{chizeroforcomp} Let $p(x)$ and $q(x)$ be non-constant polynomials.
  Then  $\chi_{p,q}(x)=0$ if and only if there exists a polynomial $h(x)$ of degree at least two such that $p(x),q(x)\in \MK[h]$.

 \end{theorem}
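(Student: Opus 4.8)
The plan is to translate the vanishing of $\chi_{p,q}$ into a statement about the field extension $\MK(x)/\MK(p,q)$ and then invoke L\"uroth's theorem. Viewing $P(x,y)=\frac{p(x)-p(y)}{x-y}$ and $Q(x,y)=\frac{q(x)-q(y)}{x-y}$ as elements of $\MK[x][y]$, their leading coefficients in $y$ are the (nonzero) leading coefficients of $p$ and $q$. Hence the resultant $\chi_{p,q}=\operatorname{Res}_y(P,Q)$ vanishes identically in $x$ if and only if $P$ and $Q$ share a common factor of positive degree in $y$ over $\MK(x)$, and by Gauss's lemma such a factor may be taken in $\MK[x,y]$.

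Next I would interpret this common factor through generic fibres. For all but finitely many values $x_0$, the set $\{y:p(y)=p(x_0),\ q(y)=q(x_0)\}$ consists of $x_0$ together with the common zeros of $P(x_0,\cdot)$ and $Q(x_0,\cdot)$, and $x_0$ itself is not among those common zeros (that would force $p'(x_0)=q'(x_0)=0$). Thus for generic $x_0$ the fibre has size $1+\deg_y\gcd(P,Q)$. Since this generic fibre size is exactly $d:=[\MK(x):\MK(p,q)]$, I obtain the clean equivalence
$$\chi_{p,q}(x)=0 \iff \deg_y\gcd(P,Q)\ge 1 \iff d\ge 2.$$
With this equivalence the easy implication is immediate: if $p,q\in\MK[h]$ with $\deg h\ge 2$ then $\MK(p,q)\subseteq\MK(h)$, so $d\ge[\MK(x):\MK(h)]=\deg h\ge 2$ and therefore $\chi_{p,q}=0$.

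For the converse I would assume $d\ge 2$ and construct $h$. By L\"uroth's theorem the intermediate field $\MK(p,q)$ equals $\MK(r)$ for some rational function $r$ with $\deg r=d\ge 2$. The main obstacle is precisely here: L\"uroth only supplies a \emph{rational} generator, whereas the theorem demands a polynomial $h$ with $p,q\in\MK[h]$. To upgrade $r$ to a polynomial I would argue at the place at infinity. Let $v_\infty$ be the valuation of $\MK(x)$ with $v_\infty(x)=-1$; after a M\"obius normalisation of $r$ I may assume $v_\infty$ lies over the pole of $r$ in $\MK(r)$. Now $p$ is a polynomial, so its only pole in $\MK(x)$ is $v_\infty$; but as an element of $\MK(r)$ it has a pole at $r=\infty$, and a pole of an element of a subfield lifts to a pole at every place above it. Consequently $v_\infty$ is the \emph{unique} place of $\MK(x)$ lying over the pole of $r$, which forces $r$ to have $v_\infty$ as its only pole, i.e. $r$ is a polynomial $h$.

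Finally, the same pole argument shows that any polynomial lying in $\MK(h)=\MK(p,q)$ has its only pole at $h=\infty$ and is therefore a polynomial in $h$; in particular $p,q\in\MK[h]$, with $\deg h=d\ge 2$. This completes the harder direction and hence the proof. I expect the delicate step to be the passage from the L\"uroth generator $r$ to an honest polynomial $h$ together with the verification that $p$ and $q$ are polynomials (not merely rational functions) in $h$; everything else reduces to the resultant/fibre bookkeeping of the first two paragraphs.
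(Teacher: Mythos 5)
Your proof is correct, but it takes a genuinely different route from the paper's in both directions. The paper proves the easy implication directly: writing $p=\pi\circ h$, it exhibits $\frac{h(x)-h(y)}{x-y}$ as an explicit common factor of $P$ and $Q$ in $y$, so the resultant vanishes. For the converse it stays inside its resultant machinery: it invokes the structure of $F(p,q)=\operatorname{Res}_y(p(x)-p,\,q(x)-q)$ (lemma 19 of \cite{tor}), uses Theorem \ref{th:partialderivative} to turn $\chi_{p,q}=0$ into the vanishing of $\partial F/\partial p$ at $(p(x),q(x))$ --- an algebraic relation of degree $m-1$ in $p$ over $\MK(q)$ --- deduces $[\MK(p,q):\MK(q)]\le m-1$ and hence $[\MK(x):\MK(p,q)]\ge 2$ by multiplicativity of degrees, and then cites theorem 14 of \cite{tor} as a black box for the existence of a \emph{polynomial} $h$ with $\MK(p,q)=\MK(h)$. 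You instead route both directions through the single equivalence $\chi_{p,q}=0\iff[\MK(x):\MK(p,q)]\ge 2$, obtained from the common-factor criterion for resultants together with counting generic fibres of $x\mapsto(p(x),q(x))$, and then you open the black box: rational L\"uroth gives $\MK(p,q)=\MK(r)$, and your valuation-theoretic argument at infinity (the polynomial $p$ has $v_\infty$ as its only pole, which forces $v_\infty$ to be the unique place above the pole of $r$, hence $r$ is a polynomial, and the same argument places $p,q$ in $\MK[h]$) is precisely the standard proof of the polynomial L\"uroth theorem. What your approach buys is self-containedness --- no reliance on \cite{tor} or on Theorem \ref{th:partialderivative} --- plus the intermediate characterization $\chi_{p,q}=0\iff[\MK(x):\MK(p,q)]\ge 2$, which has independent interest; the cost is heavier (though classical) machinery that you assert rather than prove, namely that in characteristic zero the generic fibre cardinality of a dominant map equals the field degree, and the basic lifting/uniqueness facts for places in finite extensions of function fields. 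One small over-statement: the exact count ``fibre size $=1+\deg_y\gcd(P,Q)$'' needs the gcd to be squarefree, which does hold here since $p(x)-p(y)$ is squarefree in characteristic zero, but your argument only ever uses the inequality version, so nothing breaks.
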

\begin{proof} Suppose first that $p=\pi \circ h.$ We know $\pi(a)-\pi(b)=(a-b)\rho(a,b)$ for some $\rho$ so
$$p(x)-p(y)=\pi(h(x))-\pi(h(y))=(h(x)-h(y))\rho(h(x),h(y)).$$ This means that $P(x,y)=\frac{p(x)-p(y)}{x-y}$ has a factor $\frac{h(x)-h(y)}{x-y}$ which is a polynomial in $y$ of degree at least one.
Similarly if $q(x)\in \MK[h]$ then $Q(x,y)$ also has this factor so they have a common factor  as polynomials in $y$ over $\MK(x)$ and as a consequence their resultant $\chi_{p,q}(x)$ is equal to zero.

To prove the opposite assume now that $\deg p(x)=n$ and $\deg q(x)=m$. Let $F(p,q)$ be the resultant of $p(x)-p,q(x)-q$, as before. We know from lemma 19 in \cite{tor} that $F(p,q)=\sum_{in+jm \leq nm}c_{ij}p^iq^j$ where $c_{ij}$ are constants in $\MK$. Moreover, it follows from that lemma that $p^m$ has non-zero coefficient and all other terms contain $p$ to a power strictly lower than $m$. Assume now that $\chi_{p,q}(x)=0$. Then it follows from Theorem \ref {th:partialderivative} that we can differentiate $F$ with respect to $p$ and get another identity involving $p$ and $q$. Regarding $p$ as variable this identity is a polynomial of degree $m-1$ with coefficients in $\MK(q)$, showing that adjoining $p$ to the field $F(q)$ is an extension of degree at most $m-1$. From lemma 13 in \cite{tor} we get the first equality in $m=[\MK(x):\MK(q)]=[\MK(x):\MK(p,q)][\MK(p,q):\MK(q)]$. Now it follows that $[\MK(x):\MK(p,q)] \geq 2$. On the other hand we know by theorem 14 in \cite{tor}  that $\MK(p,q)=\MK(h)$ for some polynomial $h$ and this means that we have a polynomial $h$ of degree $[\MK(x):\MK(p,q)] \geq 2$ such that $p(x),q(x)\in \MK[h]$.
	\end{proof}

\section{How the spectrum relates to $\chi_{p,q}(x)$}
Now we want to compare   the roots of the \cp  with the spectrum.

 To start with we will focus our attention on a special case - an algebra $A$ generated by two monic polynomials $p(x),q(x)$ of degrees $m>n$ with $(m,n)=1.$ It is known that they form SAGBI basis for $A$ (see \cite{tor}) and therefore $A$ has codimension \index{$g(m,n)$}$g(m,n)=(m-1)(n-1)/2$. (Here $g(m,n)$ is the genus of the corresponding semigroup of degrees.) So if we want to describe this algebra we need to find  $g(m,n)$ subalgebra conditions. For $m=3,n=2$ we have done that in Theorem \ref{th:codm1}.

 \begin{theorem}\label{th:spectrum} Let $A=\langle p(x),q(x)\rangle$ and $\alpha\in \MK.$ The following is equivalent.
\begin{description}
 \item[(i)] $\alpha$ belongs to the spectrum, thus either $f'(\alpha)=0$ for any $f(x)\in A$ or there exists $\beta\neq\alpha$ such that $f(\alpha)=f(\beta)$ for any $f(x)\in A$.
   \item[(ii)] Either $p'(\alpha)=q'(\alpha)=0$ or there exists $\beta\neq\alpha$ such that $p(\alpha)=p(\beta)$ and $q(\alpha)=q(\beta).$
   \item[(iii)]  $\alpha$ is a root of the  \cp of $A.$
 \end{description}

 \end{theorem}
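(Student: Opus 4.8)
The plan is to establish the cycle by proving (i) $\Leftrightarrow$ (ii) directly and (ii) $\Leftrightarrow$ (iii) through the resultant. The one fact I would use everywhere is that $A=\langle p,q\rangle=\MK[p,q]$, so every $f\in A$ is of the form $f=\Phi(p,q)$ for some two-variable polynomial $\Phi=\Phi(s,t)$. For (i) $\Leftrightarrow$ (ii) the idea is simply that each of the two spectrum alternatives may be tested on the generators. If $p(\alpha)=p(\beta)$ and $q(\alpha)=q(\beta)$ for some $\beta\neq\alpha$, then $f(\alpha)=\Phi(p(\alpha),q(\alpha))=\Phi(p(\beta),q(\beta))=f(\beta)$ for every $f\in A$, while the converse follows by taking $f=p$ and $f=q$. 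Likewise the chain rule $f'(\alpha)=\Phi_s(p(\alpha),q(\alpha))\,p'(\alpha)+\Phi_t(p(\alpha),q(\alpha))\,q'(\alpha)$ shows that $p'(\alpha)=q'(\alpha)=0$ forces $f'(\alpha)=0$ for all $f\in A$, with the converse again coming from $f=p,q$. Matching the first and second disjuncts of (i) against those of (ii) gives the equivalence.

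For (ii) $\Leftrightarrow$ (iii), recall $\chi_{p,q}(x)=\operatorname{Res}_y(P(x,y),Q(x,y))$ with $P=\frac{p(x)-p(y)}{x-y}$ and $Q=\frac{q(x)-q(y)}{x-y}$. Regarded as polynomials in $y$ these have degrees $m-1$ and $n-1$, and because $p,q$ are monic their $y$-leading coefficients are the constant $1$. Since these leading coefficients never vanish, specialization commutes with the resultant, giving $\chi_{p,q}(\alpha)=\operatorname{Res}_y(P(\alpha,y),Q(\alpha,y))$ with no drop in $y$-degree. Hence $\chi_{p,q}(\alpha)=0$ if and only if $P(\alpha,y)$ and $Q(\alpha,y)$ have a common root $y=\beta$ in the algebraically closed field $\MK$. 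I would then translate this common root: if $\beta\neq\alpha$ then $P(\alpha,\beta)=Q(\alpha,\beta)=0$ unwinds to $p(\alpha)=p(\beta)$ and $q(\alpha)=q(\beta)$, the second alternative of (ii); and if $\beta=\alpha$, then using the identity $P(\alpha,\alpha)=p'(\alpha)$, $Q(\alpha,\alpha)=q'(\alpha)$ (the relation $f'(x)=\frac{f(x)-f(y)}{x-y}\big|_{y=x}$ already recorded in the paper) we get $p'(\alpha)=q'(\alpha)=0$, the first alternative. Both alternatives of (ii) clearly produce a common root in reverse, closing the equivalence.

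The step that demands the most care is the interchange of specialization and resultant. The clean criterion ``$\chi_{p,q}(\alpha)=0$ iff $P(\alpha,\cdot)$ and $Q(\alpha,\cdot)$ share a root'' is valid precisely because the $y$-leading coefficients of $P$ and $Q$ are the nonzero constant $1$, so no degree collapse occurs when $x=\alpha$ is substituted; I would state and invoke this explicitly rather than treat it as automatic. A secondary subtlety is the boundary case $\beta=\alpha$, where the common-root condition must be read as a statement about derivatives rather than about equal values, which is exactly what the $\frac{f(x)-f(y)}{x-y}\big|_{y=x}=f'(x)$ identity supplies.
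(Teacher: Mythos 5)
Your proposal is correct and follows essentially the same route as the paper: (i)$\Leftrightarrow$(ii) by testing the spectrum conditions on the generators (the paper phrases this as closure of the conditions under sums and products, you phrase it via $f=\Phi(p,q)$ and the chain rule, which is the same argument), and then the resultant's common-root criterion with the identical case split $\beta\neq\alpha$ versus $\beta=\alpha$ using $P(\alpha,\alpha)=p'(\alpha)$. The only difference is that you explicitly justify the interchange of specialization $x=\alpha$ with the resultant via the constant $y$-leading coefficients of $P$ and $Q$ (monicity of $p,q$), a point the paper leaves implicit in its appeal to the ``fundamental property of the resultant''; this is a worthwhile clarification, not a different proof.
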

 \begin{proof}
 	The alternatives (i) and (ii) are equivalent since each of the two conditions stated in (ii) are closed under sums and products, so we need only to prove that (i) and (iii) are equivalent.
 	By the  fundamental property of the resultant (see e.g.\cite{Bur}) we know that $\alpha$ is a root of the \cp if and only if there is some $\beta \in \MK$ such that $P(\alpha,\beta)=Q(\alpha,\beta)$.
 	
 	We now regard two different cases. The first case is when $\beta \neq \alpha$. In this case we have that $p(\alpha)-p(\beta)=(\alpha-\beta)P(\alpha,\beta)=0$ and similarly $q(\alpha)=q(\beta)$. Thus the second statement of (ii) holds.

 	The other case is that $\alpha=\beta$ which means that $0=P(\alpha,\alpha)=p'(\alpha)$. (The second equality can easily be derived from the definition of $P$ as $P(x,y)=(p(x)-p(y))/(x-y)$.) In the same manner we find that $q'(\alpha)=0$ so in this case the first statement of (ii) holds.
 	
 \end{proof}

 This shows that the \cp allows us to find the spectrum explicitly, for the subalgebras we currently study.
 Note that the theorem also shows that the \cp is never a constant, because the spectrum is always non-empty.

 Also note that Theorem \ref{th:spectrumdef} gives us a theoretical way to find the spectrum for any subalgebra. In most practical cases it is sufficient to consider only $\chi_{p,q}$ for each pair $\{p,q\}$ of generators, but the problem is that their degrees are not always relatively prime.

Here is another  application of the theorem.
 \begin{theorem} If $a(x)$ is a polynomial of degree at least two that divides both $p(x)$ and $q(x)$ then all the roots of $a(x)$ are roots of $\chi_{p,q}(x)$.

 \end{theorem}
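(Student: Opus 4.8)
The plan is to argue directly from the resultant definition of $\chi_{p,q}$ rather than through Theorem~\ref{th:spectrum}, whose hypothesis of coprime degrees is not available here. Fix a root $\alpha$ of $a(x)$. Since $a(x)$ divides both $p(x)$ and $q(x)$, we have $p(\alpha)=q(\alpha)=0$, and I will show that this forces $P(\alpha,y)$ and $Q(\alpha,y)$ to share a non-constant common factor in $y$, so that their resultant vanishes.

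The key computation is the following. Because $p(\alpha)=0$, we get $P(\alpha,y)=\frac{p(\alpha)-p(y)}{\alpha-y}=\frac{p(y)}{y-\alpha}$, and likewise $Q(\alpha,y)=\frac{q(y)}{y-\alpha}$. Factoring out the root $\alpha$ of $a$, write $a(y)=(y-\alpha)a_1(y)$; since $\deg a\ge 2$, the polynomial $a_1(y)$ is non-constant. Writing $p=a\,p_1$ and $q=a\,q_1$ then gives $P(\alpha,y)=a_1(y)p_1(y)$ and $Q(\alpha,y)=a_1(y)q_1(y)$, so $a_1(y)$ is a common factor of positive degree. Consequently $P(\alpha,y)$ and $Q(\alpha,y)$ have a common root, and $\operatorname{Res}_y\bigl(P(\alpha,y),Q(\alpha,y)\bigr)=0$.

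The one point that needs care---and the step I expect to be the main obstacle---is the passage from $\operatorname{Res}_y\bigl(P(\alpha,y),Q(\alpha,y)\bigr)=0$ to $\chi_{p,q}(\alpha)=0$, i.e.\ that specializing $x\mapsto\alpha$ commutes with forming the resultant in $y$. This can fail if the leading coefficient of $P$ or $Q$, viewed as a polynomial in $y$, drops in degree at $x=\alpha$. Here, however, $p$ and $q$ are monic, so $P(x,y)$ and $Q(x,y)$ are monic in $y$ (of degrees $\deg p-1$ and $\deg q-1$) with leading coefficient $1$, independent of $x$; one sees this already from the monomial expansion $P=\sum_{i} y^i x^{\deg p-1-i}$. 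Thus no degree drop occurs and the specialization identity $\chi_{p,q}(\alpha)=\operatorname{Res}_y\bigl(P(\alpha,y),Q(\alpha,y)\bigr)$ holds, yielding $\chi_{p,q}(\alpha)=0$. Since $\alpha$ was an arbitrary root of $a$, every root of $a(x)$ is a root of $\chi_{p,q}(x)$. (The degenerate case $\chi_{p,q}\equiv0$, which by Theorem~\ref{chizeroforcomp} happens exactly when $p,q\in\MK[h]$ for some $h$ with $\deg h\ge2$, is trivially consistent with the claim.)
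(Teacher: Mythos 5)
Your proof is correct, but it takes a genuinely different route from the paper's. The paper argues through the spectrum: since $\deg a\ge 2$, some product $(x-\alpha)(x-\beta)$ divides $a(x)$ (with $\beta$ possibly equal to $\alpha$), and because $p$ and $q$ generate $A$ and are divisible by $a(x)$, every $f\in A$ satisfies $(x-\alpha)(x-\beta)\mid f(x)-f(\alpha)$; this is condition (ii) of Theorem \ref{th:spectrumdef}, so $\alpha\in Sp(A)$, and Theorem \ref{th:spectrum} then converts membership in the spectrum into a root of $\chi_{p,q}$. You instead work directly with the definition of $\chi_{p,q}$ as a resultant: specializing at $x=\alpha$ gives $P(\alpha,y)=a_1(y)p_1(y)$ and $Q(\alpha,y)=a_1(y)q_1(y)$ with $a_1(y)=a(y)/(y-\alpha)$ non-constant, so the specialized resultant vanishes, and the monicity of $p$ and $q$ (hence constant leading $y$-coefficients of $P$ and $Q$) guarantees that evaluation at $x=\alpha$ commutes with taking the resultant --- the degree-drop issue you flag is precisely what makes such specialization arguments rigorous. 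The paper's proof is shorter given the machinery already in place and shows the stronger-sounding fact that $\alpha$ lies in $Sp(A)$; note, though, that the theorem sits in a section where the standing assumption $(\deg p,\deg q)=1$ is in force, so the paper's appeal to those theorems (and to the notion of spectrum, which formally requires finite codimension) is legitimate there. Your argument buys independence from exactly those hypotheses: it works for arbitrary monic $p,q$, it treats the degenerate case $\chi_{p,q}\equiv 0$ explicitly, and by exhibiting the common factor $a_1(y)$ it comes closer in spirit to the conjecture $a(x)\mid\chi_{p,q}(x)$ stated immediately after the theorem.
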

\begin{proof}If $(x-\alpha)(x-\beta)|a(x)$ then $(x-\alpha)(x-\beta)|f(x)-f(\alpha)$ for any $f(x)\in A$ because $p$ and $q$ generate $A$ and are divisible by $a(x).$ The rest follows from theorems \ref{th:spectrumdef} and \ref{th:spectrum}.

\end{proof}

It would be interesting to know if the following is true.
\begin{Conj}
$a(x)|\chi_{p,q}(x)$.
\end{Conj}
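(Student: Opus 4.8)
The plan is to strengthen the preceding theorem, which only shows that every root of $a(x)$ is a root of $\chi_{p,q}(x)$, into the sharper divisibility statement by proving directly that $\chi_{p,q}(x)\equiv 0$ in the quotient ring $R=\MK[x]/(a(x))$. The guiding idea is that after reducing the $y$-coefficients of $P$ and $Q$ modulo $a(x)$, the two polynomials acquire an honest common factor in $R[y]$, and a common factor of positive degree kills the resultant. We may assume $a(x)$ is monic (divisibility is unaffected by a scalar), so write $p=a\,u$ and $q=a\,v$ with $u,v$ monic.

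First I would record the elementary identity $p(x)-p(y)=a(x)\bigl(u(x)-u(y)\bigr)+u(y)\bigl(a(x)-a(y)\bigr)$, which upon dividing by $x-y$ gives
\[
P(x,y)=a(x)\,U_1(x,y)+u(y)\,A_1(x,y),\qquad A_1=\frac{a(x)-a(y)}{x-y},\quad U_1=\frac{u(x)-u(y)}{x-y},
\]
and likewise $Q(x,y)=a(x)\,V_1(x,y)+v(y)\,A_1(x,y)$. Passing to $R[y]$ and writing $\bar{\cdot}$ for reduction modulo $a(x)$, the first summands vanish, so $\bar P=u(y)\,\bar A_1$ and $\bar Q=v(y)\,\bar A_1$. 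Here $\bar A_1$ is monic in $y$ of degree $\deg a-1\ge 1$, so $\bar P$ and $\bar Q$ share the common monic factor $\bar A_1$ of positive degree.

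Next I would transport $\chi_{p,q}$ through the reduction. Since $p,q$ are monic, $P$ and $Q$ are monic in $y$ of degrees $m-1$ and $n-1$, and these degrees are preserved modulo $a(x)$; hence the Sylvester matrix of $\bar P,\bar Q$ is exactly the entrywise reduction of that of $P,Q$, and since determinants commute with the ring homomorphism $\MK[x]\to R$ we get $\overline{\chi_{p,q}(x)}=\operatorname{Res}_y(\bar P,\bar Q)$ in $R$. Finally, the common monic factor forces this resultant to be genuinely zero: by multiplicativity $\operatorname{Res}_y(\bar P,\bar Q)=\operatorname{Res}_y(\bar A_1,\bar Q)\,\operatorname{Res}_y(u,\bar Q)$, and because $\bar Q=\bar A_1\,v$ the first factor contains $\operatorname{Res}_y(\bar A_1,\bar A_1)$, which vanishes since the Sylvester matrix of a polynomial of degree $\ge 1$ with itself has a repeated row. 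Thus $\overline{\chi_{p,q}(x)}=0$, i.e. $a(x)\mid\chi_{p,q}(x)$. Note that coprimality of $\deg p,\deg q$ is never used, so the argument applies in the full generality of the statement.

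The routine parts are the polynomial identity and the bookkeeping of $y$-degrees; the delicate point, and the main obstacle, is that every resultant manipulation happens over $R=\MK[x]/(a(x))$, which is neither a field nor even a domain once $a$ has repeated roots. Two facts therefore need care. That the resultant commutes with reduction is precisely why I insist on monicity in $y$, so that no degree drop occurs and the Sylvester matrices match. And that a shared monic factor of positive degree makes the resultant \emph{zero} rather than merely a zero-divisor must be obtained from the universal (integer-coefficient) identities for resultants, valid over any commutative ring; a direct kernel/syzygy argument would only show that $\overline{\chi_{p,q}}$ is a zero-divisor in $R$, which is not enough to conclude divisibility.
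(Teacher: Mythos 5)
You should know at the outset that the paper contains no proof of this statement: it is posed there as an open conjecture (``It would be interesting to know if the following is true''), and all the paper establishes is the weaker theorem immediately preceding it, namely that every \emph{root} of $a(x)$ is a root of $\chi_{p,q}(x)$, obtained through the spectrum machinery. So there is nothing to compare your argument against; instead I have checked it on its own terms, and I believe it is correct and settles the conjecture. The identity $p(x)-p(y)=a(x)\left(u(x)-u(y)\right)+u(y)\left(a(x)-a(y)\right)$ does give $P=a(x)U_1+u(y)A_1$ and $Q=a(x)V_1+v(y)A_1$ with $A_1$ monic in $y$ of degree $\deg a-1\ge 1$; since $P$ and $Q$ are monic in $y$ of degrees $m-1$ and $n-1$ and remain so over $R=\MK[x]/(a(x))$, the Sylvester matrix of $(\bar P,\bar Q)$ is the entrywise image of that of $(P,Q)$, and determinants commute with the quotient map, so $\overline{\chi_{p,q}}=\operatorname{Res}_y(\bar P,\bar Q)$; finally, multiplicativity of the resultant for monic factors and the vanishing $\operatorname{Res}_y(\bar A_1,\bar A_1)=0$ (equal rows in the Sylvester matrix) are both universal integer-coefficient identities, hence valid over the non-domain $R$, which is exactly the care the situation demands. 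Your route genuinely buys more than the paper's theorem: it accounts for multiplicities (for $a(x)=(x-\alpha)^2$ no root-counting argument can give $(x-\alpha)^2\mid\chi_{p,q}$), and, as you observe, it nowhere uses coprimality of $\deg p$ and $\deg q$. One small correction: your dismissal of the kernel/syzygy alternative is overly pessimistic. The syzygy $v(y)\bar P-u(y)\bar Q=0$ yields a kernel vector of the Sylvester matrix $S$ whose entries include the leading coefficients of the monic polynomials $u,v$, i.e.\ units of $R$; from $Sw=0$ one gets $\det(S)\,w=\operatorname{adj}(S)Sw=0$, and a unit entry of $w$ then forces $\det(S)=0$ outright, not merely that it is a zero-divisor. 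So that argument would also close the proof, though the multiplicativity route you chose is equally sound. I would encourage you to write this up carefully (including the standard references for resultants over commutative rings), since it resolves a question the authors left open.
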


To find the algebraic conditions is less straightforward, but knowing the spectrum helps a lot. Consider for example,  the  subalgebra $A=<x^4-x^2,x^3>.$

We see that $x^2$ divides both generators so it should divide
the \cp as well. Thus zero is in the spectrum. Moreover $f'(0)=0$ is valid for both generators and therefore is one of the conditions. Because $g(4,3)=3$ we should find two extra subalgebra conditions.
The \cp can be found using Maple and it is equal to   $x^2(x^4-x^2+1).$

Thus, besides zero we have four other elements in the spectrum, which are in fact primitive roots of degree $12.$ If we name one of them $\varepsilon,$ the remaining ones will be $\varepsilon^5,\varepsilon^7,\varepsilon^{11}.$
We cannot find more conditions involving derivations so we need to arrange those roots in pairs to get the conditions of the form $f(\alpha)=f(\beta).$ It is easy to check now that we get the example \ref{ex:ex3} in the very beginning of this article.

\section[Derivations in $<p,q>$]{Derivations in a subalgebra generated by two polynomials}\label{sec:dir2}
Now we want to formulate some general statements about possible derivations of subalgebras $A$ generated by two polynomials $p(x)$ and $q(x)$ of relatively prime degrees.
As we know (see \cite{tor}) $p(x), q(x)$ form SAGBI basis and have one relation $F(p,q)=0$ arising from the corresponding resultant. Our aim is to study
possible non-zero derivations $D:A\rightarrow\MK$, thus linear maps such that
$$
D(f(x)g(x)=D(f(x))g(\alpha)+f(\alpha)D(g(x))$$
for some $\alpha\in\MK.$

Denote $D(p(x))=Dp$ and $D(p(x))=Dq$.
Note first that for any polynomial $G(p,q)$ we have $$D(G(p(x),q(x))=\frac{\partial G}{\partial p}(p(\alpha), q(\alpha))Dp+\frac{\partial G}{\partial q}(p(\alpha), q(\alpha))Dq.$$
If we denote $\frac{\partial F}{\partial p}(p(\alpha), q(\alpha))$ by $F'_p(\alpha)$ and $\frac{\partial F}{\partial q}(p(\alpha), q(\alpha))$ by $F'_q(\alpha)$ then we get that
$$F'_p(\alpha)Dp+F'_q(\alpha)Dq=0$$
is necessary and sufficient condition for a linear map $D$ to be a derivation of $A.$

Note also that taking ordinary derivative in $\alpha$ we get
$$F'_p(\alpha)p'(\alpha)+F'_q(\alpha)q'(\alpha)=0.$$

Depending on $\alpha$  three different situations are possible.
\begin{enumerate}
  \item $p'(\alpha)=q'(\alpha)=0.$ This is the most difficult case. But at least $\alpha$ belongs to the spectrum.
  \item $(p'(\alpha),q'(\alpha))\neq (0,0),$ but $F'_p(\alpha)=F'_q(\alpha)=0.$ By Theorem \ref{th:partialderivative} we get that $\chi_{f,g}(\alpha)=0$ thus $\alpha$ belongs to the spectrum and there exists $\beta\neq\alpha$
  such that $f(\alpha)=f(\beta)$ for all $f(x)\in A.$ We can try to construct $D$ as $Af'(\alpha)+Bf'(\beta).$ We get
  $$Ap'(\alpha)+Bp'(\beta)=Dp; Aq'(\alpha)+Bq'(\beta)=Dq$$ and find a solution if the determinant
  $$\left|
      \begin{array}{cc}
        p'(\alpha) & p'(\beta) \\
        q'(\alpha) & q'(\beta) \\
      \end{array}
    \right|\neq 0.$$

  \item $(p'(\alpha),q'(\alpha))\neq (0,0),\ (F'_p(\alpha),F'_q(\alpha))\neq (0,0).$ This is the easiest case because it means that $(Dp,Dq)=C(p'(\alpha),q'(\alpha))$ and
  we simply have $D(f(x))=Cf'(\alpha).$
\end{enumerate}

Now we can obtain an important corollary that we will need to finish the proof of our main theorem.

\begin{theorem}\label{th:derivativenotinspectrum}
Let $A$ be an arbitrary subalgebra of finite codimension and $D$ be an $\alpha-$derivation on $A$. Suppose that $\alpha$ does not belong to the spectrum of $A$ (we call such derivations \textbf{trivial}). \index{trivial derivation} Then there exists $c\in\MK$ such that $D(f(x))=cf'(\alpha)$
for any $f(x)\in A.$

\end{theorem}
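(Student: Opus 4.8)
The plan is to reduce $A$ to a subalgebra generated by two polynomials, settle that case with the machinery of Section~\ref{sec:dir2}, and then bootstrap back up to $A$ using a high power of $\pi_B$ as a conductor. First I would record the trivial but crucial fact that the map $\delta\colon f(x)\mapsto f'(\alpha)$ is itself an $\alpha$-derivation of $A$, since $(fg)'(\alpha)=f'(\alpha)g(\alpha)+f(\alpha)g'(\alpha)$; hence $E:=D-c\,\delta$ is again an $\alpha$-derivation for any $c\in\MK$, and it suffices to produce a single $c$ with $E\equiv 0$. Because $\alpha\notin Sp(A)$, the equivalence of (i) and (iv) in Theorem~\ref{th:spectrumdef} supplies monic $p(x),q(x)\in A$ of relatively prime degrees with $\alpha\notin Sp(B)$, where $B=\langle p,q\rangle$.

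Next I would pin down $D$ on $B$. Since $\alpha\notin Sp(B)$, Theorem~\ref{th:spectrum} gives both $(p'(\alpha),q'(\alpha))\neq(0,0)$ and $\chi_{p,q}(\alpha)\neq 0$; feeding the latter into Theorem~\ref{th:partialderivative} (where $F'_p(\alpha)=\pm\chi_{p,q}(\alpha)q'(\alpha)$ and $F'_q(\alpha)=\mp\chi_{p,q}(\alpha)p'(\alpha)$) shows $(F'_p(\alpha),F'_q(\alpha))\neq(0,0)$, so we are precisely in the third (easiest) case of Section~\ref{sec:dir2}. There the derivation equation $F'_p(\alpha)Dp+F'_q(\alpha)Dq=0$, combined with the identity $F'_p(\alpha)p'(\alpha)+F'_q(\alpha)q'(\alpha)=0$, forces $(Dp,Dq)=c\,(p'(\alpha),q'(\alpha))$ for one scalar $c$, and therefore $D(G(p,q))=c\,(G(p(x),q(x)))'(\alpha)$ for every element of $B$. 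In other words, with this $c$ the map $E=D-c\,\delta$ vanishes identically on $B$.

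The hard part is upgrading ``$E=0$ on $B$'' to ``$E=0$ on $A$'', and this is where I expect the real obstacle: $B$ may be far smaller than $A$, and multiplying an arbitrary $f\in A$ by an element of $B$ that vanishes at $\alpha$ merely reproduces $0$ and yields no information on $E(f)$. The trick I would use is to multiply instead by an element of $B$ that is nonzero at $\alpha$ yet absorbs all of $\MK[x]$. Concretely, apply Theorem~\ref{th:power} to the proper subalgebra $B$: there is an $N$ with $x^i\pi_B^N\in B$ for all $i\ge 0$, so $\pi_B^N\,\MK[x]\subseteq B$, while $\pi_B(\alpha)\neq 0$ because $\alpha\notin Sp(B)$. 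Then for any $f\in A\subseteq\MK[x]$ we have $f\pi_B^N\in B$, so $E(f\pi_B^N)=0$; expanding by the $\alpha$-derivation rule and using $E(\pi_B^N)=0$ (as $\pi_B^N\in B$) gives $E(f)\,\pi_B(\alpha)^N=0$, whence $E(f)=0$. This establishes $D(f)=c\,f'(\alpha)$ for all $f\in A$, as required.

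Conceptually, the argument is just an elementary computation that the cotangent space of $A$ at an off-spectrum point is one-dimensional, i.e.\ that $A$ is smooth and unglued there; the two-generator reduction produces the scalar $c$, and multiplication by the conductor $\pi_B^N$ is the elementary substitute for passing to the local ring. I would expect the only genuinely delicate verifications to be the case-$3$ check (that $\chi_{p,q}(\alpha)\neq 0$ really rules out $F'_p(\alpha)=F'_q(\alpha)=0$) and the final extension step, both of which are handled above.
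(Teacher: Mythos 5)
Your steps up to and including the two-generator analysis are sound and close in spirit to the paper's own route: extracting monic $p,q\in A$ of coprime degrees with $\alpha\notin Sp(B)$, $B=\langle p,q\rangle$, from the negation of condition (iv) of Theorem~\ref{th:spectrumdef}; observing that $\alpha\notin Sp(B)$ forces, via Theorems~\ref{th:spectrum} and~\ref{th:partialderivative}, the third case of Section~\ref{sec:dir2}; and concluding that $D$ restricted to $B$ equals $c\,(\cdot)'(\alpha)$. The gap is in your final step: you invoke Theorem~\ref{th:power} to get the conductor property $\pi_B^N\MK[x]\subseteq B$, but in this paper the proof of Theorem~\ref{th:power} itself depends on Theorem~\ref{th:derivativenotinspectrum} --- its induction step, in the case where the new condition is an $\alpha$-derivation with $\alpha$ outside the spectrum of the intermediate algebra, explicitly says ``as we will show in theorem~\ref{th:derivativenotinspectrum} later, $L(f)=cf'(\alpha)$''. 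So your argument is circular. Nor can the circle be broken by a joint induction on codimension: your $B$ lies \emph{inside} $A$, hence has codimension at least that of $A$ (generally much larger), so you would need Theorem~\ref{th:power}, and with it the present theorem, for algebras of larger codimension in order to handle $A$; the induction runs the wrong way.

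This extension problem is exactly what the paper's proof is designed to dodge. Instead of fixing one $B$ and propagating ``$E=0$ on $B$'' to all of $A$ by a conductor, the paper chooses a different two-generated subalgebra for each element it must handle: for $f\in A$ having $\alpha$ as a \emph{double} root, it constructs $p\in A$ with $p(\alpha)=0$, $p'(\alpha)\neq 0$, $p$ nonvanishing at the other roots of $f$, and $\deg p$ coprime to $\deg f$, so that $B_f=\langle p,\, f/\mathrm{lc}(f)\rangle$ contains $f$ itself and has $\alpha\notin Sp(B_f)$; case 3 of Section~\ref{sec:dir2} then gives $D(f)=c_{B_f}\,f'(\alpha)=0$ with no extension step at all (the fact that the constant $c_{B_f}$ may depend on $f$ is harmless precisely because $f'(\alpha)=0$). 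An arbitrary element of $A$ is then a constant plus a multiple of one fixed $g_0$ (with $g_0(\alpha)=0$, $g_0'(\alpha)\neq 0$) plus an element with a double root at $\alpha$, which yields $D(f)=cf'(\alpha)$ with $c=D(g_0)/g_0'(\alpha)$. To rescue your route you would have to prove the conductor statement for $B$ by an argument independent of Theorem~\ref{th:power}'s induction (hence independent of the present theorem); neither the paper nor your proposal supplies such an argument, so as written the proof does not stand.
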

\begin{proof} First we prove that if $\alpha$ is a double root of $f(x)$ then $D(f(x))=0.$ Suppose the opposite. Let $\beta_1,\ldots,\beta_k$ be the other roots of $f(x)$.
   For each $i$ there exists $g_i(x)\in A$ such that $g_i(\beta_i)\neq g_i(\alpha).$ Subtracting a constant we can suppose that
  $g_i(\alpha)=0, $ but $g_i(\beta_i)\neq 0.$ Beside that there exists $g_0(x)\in A$ such that $g(\alpha)=0,$ but $g_0'(\alpha)\neq 0$ (all this because $\alpha$ does not belong to the spectrum).  Now, using that a field is infinite, we can easily construct a linear combination $g(x)$ of the $g_i$ such that $g(\alpha)=0$ but $g(\beta_i)\neq 0$ for each $i>0$ and $g'(\alpha)\neq 0.$ Since $A$ has a finite codimension we can for each large degree find a polynomial of that degree that belongs to $A.$ We choose such a monic polynomial $h(x)$ that has degree larger than $\deg g(x)$ and relatively prime to $\deg f(x).$ We can also suppose that $h(\alpha)=0.$

  Our next step is to a construct polynomial $p(x)=h(x)+cg(x)$ that has the same property as $g(x),$ namely $g(\alpha)=0,g'(\alpha)\neq 0$ and $g(\beta_i)\neq 0$ for each $i>0.$ Again, this is possible because our field is infinite.
   Let $q(x)$ be $f(x)$ divided by its leading coefficient. Consider subalgebra $B=\langle p(x),q(x)\rangle.$ By construction $\alpha$ does not belong to its spectrum, so the restriction of $D$ to $B$ should fit the third
   case considered above and therefore $D(f(x))=cf'(\alpha)=0$ which is a contradiction.

  The rest is easy.  Any polynomial in $A$ can be written as a linear combination of $g_0(x)$, some constant and some polynomial $f(x)$ having $\alpha$ as double root. Therefore only value on
  $g_0(x)$ determine the value of $D$, so it is sufficient to find $c$ such that $D(g_0(x))=cg_0'(\alpha).$ \end{proof}

\section[About $\chi_A$]{About \cp $\chi_A(x)$}

 We would like to generalize  the theorem  \ref{th:spectrum} to arbitrary subalgebras. For this we need to define \cp for an arbitrary subalgebra.

 Let us look at the case where $A$ has more than two generators.  It is not evident how to extend the definition. The resultant is defined only for pairs of polynomials.
 A  naive attempt is to use a gcd of all $\chi_{g_i,g_j}$ where $g_i$ generate $A.$
Let us first look at an example:

\begin{example}
	Let $p(x)=x^{12}+3x^6$, $q(x)=x^{15}$ and $r(x)=x^{10}$ and $A=\langle p(x),q(x),r(x) \rangle$ the subalgebra they generate. We can form the characteristic polynomial of any pair of generators. If we look at the pair $p$ and $q$ for example, it is obvious that they both belong to $\MK[x^3]$. Hence their characteristic polynomial is zero by Theorem \ref{chizeroforcomp}. In the same way the other two pairs of generators have zero as characteristic polynomial. In contrast, if we form $P$ and $Q$ as before and additionally $R(x,y)=(r(x)-r(y))/(x-y)$, then $P(x,y)=Q(x,y)=R(x,y)=0$ has only a finite set of solutions. In particular the possible $x$-values are the 24 solutions of $x^{24}+6x^{18}+26x^{12}+81x^6+81$ and $x=0$. (This can be obtained by solving the system in for example Maple.)
\end{example}

 The above example suggests that looking at pairs of generators of the algebra is not enough to define the characteristic polynomial in a suitable way. Instead we can try another definition:

 \begin{definition}Let $A$ be a subalgebra of finite codimension.  Its \textbf{characteristic polynomial} \index{$\chi_A(x)$}\index{characteristic polynomial} $\chi_A(x)$ we define as a gcd of all $\chi_{p,q}(x)$  where $p$ and $q$ are monic polynomials in $A$ with relatively prime degrees.
 \end{definition}

Another alternative is to consider all possible pairs $p,q$.

Note that in any case $\chi_A(x)$ is non-zero, because $A$ contains polynomials of relatively prime degrees.

A third alternative definition is given below.

Note that  we get by theorems \ref{th:spectrumdef} and \ref{th:spectrum} that $\chi_A(\alpha)=0$ if and only if $\alpha$ belongs to the spectrum of $A.$

It is not so obvious in the second alternative, but we can consider a third one.

Let us first assume, for simplicity, that we have three generators $A = \langle p(x),q(x) ,r(x) \rangle$. We also assume that $\deg q(x) \geq \deg r(x)$. Introduce $P(x,y), Q(x,y)$ as before and analogously $R(x,y)$. Then form the resultant $R(x,y,z)=\operatorname{Res}_y\left(P(x,y),zQ(x,y)+wR(x,y) \right)$. An $x$-value $x=\alpha$ that makes this resultant disappear for all values of $z$ and $w$ means an $x$-value for which there is some $y=\beta$ such that $P(\alpha,\beta)=0$ and $zQ(\alpha,\beta)+wR(\alpha,\beta)=0$ regardless of the values of $z$ and $w$. In other words $P(\alpha,\beta)=Q(\alpha,\beta)=R(\alpha,\beta)=0$.

Now it follows from the construction of the resultant as a certain determinant and the fact that the determinant depends linearly on the columns of the matrix that $R$ can be written as $R(x,z,w)=\sum_{j=0}^{n-1} d_j(x)z^{n-1-j}w^{j}$. Here $d_j(x)$ is a polynomial in $\MK[x]$ that can be computed by starting from the resultant-matrix of $P$ and $Q$, then replace $j$ columns of coefficients from $Q$ by the corresponding coefficients of $R$. Finally sum over all choices of $j$ such column replacements. That sum of determinants equals $d_j(x)$. The $x$-values $x=\alpha$ that make $R(\alpha,z,w)=0$ are those which satisfy $d_j(\alpha)=0$ for each $j$ or equivalently those $x=\alpha$ that are zeroes of $d(x)=gcd(d_1(x),d_2(x), \ldots ,d_{n-1}(x))$. It is straightforward to generalise this idea to more than three generators. We therefore make the following definition:
\begin{definition} Let $A=\langle p_1(x), p_2(x), \ldots ,p_t(x) \rangle$ and $n=\deg p_1(x)$. Further, let $P_i(x,y)=(p_i(x)-p_i(y))/(x-y)$ and
	$$R(x,z_2,z_3, \ldots, z_t)=$$ $$=\operatorname{Res}_y\left(P_1(x,y), z_2P_2(x,y)+z_3P_3(x,y)+ \cdots + z_tP_t(x,y)\right).$$ Then $R$ can be expressed as
	\begin{equation}\label{multressum}R(x,z_2,z_3, \ldots , z_t)=\sum d_{(a_2,a_3, \ldots , a_t)}(x)z_2^{a_2}z_3^{a_3} \cdots z_t^{a_t},\end{equation} where the sum is taken over all natural numbers $a_i$ satisfying $a_2+a_3+ \cdots a_t=n-1$
	The \cp of $A$ is given by $\chi_A(x)=gcd(\{d_{(a_2,a_3, \ldots , a_t)}\})$ where gcd is taken over the set of all polynomials $d_{(a_2,a_3, \ldots , a_t)}$ occurring in the sum (\ref{multressum}).
	
\end{definition}	

This definition looks complicated, so let us see how it works in our previous example
\begin{example}
	Let $p,q,r$ and $P,Q,R$ be as in the previous example. In this case we need to compute the resultant $$=\operatorname{Res}_y\left(P(x,y), zQ(x,y)+wR(x,y)\right)=\sum_{j=0}^{10} d_{(10-j,j)}(x)$$ (Here we have replaced $z_2$ by $z$ and $z_3$ by $w$ to improve readability.) Computing this in Maple we obtain:
	$$d_{(11,0)}=d_{(10,1)}=0$$
	 $$d_{(9,2)}=4x^{60}a(x)^2b(x)^3$$ $$d_{(8,3)}=18x^{55}a(x)b(x)^2c(x)$$  $$d_{(7,4)}=3x^{50}b(x)d(x)$$
	  where $a(x)=2x^6+3$, $b(x)=x^{24} + 6x^{18} + 36x^{12} + 81x^6 + 81$, $c(x)=2x^{30} + 5x^{24} + 30x^{18} + 90x^{12} + 135x^6 + 81$ and $d(x)=52x^{60} + 300x^{54} + 2025x^{48} + 8100x^{42} + 24300x^{36} + 65610x^{30} + 153090x^{24} + 262440x^{18} + 295245x^{12} + 196830x^6 + 59049$ Thus the gcd of the first five polynomials $d_{(11-j,j)}$ is $x^{50}b(x)$. One can check that the remaining $d_{(11-j,j)}$ also are divisible by  $x^{50}b(x)$. (In particular $d_{(0,11)}=0$, while the other polynomials are non-zero.)
	
\end{example}

\chapter{Applications}
Now we want to show some applications of the spectrum.
\section{One element in the spectrum}
We start from the subalgebras which have only one element in the spectrum.

\begin{theorem} Let $A$ be a  subalgebra of codimension $k\ge 1.$ The following is equivalent.
	\begin{enumerate}
		\item The spectrum of $A$ consists of a single element $\alpha.$
		\item $A$  contains two elements $(x-\alpha)^m, (x-\alpha)^n$ with $(m,n)=1.$
		
		\item $A$ is defined by $k$ linearly independent conditions of the form
		$\sum_{i=1}^{N}c_if^{(i)}(\alpha)=0$ for some $N>0.$
	\end{enumerate}

\end{theorem}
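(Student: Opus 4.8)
The plan is to establish the cycle of implications $(2)\Rightarrow(1)\Rightarrow(3)\Rightarrow(2)$. Before starting I would apply the substitution $x\mapsto x-\alpha$, an algebra automorphism of $\MK[x]$ that preserves codimension and shifts the spectrum by $-\alpha$; since all three statements transform accordingly, this lets me assume $\alpha=0$ throughout without loss of generality.

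For $(2)\Rightarrow(1)$ I would first note that a codimension $k\ge 1$ subalgebra contains no polynomial of degree one, so $m,n\ge 2$. I would then consider the monomial subalgebra $B=\langle x^m,x^n\rangle\subseteq A$. Combining the computation $\chi_{x^m,x^n}(x)=x^{(m-1)(n-1)}$ with Theorem~\ref{th:spectrum} gives $Sp(B)=\{0\}$, since $0$ is the only root of the characteristic polynomial. The reversing-inclusions property (Theorem~\ref{lm:reversing}) then yields $Sp(A)\subseteq Sp(B)=\{0\}$, and because a proper subalgebra has non-empty spectrum I conclude $Sp(A)=\{0\}$, i.e. $Sp(A)=\{\alpha\}$ after undoing the substitution.

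For $(1)\Rightarrow(3)$ the key observation is that $Sp(A)=\{\alpha\}$ means $A$ has a single cluster. When $k>1$ I would invoke Theorem~\ref{th:main2}(2) with $s=1$: there are then no conditions of the type $f(\alpha_1)=f(\alpha_j)$, so all $k$ describing conditions are pure derivative conditions $\sum_{i=1}^{N-1}c_if^{(i)}(\alpha)=0$, which is exactly the shape required in $(3)$, and they are linearly independent because they form a basis of the space of functionals cutting out $A$. The base case $k=1$ I would treat separately via Theorem~\ref{th:codm1}: of its two alternatives, the form $f(\alpha')=f(\beta')$ has a two-element spectrum, so a single-element spectrum forces $A=\{f:f'(\alpha)=0\}$, which is $(3)$ with $N=1$.

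Finally $(3)\Rightarrow(2)$ is a short coefficient count. Writing $f=\sum_j b_j(x-\alpha)^j$ and using $f^{(i)}(\alpha)=i!\,b_i$, each defining condition constrains only $b_1,\ldots,b_N$, so every monomial $(x-\alpha)^d$ with $d>N$ satisfies all the conditions and hence lies in $A$. In particular $(x-\alpha)^{N+1},(x-\alpha)^{N+2}\in A$ have relatively prime degrees, which is statement $(2)$. The one place I expect to need care is the $(1)\Rightarrow(3)$ step, since Theorem~\ref{th:main2} is stated only for codimension greater than one and the case $k=1$ must be handled by hand; the other two implications follow directly from the characteristic-polynomial results and the coefficient count.
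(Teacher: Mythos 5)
Your proof is correct, and its ingredients largely coincide with the paper's, but it is organized as a cycle and this genuinely changes one step. The paper proves four implications, $(1)\Rightarrow(2)$, $(2)\Rightarrow(1)$, $(1)\Rightarrow(3)$, $(3)\Rightarrow(2)$, and its $(1)\Rightarrow(2)$ is a separate induction on the codimension: via Theorem \ref{th:codimGorin} it realizes $A$ as the kernel of a linear map on a subalgebra $B$ of codimension $k-1$, argues that this map must be a $0$-derivation $D$ (anything else would enlarge the spectrum), obtains coprime monomials $x^m,x^n\in B$ from the induction hypothesis, and then uses the identity $D(f^k)=kf^{k-1}(0)D(f)$ to conclude $x^{m^2},x^{n^2}\in\ker D=A$. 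You dispense with this argument entirely: since you prove $(1)\Rightarrow(3)$ (via Theorem \ref{th:main2}(2) for $k>1$ and Theorem \ref{th:codm1} for $k=1$, which is essentially the paper's route, stated more carefully --- the paper just cites Theorem \ref{th:main}) and $(3)\Rightarrow(2)$ by the coefficient count, the implication $(1)\Rightarrow(2)$ comes for free from the cycle $(2)\Rightarrow(1)\Rightarrow(3)\Rightarrow(2)$. Your route buys economy: no induction of your own and no appeal to Gorin's theorem or to derivation identities, at the price of leaning harder on the already-established structural machinery; the paper's direct $(1)\Rightarrow(2)$ is more self-contained and exhibits concretely how monomials survive inside kernels of derivations. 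One further point in your favour: in $(2)\Rightarrow(1)$ you actually justify $Sp(\langle x^m,x^n\rangle)=\{0\}$, by combining $\chi_{x^m,x^n}(x)=x^{(m-1)(n-1)}$ with Theorem \ref{th:spectrum}, the observation $m,n\ge 2$, and the non-emptiness of the spectrum of a proper subalgebra, whereas the paper's proof of $(2)\Rightarrow(1)$ asserts that this spectrum is $\{0\}$ without justification.
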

\begin{proof}
	
	We can use induction on $k.$ The base for the induction is guaranteed by  theorem  \ref{th:codm1}.  Let $k\ge 2.$ Using the change of variable $\widehat{x}=x-\alpha$ we can restrict ourself to the case $\alpha=0.$
	
	$(1)\Rightarrow (2).$ According to Theorem \ref{th:codimGorin} the algebra $A$ is obtained from $B$ as a kernel of some linear map. This map should be $0-$derivation $D$, otherwise we have more than one element in the spectrum.
	By Theorem \ref{lm:reversing}, $B$ should have zero spectrum and according to the induction hypothesis $B$ contains some monomials $x^m,x^n$ with $(m,n)=1.$ Note that $m,n>1$ because $B$ is a proper subalgebra.
	Using that $D(f^k)=kf^{k-1}(0)D(f)$ we find that the monomials $(x^m)^m=x^{m^2},\ (x^n)^n=x^{n^2}$ belong to the $\ker D=A.$
	
	$(2)\Rightarrow (1).$ Because subalgebra generated by $x^m$ and $x^n$ has spectrum zero, by  Theorem \ref{lm:reversing} the spectrum of $A$ cannot have any other elements than zero.
	
	$(1)\Rightarrow (3)$ Follows from Theorem \ref{th:main}
	
	$(3)\Rightarrow (2)$ All the monomials $x^m$ with $m>N$ satisfy the conditions.
	
\end{proof}

\section[Polynomial of degree $2$]{Subalgebras containing a polynomial of degree  $2$}
Suppose that the subalgebra $A$ contains a polynomial $q(x)$ of degree two. Two trivial cases are $A=\langle q(x)\rangle $ and $A=\MK[x].$ In non-trivial cases we should have a polynomial $p(x)$ of odd degree $2l+1\ge 3.$
If we suppose that $l$ is as small as possible then it is easy to see that $A=\langle p(x),q(x)\rangle .$
Using variable substitution  we can suppose that $q(x)=x^2.$ Subtracting even terms
we can WLOG suppose that $p(x)$ is an odd polynomial, thus $$p(x)=a(x^2)x,\ q(x)=x^2$$
for some monic polynomial $a(x)$ of degree $l.$
We want to show that the spectrum  of $A$ consists of the roots of $a(x^2).$ (In fact the \cp is  equal to $a(x^2)$ but that requires longer proof).

Indeed, if $q'(\alpha)=0$  then $\alpha=0$ and $p'(0)=0$ implies $a(0)=0.$ If $q(\alpha)=q(\beta)$ for $\alpha\neq \beta$ then $\beta=-\alpha$ and  $p(\alpha)=p(-\alpha)=-p(\alpha)$ implies $p(\alpha)=0\Rightarrow a(\alpha^2)=0.$

Now we are ready for general statement.
\begin{theorem}\label{th:deg2} Any  proper subalgebra $A$ of finite index in $\MK[x]$ containing a polynomial $q(x)$ of degree two has a spectrum consisting of $g>0$ elements for some $g$. The spectrum has $k=\left[\frac{g}{2}\right]$
	pairs $\{\alpha_i,\beta_i\},$   $i=1,\ldots, k$ such that for each $i$ the sum $\alpha_i+\beta_i$ has a constant value $2\alpha_0$ and (for odd $g$) one extra element, namely $\alpha_0(=\beta_0).$  For each $0\le i\le k$ there exists numbers $m_i\ge 0$
	such that $f(x)\in A$ if and only if
	\begin{itemize}\item
		$f^{(j)}(\alpha_i)=(-1)^jf^{(j)}(\beta_i)$ for each  $0<i\le k$ and each $0\le j\le m_i,$
		\item  $f^{(j)}(\alpha_0)=0,\ j=1,3,\ldots, 2m_0-1$ (for odd $g$ only).
	\end{itemize}
	Vice versa, if an algebra satisfies  such conditions, then it is generated by  $$(x-\alpha_0)^2,\ (x-\alpha_0)^{2m_0+1}\prod_{i\ge 1}(x-\alpha_i)^{m_i+1}(x-\beta_i)^{m_i+1}.$$
	
\end{theorem}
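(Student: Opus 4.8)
The plan is to reduce everything to the normalized situation already arranged before the theorem, where $q(x)=x^2$ and $p(x)=a(x^2)x$ for a monic $a$ of degree $l$, with $A=\langle p,q\rangle$ of codimension $l=g(2l+1,2)$ and spectrum equal to the zero set of $a(x^2)$. The general center $\alpha_0$ is recovered only at the very end by reversing the affine substitution that sent $q$ to $x^2$: this turns symmetry about $0$ into symmetry about $\alpha_0$, so each normalized pair $\{t,-t\}$ becomes a pair $\{\alpha_i,\beta_i\}$ with $\alpha_i+\beta_i=2\alpha_0$, and the possible central root $0$ becomes $\alpha_0$. Since the spectrum is non-empty for a proper subalgebra and equals the roots of the non-constant $a(x^2)$, we get $g>0$, and the distinct nonzero roots of $a$ account for the $k=\lfloor g/2\rfloor$ pairs.

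The crucial structural fact I would establish first is a clean membership criterion. Since $p^2=a(q)^2q\in\MK[q]$, every product of generators collapses and $A=\MK[q]+p\,\MK[q]$, so $f\in A$ exactly when $f=u(x^2)+a(x^2)x\,v(x^2)$. Splitting $f=f_e+f_o$ into even and odd parts, the even part $f_e=u(x^2)$ is unconstrained, while the odd part must equal $a(x^2)x\,v(x^2)$. Writing $f_o(x)=x\,h(x^2)$, this says precisely $a\mid h$, i.e. $a(x^2)\mid f_o(x)$. Thus the single condition governing membership is that the odd part of $f$ be divisible by $a(x^2)$.

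Next I would translate this divisibility into the stated derivative relations by factoring $a(x^2)=\prod_r(x^2-\gamma_r)^{e_r}$ over the distinct roots $\gamma_r$ of $a$. For a nonzero root $\gamma_r$, set $\alpha_i=\sqrt{\gamma_r}$, $\beta_i=-\alpha_i$, $m_i=e_r-1$; divisibility by $(x-\alpha_i)^{e_r}(x+\alpha_i)^{e_r}$ means $f_o$ vanishes to order $m_i+1$ at $\pm\alpha_i$, and by oddness these two vanishing conditions coincide, giving $f_o^{(j)}(\alpha_i)=0$ for $0\le j\le m_i$. A short parity computation yields $f^{(j)}(\alpha_i)-(-1)^j f^{(j)}(\beta_i)=2f_o^{(j)}(\alpha_i)$, so (using $\operatorname{char}\MK\neq 2$) these are exactly the relations $f^{(j)}(\alpha_i)=(-1)^j f^{(j)}(\beta_i)$. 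For the root $\gamma=0$, present precisely when $g$ is odd, put $m_0=e_0$; divisibility by $x^{2e_0}$ forces the odd part to have no term below degree $2m_0$, and since $f_e$ has vanishing odd derivatives at $0$ this reads $f^{(j)}(0)=0$ for $j=1,3,\dots,2m_0-1$. As every step is an equivalence, $f\in A$ iff all listed conditions hold; as a consistency check one verifies $\sum_{i\ge 1}(m_i+1)+m_0=\deg a=l$, matching the codimension.

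For the converse I would observe that, in the normalized variable, the product polynomial in the statement is $x^{2m_0+1}\prod_{i\ge 1}(x^2-\gamma_r)^{e_r}=x\,a(x^2)=p(x)$, while $(x-\alpha_0)^2=x^2=q(x)$. Hence the subalgebra cut out by the conditions is $\langle p,q\rangle$ itself, so any algebra described by such a system of conditions is generated by the two displayed polynomials. The only genuinely delicate points are the parity bookkeeping (the factor $2$, hence the need for $\operatorname{char}\MK\neq 2$) and matching the multiplicities of the roots of $a$ to the exponents $m_i+1$; everything else is formal.
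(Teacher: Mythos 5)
Your proof is correct, and it reaches the result by a genuinely more structural route than the paper's. The paper proves the equality of the two algebras by a double inclusion: it first checks that the stated conditions hold on the generators $p=a(x^2)x$, $q=x^2$ and that they are subalgebra conditions (a Leibniz-formula computation with the same parity bookkeeping you use), and then, for the reverse inclusion, runs a minimal-counterexample argument: a polynomial satisfying the conditions but lying outside $A$ is subduced by $p$ and $q$ to an odd polynomial of degree less than $\deg p$, and the identity $f^{(j)}(-\alpha)=-(-1)^jf^{(j)}(\alpha)$ for odd $f$ then forces vanishing to such high order at the spectrum points that the polynomial must be zero. You instead prove the module decomposition $A=\MK[q]\oplus p\,\MK[q]$ (exploiting $p^2=a(q)^2q\in\MK[q]$), so that membership becomes exactly the divisibility $a(x^2)\mid f_o$ of the odd part $f_o$ of $f$, and you translate this divisibility into the derivative conditions via $f^{(j)}(\alpha_i)-(-1)^jf^{(j)}(\beta_i)=2f_o^{(j)}(\alpha_i)$. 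This buys you three things: you never need to verify separately that the conditions are subalgebra conditions (closure under products is automatic once they characterize membership in $A$); you avoid the subduction argument entirely; and the ``vice versa'' half of the statement is immediate, since your chain of equivalences identifies the algebra cut out by the conditions with $\langle p,q\rangle$ on the nose, whereas the paper folds both directions into its double-inclusion argument. What the paper's method buys in exchange is uniformity: subduction plus verification of subalgebra conditions is the template used throughout the rest of the article, including in situations where no such explicit decomposition of $A$ is available. Two minor remarks: your concern about characteristic two is moot, since the paper assumes characteristic zero throughout; and, like the paper, you legitimately lean on the discussion preceding the theorem both for the normalization $A=\langle a(x^2)x,\,x^2\rangle$ and for the identification of the spectrum with the roots of $a(x^2)$.
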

\begin{proof}
	
	Since the codimension is finite and the subalgebra is proper we can after substitution suppose that $A$ is generated by $p(x)=a(x^2)x,\ q(x)=x^2,$ where $a(x)$ is a monic polynomial of degree $l>0.$
	Here we put $\alpha_0=0$ and  for each
	non-zero root $\mu_i $ of  $a(x)$ with $i=1,\ldots,k$ we can put $\alpha_i=\sqrt{\mu_i}$ and $\beta_i=-\alpha_i.$ We define $m_0$ to be  multiplicity of zero as a zero of $a(x)$ and put $g=2k$ if $m_0=0$ and $g=2k+1$ if $m_0>0.$
	Now $a(x)=x^{m_0}\prod(x-\mu_i)^{m_i+1}$ and $$p(x)=x^{2m_0+1}\prod(x^2-\mu_i)^{m_i+1}=$$
	$$x^{2m_0+1}\prod(x-\alpha_i)^{m_i+1}(x-\beta_i)^{m_i+1}.$$ As we already discussed above the spectrum has exactly $g$ elements.
	To check the conditions note that they are trivial for $x^2$ and that $$p^{(j)}(\alpha_i)=p^{(j)}(-\alpha_i)=0$$ if $j\le m_i$  for $i>0.$ If $m_0>0$ then all the derivatives until $2m_0+1$ are zero as well. Therefore
	$p(x)$ and $q(x)$ satisfies the conditions and it is sufficient to check that if $f(x)$ and $g(x)$ satisfy the conditions the same is true for $f(x)g(x).$
	We have $$(f(x)g(x))^{(j)}(\alpha_i)=\sum_{j_1+j_2=j}\binom j {i_1} f^{(j_1)}(\alpha_i)g^{(j_2)}(\alpha_i)=$$
	$$\sum_{j_1+j_2=j}\binom j {i_1}(-1)^{j_1}f^{(j_1)}(-\alpha_i)(-1)^{j_2}g^{(j_2)}(-\alpha_i)$$
	and get the desired property both for $i>0$ and $i=0$ (because if $j$ is odd one of $j_1,j_2$ is odd as well). So $A$ satisfies the conditions. Let us now turn to the opposite direction.
	Our proof shows
	that the conditions determine some subalgebra that contains $A$ and we need to prove that it equals $A.$ If not there should be some  polynomial $f(x)$ which does not belong $A.$ Using subduction by $p(x)$ and $q(x)$
	we can suppose that it has an odd degree less than the degree of $p(x)$ and has only odd powers, and thus $f(-x)=-f(x).$
	
	Note that for an odd function $f(x)$ we have $$f^{(j)}(\beta_i) =f^{(j)}(-\alpha_i)=-(-1)^jf^{(j)}(\alpha_i).$$
	We get the opposite sign than in our conditions so all terms must be zero. Thus $\alpha_i$ and $\beta_i$ have multiplicity at least $m_i+1.$ as zeroes of $f(x).$ Similarly the second condition gives us that the multiplicity of zero as a zero is at least $2m_0+1.$ But then $f(x)$ cannot have degree less than degree of $p(x).$
	
	It remains to understand how we get back to the general case by using variable substitution back. Obviously $\alpha_0$ is the only root of the derivative in $q(x)$ and the spectrum is simply shifted by $\alpha _0.$
\end{proof}

To understand the conditions in the above theorem better we reformulate them in a special small case.

\begin{theorem}\label{th:52} Let $A$ be a subalgebra generated by polynomials of degree $2$ and $5$. Then depending of the size $s$ of its spectrum it can be described as
	\begin{description}
		\item[s=1] $A=\{f(x)|f'(\alpha)=f'''(\alpha)=0\}.$
		\item[s=2] $A=\{f(x)|f(\alpha)=f(\beta);f'(\alpha)+f'(\beta)=0\}.$
		\item[s=3] $A=\{f(x)|f(\alpha)=f(\beta);f'(\gamma)=0\};\ \alpha+\beta=2\gamma.$
		\item[s=4] $A=\{f(x)|f(\alpha)=f(\beta);f(\gamma)=f(\delta)\};\ \alpha+\beta=\gamma+\delta. $
	\end{description}
	Here $\alpha,\beta,\gamma,\delta$ are different elements of the spectrum.
\end{theorem}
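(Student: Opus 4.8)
The plan is to obtain this statement as a direct specialization of Theorem \ref{th:deg2} to the case where the odd-degree generator has degree $5$. First I would invoke the reduction carried out in the proof of Theorem \ref{th:deg2}: after an affine change of variable $x \mapsto x - \alpha_0$ together with a rescaling, I may assume the two generators are $q(x) = x^2$ and $p(x) = a(x^2)\,x$ for a monic polynomial $a$. Since $\deg p = 2l+1 = 5$ forces $l = 2$, the polynomial $a$ has degree exactly $2$, and in every case the codimension of $A$ equals $g(5,2) = 2$, consistent with the two conditions appearing in each branch. The entire classification then reduces to the ways a monic quadratic factors over the algebraically closed field $\MK$.

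Next I would enumerate the four possible shapes of $a$ and read off the spectrum and the defining conditions from Theorem \ref{th:deg2}. Writing $m_0$ for the multiplicity of $0$ as a root of $a$ and $m_i+1$ for the multiplicity of a nonzero root $\mu_i$ (so $\alpha_i = \sqrt{\mu_i}$, $\beta_i = -\alpha_i$), the cases are: (a) $a(x) = x^2$, where $m_0 = 2$, $k = 0$, giving $s = 1$ and the odd-derivative conditions $f^{(j)}(0)=0$ for $j \le 2m_0-1 = 3$, i.e. $f'(\alpha)=f'''(\alpha)=0$; (b) $a(x) = (x-\mu)^2$ with $\mu \ne 0$, where $m_0 = 0$ and one pair has $m_1 = 1$, giving $s = 2$ and the conditions $f^{(j)}(\alpha) = (-1)^j f^{(j)}(\beta)$ for $j = 0,1$, i.e. $f(\alpha)=f(\beta)$ and $f'(\alpha)+f'(\beta)=0$; (c) $a(x) = x(x-\mu)$ with $\mu \ne 0$, where $m_0 = 1$ and one pair has $m_1 = 0$, giving $s = 3$, the pair condition $f(\alpha)=f(\beta)$ and the single odd-derivative condition $f'(\gamma)=0$ at the extra element $\gamma = \alpha_0$; and (d) $a(x) = (x-\mu_1)(x-\mu_2)$ with $\mu_1,\mu_2$ distinct and nonzero, where $m_0 = 0$ and two pairs each have $m_i = 0$, giving $s = 4$ and the conditions $f(\alpha)=f(\beta)$, $f(\gamma)=f(\delta)$. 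Since these four factorization types are exhaustive, so are the four spectrum sizes.

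Finally I would transport everything back through the inverse substitution. In the reduced coordinates each pair obeys $\beta_i = -\alpha_i$ and the distinguished point is $\alpha_0 = 0$; undoing the shift moves $\alpha_0$ to the unique critical point of the quadratic generator and turns the symmetry $\beta_i = -\alpha_i$ into the stated relation $\alpha_i + \beta_i = 2\alpha_0$. This is precisely where the sum conditions arise: in case (c) the pair $\{\alpha,\beta\}$ shares its centre with the lone spectral point, yielding $\alpha+\beta = 2\gamma$, and in case (d) the two pairs share a common centre, yielding $\alpha+\beta = \gamma+\delta$; in case (b) the centre $\alpha_0$ is not itself in the spectrum and is free to be $(\alpha+\beta)/2$, so no relation among $\alpha,\beta$ is imposed.

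I expect the only genuine obstacle to be bookkeeping rather than any new idea. The substantive content is entirely contained in Theorem \ref{th:deg2}; what must be checked carefully is that the index ranges $0 \le j \le m_i$ from that theorem produce exactly the listed derivative conditions in each branch, and above all that one keeps track of which spectral elements coincide with the centre $\alpha_0$, since this is what determines whether a sum relation ($\alpha+\beta=2\gamma$ or $\alpha+\beta=\gamma+\delta$) is forced or absent.
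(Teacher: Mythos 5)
Your proposal is correct and follows exactly the route the paper intends: the paper states this theorem without proof, presenting it as a direct reformulation of Theorem \ref{th:deg2} in the special case $\deg p = 5$, which is precisely the specialization you carry out. Your enumeration of the four factorization types of the monic quadratic $a(x)$ and the resulting bookkeeping (multiplicities $m_0$, $m_i$, the shift by $\alpha_0$, and the sum relations) is accurate and complete.
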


\section{Relations between subalgebras}
The spectrum gives a much more clear picture of the inclusion of one subalgebra inside another. Just knowing the related semigroup inclusion can give us important information.
For example,  the subalgebra $A=<x^4,x^3-mx>$ is  not included in any subalgebra described in Theorem \ref{th:52}. Indeed, it has an element of degree $3$, which does not belong to the semigroup generated by $2$ and $5.$

We can use reversing inclusion of the spectra.  If that holds, one need to check if the subalgebra conditions are compatible with the given generators.
We have a complete classification of subalgebras of codimension $2$ in Theorem \ref{sec:deg2} below so let us study the list and decide which of them contains $A.$

If $m=0$ then $Sp(A)=\{0\}$ and our only candidate is $s=1$ with $\alpha=0.$ Obviously $b=0$ must hold so the only subalgebra of codimension two containing $A$ is defined by the conditions $f'(0)=f''(0)=0.$ This is the monomial algebra $<x^3,x^4,x^5>$.

 If $m\neq 0$ then using variable substitution we can restrict ourselves to
 the case $m=1.$ The \cp is equal to $(x^2-1)(x^4+1)$ and we get six elements in the spectrum. Thus $A$ is defined without derivatives and we only need to divide the elements of the spectrum into three pairs. Due to the fact that $x^4$ has the value $1$ on $1,-1$ and the value $-1$ on the other elements of the spectrum $1$ and $-1$ must pair up. Taking the values of $x^3-x$ into account we find the other pairs.

 In fact this is our Example \ref{ex:spectere4} from the very beginning of the text and now we easily find three possible algebras containing $A$ by omitting one of three conditions at a time. (It is important to note that the remaining two are still subalgebra conditions.)

Another application  is finding the intersection of two subalgebras: we take the union of their spectra and the union of their conditions and we only need to check if there are any linear dependencies between them. For example
we can easily find the situations when the intersection of two subalgebras is a monomial subalgebra. Both should have zero spectrum and the conditions of the subalgebras should complete each other so that we obtain conditions of the form $f^{(j)}(0)=0.$

We can go in the opposite direction as well: if we have two subalgebras $A_1,A_2$ we can easily construct the subalgebra they generate together. We take the intersection of the spectra and try to see which conditions  remain.
Let us take an example from \cite{Rob}. Is $<x^3-x,x^4,x^5-1>=\MK[x]?.$

The subalgebra $<x^4,x^5>$ is monomial, so its spectrum is zero. But zero is not in the spectrum of the subalgebra $<x^4,x^3-x>,$  so the intersection of their spectra is empty and we get $\MK[x].$

\chapter{The main conjecture.}
\section{Derivations}
One important corollary of  Theorem \ref{th:main2} is that each $\alpha-$derivation $D$ of a subalgebra $A$ with spectrum $C=\{\alpha_1,\ldots,\alpha_s\}$ can be written as
$$D(f)=\sum_{i=0}^N\sum_{j=1}^sc_{ij}f^{(i)}(\alpha_j),$$ if $\alpha\in C$ and as $cf'(\alpha)$ if $\alpha \not \in C$.
This can be proven by induction using  Theorem \ref{th:codimGorin}.

Our main conjecture is slightly stronger.
\begin{Conj}\label{conj:mainD} If $\alpha$ belongs to the spectrum then
		each $\alpha-$derivation  $D$ can be written as
\begin{equation}\label{eq:deriv} D(f)=\sum_{i=1}^N\sum_{\alpha_j\sim\alpha}c_{ij}f^{(i)}(\alpha_j),\end{equation} thus using pure derivatives (of some order) in the elements of the cluster containing $\alpha$.

\end{Conj}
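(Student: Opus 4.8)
The plan is to prove Conjecture \ref{conj:mainD} by reducing the general case to the key observation that the $0$-derivations of the monomial-like subalgebra generated by two relatively prime powers of $(x-\alpha)$ are spanned by pure derivatives. After the change of variable $\widehat{x}=x-\alpha$ I would assume $\alpha=0$, so that the claim becomes: every $0$-derivation $D$ annihilating $A$ can be written using only $f^{(i)}(\alpha_j)$ with $\alpha_j\sim 0$ and $i\ge 1$. By Theorem \ref{th:main2}(1) we already know $D(f)=\sum_{i=0}^N\sum_{j=1}^s c_{ij}f^{(i)}(\alpha_j)$, and since $D(1)=0$ (any $0$-derivation kills constants, as $D(1)=D(1\cdot 1)=2D(1)$) we get $\sum_j c_{0j}=0$. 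The task is therefore twofold: first eliminate the order-zero terms $f(\alpha_j)$, and second show that the surviving derivative terms only involve $\alpha_j\sim 0$.

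First I would handle the order-zero part. Combining $\sum_j c_{0j}=0$ with the cluster structure from Section on Clusters, the functional $f\mapsto\sum_j c_{0j}f(\alpha_j)$ should be expressible through the conditions $f(\alpha_j)=f(\alpha_{j'})$ that already hold inside clusters — but this needs the derivation property, not merely linearity. The cleaner route is to use the defining relation of a $0$-derivation directly: applying $D(fg)=D(f)g(0)+f(0)D(g)$ to elements supported away from the relevant points forces the order-zero coefficients to be constant on each cluster, and the constraint $\sum_j c_{0j}=0$ then lets me rewrite the order-zero part as a combination of the subalgebra conditions $f(\alpha_j)-f(\alpha_{j'})$, which vanish on $A$; hence they contribute nothing and may be dropped. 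This is essentially the argument already used in the proof of Theorem \ref{th:main2}(2), and I would invoke it.

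The genuinely new content is the second task: showing that a pure-derivative $0$-derivation cannot involve any $\alpha_j$ outside the cluster of $0$. Here I would argue by testing $D$ on well-chosen products. Suppose some $\alpha_j\not\sim 0$ appears with $i\ge 1$. Using Theorem \ref{th:power} there is $N$ with $x^k\pi_A^N\in A$ for all $k$, and I would probe $D$ on elements of the form $(x-\alpha_j)^r\cdot(\text{high power of }\pi_A)$, exploiting that the derivation law $D(fg)=D(f)g(0)+f(0)D(g)$ with $0\not\sim\alpha_j$ makes the two anchor-point values $g(0),f(0)$ interact rigidly. The idea mirrors the contradiction extraction in Theorem \ref{th:ghost} and Theorem \ref{th:derivativenotinspectrum}: if a derivative at a point not equivalent to the anchor $0$ were essential, one could build $f,g\in A$ with prescribed derivative data at $\alpha_j$ but vanishing value at $0$, making $D(fg)=0$ while the Leibniz expansion of the coefficient forces a nonzero value. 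Carrying this out isolates the offending coefficient and shows it must vanish.

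I expect the main obstacle to be exactly this last step — proving that derivative terms at points outside the cluster of $\alpha$ are forced to vanish. The difficulty is that, unlike in Theorem \ref{th:derivativenotinspectrum} where the anchor $\alpha$ lay \emph{outside} the spectrum, here $\alpha$ is \emph{in} the spectrum, so one cannot directly reduce to the two-generator third case; the cluster of $\alpha$ may itself be nontrivial, and one must separate the behavior at $\alpha$'s cluster from behavior at other clusters while respecting all the $f(\alpha_j)=f(\alpha_{j'})$ identities. I anticipate needing a careful induction on codimension via Theorem \ref{th:codimGorin}, treating the derivation and value-equality extension steps separately and checking at each step that no cross-cluster derivative term can be introduced — this bookkeeping, rather than any single computation, is where the real work lies.
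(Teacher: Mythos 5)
The statement you are addressing is not a theorem of the paper but its \emph{main open conjecture} (Conjecture~\ref{conj:mainD}): the paper never proves it in general. What the paper establishes are special cases — the weaker statement allowing order-zero terms and all spectrum elements (stated after Theorem~\ref{th:main2} as a corollary provable by induction from Theorem~\ref{th:codimGorin}), the one-cluster case (Theorem~\ref{th:main2}(2)), algebras defined without derivations (Theorems~\ref{oneclusternoderiv} and~\ref{th:noderivations}), and, by explicit case-by-case classification, all subalgebras of codimension at most three (Chapter~\ref{chapter:class}). So there is no proof in the paper to compare yours against, and a complete argument here would be a genuine advance beyond the paper itself.

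Your proposal does not supply that argument; two steps are genuinely missing. First, the elimination of the order-zero terms: with several clusters, $D(1)=0$ only gives the global relation $\sum_j c_{0j}=0$, whereas rewriting $\sum_j c_{0j}f(\alpha_j)$ as a combination of the differences $f(\alpha_j)-f(\alpha_{j'})$ that vanish on $A$ requires the coefficients to sum to zero \emph{within each cluster separately}; your claim that the derivation law forces this constancy is exactly what needs proof, and the argument of Theorem~\ref{th:main2}(2) does not transfer because it assumes a single cluster. Second, and more seriously, the exclusion of derivative terms at points not equivalent to $\alpha$ — which you yourself flag as the real obstacle — is only a heuristic appeal to the style of Theorems~\ref{th:ghost} and~\ref{th:derivativenotinspectrum}; those proofs work precisely because their anchor point lies outside the relevant spectrum, which fails here by hypothesis. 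Note also that your overall strategy (induction on codimension via Theorem~\ref{th:codimGorin}, treating value-equality and derivation extensions separately, with semi-commutativity) is the paper's own program from Section~\ref{sec:mainplan}; the authors could carry it out only through codimension three, replacing the bookkeeping you defer by concrete devices such as Lemma~\ref{lm:alphabeta}, Corollary~\ref{cor:lift}, and the Groebner-basis computations of Sections~\ref{sec:22k+1} and~\ref{sec:345}. Deferring that step is therefore not a routine omission — it is the entire open problem.
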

 Note that if $\alpha$ does not belong to the spectrum then
according to Theorem \ref{th:derivativenotinspectrum} each $\alpha-$derivation can be written as $D(f)=cf'(\alpha).$

An alternative formulation of this conjecture is the following.
\begin{Conj}\label{conj:main} Each subalgebra $A$ of finite codimension can be described using only derivations of type (\ref{eq:deriv}) and conditions of the type $f(\alpha_i)=f(\alpha_j).$

\end{Conj}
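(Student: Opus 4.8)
The plan is to deduce Conjecture \ref{conj:main} from Conjecture \ref{conj:mainD}, and to attack the latter through the local structure of $A$ at a single cluster. For the reduction, recall that by Theorem \ref{th:codimGorin} and induction $A$ is obtained from $\MK[x]$ by imposing, one at a time, conditions that are either of the form $f(\alpha)=f(\beta)$ or the vanishing of an $\alpha$-derivation of an intermediate subalgebra $B\supseteq A$. The value conditions are already of the allowed type. For a derivation condition, if $\alpha\notin Sp(B)$ then Theorem \ref{th:derivativenotinspectrum} rewrites it as $cf'(\alpha)$, a pure derivative at the singleton cluster $\{\alpha\}$ that this step creates in $A$; and if $\alpha\in Sp(B)$, then Conjecture \ref{conj:mainD} applied to $B$ rewrites the derivation using only pure derivatives at the points of the $B$-cluster of $\alpha$. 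Since $Sp(B)\subseteq Sp(A)$ and the relation $\sim_B$ refines $\sim_A$ (by Theorem \ref{lm:reversing}, an identity $f(\alpha)=f(\beta)$ valid on $B$ is valid on $A$), these points lie in a single $A$-cluster. Thus every condition ends up in one of the two permitted shapes, and it suffices to prove Conjecture \ref{conj:mainD}.

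For Conjecture \ref{conj:mainD}, fix an $\alpha$-derivation $D$ with $\alpha$ in the cluster $C_0$, and set $M_\alpha=\{f\in A \mid f(\alpha)=0\}$, the ideal of $A$ of functions vanishing on $C_0$. The defining identity $D(fg)=D(f)g(\alpha)+f(\alpha)D(g)$ shows that $D$ annihilates every product of two elements of $M_\alpha$; since $A=\MK\oplus M_\alpha$ and $D(1)=0$, the functional $D$ factors through $M_\alpha/M_\alpha^2$. The corollary of Theorem \ref{th:main2} already expresses $D$ as a combination of values and derivatives at all spectrum points, so it remains only to show that, modulo $M_\alpha^2$, the terms attached to points outside $C_0$ disappear, and that the value terms at points of $C_0$ collapse. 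The latter is immediate: for $f\in A$ every $\alpha_j\in C_0$ satisfies $f(\alpha_j)=f(\alpha)$, so the order-zero part $\sum_j c_{0j}f(\alpha_j)$ equals $\big(\sum_j c_{0j}\big)f(\alpha)$, and $D(1)=0$ forces $\sum_j c_{0j}=0$. This leaves exactly the pure-derivative form (\ref{eq:deriv}), provided the out-of-cluster terms vanish.

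The heart of the argument --- and the step I expect to be the main obstacle --- is the following local lemma: there is an integer $M$ such that every $h\in A$ vanishing to order at least $M$ at each point of $C_0$ already lies in $M_\alpha^2$. Granting this, any $f\in A$ is congruent modulo $M_\alpha^2$ to a function determined solely by its jets of bounded order at the points of $C_0$ (subtract from $f$ a function matching those jets and vanishing to high order at every other cluster), so $D(f)$ depends only on the values $f^{(i)}(\alpha_j)$ with $\alpha_j\in C_0$ and $i$ bounded, which is precisely (\ref{eq:deriv}). To prove the lemma one must factor such an $h$ as a product of two elements of $M_\alpha$; the natural device is to combine the high-degree membership guaranteed by Theorem \ref{th:power} (the conductor relation $\pi_A^{\,N}\MK[x]\subseteq A$) with the fact that distinct clusters are never linked by a value condition, in order to build elements of $A$ that vanish to prescribed high order on $C_0$ while being invertible near the other clusters. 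The delicate point is that $\prod_{\alpha_j\in C_0}(x-\alpha_j)$ itself need not lie in $A$, so one cannot peel off $M_\alpha$-factors naively; the factorization must instead be performed locally at $C_0$ and then corrected back into $A$, and controlling this correction while keeping the bound $M$ uniform is exactly where the difficulty --- and the reason the statement remains conjectural --- resides.
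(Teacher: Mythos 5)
You have not proved the statement, but it is essential to note that neither does the paper: Conjecture \ref{conj:main} is genuinely open there, and is established only in special cases --- one cluster (Theorem \ref{th:main2}), algebras defined without derivations (Theorem \ref{th:noderivations}), and all subalgebras of codimension at most three (the classification chapters). Your first paragraph reproduces, essentially step for step, the paper's own reduction plan from Section \ref{sec:mainplan}: induct on codimension via Theorem \ref{th:codimGorin}, absorb the value conditions directly, handle derivations at points outside the spectrum by Theorem \ref{th:derivativenotinspectrum}, and invoke the cluster-derivative statement (your Conjecture \ref{conj:mainD}; in the paper's packaging, Conjecture \ref{conj:dim}) for the rest; your remark that each $B$-cluster sits inside an $A$-cluster is the same monotonicity the paper uses implicitly. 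Your second paragraph is likewise the paper's observation that $\alpha$-derivations are exactly the linear functionals on $M_\alpha/M_\alpha^2$, whence $k_\alpha=\dim M_\alpha/M_\alpha^2$.

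The gap is your ``local lemma'': that for some $M$ every $h\in A$ vanishing to order at least $M$ at each point of the cluster $C_0$ lies in $M_\alpha^2$. This is not an auxiliary statement that can be attacked independently --- it is equivalent to Conjecture \ref{conj:mainD} itself. Indeed, if the lemma holds, every $\alpha$-derivation factors through bounded-order jets at $C_0$ and the value terms vanish identically on $M_\alpha$, giving the form (\ref{eq:deriv}); conversely, if every $\alpha$-derivation has the form (\ref{eq:deriv}), then (since the space of $\alpha$-derivations is finite-dimensional, so the orders $N$ are uniformly bounded, and since $M_\alpha^2$ has finite codimension, so it is the common kernel of these derivations restricted to $M_\alpha$) any $h$ vanishing to high enough order at $C_0$ is killed by all of them and lies in $M_\alpha^2$. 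So your plan repackages the conjecture rather than reducing it, exactly as you concede in your last sentence. The one substantive difference in flavor: the paper works on the dual side, exhibiting explicit derivations to bound $\dim\mathcal{D}_\alpha$ from below and using relations among SAGBI basis elements to bound $k_\alpha$ from above (Theorem \ref{th:SAGBIbasis}, Corollary \ref{cor:lift}, Theorem \ref{th:trivialD}), whereas your lemma calls for a primal factorization argument inside $A$ built from the conductor $x^i\pi_A^N\in A$ of Theorem \ref{th:power}, which the paper never carries out. As a proof attempt, yours stalls at precisely the point where the paper's plan does; as a description of where the difficulty lies, it is accurate.
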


We have already proven this  in Theorem \ref{th:main} in the case when $A$ has only one cluster.

\section{Main plan}\label{sec:mainplan}
We will now introduce some notation and present a general plan for attacking the main conjecture. The aim of the rest of this article is to realise this plan for algebras of small codimension and in this way get a classification of them.

Let $A$ be an algebra in $\MK[x]$ of finite codimension. We define its \textbf{type} \index{type of subalgebra} \index{$T(A)$} $T(A)=(d_1,\ldots, d_s)$  as an ordered list of degrees $d_i$ of the elements of a minimal SAGBI basis. Note that the numbers $d_i$ are simply the generators
of the numerical semigroup  $S=\{\deg f(x)|f(x)\in A\}$ consisting of all degrees of polynomials from $A.$ Thus the type is uniquely determined and for a fixed small codimension we can easily enumerate all possible types. For example, there is only one possible type $(2,3)$ for codimension one and two types, namely $(2,5)$ and $(3,4,5)$ for codimension two. For codimension three the possible types are: $$(2,7),(3,4),(3,5,7), (4,5,6,7).$$

For each $\alpha\in \MK$ we consider an important number $k_\alpha^A$\index{$k_\alpha^A$} which is the dimension of the vector space of all possible  $\alpha-$derivations. Normally we write only $k_\alpha$  if it is clear which subalgebra $A$ is used.

 Note that if $M_\alpha=\{f(x)\in A|f(\alpha)=0\}$ \index{$M_\alpha$} is the corresponding maximal ideal in $A$ then $A=\MK\oplus M_\alpha$ and (more importantly) that $$k_\alpha=\dim M_\alpha/M_\alpha^2.$$
Indeed, for any $\alpha-$derivation $D$ we have that $D(M_\alpha^2)=0.$ So if we choose a SAGBI basis in $M_\alpha$ and choose those $g_i$ from it that form a basis  modulo $M_\alpha^2$ then $D$ will be uniquely determined by the values of $D(g_i).$ On the other hand we can choose those values arbitrarily and the values of $D$ on the remaining elements in SAGBI basis will be uniquely determined.

Another important subspace is $\mathcal{D}^A_\alpha$ \index{$\mathcal{D}^A_\alpha$} consisting of those   $\alpha-$derivations that can be written as a linear combination ordinary derivations $f'(\beta),f''(\beta),\ldots$ for all $\beta\sim\alpha.$ Naturally $\alpha$ is one of such $\beta$ and may be the only one.
Our main conjecture in fact will be easier to prove if we simultaneously prove
\begin{Conj}\label{conj:dim}
		$\dim \mathcal{D}_\alpha^A=k_\alpha$ for each $\alpha.$
\end{Conj}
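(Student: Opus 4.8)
The plan is to prove the stronger statement that every $\alpha$-derivation of $A$ actually lies in $\mathcal{D}_\alpha^A$. Since $\mathcal{D}_\alpha^A$ is by definition a subspace of the space of all $\alpha$-derivations, whose dimension is $k_\alpha$, the inequality $\dim\mathcal{D}_\alpha^A\le k_\alpha$ is automatic, and it is the reverse inclusion that carries all the content. After translating so that $\alpha=0$, I would start from the representation guaranteed by the remark following Theorem~\ref{th:main2}: any $0$-derivation can be written as $D(f)=\sum_{i=0}^N\sum_{j}c_{ij}f^{(i)}(\alpha_j)$, where the sum ranges over the whole spectrum and includes the point-evaluation terms $i=0$. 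The task then becomes a rewriting problem, namely to remove the $i=0$ terms and all terms attached to spectrum points $\alpha_j$ outside the cluster $C_0$ of $0$, exploiting that any functional vanishing on $A$ may be added to the representation without changing $D$.

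I would run the argument by induction on the codimension $n$, using Theorem~\ref{th:codimGorin} to write $A=\ker L$ inside a subalgebra $B$ of codimension $n-1$, with $L(f)=f(\gamma)-f(\delta)$ or $L$ a $\gamma$-derivation of $B$, and with Conjecture~\ref{conj:dim} available for $B$ by the inductive hypothesis. The restriction map from $\alpha$-derivations of $B$ to those of $A$ carries $\mathcal{D}_\alpha^B$ into $\mathcal{D}_\alpha^A$: because $A\subseteq B$ forces $\alpha\sim_B\beta\Rightarrow\alpha\sim_A\beta$, the cluster $C_\alpha^A$ can only grow relative to $C_\alpha^B$, so a pure-derivative combination over $C_\alpha^B$ is still one over $C_\alpha^A$. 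Thus the entire image of restriction is already in $\mathcal{D}_\alpha^A$, and the genuine work is to produce pure-derivative representatives for those $\alpha$-derivations of $A$ that do not extend to $B$.

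To locate and represent these new derivations I would analyse the local algebra $M_0/M_0^2$ directly, tracking its change as one passes from $B$ to $A$. Writing $A=\MK\oplus M_0^A$ with $M_0^A=M_0^B\cap\ker L$ of codimension one in $M_0^B$, one compares $(M_0^A)^2$ with $(M_0^B)^2$ and reads off how $k_0$ moves. In each Gorin case I would exhibit an explicit jet functional $f\mapsto f^{(i)}(\beta)$, with $\beta\in C_0^A$, that becomes a genuine $0$-derivation on $A$ exactly when the new derivation appears; the Leibniz-type identity $D(g^k)=kg^{k-1}(0)D(g)$ together with the multiplicativity computations of Theorem~\ref{th:ghost} are the tools that certify a candidate jet functional really obeys the $0$-derivation law once the extra condition is imposed. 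The surviving point-evaluation terms are then eliminated using $D(1)=0$, i.e.\ $\sum_j c_{0j}=0$, combined with the cluster conditions $f(\alpha_i)=f(\alpha_j)$, which vanish on $A$ and may therefore be subtracted.

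The hard part, I expect, is the locality-to-the-cluster step: showing that an $\alpha$-derivation cannot genuinely depend on the behaviour of $f$ at spectrum points outside $C_\alpha$. Individually a functional $f\mapsto f^{(i)}(\alpha_j)$ with $\alpha_j\notin C_\alpha$ need not vanish on $M_\alpha^2$, so it is not by itself an admissible building block; one must prove that whatever such terms occur in the representation of $D$ combine into something redundant modulo the defining conditions of $A$. This is precisely the delicate situation flagged in Section~\ref{sec:dir2}, where $p'(\alpha)=q'(\alpha)=0$ and derivations are least transparent, and it is where Conjecture~\ref{conj:dim} and Conjecture~\ref{conj:main} feed into one another. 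I would attempt it by establishing that any element of $M_\alpha$ whose jets vanish to sufficiently high order at every point of $C_\alpha$ already lies in $M_\alpha^2$; this would identify $M_\alpha/M_\alpha^2$ with a space of jets supported on the cluster, making the pure-derivative functionals at $C_\alpha$ a spanning set by duality and closing the gap.
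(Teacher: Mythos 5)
You have set out to prove a statement that the paper itself does not prove: Conjecture~\ref{conj:dim} is genuinely open there, and the authors establish it only in special cases --- derivation-free definitions (Theorems~\ref{oneclusternoderiv} and~\ref{th:noderivations}), trivial derivations (Theorem~\ref{th:trivialD}), certain lifting situations (Theorem~\ref{th:derivationlift}, Corollary~\ref{cor:lift}, Corollary~\ref{colD2D3}, Theorem~\ref{oneclusternoderivandderiv}), and subalgebras of codimension at most three via the explicit classifications of Chapter~\ref{chapter:class}. What you have written is, in substance, the paper's own attack plan from Section~\ref{sec:mainplan}: induction on codimension through Theorem~\ref{th:codimGorin}, restriction of derivations from $B$ to $A$, semi-commutativity to normalize which condition is imposed last, and a case split on whether the new condition is an evaluation identity or a derivation. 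Re-deriving that plan is reasonable, but it is a plan, not a proof, and your text concedes as much (``The hard part, I expect, is\dots I would attempt it by\dots'').

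The gap is concentrated in two places. First, your dimension bookkeeping via the restriction map $\mathcal{D}_\alpha^B\to\mathcal{D}_\alpha^A$ ignores that this map need not be injective: if the condition $L$ cutting $A$ out of $B$ is itself an $\alpha$-derivation, then $L$ restricts to zero on $A$, and more generally $k_\alpha^A-k_\alpha^B$ can be arbitrarily large (the paper points this out right after Corollary~\ref{cor:lift}). This is precisely why the paper needs the laborious SAGBI-basis computations of Theorems~\ref{th:SAGBIbasis}, \ref{th:derivationlift}, \ref{th:trivialD} and the $\beta$-derivations section to control $M_\alpha^A/(M_\alpha^A)^2$ case by case; nothing in your sketch substitutes for them. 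Second, the lemma you propose to close the ``locality-to-the-cluster'' step --- that any element of $M_\alpha$ whose jets vanish to sufficiently high order at every point of $C_\alpha$ already lies in $(M_\alpha)^2$ --- is not proved, and it is not an auxiliary fact: by duality it is equivalent to saying that the pure-derivative functionals supported on the cluster span $(M_\alpha/M_\alpha^2)^*$, which is a reformulation of Conjecture~\ref{conj:dim} itself. So the step that carries all the content is deferred rather than done, and the proposal does not go beyond what the paper already achieves.
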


Our plan is to prove both conjecture \ref{conj:main} and \ref{conj:dim} simultaneously using induction on the codimension and consists of the following steps:

The base for the induction is codimension zero, thus $A=\MK[x]$. In this case all is obvious: all $k_\alpha=1$ and $\mathcal{D}^A_\alpha$ is generated by $f(x)\rightarrow f'(\alpha)$ which is obviously
an $\alpha-$derivation.

For the induction step we use Theorem \ref{th:codimGorin} and find a subalgebra $B$ of codimension one less that contains $A.$ Then conjecture \ref{conj:main} for $A$ will follow immediately, because we get $A$ from $B$ using an extra linear condition of the form $f(\alpha)=f(\beta)$ or by demanding that some derivation is equal to zero. Conjecture \ref{conj:dim} gives that the derivation will be an $\alpha-$derivation from $\mathcal{D}^B_\alpha$, so in both cases we get the extra condition of the desired form. Note also that we automatically get subalgebra conditions, because the kernel of a derivation is a subalgebra. So the main difficulty in the induction step will be to prove conjecture \ref{conj:dim}.

 The main challenge here  will be when $\alpha$ belongs to the spectrum of $A$, because otherwise we can simply use Theorem \ref{th:derivativenotinspectrum}.

 One extra improvement is based on the \textbf{semi-commu\-ta\-ti\-vity} \index{semi-commutativity} of the linear conditions $L_i(f(x))=0$ which we put on $\MK[x]$ to obtain $A.$ By this we mean the following: if the last condition is of the form $f(\beta)=f(\gamma),$
but an earlier one is of the form $D(f)=0$ for some $\alpha-$derivation we can interchange them, and hence suppose that $D(f)=0$ is the last condition. This changes the intermediate subalgebra $B$ but it will still be of the correct codimension so we can still use induction on the codimension.
The reason is that the derivation will still be a derivation  though for a smaller subalgebra.  Note also that the opposite exchange might be impossible, that is why we call it semi-commutativity.

This allows us to restrict ourselves to only two cases: either all conditions in $A$ are of the form $f(\alpha_i)=f(\beta_i)$ or $A$ was obtained from $B$ using some $\beta-$derivation. In the first case, as we will see later,  we can describe the derivations
directly.

%

In the second case we consider first an easy subcase, when $\alpha$ belongs to the spectrum of $A,$ but does not belong to the spectrum of $B.$ Because $f(\alpha)=f(\lambda)$ or $f'(\alpha)=0$ is valid in $A$ but not in $B$ we can, using the semi-commutativity, WLOG suppose
that the last of those condition was added to define $A$ (because it is a subalgebra condition and increases the codimension by one), thus $\alpha=\beta.$  Thus we have simply added $\alpha$ to the spectrum of $B.$  We  need to prove that $k_\alpha=2$ and $\mathcal{D}_\alpha$ is generated by $f''(\alpha),f'''(\alpha).$ For all other $\mu$ in the spectrum we should prove that $k_\mu$ and $\mathcal{D}_\mu$ are unchanged.

The case when $\beta$  belong to the spectrum of $B$ is more complicated and often demands the creation of non-trivial elements in $\mathcal{D}_\alpha$. We will show this in Chapter \ref{chapter:class} when applying the described approach for small codimensions.

\section{Subalgebras $A(C).$}
We will now realise some steps in our plan and prove the conjecture \ref{conj:dim} in the case were there are no derivations involved in the construction of $A.$
We start from a simplest  case when $A$ has only one cluster $C=\{\alpha_1,\ldots,\alpha_m\}.$
\begin{theorem}\label{oneclusternoderiv}  Let $A=A(C)$ be a
subalgebra of codimension $m-1$ that is defined by the conditions
$$f(\alpha_1)=f(\alpha_2)=\cdots f(\alpha_m).$$

\begin{itemize}
  \item $A$ has type $(m,m+1,\ldots, 2m-1).$
  \item  The elements
$$p_i=(x-\alpha_1)^i(x-\alpha_2)\cdots(x-\alpha_m)$$ for $i=1,\ldots m$ form a minimal SAGBI basis.
  \item For each $\alpha=\alpha_i$ any $\alpha-$derivations can be written as $f\rightarrow\sum c_jf'(\alpha_j).$
\item For $\alpha\not\in C$ any $\alpha-$derivation can be written as $f\rightarrow cf'(\alpha).$
\item The subalgebra $A$  satisfies  Conjecture \ref{conj:dim}.
\end{itemize}

\end{theorem}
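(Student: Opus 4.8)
The plan is to derive Conjecture \ref{conj:dim} for $A=A(C)$ from an explicit description of all $\alpha$-derivations, after first pinning down the ideal $M_\alpha$ and the number $k_\alpha$. I would organise the argument around three observations, the middle one being the real work.

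First I would identify $M_\alpha$ for $\alpha\in C$. Since every $f\in A$ satisfies $f(\alpha_1)=\cdots=f(\alpha_m)$, the condition $f(\alpha_i)=0$ forces $f$ to vanish at \emph{all} of $\alpha_1,\dots,\alpha_m$, and conversely every polynomial vanishing at all the $\alpha_j$ lies in $A$ (its values are all equal). Hence $M_{\alpha_i}=\pi\,\MK[x]$ is the \emph{same} ideal for each $i$, where $\pi=(x-\alpha_1)\cdots(x-\alpha_m)$. Therefore $M_{\alpha_i}^2=\pi^2\MK[x]$ and division by $\pi$ gives an isomorphism $M_{\alpha_i}/M_{\alpha_i}^2\cong \MK[x]/(\pi)$, so by the formula $k_\alpha=\dim M_\alpha/M_\alpha^2$ we obtain $k_{\alpha_i}=\deg\pi=m$ for every $i$.

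Second, I would exhibit $m$ independent first-derivative functionals that span the space of $\alpha_i$-derivations. Each map $D_j\colon f\mapsto f'(\alpha_j)$ is an $\alpha_i$-derivation, because $\alpha_j\sim\alpha_i$ makes $f(\alpha_j)=f(\alpha_i)$ for all $f\in A$, collapsing the product rule to the $\alpha_i$-derivation identity. To prove $D_1,\dots,D_m$ linearly independent I would test them on the polynomials $g_l=(x-\alpha_l)\prod_{j\ne l}(x-\alpha_j)^2$, each of which lies in $A$ (it vanishes at every $\alpha_j$) and satisfies $g_l'(\alpha_j)=0$ for $j\ne l$ (a double root there) while $g_l'(\alpha_l)=\prod_{j\ne l}(\alpha_l-\alpha_j)^2\ne0$. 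Evaluating a relation $\sum_j c_jD_j=0$ on $g_l$ then yields $c_l=0$ for each $l$. Having $m$ independent elements inside an $m$-dimensional space, the $D_j$ form a basis, which is exactly the statement that every $\alpha_i$-derivation equals $f\mapsto\sum_j c_jf'(\alpha_j)$. I expect this step to be the main obstacle, since the computation of $k_{\alpha_i}$ together with the independence of the $D_j$ is the only place where something nontrivial must be checked.

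Finally I would assemble the conjecture. As $\mathcal{D}_\alpha^A$ is by definition a subspace of the space of $\alpha$-derivations, we always have $\dim\mathcal{D}_\alpha^A\le k_\alpha$, so only the reverse inclusion is at issue. For $\alpha=\alpha_i\in C$ the expression $\sum_j c_jf'(\alpha_j)$ is a combination of pure first derivatives at points $\alpha_j\sim\alpha_i$, hence lies in $\mathcal{D}_{\alpha_i}^A$; since by the second step this exhausts the $\alpha_i$-derivations, $\dim\mathcal{D}_{\alpha_i}^A=k_{\alpha_i}$. For $\alpha\notin C$, using that $Sp(A)=C$ (the defining conditions add no ghost elements, by Theorem \ref{th:ghost}), Theorem \ref{th:derivativenotinspectrum} gives that every $\alpha$-derivation has the form $f\mapsto cf'(\alpha)\in\mathcal{D}_\alpha^A$ with $k_\alpha=1$, so equality holds there as well. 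This establishes $\dim\mathcal{D}_\alpha^A=k_\alpha$ for every $\alpha$, which is the final statement.
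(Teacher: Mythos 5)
Your proposal proves only three of the theorem's five assertions: it never addresses the first two bullets, namely that $T(A)=(m,m+1,\ldots,2m-1)$ and that the polynomials $p_i=(x-\alpha_1)^i(x-\alpha_2)\cdots(x-\alpha_m)$, $i=1,\ldots,m$, form a \emph{minimal} SAGBI basis. These are part of the statement, and the paper devotes the first half of its proof to them. The repair is short with what you already have: for nonconstant $f\in A$ the polynomial $f-f(\alpha_1)$ vanishes at all $m$ points, so $\deg f\ge m$; the $p_i$ lie in $A$ and realize the degrees $m,\ldots,2m-1$; hence the semigroup of degrees of $A$ is exactly $\{m,m+1,m+2,\ldots\}$, whose unique minimal generating set is $\{m,\ldots,2m-1\}$ (no element below $2m$ is a sum of two elements $\ge m$), so the $p_i$ form a minimal SAGBI basis and the type is as claimed. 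Without some such argument the proposal is incomplete as a proof of the theorem.

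For the parts you do cover (the two derivation descriptions and Conjecture \ref{conj:dim}), your argument is correct and takes a genuinely different route from the paper's. The paper only proves the inequality $k_{\alpha_i}\le m$, by counting the SAGBI basis elements in $M_{\alpha_1}$, and then gets equality from the linear independence of the $m$ derivations $f\mapsto f'(\alpha_j)$, which it checks by evaluating them on the basis $p_i$ and reducing the resulting determinant to a Vandermonde determinant. You instead compute $k_{\alpha_i}$ exactly and in advance from the principal-ideal structure: $M_{\alpha_i}=\pi\,\MK[x]$ with $\pi=(x-\alpha_1)\cdots(x-\alpha_m)$, whence $M_{\alpha_i}^2=\pi^2\MK[x]$ and $M_{\alpha_i}/M_{\alpha_i}^2\cong\MK[x]/(\pi)$ has dimension $m$; and you check independence on the tailor-made polynomials $g_l=(x-\alpha_l)\prod_{j\ne l}(x-\alpha_j)^2$, whose double roots make the evaluation matrix diagonal. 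This buys a cleaner argument (no determinant manipulation, and the exact value of $k_{\alpha_i}$ without invoking the SAGBI basis at all), which is precisely why your proof could bypass the SAGBI basis — and why the first two bullets went unproved. Your handling of $\alpha\notin C$, via Theorem \ref{th:ghost} to get $Sp(A)=C$ and then Theorem \ref{th:derivativenotinspectrum}, is legitimate since both results precede this theorem in the paper, and it matches what the paper dismisses as ``trivial''.
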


\begin{proof}
	Obviously the polynomials $p_i$ satisfy the conditions and we cannot have
	(non-constant) polynomials of degree less then $m$ in $A.$ (If $\alpha_1$
	is a root then all $\alpha_i$ are roots). Because the semigroup generated
	by the degrees $m,m+1,\ldots, 2m-1$ does not contain $1,\ldots ,m-1$, but
	all other positive integers, we get that $p_i$ must form a SAGBI basis of
	$A$ (inside $M_{\alpha_1}=M_{\alpha_i}).$

	We have $m$ linearly independent derivations $f\rightarrow f'(\alpha_i)$
	and since $k_{\alpha_i}\le m$, conjecture \ref{conj:dim} is valid. The
	linear dependency can be checked directly on the given SAGBI basis -- the
	corresponding determinant can easily be reduced to a Vandermonde
	determinant and is equal to the product of some powers of
	$\alpha_i-\alpha_j$ with $i\neq j.$

	The rest of the statements are trivial.
\end{proof}

\section[different clusters]{Algebras with different clusters defined without derivations}

Our next step is  to generalize  Theorem~\ref{oneclusternoderiv} and  prove conjecture \ref{conj:dim}  for  algebras with several clusters but no derivations in its definition.

We begin with an easy but important statement that is valid for any subalgebra.

\begin{lemma} \label{lm:alphabeta} Suppose that  $\alpha\not\sim\beta,$ that is $\alpha$ and $\beta$ belong to different clusters in the subalgebra $B.$ Suppose that $D_1$ is a non-zero  $\alpha-$derivation and $D_2$ is a  $\beta-$derivation. Then
 \begin{itemize}
                \item  $D_1\neq D_2$, thus if $D$ is both $\alpha-$derivation and $\beta-$derivation then $D=0.$
                \item Moreover, $D_1$ and $D_2$ cannot coincide on the subalgebra $A$ obtained by adding the condition $f(\alpha)=f(\beta)$ to $B.$
              \end{itemize}

\end{lemma}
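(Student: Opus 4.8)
The plan is to establish the first (pointwise) statement and then reduce the ``moreover'' part to it. Throughout I would write $\delta(f)=f(\alpha)-f(\beta)$, a linear functional on $B$ that is non-zero precisely because $\alpha\not\sim\beta$; fix once and for all an $h\in B$ with $\delta(h)\neq 0$.

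For the first claim, suppose $D$ is simultaneously an $\alpha$-derivation and a $\beta$-derivation of $B$. Writing out $D(fg)$ in the two ways and subtracting, I would obtain the identity $D(f)\delta(g)+D(g)\delta(f)=0$ for all $f,g\in B$. Setting $f=g=h$ gives $2D(h)\delta(h)=0$, hence $D(h)=0$ (characteristic zero, $\delta(h)\neq 0$); feeding this back into the identity with $g=h$ gives $D(f)\delta(h)=0$, so $D\equiv 0$. This is exactly the statement that if $D_1$ and $D_2$ agree on $B$ then the common map vanishes, so a non-zero $D_1$ can never equal a $\beta$-derivation $D_2$ on all of $B$.

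For the ``moreover'' part, I would first normalise $h$ (subtract the constant $h(\beta)$ and rescale, both operations keeping us inside $B$) so that $h(\alpha)=1$ and $h(\beta)=0$. Then $B=A\oplus\MK h$, because $f-\delta(f)h\in A$ for every $f\in B$. The key step is to test the assumed equality $D_1|_A=D_2|_A$ on the elements $h^2-h$ and $h^3-h$, both of which lie in $A$ since they vanish at $\alpha$ and at $\beta$. Using $D_1(h^k)=kD_1(h)$ and $D_2(h^k)=0$ for $k\ge 2$ (these follow from $h(\alpha)=1$, $h(\beta)=0$ and the respective Leibniz rules), the two test elements yield $D_1(h)=-D_2(h)$ and $2D_1(h)=-D_2(h)$, which force $D_1(h)=D_2(h)=0$. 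Since $D_1$ and $D_2$ already agree on $A$ and now also on $h$, they agree on all of $B$ by the direct-sum decomposition; the first part then applies to $D_1=D_2$ and gives $D_1=0$, contradicting the hypothesis that $D_1$ is non-zero.

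The only subtle point, and the one I would be most careful about, is this second bullet. The naive approach — noting that on $A$ a $\beta$-derivation is automatically an $\alpha$-derivation, so $D_1|_A$ and $D_2|_A$ are both $\alpha$-derivations of $A$ — does not by itself produce a contradiction, because $A$ may well carry non-zero $\alpha$-derivations. The essential idea is therefore to step outside $A$ and exploit the single extra generator $h$: the powers $h^k$ behave completely differently under an $\alpha$-derivation (where $h(\alpha)=1$) and under a $\beta$-derivation (where $h(\beta)=0$), and it is precisely this asymmetry, evaluated on the $A$-elements $h^k-h$, that pins down the values $D_1(h)$ and $D_2(h)$ and collapses the difference $D_1-D_2$ to the situation already handled in the first part.
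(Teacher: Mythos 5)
Your proof is correct, and for the crucial ``moreover'' part it takes a genuinely different and more elementary route than the paper. The paper also tests the assumed equality $D_1|_A=D_2|_A$ on elements of $A$ built from a single polynomial $f$ with $a=f(\alpha)\neq f(\beta)=b$, namely $f^kL(f)-fL(f^k)$ where $L(p)=p(\alpha)-p(\beta)$; but because $a,b$ are left general, each $k\ge 2$ yields a relation between $D_1(f)$ and $D_2(f)$ whose coefficients are polynomials in $a,b$, and the paper must then set the associated $2\times 2$ determinants for $k=2,3,4,5$ to zero and invoke a Gr\"obner-basis computation in Maple to conclude that $a=b$, the desired contradiction. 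Your normalization $h(\alpha)=1$, $h(\beta)=0$ (legitimate, since $B$ contains the constants) collapses all of this: $D_1(h^k)=kD_1(h)$ while $D_2(h^k)=0$ for $k\ge 2$, so the two test elements $h^2-h,\,h^3-h\in A$ give the linear system $D_1(h)=-D_2(h)$, $2D_1(h)=-D_2(h)$, forcing $D_1(h)=D_2(h)=0$; the decomposition $B=A\oplus\MK h$ then upgrades agreement on $A$ to agreement on all of $B$, and your first bullet finishes the argument. What your approach buys is the removal of the computer-assisted step and a transparent reason for the contradiction (the asymmetric behaviour of the powers $h^k$ under the two derivations); what the paper's approach avoids is the need for any normalization or direct-sum decomposition, extracting the contradiction pointwise at an arbitrary $f$. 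Your treatment of the first bullet is essentially equivalent to the paper's (the paper evaluates $D(f^2)$ and $D(fg)$ with $D(g)=1$, you antisymmetrize the two Leibniz identities), just packaged more cleanly.
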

\begin{proof} Suppose the opposite, $D_1=D_2=D.$ By the condition there exists a polynomial $f$  such that $f(\alpha)\neq f(\beta).$ Because $D$ is both a $\beta-$ and an $\alpha-$derivation we obtain:
$$D(f^2)=2f(\beta)D(f)=2f(\alpha)D(f)\Rightarrow D(f)=0.$$

Also, $D$ is non-zero, so there exists a polynomial $g$ such that $D(g)=1.$ Now we  have:
$$D(fg)=f(\beta)D(g)+D(f)g(\beta)=f(\beta).$$ But the same arguments gives $D(fg)=f(\alpha)$ and we get a contradiction.

Now we want to prove the second statement. Suppose the opposite. By the first statement we can find $f\in B$ such that $D_1(f)\neq D_2(f).$  Denote $a=f(\alpha), b=f(\beta).$ Note that $a\neq b$ otherwise $f\in A$ and $D_1(f)=D_2(f).$ Let $L(p)=p(\alpha)-p(\beta)$. Then
$h=f^kL(f)-fL(f^k)\in A$ and because $D_1$ and $D_2$ coincide on $A$ we get:
$$0=D_1(h)-D_2(h)=$$
$$(D_1-D_2)(f^kL(f))-(D_1-D_2)(fL(f^k))=$$
$$\left(ka^{k-1}D_1(f)-kb^{k-1}D_2(f)\right)(a-b)-$$
$$\left(D_1(f)-D_2(f)\right)(a^k-b^k).$$

Cancelling $(a-b)$ we get for each $k\ge 2$ the equation
$$(ka^{k-1}-a^{k-1}-a^{k-2}b-\cdots- b^{k-1})D_1(f)=$$
$$(kb^{k-1}-a^{k-1}-a^{k-2}b-\cdots- b^{k-1})D_2(f).$$
Considering this as a linear system for $D_i(f)$ we get that the corresponding $2\times2$ determinants  should equal zero and from each pair of equations we get a polynomial equation in $a,b$.
Already for the equations from $k=2,3,4,5$ we find, using the Maple package Groebner, that all the solutions of the system satisfy  $a=b$. This provides us with the contradiction that completes our proof.
\end{proof}

\begin{theorem}\label{th:noderivations}

	Consider the algebra $$A=A(C_1) \cap A(C_2) \ldots \cap A(C_t)$$ containing all polynomials that are constant on each of the clusters $C_i.$ Then
\begin{itemize}
  \item For each $\alpha\in C_i$ any $\alpha-$derivation can be written as $f\rightarrow\sum_{\alpha_j\in C_i} c_jf'(\alpha_j).$
\item For $\alpha \not \in Sp(A)$ any $\alpha-$derivation can be written as $f\rightarrow cf'(\alpha).$
\item The subalgebra $A$  satisfies  Conjecture \ref{conj:dim}.
\end{itemize}

\end{theorem}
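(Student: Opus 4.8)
The plan is to reduce all three claims to a single dimension count, namely $k_\alpha = |C_i|$ for $\alpha \in C_i$. Recall that the space of $\alpha$-derivations on $A$ is canonically $(M_\alpha/M_\alpha^2)^*$ and hence has dimension $k_\alpha = \dim M_\alpha/M_\alpha^2$, as noted in the Main plan. For $\alpha \notin Sp(A)$ there is nothing new: Theorem \ref{th:derivativenotinspectrum} says every $\alpha$-derivation equals $cf'(\alpha)$, and since $A$ has finite codimension it contains a polynomial with $f'(\alpha)\neq 0$, so $k_\alpha = 1 = \dim\mathcal{D}_\alpha^A$ and the second and third bullets hold at such $\alpha$. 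So fix $\alpha\in C_i$ and write $C_i=\{\alpha_1,\ldots,\alpha_m\}$ with $m=|C_i|$. Since $\alpha_j\sim\alpha$ for every $j$, each first derivative $f\mapsto f'(\alpha_j)$ is an $\alpha_j$-derivation and therefore also an $\alpha$-derivation on $A$; these lie in $\mathcal{D}_\alpha^A$ by definition.

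First I would prove the lower bound $k_\alpha\geq m$ by exhibiting $m$ linearly independent such derivations. Put $f_j=(x-\alpha_j)\prod_{\gamma\in Sp(A),\,\gamma\neq\alpha_j}(x-\gamma)^2$. Each $f_j$ vanishes at every point of $Sp(A)$, hence is constant (equal to $0$) on each cluster and lies in $M_\alpha$; moreover $f_j'(\alpha_j)\neq 0$ while $f_j'(\gamma)=0$ for every other $\gamma\in Sp(A)$. Consequently, if $\sum_j c_j f'(\alpha_j)$ vanishes on all of $A$, then applying it to $f_k$ gives $c_k f_k'(\alpha_k)=0$, so every $c_k=0$. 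Thus the $m$ derivations $f\mapsto f'(\alpha_j)$ are independent, giving $m\leq\dim\mathcal{D}_\alpha^A\leq k_\alpha$.

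The substance is the matching upper bound $k_\alpha\leq m$, which I would obtain by identifying $M_\alpha^2$ exactly. Let $V=\{h\in A : h\text{ has a root of order }\geq 2\text{ at each point of }C_i\}$; equivalently, writing $\pi_i=\prod_{\alpha_j\in C_i}(x-\alpha_j)$, these are the $h\in A$ divisible by $\pi_i^2$. Leibniz immediately gives $M_\alpha^2\subseteq V$. Granting the reverse inclusion $V\subseteq M_\alpha^2$, the correction $h-\sum_j \frac{h'(\alpha_j)}{f_j'(\alpha_j)}f_j$ lies in $V=M_\alpha^2$ for every $h\in M_\alpha$, so the classes of $f_1,\ldots,f_m$ span $M_\alpha/M_\alpha^2$ and $k_\alpha\leq m$; combined with the lower bound this forces $k_\alpha=m=\dim\mathcal{D}_\alpha^A$, which is Conjecture \ref{conj:dim}, and simultaneously shows every $\alpha$-derivation is a combination $\sum_{\alpha_j\in C_i}c_j f'(\alpha_j)$, the first bullet.

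The main obstacle is therefore the core lemma $V\subseteq M_\alpha^2$. I would prove it by descent on degree: for $h\in V$ nonconstant of degree $d$, I claim $d$ is a sum $d=d_1+d_2$ of two positive degrees of elements of $A$, so that choosing monic $a,b\in M_\alpha$ of degrees $d_1,d_2$ (subtract the value at $\alpha$ from a monic element of $A$ of the required degree) yields $ab\in M_\alpha^2\subseteq V$ with the same leading term as $h$ up to a scalar; then $h-\lambda ab\in V$ has strictly smaller degree and induction applies, the descent terminating because every nonzero element of $V$ has degree at least $2m$. The delicate point is the degree bookkeeping: one must check that every degree occurring in $V$ lies in the sumset $S_{>0}+S_{>0}$ of the semigroup $S$ of degrees of $A$, using Theorem \ref{th:power} to guarantee that $A$, and hence $M_\alpha$, contains monic polynomials of all large degrees. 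This is transparent in the one-cluster situation of Theorem \ref{oneclusternoderiv}, where $S_{>0}$ starts at $m$ and $V$ starts at $2m$, and I would verify that it survives the presence of several clusters. Finally, Lemma \ref{lm:alphabeta} confirms that derivations attached to distinct clusters are genuinely distinct, so the local description obtained at each $\alpha$ assembles consistently across $Sp(A)$.
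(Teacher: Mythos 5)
Your overall architecture is sound: reducing all three bullets to the equality $k_\alpha=|C_i|$, the lower bound via the polynomials $f_j$, and the spanning argument granted the identification $M_\alpha^2=V$ are all correct, and that identification is in fact true. The genuine gap is in your proof of the core lemma $V\subseteq M_\alpha^2$: the degree claim it rests on --- that every degree occurring in $V$ lies in $S_{>0}+S_{>0}$ --- is false once there is more than one cluster, which is precisely the case this theorem adds over Theorem~\ref{oneclusternoderiv}. Take $C_1=\{0,1\}$, $C_2=\{2,3\}$, so $A=\{f\mid f(0)=f(1),\ f(2)=f(3)\}$, and let $\alpha\in C_1$, $\pi_1=x(x-1)$. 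Here $S=\{0,3,4,5,\dots\}$ (the gaps are $1,2$; e.g.\ $x(x-1)(x-\frac72)\in A$ has degree $3$), so $S_{>0}+S_{>0}=\{6,7,8,\dots\}$. But $V$ contains the degree-five element
$$h(x)=x^2(x-1)^2\left(x-\frac{25}{8}\right),$$
since $h(2)=h(3)=-\frac92$. Every nonconstant element of $M_\alpha$ has degree at least $3$, so every single product $ab$ with $a,b\in M_\alpha$ has degree at least $6$ and can never match the leading term of $h$; your descent gets stuck at $h$. Note that $h$ \emph{does} lie in $M_\alpha^2$, but only as a linear combination of products of degree at least $6$ whose leading terms cancel --- exactly the phenomenon a single-product, degree-by-degree subduction cannot see, so the descent cannot be repaired by bookkeeping alone.

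The lemma therefore needs a different argument. One workable route: write $M_\alpha=\pi_1W$ and $V=\pi_1^2U$, where $W=\{v\mid \pi_1v\in A\}$ and $U=\{u\mid \pi_1^2u\in A\}$; using an element of $W$ that is nonzero on all other clusters, show that $\mathrm{span}\{vw\mid v,w\in W\}$ contains $\rho\,\MK[x]$ with $\rho=\prod_{j\neq i}\pi_j$, and then compare $U$ with the product span modulo $\rho$, where both reduce to the span of the functions taking the value $1/\pi_1(\gamma)^2$ on a single cluster $C_j$ and $0$ elsewhere. The paper avoids computing $M_\alpha^2$ altogether: it restricts a given $\alpha$-derivation to the smaller algebra obtained by merging two clusters (adding one condition $f(\alpha)=f(\beta)$ with $\beta\not\sim\alpha$), applies the already-known description there (Theorem~\ref{oneclusternoderiv}, then induction on the number of clusters), subtracts the part supported on the cluster of $\alpha$, and kills the remainder with Lemma~\ref{lm:alphabeta}, since that remainder is an $\alpha$-derivation agreeing with a $\beta$-derivation on the merged algebra. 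Either of these would close your gap; as written, the key step of your proof fails.
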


\begin{proof}
	Let us first consider the case with two clusters only, I.e $A = A(C_1) \cap
	A(C_2)$. Let $C=C_1\bigcup C_2.$ If $D$ is an $\alpha-$derivation over $A$
	with $\alpha\in C_1$ then $D$ is also an $\alpha-$ derivation when
	restricted to the subalgebra $A' = A(C).$ By Theorem
	\ref{oneclusternoderiv} this derivation is a linear combination $\sum
	c_iD_i$ of the derivations $D_i:f\rightarrow f'(\alpha_i)$ with
	$\alpha_i\in C.$  Subtracting the $\alpha-$derivation $\sum_{\alpha_i\in
	C_1} c_iD_i$ from $D$ we get an $\alpha-$derivation $D'$ which coincides
	with the $\beta-$derivation $\sum_{\alpha_i\in C_2} c_iD_i$ with $\beta\in
	C_2$ on $A(C).$ By Lemma \ref{lm:alphabeta} this is possible only if $D'=0$
	which proves our statement.

In the general case we can now easily use the same argument and induction on the number of clusters. If $\beta\not\sim \alpha$ we add an extra condition $f(\alpha)=f(\beta)$ and get a subalgebra $A'$ with a smaller number of clusters. Thus the restriction $D$ to $A'$ can be written as a linear combinations of $f'(\alpha_i)$ with $\alpha_i\sim\alpha$ or $\alpha_i\sim\beta.$ Subtracting the $\alpha-$ part from $D$ we get a derivation on $A$ which is both an $\alpha-$ and a $\beta-$derivation on $A$ and again lemma \ref{lm:alphabeta} finishes the proof.

To prove the conjecture is now easy because we have an upper bound and simultaneously sufficiently many explicitly given derivations.
\end{proof}

\chapter{SAGBI Bases and Derivations}
Now it is time to understand how the SAGBI basis changes when we add an extra condition to a subalgebra.

\section{Constructing SAGBI bases}\label{sec:SAGBI}

One useful thing we want to mention is that the inductive approach which we
have used throughout the article also allows us to relatively easily create
SAGBI bases in $A.$ Namely, when we have a SAGBI basis $G$ for $B$ and get $A$
by adding the condition $L(f)=0$ we do the following to obtain a SAGBI basis of
$A$.  All elements of $G$ that satisfy the extra condition $L(f)=0$ will remain
in the SAGBI basis. There must, however, be at least one element that does not
satisfy the condition. Let us choose such a $g\in G$ of minimal degree $d$,
thus $L(g)\neq 0.$ Note that exactly this degree $d$ should disappear from the
numerical semigroup $S$ of degrees. Thus we know the new semigroup $S_A=S \setminus \{d\}$
and can easily find the type $(s_1,\ldots, s_m)$ of the subalgebra $A.$ For
each degree $s_i$ we find  a polynomial $h_i\in B$ and our new SAGBI basis
consists of $f_i=L(g)h_i-L(h_i)g,$. If we wish to make them monic we can just
divide each $f_i$ by its highest coefficient.  In order to further simplify
calculations we want the basis elements to be inside $M_\alpha,$ and there are
several ways to do this. The simplest one is to replace $f_i(x)$ by
$f_i(x)-f_i(\alpha),$ but a more efficient way is to choose $h_i$ and $g$ in
$M_\alpha$ from the start. Sometimes it may be clever to choose a linear
combination with the previous $f_j$ to get as high degree of the factor
$x-\alpha$ as possible.

 We summarize this as follows.
\begin{theorem}\label{th:SAGBIbasis} Let $G$ be a SAGBI basis for $B$ chosen inside $M_\alpha^B.$ Let $g=g_i$ be an element of minimal degree in this basis that does not belong to $A.$ Suppose WLOG that  $L(g)=1.$
\begin{itemize}
  \item The set consisting of polynomials $g_j,\ h_j=gg_j-L(gg_j)g$ with $g_j \in G$, $j\neq i$ and two polynomials $f_k=g^k-L(g^k)g$ for $k=2,3$ forms a  SAGBI basis for $A$ inside $M_\alpha^A.$ (Not necessary a minimal one.)
  \item If $A$ has type $(s_1,\ldots,s_m)$ then to construct a minimal SAGBI basis one should for each $s=s_j$ find a polynomial $p_s \in B$ of degree $s$  and take $p_s-L(p_s)g.$  If all $p_s$ are chosen inside $M_\alpha^B$ then the obtained SAGBI basis will be inside $M_\alpha^A.$
   \end{itemize}
 \end{theorem}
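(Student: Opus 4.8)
The plan is to verify directly the defining property of a SAGBI basis: a finite subset $G'\subseteq A$ is a SAGBI basis of $A$ precisely when $G'\subseteq A$ and the degrees of its elements generate the numerical semigroup $S_A=\{\deg f\mid f\in A\text{ non-constant}\}$. Write $d=\deg g$. I would first normalise: since the elements of $G$ have distinct degrees, $\deg g_j\neq d$ for $j\neq i$, so replacing each $g_j$ by $g_j-L(g_j)g$ leaves its degree unchanged and keeps $G$ a SAGBI basis of $B$ inside $M_\alpha^B$ (as $g\in M_\alpha^B$); hence we may assume $L(g_j)=0$, i.e. $g_j\in A$, for all $j\neq i$. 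With $L(g)=1$ the identities $L(h_j)=L(gg_j)-L(gg_j)L(g)=0$ and $L(f_k)=L(g^k)-L(g^k)L(g)=0$ give $h_j,f_k\in A$, and every listed polynomial vanishes at $\alpha$ because it is built from elements of $M_\alpha^B$. So the whole set lies in $A\cap M_\alpha^A$.

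Next I would read off the degrees, $\deg g_j$ (for $j\neq i$), $\deg h_j=d+\deg g_j$, and $\deg f_k=kd$ for $k=2,3$, and show by a short case analysis that they generate $S_B\setminus\{d\}$. Writing any $s\in S_B$ as $s=c\,d+\sum_{j\neq i}c_j\deg g_j$ (possible because $G$ is a SAGBI basis of $B$): if $c=0$ the term $s$ is built from the $\deg g_j$; if $c=1$ and $s\neq d$ some $c_{j_0}\geq 1$, so $s=(d+\deg g_{j_0})+\sum_{j\neq i}c_j'\deg g_j$ uses $\deg h_{j_0}$; and if $c\geq 2$ then $c\,d\in\langle 2d,3d\rangle$ (every integer $\geq 2$ is a nonnegative combination of $2$ and $3$), so $s$ is built from $\deg f_2,\deg f_3$ and the $\deg g_j$. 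Hence the degrees of $G'$ generate exactly $S_B\setminus\{d\}$.

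It remains to identify $S_A$, and this is the crux of the argument. From $A\subseteq B$ we get $S_A\subseteq S_B$, and the previous two steps give $S_B\setminus\{d\}\subseteq S_A$; so everything reduces to proving $d\notin S_A$, which I expect to be the main obstacle. I would argue by contradiction: if $p\in A$ is monic of degree $d$, then $p-g\in B$ has degree $<d$, and subducing $p-g$ inside $B$ with the basis $G$ uses at each step a product of degree $<d$, which therefore cannot contain the factor $g$ (of degree $d$) and is thus a product of basis elements of degree $<d$, all lying in $A$ by the minimality of $d$. Consequently $p-g$ differs from a constant by an element of $A$, so $p-g\in A$ and $L(p-g)=0$; but $L(p-g)=L(p)-L(g)=-1$, a contradiction. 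Therefore $S_A=S_B\setminus\{d\}$, the degrees of $G'$ generate $S_A$, and $G'\subseteq A\cap M_\alpha^A$, which proves the first part (the basis, as stated, need not be minimal, since the $f_k$ and several $h_j$ are typically products of lower-degree members).

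For the second part, the type $(s_1,\dots,s_m)$ is by definition the minimal generating set of $S_A=S_B\setminus\{d\}$. For each $s_j\in S_A\subseteq S_B$ pick $p_{s_j}\in B$ of degree $s_j$ and put $q_j=p_{s_j}-L(p_{s_j})g$; then $L(q_j)=0$, so $q_j\in A$, and $\deg q_j=s_j$. Indeed $s_j\neq d$: if $s_j>d$ the subtracted term has the smaller degree $d$, while if $s_j<d$ then $p_{s_j}$ already subduces in $B$ through degree-$<d$, hence $A$-, elements, so $p_{s_j}\in A$, $L(p_{s_j})=0$ and $q_j=p_{s_j}$. Thus the $q_j$ have degrees exactly $s_1,\dots,s_m$ and form a SAGBI basis of $A$, minimal because those degrees minimally generate $S_A$; and choosing each $p_{s_j}$ in $M_\alpha^B$ gives $q_j(\alpha)=0$, placing the basis inside $M_\alpha^A$.
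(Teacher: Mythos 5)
Your proof is correct, and its core -- producing elements of $A$ of every degree in $S_B\setminus\{d\}$ via the case split $c=0$, $c=1$, $c\ge 2$ with $cd=2ad+3bd$ -- is exactly the paper's argument (the paper phrases it as writing a degree as $\deg(g^lu)$ and replacing $g^lu$ by $f_2^af_3^bu$ when $l\ge 2$, or by $h_jv$ when $l=1$). Where you genuinely differ is in what you prove around this core. The paper's very short proof takes two things for granted: (i) that the $g_j$, $j\neq i$, lie in $A$ at all -- in the paper this normalization $L(g_j)=0$ is performed in the applications of the theorem by replacing $g_j$ with $g_j-L(g_j)g$, exactly the step you carry out explicitly at the start; and (ii) that the new degree semigroup is $S_A=S_B\setminus\{d\}$, which the paper asserts in the text preceding the theorem by a codimension count ($A=\ker L$ has codimension one in $B$, so exactly one degree disappears, and the constructed elements show it can only be $d$). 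You instead prove $d\notin S_A$ directly: a hypothetical $p\in A$ of degree $d$ gives (after matching leading coefficients) $p-g\in B$ of degree $<d$, whose subduction in $B$ uses only products of basis elements of degree $<d$, all of which lie in $A$ by the minimality of $d$; hence $p-g\in A$, contradicting $L(p-g)=-1\neq 0$. Both routes are sound; yours is self-contained, and the same subduction observation also patches a point the paper passes over silently in the second statement: when $s_j<d$ one must know $L(p_{s_j})=0$, since otherwise $p_{s_j}-L(p_{s_j})g$ would have degree $d$ rather than $s_j$.
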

 \begin{proof} If $f(x)\in B$ then $L(f-L(f)g)=0,$ thus $f-L(f)g$ belongs to $A=\ker L.$ This immediately proves the second statement because we get elements of degree $s_i$ in $A$. To prove the first statement
   we need to find polynomials built up from our basis elements of each degree $d\neq \deg g$ occurring in $B$. We can express $d$ as the degree of some   $g^lu$ where $u$ is a product of $g_j,$ where $j\neq i,$ but repetitions are allowed. Because each such $g_j$ belongs to  $M_\alpha^A$ the same is true for $u,$ so suppose that $l>0.$
  If $l\ge 2$ we can use $f_2^a f_3^bu$ where $l=2a+3b$ to get the degree $d.$ At last if $l=1$ and $u=g_jv$ for some  $g_j$ we can use $h_jv$.
 \end{proof}

 Now we want to prove a theorem that can help to estimate $k_\alpha$ at least in one special case.
 \begin{theorem}   \label{th:derivationlift}
Let $g$ be as in Theorem~\ref{th:SAGBIbasis}. If $L(g^2)^2\neq L(g)L(g^3)$ then $k_\alpha^A\le k_\alpha^B+1.$

If additionally $g$  does not belong to $\left(M_\alpha\right)^2$ then we get a stronger inequality: $k_\alpha^A\le k_\alpha^B.$

\end{theorem}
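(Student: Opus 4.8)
The plan is to work with the cotangent-space description of $k_\alpha$ recalled in the Main plan section, namely $k_\alpha=\dim M_\alpha/M_\alpha^2$. Write $\mathfrak m=M_\alpha^B$ and $\mathfrak n=M_\alpha^A$. Since $g\in\mathfrak m$ and $L(g)=1$, the restriction $L|_{\mathfrak m}$ is onto $\MK$, so $\mathfrak n=\ker(L|_{\mathfrak m})$ has codimension one in $\mathfrak m$ and $\mathfrak m=\mathfrak n\oplus\MK g$. The inclusion $\mathfrak n\hookrightarrow\mathfrak m$ (which carries $\mathfrak n^2$ into $\mathfrak m^2$) induces a linear map $\phi\colon\mathfrak n/\mathfrak n^2\to\mathfrak m/\mathfrak m^2$, and a dimension count gives $k_\alpha^A-k_\alpha^B=\dim\ker\phi-\dim\operatorname{coker}\phi$, where $\ker\phi=(\mathfrak n\cap\mathfrak m^2)/\mathfrak n^2$ and $\operatorname{coker}\phi=\mathfrak m/(\mathfrak n+\mathfrak m^2)$. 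Thus the whole statement reduces to controlling these two finite-dimensional spaces.

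First I would extract a single algebraic relation. With $f_k=g^k-L(g^k)g\in\mathfrak n$ one computes $gf_2=f_3-L(g^2)f_2+\Delta g$, where $\Delta=L(g)L(g^3)-L(g^2)^2$; in particular $L(gf_2)=\Delta$. The hypothesis $\Delta\neq0$ now does two jobs. Solving the relation for $g$ exhibits $g\in\mathfrak n+\mathfrak m^2$, whence $\mathfrak m=\mathfrak n+\mathfrak m^2$ and $\operatorname{coker}\phi=0$. Moreover, if $L$ vanished on all of $\mathfrak m^2$ then $L(g^2)=L(g^3)=0$ and $\Delta=0$; so $L|_{\mathfrak m^2}\neq0$, the subspace $\mathfrak n\cap\mathfrak m^2=\ker(L|_{\mathfrak m^2})$ has codimension exactly one in $\mathfrak m^2$, and $\dim\ker\phi=\dim(\mathfrak m^2/\mathfrak n^2)-1$. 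Consequently $k_\alpha^A=k_\alpha^B+\dim(\mathfrak m^2/\mathfrak n^2)-1$, and it remains only to bound the single number $\dim(\mathfrak m^2/\mathfrak n^2)$.

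Here I would use the filtration $\mathfrak n^2\subseteq\mathfrak n\mathfrak m\subseteq\mathfrak m^2$, where $\mathfrak n\mathfrak m=\mathfrak n^2+g\mathfrak n$. The top layer $\mathfrak m^2/\mathfrak n\mathfrak m$ is spanned by the class of $g^2$, so has dimension at most one. The bottom layer $\mathfrak n\mathfrak m/\mathfrak n^2$ is spanned by the classes $\overline{gv}$, $v\in\mathfrak n$; the key claim is that it is at most one-dimensional, spanned by $\overline{gf_2}$. Indeed, writing $\gamma$ for the point attached to $L$ (so that $L(gv)=v(\gamma)$ for every $v\in\mathfrak n$, in both the derivation case and the $f(\gamma)=f(\delta)$ case, and noting the computation $f_2(\gamma)=\Delta$), the vector $u=v-\tfrac{v(\gamma)}{\Delta}f_2$ satisfies $u(\gamma)=0$, and once one knows $gu\in\mathfrak n^2$ it follows that $\overline{gv}=\tfrac{v(\gamma)}{\Delta}\,\overline{gf_2}$. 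This already yields $\dim(\mathfrak m^2/\mathfrak n^2)\le2$ and hence the first inequality $k_\alpha^A\le k_\alpha^B+1$. For the sharp bound I would show the top layer is actually zero: $\mathfrak n\mathfrak m$ is stable under multiplication by $g$ (immediate from $g^2=f_2+L(g^2)g$ together with the reduction of $g\mathfrak n^2$ into $\mathfrak n^2+g\mathfrak n$), so multiplying $\Delta g=gf_2-f_3+L(g^2)f_2$ by $g$ places $\Delta g^2$, hence $g^2$, in $\mathfrak n\mathfrak m$. Then $\dim(\mathfrak m^2/\mathfrak n^2)\le1$ and $k_\alpha^A\le k_\alpha^B$. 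The hypothesis $g\notin(M_\alpha)^2$ of the sharp statement says precisely that $g$ is a minimal generator of $\mathfrak m$; it is the natural condition under which one expects no jump, and the filtration above makes the requisite vanishing rigorous.

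The main obstacle is the sub-claim that $u\in\mathfrak n$ with $u(\gamma)=0$ forces $gu\in\mathfrak n^2$. I would prove it by evaluating $g\,(uf_2)$ in two ways: on one hand $g(uf_2)=(gu)f_2\in\mathfrak n^2$, because $u(\gamma)=0$ makes $gu\in\mathfrak n$; on the other hand $g(uf_2)=(gf_2)u=(f_3-L(g^2)f_2)u+\Delta\,gu$, whose first summand lies in $\mathfrak n^2$. Comparing the two expressions gives $\Delta\,gu\in\mathfrak n^2$, and $\Delta\neq0$ finishes it. The reason this step is delicate — and why one cannot argue by leading degrees — is that $\mathfrak n^2$ already contains elements of low degree produced by the defining relations of $A$ (for instance $x^2\in\mathfrak n^2$ when $A=\langle x^2,x^3-x\rangle$ and $\alpha=0$); the two-ways computation is exactly what sidesteps this by staying inside the ideal arithmetic. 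I note in passing that this analysis in fact establishes the equality $k_\alpha^A=k_\alpha^B$ whenever $\Delta\neq0$, of which both displayed inequalities are immediate consequences.
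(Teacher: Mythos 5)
Your proof is correct, and it takes a genuinely different route from the paper's. The paper works entirely inside $M_\alpha^A/\left(M_\alpha^A\right)^2$: starting from the SAGBI basis $g_j,\,h_j,\,f_2,\,f_3$ of $A$ supplied by Theorem~\ref{th:SAGBIbasis}, it exhibits explicit elements of $\left(M_\alpha^A\right)^2$ showing that $L(g^2)f_3-L(g^3)f_2$ and every $h_j$ become redundant modulo $f_2,f_3$, proves $\left(M_\alpha^B\right)^2\subseteq \MK g+\MK f_2+\MK f_3+\left(M_\alpha^A\right)^2$ by induction on the number of factors $g$ in a product, and then counts a spanning set, whose size is $k_\alpha^B$ or $k_\alpha^B+1$ according as $g$ does or does not avoid $\left(M_\alpha^B\right)^2$. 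You instead compute $k_\alpha^A-k_\alpha^B$ as $\dim\ker\phi-\dim\operatorname{coker}\phi$ for the map of cotangent spaces, reduce everything to the single identity $gf_2=f_3-L(g^2)f_2+\Delta g$, and bound the two layers of the filtration $\mathfrak{n}^2\subseteq\mathfrak{n}\mathfrak{m}\subseteq\mathfrak{m}^2$. The two arguments also consume different inputs: the paper's manipulations use only linearity of $L$ and $L(g)=1$, whereas your bound on $\mathfrak{n}\mathfrak{m}/\mathfrak{n}^2$ genuinely uses the dichotomy of Theorem~\ref{th:codimGorin} ($L$ a derivation at some point $\gamma$, or a difference functional) through the identity $L(gv)=v(\gamma)$ for $v\in\ker L$ -- that identity, which makes the two-ways evaluation of $g(uf_2)$ work, is false for a general functional, though its verification in each of the two cases is the one-line computation you indicate. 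In exchange your argument buys strictly more: you never use the hypothesis $g\notin\left(M_\alpha\right)^2$, so you actually establish $k_\alpha^A=k_\alpha^B$ whenever $L(g^2)^2\neq L(g)L(g^3)$, making the theorem's second hypothesis superfluous. I checked the steps ($L(gf_2)=\Delta$, $f_2(\gamma)=\Delta$, $g\cdot\mathfrak{n}\mathfrak{m}\subseteq\mathfrak{n}\mathfrak{m}$, and the kernel--cokernel count) and see no gap; the sharper conclusion is also consistent with the paper, since in the example after Corollary~\ref{cor:lift} where $k_\alpha^A-k_\alpha^B$ is large one has $L(g^k)=-g(\alpha_1)^k$ and hence $L(g)L(g^3)=L(g^2)^2$, so that example does not meet your (or the theorem's) first hypothesis.
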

\begin{proof}Because $L(g)\neq 0$ we can replace $L$ by $L/L(g)$ and suppose WLOG that $L(g)=1.$  We have that $L(g_j)= 0$ for all $j<i$.  Subtracting  $cg$ we can WLOG  suppose that
$L(g_j)=0$ for all $j\neq i$ (because this subtraction does not change the degree). We can use the first part of the previous theorem to get a SAGBI basis inside $M_\alpha.$

 This means we have a generating set and need to study linear dependencies between its elements modulo $\left(M_\alpha^A\right)^2.$  Let $a= L(g^3)$ and $b= L(g^2),$ so that $a\neq b^2$ and $f_3=g^3-ag,f_2=g^2-bg.$
 First we get two elements in $\left(M_\alpha^A\right)^2:$
 $$f_3^2-f_2^3=(g^3-ag)^2-(g^2-bg)^3=$$
 $$3bg^5-(2a+3b^2)g^4+b^3g^3+a^2g^2.$$
 $$3bf_3f_2-2af_2^2=3b(g^3-ag)(g^2-bg)-2a(g^2-bg)^2=$$
 $$3bg^5-(3b^2+2a)g^4+abg^3+ab^2g^2.$$
 But their difference is equal to
 $$(b^3-ab)g^3+ (a^2-ab^2)g^2=(b^2-a)(bf_3-af_2)$$
 and we get a first linear dependency between $f_3$ and $f_2,$ because $a\neq b^2$.

 Let $c=L(gg_j)$ and $j\neq i.$ Another element in $\left(M_\alpha^A\right)^2$ is
 $$f_3g_j-f_2h_j-bf_2g_j=$$
 $$(g^3-ag)g_j-(g^2-bg)(gg_j-cg)-b(g^2-bg)g_j=$$
 $$cg^3+(b^2-a)gg_j-bcg^2=(b^2-a)h_j+ cf_3-bcf_2 $$ which shows that $h_j$ is a linear combination of $f_2,f_3$ modulo $\left(M_\alpha^A\right)^2.$

 If $g$ does not belong to $\left(M_\alpha\right)^2$
  we can complete it by some $g_j$ with $j\in J$ to form a basis in $M_\alpha^B/\left(M_\alpha^B\right)^2.$
  Then $k_\alpha^B=|J|+1.$ Otherwise, if $g$  belongs to $\left(M_\alpha\right)^2$, we have $k_\alpha^B=|J|,$ where we suppose that $g_j,j\in J$  form a basis in $M_\alpha^B/\left(M_\alpha^B\right)^2.$

 Thus to finish the proof it is sufficient to show that all elements in $M_\alpha^A$ can be expressed  modulo $\left(M_\alpha^A\right)^2$ as a linear combination of $g_j$ with $j\in J$ and $f_3,f_2.$ We have already done this
 for $h_j$ and it remains to do it for any $g_j$ with $j\neq i.$

 First we prove that $$\left(M_\alpha^B\right)^2\subseteq U=\MK g+ \MK f_2+\MK f_3+ \left(M_\alpha^A\right)^2.$$ Consider $\prod g_{j}$ with at least two factors. We use induction on the number of factors equal to $g_i=g$ to prove that this product belongs to $U.$

 This is obvious if all $g_j$ are different from $i.$, so suppose that $g$ is a factor. It is obvious for $g^2=f_2+bg$ as well.
 For $g_ig=h_j+cg$   it follows from the fact that we can replace $h_j$ by linear combination of $f_2$ and $f_3$ modulo $\left(M_\alpha^A\right)^2$. So we can suppose that we have more than two factors and at least one of them is $g.$

 If there are three factors we reduce this  to two factors directly: $$g^3=f_3+ag;\quad g^2g_j=f_2g_j+bgg_i;\quad gg_jg_j'=h_jg_{j'}+cgg_{j'}.$$

 Now we can suppose that we have at least four factors and at least one factor is $g.$
 If there is another factor $g$ we can replace $g^2$ by $f_2+bg$ and for both terms we can use induction. What remains is the case when $g$ appears only once. Then we have some factor $g_j$ and our product is written as
 $gg_ju$ for some shorter product $u$ which does not contain $g.$ We can replace
 $gg_j$ by $h_j+cg.$   Now it is immediate that  $h_ju$ belongs to  $\left(M_\alpha^A\right)^2$ because $u$ does not contain any factor $g$ and we can use induction for $gu$.

 Now we are ready  to consider $g_k$ with $k\neq i.$  We can write it as $ag+\sum_{j\in J} a_jg_j +m$ with $m\in \left(M_\alpha^B\right)^2.$
 This means that $g_k$  can be written as
 $$ag+\sum_{j\in J} a_jg_j +a'g+b'f_2+c'f_3+m'$$
 with $m'\in \left(M_\alpha^A\right)^2.$ But $g_k\in A=\ker L$ so $a+a'=0$ and we have finished our proof.

\end{proof}

\begin{corollary} \label{cor:lift}If $L(g^2)^2\neq L(g)L(g^3)$, $g$  does not belong to $\left(M_\alpha\right)^2$ and  $\dim \mathcal{D}_\alpha^B=k_\alpha^B$ then $\dim \mathcal{D}_\alpha^A=k_\alpha^A.$
\end{corollary}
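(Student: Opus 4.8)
The plan is to reduce everything to a restriction map between spaces of derivations and then count dimensions. Write $V^B$ and $V^A$ for the spaces of \emph{all} $\alpha$-derivations of $B$ and of $A$; recall from the discussion of $k_\alpha$ that an $\alpha$-derivation is precisely a functional on $M_\alpha/M_\alpha^2$, so $\dim V^B=k_\alpha^B$ and $\dim V^A=k_\alpha^A$. Restriction gives a linear map $r\colon V^B\to V^A$, $D\mapsto D|_A$, and two inputs make the argument work: by Theorem~\ref{th:derivationlift} the two hypotheses $L(g^2)^2\neq L(g)L(g^3)$ and $g\notin(M_\alpha)^2$ already yield $k_\alpha^A\le k_\alpha^B$; and by hypothesis $\dim\mathcal{D}_\alpha^B=k_\alpha^B$, so $\mathcal{D}_\alpha^B=V^B$, i.e. every $\alpha$-derivation of $B$ is a combination of pure derivatives at the points $\beta\sim\alpha$.

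First I would prove that $r$ is injective. As in Theorem~\ref{th:derivationlift} I may assume $L(g)=1$ and, crucially, that $g$ is chosen inside $M_\alpha^B$, so that $g(\alpha)=0$. Put $a=L(g^3)$ and $b=L(g^2)$, so that $f_2=g^2-bg$ and $f_3=g^3-ag$ lie in $A$. If $D\in V^B$ vanishes on $A$ and $v=D(g)$, then applying $D$ to $g^2=f_2+bg$ and to $g^3=f_3+ag$, and using the $\alpha$-derivation identities $D(g^2)=2g(\alpha)D(g)=0$ and $D(g^3)=3g(\alpha)^2D(g)=0$, gives $bv=0$ and $av=0$. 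If $v\neq0$ this forces $a=b=0$, contradicting $b^2\neq a$; hence $v=0$ and $D=0$, so $r$ is injective.

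The dimension count then closes at once: injectivity gives $k_\alpha^B=\dim V^B=\dim r(V^B)\le\dim V^A=k_\alpha^A$, and combined with $k_\alpha^A\le k_\alpha^B$ from Theorem~\ref{th:derivationlift} we obtain $k_\alpha^A=k_\alpha^B$ and that $r$ is an isomorphism. Finally I would verify that restriction carries $\mathcal{D}_\alpha^B$ into $\mathcal{D}_\alpha^A$: a pure derivative $f\mapsto f^{(i)}(\beta)$ with $\beta\sim\alpha$ in $B$ restricts to the same expression on $A$, and $\beta\sim\alpha$ in $B$ implies $\beta\sim\alpha$ in $A$ since $A\subseteq B$. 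As $\mathcal{D}_\alpha^B=V^B$ and $r$ is surjective, $V^A=r(V^B)=r(\mathcal{D}_\alpha^B)\subseteq\mathcal{D}_\alpha^A\subseteq V^A$, whence $\mathcal{D}_\alpha^A=V^A$ and $\dim\mathcal{D}_\alpha^A=k_\alpha^A$.

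The step I expect to be the crux is the injectivity of $r$: this is exactly where $L(g^2)^2\neq L(g)L(g^3)$ is indispensable, and it is equally essential that $g$ be taken inside $M_\alpha$ so that $g(\alpha)=0$. Without that normalization the relations read $(b-2g(\alpha))v=0$ and $(a-3g(\alpha)^2)v=0$, whose simultaneous vanishing is compatible with $b^2\neq a$ and would permit a nonzero kernel. The role of the second hypothesis $g\notin(M_\alpha)^2$ is more hidden: it enters only through Theorem~\ref{th:derivationlift} to secure $k_\alpha^A\le k_\alpha^B$, which is precisely what upgrades injectivity of $r$ to an isomorphism and lets the subspace $\mathcal{D}_\alpha^A$ fill out all of $V^A$.
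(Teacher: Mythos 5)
Your proof is correct, and while it rests on the same two pillars as the paper's proof --- the inequality $k_\alpha^A\le k_\alpha^B$ supplied by Theorem~\ref{th:derivationlift} under exactly these hypotheses, and the relations $D(f_2)=-L(g^2)D(g)$, $D(f_3)=-L(g^3)D(g)$ forced by $g(\alpha)=0$ --- it is organized along a genuinely different line. The paper never considers the full space of $\alpha$-derivations of $A$: it takes the $k_\alpha^B$ independent derivations in $\mathcal{D}_\alpha^B$ given by the hypothesis, evaluates them on the basis $\{g_j\}_{j\in J}\cup\{g\}$ of $M_\alpha^B$ modulo $\bigl(M_\alpha^B\bigr)^2$, and observes that replacing the row of values on $g$ by the values on $f_2$ (if $b\neq 0$) or on $f_3$ (if $b=0$, $a\neq 0$) multiplies the nonzero determinant by $-b$ resp.\ $-a$; since the $g_j$ and $f_2$ (or $f_3$) all lie in $A$, the restricted derivations are independent elements of $\mathcal{D}_\alpha^A$, and the count closes. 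You instead prove injectivity of the restriction map $r\colon V^B\to V^A$ on \emph{all} $\alpha$-derivations (your kernel computation $bv=av=0$ with $b^2\neq a$), then finish by pure dimension counting. This eliminates the basis indexed by $J$ and all determinant bookkeeping; the only structural fact you need is $B=A\oplus\MK g$, which holds because $A=\ker L$ has codimension one in $B$ and $L(g)=1$ --- this is the (easy, but worth stating explicitly) justification for your step ``hence $v=0$ and $D=0$.'' Your route also yields slightly more than the statement: $r$ is an isomorphism, so $k_\alpha^A=k_\alpha^B$ and every $\alpha$-derivation of $A$ extends to one of $B$; the paper only gestures at this in its closing remark. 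What the paper's version buys in exchange is that the independent derivations of $A$ are exhibited concretely, evaluated on explicit polynomials of $A$, which is the form in which the corollary is actually exploited in the later classification chapters.
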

\begin{proof} We use the same notation as in the theorem. By the conditions  there exist  $k_\alpha^B=1+|J|$ linearly independent $\alpha-$derivations in $\mathcal{D}_\alpha^B$. If we apply them to the basis $g_j$ with $j\in J$ and $g$
the corresponding determinant is different from zero. (Because any $\alpha-$derivation annihilates $\left(M_\alpha^B\right)^2$.) The restrictions of those derivations to $B$ belong to $\mathcal{D}_\alpha^B.$

 If we apply them to the same $g_j$ with $j\in J$ and either $f_2$ (if $b\neq 0$) or $f_3$ (if $b=0,a\neq 0$) we obtain a determinant where  for each derivation $D$ in the last row is replaced by $D(f_2)$ or $D(f_3).$
Because $g\in M_\alpha$ for any $\alpha-$derivation $D$ we have $D(g^2)=D(g^3)=0$ we get the same determinant multiplied by $a\neq 0$ or by $b\neq 0.$ In each case $f_2$ or $f_3$ complete $g_j$ with $j\in J$ to a basis modulo $\left(M_\alpha^B\right)^2$ and
we get that all our chosen derivations are still linearly independent, which proves the statement. Note that we automatically get $k_\alpha=k_\beta.$
\end{proof}

Note that the conditions are essential because in general the difference $k_\alpha^A-k_\alpha^B$ can be arbitrarily large. For example if $\alpha$ does not belong to $C$ then for $B=A(C)$ we have $k_\alpha^B=1.$ If we add $\alpha$ to the spectrum
with the condition $f(\alpha)=f(\alpha_1)$ we get $A(C\bigcup\alpha)$ and $k_\alpha^A=m+1$, where $m$ is the size of $C$ (according to Theorem \ref{oneclusternoderiv}).

Here is another useful application of what we have learned about how the SAGBI bases of $B$ and $A$ are related.
\begin{theorem}\label{th:trivialD} If $D$ is a \textbf{trivial}\index{trivial derivation} $\alpha-$derivation of an algebra $B$ (thus $\alpha$ does not belong to its spectrum) and $A=\ker D,$ then all $\alpha-$derivations of algebra $A$ can be written as $$f(x)\rightarrow af'''(\alpha)+bf''(\alpha).$$
\end{theorem}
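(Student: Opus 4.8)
Since $D$ is a trivial $\alpha$-derivation, Theorem~\ref{th:derivativenotinspectrum} gives $D(f)=cf'(\alpha)$ for some $c$, with $c\neq0$ as $D\neq0$; after rescaling $D$ we may assume $D(f)=f'(\alpha)$, so that
$$A=\{f(x)\in B\mid f'(\alpha)=0\}.$$
Recall that the space of $\alpha$-derivations of $A$ has dimension $k_\alpha^A=\dim M_\alpha^A/(M_\alpha^A)^2$. The plan is to show this dimension equals $2$ and then to exhibit $f\mapsto f''(\alpha)$ and $f\mapsto f'''(\alpha)$ as two linearly independent $\alpha$-derivations of $A$, which must then span the whole space. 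Throughout write $\mathfrak m=M_\alpha^B$; the heart of the argument is to pin down $M_\alpha^A$ and $(M_\alpha^A)^2$ in terms of the powers of $\mathfrak m$.

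First I would exploit $\alpha\notin Sp(B)$. By Theorem~\ref{th:derivativenotinspectrum} every $\alpha$-derivation of $B$ is a multiple of $f\mapsto f'(\alpha)$, so $k_\alpha^B=1$ and $\dim\mathfrak m/\mathfrak m^2=1$. Choose $g\in\mathfrak m$ with $g'(\alpha)=1$ generating $\mathfrak m$ modulo $\mathfrak m^2$; such $g$ exists, for otherwise $f'(\alpha)=0$ would hold throughout $\mathfrak m$ and hence throughout $B$, forcing $\alpha\in Sp(B)$. Every $h\in\mathfrak m^2$ satisfies $h'(\alpha)=0$ by the Leibniz rule, so writing $f\in\mathfrak m$ as $f=cg+h$ with $h\in\mathfrak m^2$ gives $f'(\alpha)=c$. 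Thus $f'(\alpha)=0$ precisely when $f\in\mathfrak m^2$, i.e.
$$M_\alpha^A=\{f\in\mathfrak m\mid f'(\alpha)=0\}=\mathfrak m^2,$$
whence $(M_\alpha^A)^2=\mathfrak m^4$ and $k_\alpha^A=\dim\mathfrak m^2/\mathfrak m^4$.

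Next I would compute this dimension using the order of vanishing at $\alpha$. Expanding a product of $i$ elements of the form $cg+h$ shows $\mathfrak m^i\subseteq\MK g^i+\mathfrak m^{i+1}$, so $\mathfrak m^i/\mathfrak m^{i+1}$ is spanned by the image of $g^i$ and has dimension at most one. Conversely every element of $\mathfrak m^{i+1}$ vanishes to order at least $i+1$ at $\alpha$, while $g^i$ vanishes to order exactly $i$ because $g'(\alpha)\neq0$; hence $g^i\notin\mathfrak m^{i+1}$ and $\dim\mathfrak m^i/\mathfrak m^{i+1}=1$ for all $i$. Filtering by $\mathfrak m^3$ then yields $k_\alpha^A=\dim\mathfrak m^2/\mathfrak m^3+\dim\mathfrak m^3/\mathfrak m^4=2$.

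Finally, for $u,v\in A$ we have $u'(\alpha)=v'(\alpha)=0$, so the Leibniz expansions of $(uv)''$ and $(uv)'''$ lose their cross terms and both $f\mapsto f''(\alpha)$ and $f\mapsto f'''(\alpha)$ satisfy the $\alpha$-derivation identity on $A$. They are independent: on the basis $\{g^2,g^3\}$ of $\mathfrak m^2/\mathfrak m^4$ their value matrix is triangular with nonzero diagonal, since $(g^2)''(\alpha)\neq0$, $(g^3)''(\alpha)=0$ and $(g^3)'''(\alpha)\neq0$. As the space of $\alpha$-derivations is two-dimensional, these two span it, giving the claimed form $f\mapsto af'''(\alpha)+bf''(\alpha)$. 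The main obstacle is the structural identity $M_\alpha^A=\mathfrak m^2$ together with the count $\dim\mathfrak m^i/\mathfrak m^{i+1}=1$; this is exactly where $\alpha\notin Sp(B)$ is used, and it also explains why Theorem~\ref{th:derivationlift} does not apply here: the minimal $g$ lies in $M_\alpha^B$, so $L(g^2)=L(g^3)=0$ and the hypothesis $L(g^2)^2\neq L(g)L(g^3)$ fails.
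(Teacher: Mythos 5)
Your proof is correct, and while it shares its skeleton with the paper's argument, it executes the decisive step by a genuinely different and more elementary route. Both proofs normalize $D$ to $f\mapsto f'(\alpha)$ via Theorem~\ref{th:derivativenotinspectrum}, both establish the structural identity $M_\alpha^A=\left(M_\alpha^B\right)^2$, and both finish by exhibiting $f\mapsto f''(\alpha)$, $f\mapsto f'''(\alpha)$ and checking their independence on $g^2,g^3$. The difference lies in the bound $k_\alpha^A\le 2$: the paper chooses a SAGBI basis $\{g_j\}$ of $M_\alpha^B$, normalizes it against $D$, and runs a case analysis on products of basis elements (sorted by how many factors equal $g$) to show $\left(M_\alpha^B\right)^2\subseteq\MK g^2+\MK g^3+\sum_{j\neq i}\MK gg_j+\left(M_\alpha^A\right)^2$, followed by a second pass eliminating the $gg_j$ terms. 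You bypass the SAGBI combinatorics entirely: writing $\mathfrak{m}=M_\alpha^B$, you observe $\left(M_\alpha^A\right)^2=\left(\mathfrak{m}^2\right)^2=\mathfrak{m}^4$ (both sides being spans of products of four elements of $\mathfrak{m}$), and then count $\dim\mathfrak{m}^2/\mathfrak{m}^4=2$ from the one-dimensionality of each graded piece $\mathfrak{m}^i/\mathfrak{m}^{i+1}$, proved by combining $\mathfrak{m}^i\subseteq\MK g^i+\mathfrak{m}^{i+1}$ with the order-of-vanishing argument ($g^i$ vanishes to order exactly $i$ at $\alpha$, anything in $\mathfrak{m}^{i+1}$ to order at least $i+1$). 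This is shorter and more transparent, but it leans on the univariate fact that vanishing at $\alpha$ forces divisibility by $x-\alpha$ -- a fact the paper itself flags as unavailable in several variables; the paper's heavier product analysis is the template it reuses in the later lifting theorems (Theorem~\ref{th:derivationlift} and the $\beta$-derivation section), where the ideal-power filtration does not collapse so neatly. Two small points: your step identifying the dimension of the space of $\alpha$-derivations with $\dim M_\alpha^A/\left(M_\alpha^A\right)^2$ is the paper's identity $k_\alpha=\dim M_\alpha/M_\alpha^2$ from the Main plan section, whose own justification uses a SAGBI basis, so SAGBI bases are removed only from the part you redo, not from the full logical chain; and your closing observation that Theorem~\ref{th:derivationlift} is inapplicable here because $L(g^2)=L(g^3)=0$ is correct.
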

\begin{proof} By Theorem \ref{th:derivativenotinspectrum} we can suppose that $D(f)=f'(\alpha)$ and  $k_\alpha^B=1.$ Then $f\rightarrow f^{(k)}(\alpha)$ for $k=2,3$ are two derivations and it is sufficient  to prove that they are linearly independent and that $k_\alpha^A\le 2.$ The linear independence is obvious if we restrict those maps to $g^2,g^3$ only so let us concentrate on the inequality.

As usual we choose a SAGBI basis $\{g_j\}$ in $M_\alpha^B$ such that $g=g_i$, $D(g_i)=1$ and $D(g_j)=0$ for $j\neq i.$ Because $k_\alpha^B=1$ we have that
$$M_\alpha^B=\MK g+\left( M_\alpha^B\right)^2.$$
In particular for $j\neq i$ we have $$g_j\in \left( M_\alpha^B\right)^2.$$
Indeed, $$g_j=cg+m,\ m\in \left( M_\alpha^B\right)^2\Rightarrow$$
$$0=D(g_j)=c+D(m)=c+0\Rightarrow c=0.$$

Using the fact that $D(g^2)=D(g^3)=D(gg_j)=0$ we get by the first part of Theorem \ref{th:SAGBIbasis} that $M_\alpha^A$ is generated by $g^2,g^3,g_j, gg_j$ with $j\neq i.$
Note that all those elements belong to $\left( M_\alpha^B\right)^2.$ Since both $\left( M_\alpha^B\right)^2$ and $M_\alpha^A$ have codimension one in $M_\alpha^B$ we conclude that
$$M_\alpha^A=\left( M_\alpha^B\right)^2.$$
Next we want to study which of the products $p=\Pi g_j$ with at least two elements that belong to $\left( M_\alpha^A\right)^2.$ It depends on the number of factors $g_j$ that equal $g=g_i.$ If there is no factor $g$ then
$p\in\left( M_\alpha^A\right)^2.$ This also holds if at least four factors equal $g$, because $g^n$ with $n\ge 4$ can be written as a product of $g^2$ and $g^3.$

If $p=g^3u$ or $p=g^2u$ where $u$ does not contain $g$, the only exception is $p=g^3$ and $p=g^2,$ because otherwise $u\in M_\alpha^A.$

At last if $p=gu$ then  $u=g_jv$ and the only exception is $p=gg_j.$ In all other cases $p=(gg_j)v\in\left( M_\alpha^A\right)^2.$

Because the products  $\Pi g_j$ span $\left( M_\alpha^B\right)^2$ we conclude that

\begin{equation}\label{eq:inclusion1}\left( M_\alpha^B\right)^2\subseteq \MK g^2+\MK g^3 +\sum_{j\neq i} \MK gg_j+ \left( M_\alpha^A\right)^2.\end{equation}

We know that $g_k\in \left( M_\alpha^B\right)^2$ for $k\neq i$. As a result
$$gg_k\in  \MK g^3+\MK g^4 +\sum_{j\neq i} \MK g^2g_j+ g\left( M_\alpha^A\right)^2.$$
Using the facts that $g^4=(g^2)^2, (g^2)g_j\in \left( M_\alpha^A\right)^2$  and $g M_\alpha^A\subseteq M_\alpha^A$
we find that
$$gg_k\in \MK g^3+\left( M_\alpha^A\right)^2.$$

Applying this for $k=j$ in (\ref{eq:inclusion1}) we can improve this to
$$M_\alpha^A=\left( M_\alpha^B\right)^2\subseteq \MK g^2+\MK g^3 + \left( M_\alpha^A\right)^2.$$
From this it is clear that $$k_\alpha^A=\dim M_\alpha^A/\left( M_\alpha^A\right)^2\le 2.$$

\end{proof}

\section{$\beta-$derivations}

Let us go back to our plan to prove that $k_\alpha=\dim D_\alpha.$ Using Theorem \ref{th:noderivations}   and  semi-commutativity we can suppose that $A$ is obtained from $B$ by some $\beta-$derivation $D$.
Let us first concentrate on the important case when $\alpha$ is not equivalent to $\beta.$  Using the same notation as in section \ref{sec:SAGBI} with $L=D$ we can suppose  WLOG that $D(g)=1$. By Theorem \ref{th:SAGBIbasis} we have a SAGBI basis $g_j,f_2,f_3, h_j$ for $A.$
\begin{theorem} Suppose that $g(\beta)=0.$
 If $\dim \mathcal{D}_\alpha^B=k_\alpha^B$ then $\dim \mathcal{D}_\alpha^A=k_\alpha^A.$
\end{theorem}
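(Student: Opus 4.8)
The plan is to exhibit $k_\alpha^A$ linearly independent derivations lying inside $\mathcal D_\alpha^A$; since one always has $\dim\mathcal D_\alpha^A\le k_\alpha^A$, this yields the desired equality. First I would extract the arithmetic consequences of the hypothesis $g(\beta)=0$. Because $D$ is a $\beta$-derivation, $D(g^2)=2g(\beta)D(g)=0$ and likewise $D(g^3)=0$, so in the SAGBI basis furnished by Theorem \ref{th:SAGBIbasis} we get $f_2=g^2$, $f_3=g^3$, while $D(gg_j)=g_j(\beta)$ gives $h_j=gg_j-g_j(\beta)g$. I would also observe that adjoining a $\beta$-derivation condition with $\alpha\not\sim\beta$ leaves the cluster $C_\alpha$ of $\alpha$ unchanged (by Theorem \ref{th:ghost} the only point that can enter the spectrum is $\beta$, and no existing clusters merge), so that $\mathcal D_\alpha^A$ and $\mathcal D_\alpha^B$ are assembled from derivatives evaluated at the same points of $C_\alpha$.

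Next comes the lower bound $\dim\mathcal D_\alpha^A\ge k_\alpha^B$. I would restrict each derivation of $\mathcal D_\alpha^B$ to $A$ and show this restriction is injective: if a nonzero $\alpha$-derivation $E$ of $B$ vanished on $A=\ker D$, then $\ker E\supseteq\ker D$, and since $\ker D$ has codimension one in $B$ we would get $E=\lambda D$; but then $E$ is at once an $\alpha$- and a $\beta$-derivation, and since $\alpha\not\sim\beta$, Lemma \ref{lm:alphabeta} forces $E=0$. As $\dim\mathcal D_\alpha^B=k_\alpha^B$ by hypothesis, this produces $k_\alpha^B$ independent derivations inside $\mathcal D_\alpha^A$. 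A bookkeeping remark for later: every such restricted derivation annihilates $g^2$, because $g\in M_\alpha^B$ makes $g^2\in(M_\alpha^B)^2$, which all $\alpha$-derivations of $B$ kill.

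Finally I would pin down $k_\alpha^A$ and close the gap. Rerunning the cotangent computation of Theorem \ref{th:derivationlift} with $a=b=0$ (where that theorem no longer applies, as it requires $a\ne b^2$), the identity $f_3g_j-f_2h_j=g_j(\beta)f_3\in(M_\alpha^A)^2$ together with the analogous reductions should show that the images of $g_j,f_2,f_3,h_j$ span $M_\alpha^A/(M_\alpha^A)^2$ with at most $k_\alpha^B+1$ vectors, giving $k_\alpha^A\in\{k_\alpha^B,k_\alpha^B+1\}$. If $k_\alpha^A=k_\alpha^B$ the previous paragraph already finishes the proof.

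The genuinely hard case is $k_\alpha^A=k_\alpha^B+1$, where I must manufacture one further derivation in $\mathcal D_\alpha^A$. It cannot be of first order: a direct check shows that any $\sum_{\gamma\in C_\alpha}c_\gamma f'(\gamma)$ is automatically an $\alpha$-derivation of $A$ (using $f(\gamma)=f(\alpha)$ for $f\in A$), yet it annihilates $g^2$ (since $g(\gamma)=0$), while the missing direction is precisely the one detecting $f_2=g^2$. The candidate is therefore a second-order combination $\sum_{\gamma\in C_\alpha}c_\gamma f''(\gamma)$, whose Leibniz defect on $A$ equals $2\sum_{\gamma}c_\gamma f'(\gamma)h'(\gamma)$; the whole difficulty collapses to choosing the $c_\gamma$ so that the diagonal form $\sum_{\gamma}c_\gamma\,\ell_\gamma\!\otimes\!\ell_\gamma$, with $\ell_\gamma\colon f\mapsto f'(\gamma)$, vanishes on $A$ while $\sum_{\gamma}c_\gamma g'(\gamma)^2\ne0$. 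Showing that such coefficients exist — equivalently, that the surviving cotangent direction $g^2$ is reached by an honest cluster-derivative derivation — is where the hypotheses $g(\beta)=0$ and $\dim\mathcal D_\alpha^B=k_\alpha^B$ must be used essentially, and this is the main obstacle. Once this derivation is built, its independence from the $k_\alpha^B$ restricted ones is free, since it is nonzero on $g^2$ whereas they all vanish there.
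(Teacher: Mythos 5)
Your second paragraph is correct, and its injectivity argument is actually cleaner than the paper's: if a nonzero $E\in\mathcal{D}_\alpha^B$ vanished on $A=\ker D$ then $\ker E=\ker D$, so $E=\lambda D$, making $E$ simultaneously an $\alpha$- and a $\beta$-derivation, hence zero by Lemma \ref{lm:alphabeta}. The gap lies in your third and fourth paragraphs. The identities you cite (for instance $f_3g_j-f_2h_j=g_j(\beta)f_3$) only show that $f_2$, $f_3$ and the $h_j$ with $j\neq i,k$ die in $M_\alpha^A/\left(M_\alpha^A\right)^2$, so this space is spanned by the images of $h_k$ and of \emph{all} the $g_j$ with $j\neq i$. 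That is not a bound by $k_\alpha^B+1$: the $g_j$ are indexed by the whole SAGBI basis of $B$, whose cardinality can exceed $k_\alpha^B=\dim M_\alpha^B/\left(M_\alpha^B\right)^2$, and a $g_j$ that is redundant modulo $\left(M_\alpha^B\right)^2$ need not, a priori, be redundant modulo $\left(M_\alpha^A\right)^2$. Equivalently, the natural map $M_\alpha^A/\left(M_\alpha^A\right)^2\rightarrow M_\alpha^B/\left(M_\alpha^B\right)^2$ has kernel $\left(M_\alpha^A\cap\left(M_\alpha^B\right)^2\right)/\left(M_\alpha^A\right)^2$, and nothing in your sketch controls it. Controlling it is exactly the technical heart of the paper's proof: after normalizing so that $g_j(\beta)=0$ for $j\neq k$, one proves by induction on the number of SAGBI factors that $gu\equiv u(\beta)\,g+\frac{u(\beta)}{c_k}\,h_k \pmod{\left(M_\alpha^A\right)^2}$ for every $u\in M_\alpha^B$, whence $\left(M_\alpha^B\right)^2\subseteq\MK g+\MK h_k+\left(M_\alpha^A\right)^2$. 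You attempt nothing of this kind, so even your weaker trichotomy $k_\alpha^A\in\{k_\alpha^B,k_\alpha^B+1\}$ is unproven.

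Moreover, once that inclusion is available, the conclusion is sharper than your trichotomy and your ``genuinely hard case'' evaporates. Writing any $g_j$ ($j\neq i$) in terms of a cotangent basis of $B$ plus an element of $\left(M_\alpha^B\right)^2$, pushing the error term through the inclusion, and using $D(g_j)=0$ to cancel the coefficient of $g$, one finds: if $g\notin\left(M_\alpha^B\right)^2$ then $\{g_j:j\in J\}\cup\{h_k\}$ spans $M_\alpha^A/\left(M_\alpha^A\right)^2$, giving $k_\alpha^A\le |J|+1=k_\alpha^B$; and if $g\in\left(M_\alpha^B\right)^2$ then additionally $h_k\in\left(M_\alpha^A\right)^2$ and $\{g_j:j\in J\}$ alone spans, giving $k_\alpha^A\le |J|=k_\alpha^B$. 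So $k_\alpha^A\le k_\alpha^B$ always holds under the hypothesis $g(\beta)=0$, the restricted derivations from your second paragraph already exhaust $\mathcal{D}_\alpha^A$, and the second-order derivation you propose to manufacture --- whose construction you yourself concede is ``the main obstacle'' --- is never needed. As written, the proposal is therefore not a proof: the upper bound is asserted rather than established, and the case on which your strategy hinges is left open precisely where the real work lies.
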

\begin{proof}
We  obtain directly that $$D(g^2)=D(g^3)=0;$$
$$ D(gg_j)=g(\beta)D(g_j)+D(g)g_j(\beta)=g_j(\beta)=c_j, $$
 $$f_2=g^2,f_3=g^3, h_j=gg_j-c_jg.$$

 Because $\alpha$ is not equivalent to $\beta$ we should have $c_k=g_k(\beta)\neq 0$ for some $k\neq i.$ Taking $g_k$ to be of minimal degree with this property we can subtract $d_jg_k$ from $g_j$ for suitable constant $d_j$.  Therefore we can suppose WLOG that $c_j=g_j(\beta)=0$ for $j\neq k.$
Then we have for $j\neq i,k$ that $h_j=gg_j$ and $$h_jg_k-h_kg_j=gg_jg_k-\left(gg_k-c_kg\right)g_j=
c_kgg_j=c_kh_j,$$
which means that all $h_j$ with $j\neq i,k$ are equal to zero modulo $\left(M_\alpha^A\right)^2$.
Moreover:
$$f_3g_k-f_2h_k=g^3g_k-g^2\left(gg_k-c_kg\right)=c_kg^3,$$
$$h_k^2-f_2g_k^2+2c_kg_kf_2=\left(gg_k-c_kg\right)^2-g^2g_k^2+2c_kg_kg^2=c_k^2g^2,$$
thus $f_3=g^3$ and $f_2=g^2$ also equal zero modulo $\left(M_\alpha^A\right)^2$.

Next we want to prove that for any $u\in M_\alpha^B$ we have
\begin{equation}\label{eq:inclusionshort}gu\in u(\beta) g+ \frac{u(\beta)}{c_k} h_k+ \left(M_\alpha^A\right)^2.
 \end{equation}

 It is sufficient to prove this for $u=\Pi g_j$
 and we use induction on the number of elements in the product.
 If we have only one element in the product then we consider two cases. For $u=g_k$ we have $$gg_k=c_kg+h_k=g_k(\beta)g+\frac{g_k(\beta)}{c_k}h_k.$$ For $u=g_j$ with $j\neq k$ we have $u(\beta)=0$ and
using  that $g^2,gg_j=h_j$ are equal to zero modulo $\left(M_\alpha^A\right)^2$ we confirm (\ref {eq:inclusionshort}).

Now suppose that we have at least two factors in $u.$ If $g$ is one of the factors then  $gu=g^lv$ where $l\ge 2$ and $v$ does not contain $g.$ Writing $l=2a+3b$ and using that $g^2,g^3\in  \left(M_\alpha^A\right)^2$ we get that
$gu\in\left(M_\alpha^A\right)^2.$ Because $u(\beta)=0$ we confirm (\ref {eq:inclusionshort}) in this case.

So, suppose that there are no factors $g$ in $u.$

If one factor is $g_j$ with $j\neq k$ then $u(\beta)=0$ and simultaneously $gg_j$ is equal to zero modulo $\left(M_\alpha^A\right)^2,$  and we get (\ref {eq:inclusionshort}) also in this case. It only remains to consider the case when $u=g_k^l$ with $l\ge 2.$ In this case we get
$$gu=gg_k^l=h_kg_k^{l-1}+c_kgg_k^{l-1}.$$ The first summand belongs to $\left(M_\alpha^A\right)^2,$ and for the second we can use induction to see that
$$gu \in c_kg_k(\beta)^{l-1} g+ c_k\frac{g_k(\beta)^{l-1}}{{c_k}}h_k+\left(M_\alpha^A\right)^2=$$
$$g_k(\beta)^{l}g+ \frac{g_k(\beta)^{l}}{{c_k}}h_k+\left(M_\alpha^A\right)^2,$$ which finishes the proof of (\ref {eq:inclusionshort}).

As result we get the inclusion
\begin{equation}\label{eq:inclusion}\left(M_\alpha^B\right)^2\subseteq U=\MK g+ \MK h_k+ \left(M_\alpha^A\right)^2.
 \end{equation}
 Indeed, if the product  $\Pi g_j$ contains at least two factors and none of them is $g$ it is obvious because all other $g_j$ belong to $M_\alpha^A.$ For the remaining products it follows from (\ref {eq:inclusionshort}).

 Now we are ready to estimate the dimensions.

Suppose  first that  $g$ does not belong to $\left(M_\alpha^B\right)^2$.
  We can complete it by some $g_j$s with $j\in J$ to form a basis in $M_\alpha^B/\left(M_\alpha^B\right)^2.$
  Then $k_\alpha^B=|J|+1.$  We want to prove that $g_j$ with $j\in J$ together with $h_k$ span $M_\alpha^A$ modulo $\left(M_\alpha^A\right)^2.$ We already know that $h_j$ and $f_2,f_3$ can be obtained and it remains to show that
  the same is true for $g_j$ with $j\neq i.$  We know $g_j$ can be written  as $ag+\sum_{j\in J} a_jg_j +m$ with $m\in \left(M_\alpha^B\right)^2.$
 Using (\ref{eq:inclusion}) we obtain that $g_j$  can be written as
 $$ag+\sum_{j\in J} a_jg_j +a'g+b'h_k+m'$$
 with $m'\in \left(M_\alpha^A\right)^2.$ But $g_j\in A=\ker L$ thus $a+a'=0$ and we have finished the proof which shows that $k_\alpha^A\le |J|+1=\dim k_\alpha^A.$

To obtain that this number equals $\dim \mathcal{D}_\alpha^A$ as well we can use the same argument as in the proof of corollary \ref{cor:lift}, but an even easier ways is to note that $h_k$ can replace $g$ in
the basis of $M_\alpha^B/\left(M_\alpha^B\right)^2.$ Thus the linear independence of the derivatives remains since it can be checked on the same set of polynomials.

Let us now consider the remaining case when $g$ belongs to $\left(M_\alpha^B\right)^2$. This means that
$g=gu+m$ with $u\in M_\alpha^B$ and $m\in\left(M_\alpha^A\right)^2,$ where $m$ is the sum of all terms which does not contain $g_i.$ Then $$1=D(g)=D(g)u(\beta)+g(\beta)D(u)+D(m)=$$
$$1\cdot u(\beta)+0\cdot D(u)+0=u(\beta).$$
Now inclusion (\ref {eq:inclusionshort}) results in
$$ g=u(\beta)g+\frac{u(\beta)}{c_k}h_k+m'=g+\frac{1}{c_k}h_k+m'$$ with $m'\in \left(M_\alpha^A\right)^2.$ We  get that $h_k\in \left(M_\alpha^A\right)^2.$
Thus $g_j$ with $j\neq i$ span $M_\alpha^A$  modulo $\left(M_\alpha^A\right)^2$. We chose $g_j$ with $j\in J$ that form a basis for $M_\alpha^B/\left(M_\alpha^B\right)^2$ and note that $J$ does not contain $i$ because $g=g_i$ is zero modulo $\left(M_\alpha^B\right)^2$.
 We want to show that
each element of $M_\alpha^A$ can be obtained modulo $\left(M_\alpha^A\right)^2$ using these basis elements. It is sufficient to check this for $g_k$ with $k\neq i.$
We simply repeat the above argument.

 Using (\ref{eq:inclusion}) we find that $g_k$  can be written as
 $$ag+\sum_{j\in J} a_jg_j +a'g+b'h_k+m'$$
 with $m'\in \left(M_\alpha^A\right)^2.$ But $g_k\in A=\ker L$ thus $a+a'=0.$ In addition we know that $h_k\in \left(M_\alpha^A\right)^2$ and this finishes the proof which shows that $k_\alpha^A\le |J|=\dim k_\alpha^B.$

To show that this number equals $\dim \mathcal{D}_\alpha^A$ as well we simply note that we already have that many derivations in $\mathcal{D}_\alpha^B$
and the linear independence of the derivations remains as it can still be checked on the same set of polynomials.

  \end{proof}

\begin{corollary} \label{colD2D3} Suppose that $D(g^2)^2=D(g)D(g^3).$
 If $\dim \mathcal{D}_\alpha^B=k_\alpha^B$ then $\dim \mathcal{D}_\alpha^A=k_\alpha^A.$
\end{corollary}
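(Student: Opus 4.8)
The plan is to observe that the hypothesis $D(g^2)^2=D(g)D(g^3)$ is merely a disguised form of the condition $g(\beta)=0$ that drives the theorem immediately preceding this corollary. Consequently the corollary should reduce to that theorem with essentially no additional work, the only content being a short computation on the powers of $g$.

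First I would normalize exactly as in the surrounding discussion: since $g=g_i$ is chosen to lie outside $A=\ker D$, we have $D(g)\neq 0$, so after replacing $D$ by $D/D(g)$ we may assume $D(g)=1$. Next I would use the defining relation of a $\beta$-derivation, $D(uv)=D(u)v(\beta)+u(\beta)D(v)$, to evaluate $D$ on the powers of $g$. This gives $D(g^2)=2g(\beta)$, and applying the rule once more to $g^3=g\cdot g^2$ yields $D(g^3)=D(g)g(\beta)^2+g(\beta)D(g^2)=3g(\beta)^2$.

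With these values the hypothesis becomes a purely numerical identity. Indeed $D(g^2)^2=4g(\beta)^2$ while $D(g)D(g^3)=3g(\beta)^2$, so $D(g^2)^2=D(g)D(g^3)$ holds if and only if $4g(\beta)^2=3g(\beta)^2$, i.e. if and only if $g(\beta)^2=0$, i.e. $g(\beta)=0$. Thus the assumption of the corollary is precisely the hypothesis $g(\beta)=0$ of the theorem just proved, and we remain in the standing case $\alpha\not\sim\beta$ of this section. Invoking that theorem together with the remaining assumption $\dim\mathcal{D}_\alpha^B=k_\alpha^B$ gives $\dim\mathcal{D}_\alpha^A=k_\alpha^A$, which completes the argument.

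There is no genuine obstacle here: the substantive work — estimating $k_\alpha^A$ by analysing $M_\alpha^A$ modulo $\left(M_\alpha^A\right)^2$ and matching it with the explicit derivations in $\mathcal{D}_\alpha^A$ — has already been carried out in the preceding theorem. The whole role of the corollary is to express that theorem's geometric hypothesis $g(\beta)=0$ in the intrinsic, derivation-theoretic form $D(g^2)^2=D(g)D(g^3)$, which is exactly the condition that fails in Theorem~\ref{th:derivationlift} and Corollary~\ref{cor:lift}; so the two results together cover the complementary possibilities for the quantity $D(g^2)^2-D(g)D(g^3)$.
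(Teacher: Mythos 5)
Your proof is correct and matches the paper's own argument essentially verbatim: normalize $D(g)=1$, compute $D(g^2)=2g(\beta)$ and $D(g^3)=3g(\beta)^2$ from the $\beta$-derivation rule, conclude $4g(\beta)^2=3g(\beta)^2$ hence $g(\beta)=0$, and invoke the preceding theorem. Nothing further is needed.
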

\begin{proof}WLOG $D(g)=1.$ It now follows from the assumption that $$(2g(\beta))^2=3g(\beta)^2\Rightarrow g(\beta)=0.$$
\end{proof}

The above Corollary holds also in the case when $L(f)=f(\beta)-f(\gamma)$ instead of a derivation. The proof is similar and we leave it out.
%

In Theorem \ref{oneclusternoderiv} we gave a description of the subalgebras with just one cluster and no derivations among its subalgebra conditions. We are now ready to move one step up in complexity.

\begin{theorem}\label{oneclusternoderivandderiv}  Let $A$ be a
subalgebra of codimension $m$ that is defined by the conditions
$$f(\alpha_1)=f(\alpha_2)=\cdots f(\alpha_m),\quad f'(\beta)=0.$$ with $\beta\neq\alpha_i.$
Let $p(x)=(x-\alpha_1)\cdots (x-\alpha_m).$
\begin{itemize}
  \item if $p'(\beta)\neq 0$ then  $T(A)=(m+1,m+2,\ldots, 2m+1)$ and a SAGBI basis is given by
  $$p(x)[(x-\beta)p'(\beta)-p(\beta)];$$
  $$p(x)(x-\beta)^k,\quad k=2,3,\ldots,m+1.$$

  \item
   if $p'(\beta)=0$ then  $$T(A)=(m,m+2,m+3,\ldots,2m-1, 2m+1)$$ (all integers  in the interval $[m,2m+1]$ except $m+1$ and $2m$) and a SAGBI basis is given by
  $$p(x),\quad
  p(x)(x-\beta)^k,\quad k=2,3,\ldots,m-1, m+1.$$
  \item For each $\alpha=\alpha_i$ any $\alpha-$derivation can be written as $f\rightarrow\sum c_jf'(\alpha_j).$
  \item Any $\beta-$derivation has the form\\ $D(f)=af''(\beta)+bf'''(\beta).$
\item For $\alpha\neq\alpha_i, \alpha\neq \beta$ any $\alpha-$derivation can be written as $f\rightarrow cf'(\alpha).$
\item The subalgebra $A$  satisfies  Conjecture \ref{conj:dim}.
\end{itemize}

\end{theorem}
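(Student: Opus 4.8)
The subalgebra $A$ arises from $B=A(C)$, with $C=\{\alpha_1,\dots,\alpha_m\}$, by imposing the single extra condition $D(f)=f'(\beta)=0$; thus $A=\ker D$ where $D$ is a $\beta$-derivation of $B$. Everything about $B$ is already known from Theorem~\ref{oneclusternoderiv}: its degree semigroup is $S=\{0\}\cup\{n\ge m\}$ (the $m$ consecutive generators $m,\dots,2m-1$ produce all $n\ge m$), a SAGBI basis is $p_i=(x-\alpha_1)^{i-1}p$ with $p=(x-\alpha_1)\cdots(x-\alpha_m)=p_1$, and $Sp(B)=C$ with $k_{\alpha_i}^B=\dim\mathcal D_{\alpha_i}^B=m$. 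Since $\beta\neq\alpha_i$ we have $\beta\notin Sp(B)$, so $D$ is a trivial derivation of $B$; by Theorem~\ref{th:ghost} and the cluster rules $Sp(A)=C\cup\{\beta\}$, where $\{\beta\}$ is a new singleton cluster. The plan is to read off the type and SAGBI basis from Theorem~\ref{th:SAGBIbasis}, and to obtain all three derivation statements from the trivial-derivation and lifting results already proved.

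For the type I locate the minimal-degree SAGBI element $g$ of $B$ with $D(g)\neq0$; the degree of $g$ is exactly the degree removed from $S$. Here $D(p_i)=p_i'(\beta)$, and a short computation gives $D(p_1)=p'(\beta)$, while when $p'(\beta)=0$ also $D(p_2)=p(\beta)\neq0$. Hence if $p'(\beta)\neq0$ then $g=p$ of degree $m$, so $S_A=S\setminus\{m\}=\{0\}\cup\{n\ge m+1\}$ has type $(m+1,\dots,2m+1)$; if $p'(\beta)=0$ then $p=p_1$ already lies in $A$, so $g=p_2=(x-\alpha_1)p$ of degree $m+1$, and $S_A=S\setminus\{m+1\}$ has type $(m,m+2,\dots,2m-1,2m+1)$. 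For each required degree $s$ I then take the basis element $p_s-D(p_s)g$ of Theorem~\ref{th:SAGBIbasis} with $p_s=p(x)(x-\beta)^{s-m}\in M_{\alpha_i}^B$; since $D\big(p(x)(x-\beta)^k\big)=0$ for $k\ge2$ and equals $p(\beta)$ for $k=1$, this reproduces the two listed bases (the degree-$(m+1)$ element in the first case being $p(x)[(x-\beta)p'(\beta)-p(\beta)]$ after clearing the constant $p'(\beta)$).

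The derivation statements split by where $\alpha$ sits. For $\alpha\neq\alpha_i,\beta$ we have $\alpha\notin Sp(A)$, so Theorem~\ref{th:derivativenotinspectrum} gives $D(f)=cf'(\alpha)$, whence $k_\alpha=\dim\mathcal D_\alpha=1$. For $\alpha=\beta$, recalling that $D=f'(\beta)$ is a trivial $\beta$-derivation of $B$ and $A=\ker D$, Theorem~\ref{th:trivialD} yields that every $\beta$-derivation of $A$ is $af''(\beta)+bf'''(\beta)$, so $k_\beta=\dim\mathcal D_\beta=2$. For $\alpha=\alpha_i$ I invoke Corollary~\ref{cor:lift} with $L=D$ and the above $g$. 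Its hypotheses hold in both cases: $L(g^2)^2-L(g)L(g^3)=g(\beta)^2g'(\beta)^2\neq0$ because $g(\beta)\neq0$ and $g'(\beta)\neq0$ (in the second case $g'(\beta)=p(\beta)\neq0$ precisely because $p'(\beta)=0$); $g\notin(M_{\alpha_i})^2$ since $\deg g<2m$; and $B$ satisfies Conjecture~\ref{conj:dim}. The corollary gives $\dim\mathcal D_{\alpha_i}^A=k_{\alpha_i}^A$, and its proof shows that the generators $f'(\alpha_1),\dots,f'(\alpha_m)$ of $\mathcal D_{\alpha_i}^B$ stay linearly independent when restricted to $A$; combined with $k_{\alpha_i}^A\le k_{\alpha_i}^B=m$ from Theorem~\ref{th:derivationlift}, this forces $k_{\alpha_i}^A=m$ and shows every $\alpha_i$-derivation equals $\sum_j c_jf'(\alpha_j)$.

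Collecting the three cases gives $\dim\mathcal D_\alpha=k_\alpha$ (equal to $m$, $2$, $1$ respectively) for every $\alpha$, which is Conjecture~\ref{conj:dim}. The one genuinely delicate point is the case distinction itself: one must take $g=p_2$ rather than $p_1$ when $p'(\beta)=0$, and the verification that $g'(\beta)=p(\beta)\neq0$ there—the essential use of the hypothesis $p'(\beta)=0$—is exactly what makes the lifting corollary applicable and what separates the two SAGBI-basis shapes. I expect this bookkeeping, rather than any deep new argument, to be the main obstacle.
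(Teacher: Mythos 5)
Your proof is correct, and its skeleton is the same as the paper's: realise $A$ as the kernel of the trivial $\beta$-derivation $L(f)=f'(\beta)$ on $B=A(C)$, read off the type and SAGBI basis from Theorem~\ref{th:SAGBIbasis}, get the $\beta$-derivations and $k_\beta^A=2$ from Theorem~\ref{th:trivialD}, the off-spectrum points from Theorem~\ref{th:derivativenotinspectrum}, and the $\alpha_i$-derivations from the lifting results. The one genuine divergence is the choice of $g$ in the degenerate case $p'(\beta)=0$. The paper works with the SAGBI basis $g_k=(x-\beta)^k p(x)$ of $B$, so its minimal non-kernel element is $g=(x-\beta)p$, which satisfies $g(\beta)=0$; then $L(g^2)^2=L(g)L(g^3)$ and the paper must invoke Corollary~\ref{colD2D3} (which rests on the long $g(\beta)=0$ theorem of the $\beta$-derivations chapter), while the membership check $g\notin\left(M_{\alpha_i}\right)^2$ is immediate because $g$ is not divisible by $(x-\alpha_i)^2$. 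You instead take $g=(x-\alpha_1)p$ from the basis of Theorem~\ref{oneclusternoderiv}, for which $g(\beta)\neq 0$ and $g'(\beta)=p(\beta)\neq 0$, so that $L(g^2)^2-L(g)L(g^3)=g(\beta)^2g'(\beta)^2\neq 0$ and Corollary~\ref{cor:lift} covers both cases uniformly; the price is that your $g$ \emph{is} divisible by $(x-\alpha_1)^2$, so the paper's one-line divisibility criterion fails at $\alpha=\alpha_1$, and you need (and correctly supply) the degree argument: every nonzero element of $\left(M_{\alpha_i}^B\right)^2$ is divisible by $p^2$, hence has degree at least $2m>m+1=\deg g$ (using that $p'(\beta)=0$ forces $m\ge 2$). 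Both routes are sound; yours has the small advantage of never needing Corollary~\ref{colD2D3}, the paper's of keeping all membership checks trivial.
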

\begin{proof} The algebra $A$ is obtained from $B=A(C)$ by adding the condition $L(f)=f'(\beta)=0$ so it is natural to use Theorem \ref{oneclusternoderiv} as a starting point. But we choose our SAGBI basis in $M_\alpha^B$ as
$$g_k=(x-\beta)^kp(x),\quad k=0,\ldots m-1.$$ Note first that for $k\ge 2$ we get $g_k\in \ker L=A.$ Moreover
$$L(g_1)=(p(x)(x-\beta))'|_{x=\beta}=p(\beta)\neq 0.$$
So, if we need to choose $g$ to apply corollary \ref{cor:lift} or \ref{colD2D3} we get either $g=g_0=p$ (if $p'(\beta)\neq 0$) or $g=g_1$ (if $p'(\beta)=0$). This explains why there are alternative SAGBI bases. We only need to verify
that the conditions for one of the corollaries are satisfied. Indeed, $g\not\in M_\alpha^2$ because it is not divisible by $(x-\alpha)^2.$ So, if
$L(g^2)^2=L(g)L(g^3)$ we can apply corollary \ref{colD2D3}, otherwise corollary \ref{cor:lift}.

The last step that $k_\beta^A=2$ follows from Theorem \ref{th:trivialD}.
\end{proof}


We finish this chapter by formulating a more general conjecture regarding what derivations we have in the intersection of two subalgebras.
\begin{Conj}\label{Conj:intersection} Let $A_1,A_2$ be two subalgebras of finite codimension such that their spectra have no common elements.
Then the set of derivations of $A=A_1\bigcap A_2$ is the union of the derivations in $A_1$ and $A_2$ (restricted to $A$).
\end{Conj}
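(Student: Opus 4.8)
The plan is to reduce the statement to a local question at each point of the spectrum and then, by induction on the codimension of one of the two factors, to show that every derivation of $A$ extends to a derivation of whichever of $A_1,A_2$ owns its base point.

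First I would settle the bookkeeping of spectrum and clusters. By Theorem \ref{lm:reversing} we have $Sp(A_1)\cup Sp(A_2)\subseteq Sp(A)$, and if we build $A$ from $\MK[x]$ by imposing first the conditions of $A_1$ and then those of $A_2$, Theorem \ref{th:ghost} forbids any further point from entering the spectrum; hence $Sp(A)=Sp(A_1)\sqcup Sp(A_2)$, a disjoint union by hypothesis. Since the pairing conditions $f(\alpha)=f(\beta)$ defining $A$ only ever link two points lying both in $Sp(A_1)$ or both in $Sp(A_2)$, no cluster of $A$ mixes the two spectra. The converse inclusion in the conjecture is automatic, as the restriction to $A$ of an $\alpha$-derivation of $A_1$ (or of $A_2$) is again an $\alpha$-derivation of $A$. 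Moreover, for fixed $\alpha$ the $\alpha$-derivations form a vector space, so it suffices to treat one $\alpha$ at a time. If $\alpha\notin Sp(A)$, then Theorem \ref{th:derivativenotinspectrum} says every $\alpha$-derivation of $A$ is $cf'(\alpha)$, which is a derivation of $\MK[x]$ and hence of both $A_1$ and $A_2$; this case is done. It remains to prove that for $\alpha\in Sp(A_1)$ (and, symmetrically, for $\alpha\in Sp(A_2)$) every $\alpha$-derivation of $A$ is the restriction of an $\alpha$-derivation of $A_1$.

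I would reformulate this extension statement via cotangent spaces. Recall that $k_\alpha^A=\dim M_\alpha^A/(M_\alpha^A)^2$ and that an $\alpha$-derivation is exactly a linear functional on $M_\alpha^A/(M_\alpha^A)^2$. The inclusion $A\subseteq A_1$ induces $r:M_\alpha^A/(M_\alpha^A)^2\to M_\alpha^{A_1}/(M_\alpha^{A_1})^2$, and restriction of derivations is the dual map $r^*$; over a field $r^*$ is surjective precisely when $r$ is injective, i.e. when $M_\alpha^A\cap(M_\alpha^{A_1})^2\subseteq(M_\alpha^A)^2$. So the whole conjecture comes down to this inclusion for $\alpha\in Sp(A_1)$, $\alpha\notin Sp(A_2)$. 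I would prove it by induction on the codimension of $A_2$, keeping $A_1$ and $\alpha$ fixed. The base case $A_2=\MK[x]$ gives $A=A_1$. For the step, use Theorem \ref{th:codimGorin} to write $A_2=\ker L_2$ in a subalgebra $B_2$ of codimension one less, where $L_2(f)=f(\gamma)-f(\delta)$ or $L_2$ is a $\gamma$-derivation; in either case $\gamma$ (and $\delta$) lie in $Sp(A_2)$. Put $B=A_1\cap B_2$; then $Sp(B_2)\subseteq Sp(A_2)$ is disjoint from $Sp(A_1)$, so the induction hypothesis gives the injection $M_\alpha^B/(M_\alpha^B)^2\hookrightarrow M_\alpha^{A_1}/(M_\alpha^{A_1})^2$. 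Since $A=B\cap\ker L_2$, it remains to pass from $B$ to $A$ by a single condition whose points $\gamma,\delta\in Sp(A_2)$ satisfy $\gamma,\delta\not\sim\alpha$. This is exactly the favourable, cluster-separated situation: the argument in the proof of the $\beta$-derivation theorem preceding Corollary \ref{colD2D3} (taken with $\beta=\gamma$, so that $g(\beta)=0$), together with the remark that it also covers $L(f)=f(\beta)-f(\gamma)$, controls $M_\alpha^A/(M_\alpha^A)^2$ in terms of $M_\alpha^B/(M_\alpha^B)^2$ and yields $M_\alpha^A/(M_\alpha^A)^2\hookrightarrow M_\alpha^B/(M_\alpha^B)^2$. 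Composing the two injections proves the inclusion, and the symmetric induction on the codimension of $A_1$ handles $\alpha\in Sp(A_2)$.

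The main obstacle is this last step: the theorems of the SAGBI-and-derivations chapter are phrased as dimension equalities $\dim\mathcal{D}_\alpha^A=k_\alpha^A$ for the pure-derivative spaces, whereas what I need is the stronger relative statement that the restriction $\mathcal{D}_\alpha(A_1)\to\mathcal{D}_\alpha(A)$ is onto, i.e. the cotangent injectivity above. The decisive simplification is that, because the added point always lies in the opposite spectrum, one only ever meets the separated case $\gamma\not\sim\alpha$ and never the genuinely hard case where the new point is equivalent to $\alpha$; hence no new $\alpha$-derivations can appear, and extracting the extension property from the existing good-case proofs is the whole job, with no new phenomenon to confront. A secondary point demanding care is the rigorous proof that a cluster of $A$ never merges a point of $Sp(A_1)$ with one of $Sp(A_2)$, for which Lemma \ref{lm:alphabeta} and the clustering discussion supply the needed tools.
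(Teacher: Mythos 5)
This statement is left as an open conjecture in the paper: the authors state it without proof ("we finish this chapter by formulating a more general conjecture"), so your proposal is not being measured against a hidden argument but must stand on its own, and it does not. Your reduction scheme (disjointness $Sp(A)=Sp(A_1)\sqcup Sp(A_2)$ via Theorem~\ref{th:ghost}, the cotangent reformulation of the extension problem as $M_\alpha^A\cap(M_{\alpha}^{A_1})^2\subseteq(M_\alpha^A)^2$, and the Gorin-chain induction on the codimension of $A_2$) is sensible, and the easy directions are handled correctly. The gap is in the single step from $B=A_1\cap B_2$ to $A$, precisely where you yourself locate the "whole job." Your parenthetical "taken with $\beta=\gamma$, so that $g(\beta)=0$" treats the hypothesis $g(\beta)=0$ of the theorem preceding Corollary~\ref{colD2D3} as automatic, but it is a substantive hypothesis (equivalent to $L(g^2)^2=L(g)L(g^3)$), and disjointness of the spectra does not grant it. Concretely, take $A_1=\{f\,|\,f(1)=f(-1)\}$ and $A_2=\{f\,|\,f'(0)=f''(0)=0\}$, so $B_2=\{f\,|\,f'(0)=0\}$, $L(f)=f''(0)$, and $B=\{f\,|\,f(1)=f(-1),\,f'(0)=0\}$, a type $(2,5)$ algebra. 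With $\alpha=1$ and $\gamma=0$, the minimal-degree element of $M_1^B$ with $L(g)\neq 0$ is forced (up to scalar, and up to adding constants which is all the freedom available at the bottom degree) to be $g=x^2-1$, and $g(0)=-1\neq 0$: the theorem you cite simply does not apply, and no re-choice of SAGBI basis can make it apply.

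In this particular example one can instead extract the needed cotangent injectivity from the proof of Theorem~\ref{th:derivationlift}, since there $L(g^2)^2\neq L(g)L(g^3)$ and $g\notin(M_\alpha^B)^2$; note, however, that you cannot quote Corollary~\ref{cor:lift} as stated, because it assumes $\dim\mathcal{D}_\alpha^B=k_\alpha^B$, i.e.\ Conjecture~\ref{conj:dim} for $B$, which your induction does not carry (your inductive invariant is the extension property, not the pure-derivative dimension count). Even granting that repair, the two available tools together cover only the cases "$L(g^2)^2=L(g)L(g^3)$ with $\gamma\not\sim\alpha$" and "$L(g^2)^2\neq L(g)L(g^3)$ with $g\notin(M_\alpha^B)^2$." The residual case $g\in(M_\alpha^B)^2$ with $L(g^2)^2\neq L(g)L(g^3)$ is untouched: there the paper only yields $k_\alpha^A\le k_\alpha^B+1$, so an $\alpha$-derivation of $A$ not extending to $B$ could a priori appear, and nothing in the disjoint-spectra assumption rules this out. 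This residual case is exactly the hard situation that keeps Conjectures~\ref{conj:mainD}--\ref{conj:dim} (and hence this conjecture) open in the paper, so your closing claim that there is "no new phenomenon to confront" is where the proof fails.
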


\chapter{Classifications}\label{chapter:class}

Let us see how the method described in section \ref{sec:mainplan} can be realised starting with subalgebras of codimension one and moving step by step to higher codimensions.
\section{Subalgebras of codimension one}
For codimension one we start from $\MK[x]$ (which has $x$ as SAGBI basis and from which we can get $A$ either by the condition $f'(\alpha)=0$ or by the condition $f(\alpha)=f(\beta)).$  We now get Theorem \ref{th:codm1} without any effort thanks to Theorem \ref{th:codimGorin}.
Now we want to prepare for the next codimension and for this we need to find SAGBI bases and derivations for the different subalgebras of codimension one. We obviously have that
$ \mathcal{D}_\alpha$ contains $f''(\alpha), f'''(\alpha)$ in the first case and  $f'(\alpha),f'(\beta)$ in the second case.

 Because $k_\alpha$ and $k_\beta$ are not greater than the number of generators, which equals two, Conjecture \ref{conj:dim} is obviously valid and we have found all nontrivial derivations.

Type $(2,3)$ is the only possible semigroup of degrees, so an easy way to construct a SAGBI basis   is to use the second part of Theorem \ref{th:SAGBIbasis}. An even more convenient way in this case is to use Theorem \ref{th:deg2} to get the basis directly. We will however, in order to practice using our algorithm to get a SAGBI basis inside $M_\alpha$, use the first part of Theorem \ref{th:SAGBIbasis} instead.

 First we choose $g=x-\alpha$ as a single-element SAGBI basis of $\MK[x]$ inside  $M_\alpha$. We let $L:f\rightarrow f'(\alpha)$  in the first alternative and $$L: f\rightarrow \frac{f(\alpha)-f(\beta)}{\alpha-\beta}$$
 in the second alternative to get $L(g)=1$ (while we still have $A=\ker L$).

 Now, according Theorem \ref{th:SAGBIbasis} the elements $g^k-L(g^k)g$ for $k=2,3$ form the desired SAGBI bases. We get $L(g^k)=0$ and
 $$(x-\alpha)^2, (x-\alpha)^3$$ as the SAGBI basis for the first alternative, that is when $L$ is a derivation. For the second alternative we get
 $$f_k=(x-\alpha)^k-\frac{0-(\beta-\alpha)^k}{\alpha-\beta}(x-\alpha).$$
 For $k=2$ this results in
 $$f_2=(x-\alpha)\left(x-\alpha - (\alpha-\beta)\right)=(x-\alpha)(x-\beta).$$
  and for $k=3$ we get
 $$f_3=(x-\alpha)\left((x-\alpha)^2 - (\alpha-\beta)^2\right)=(x-\alpha)(x-\beta)(x-2\alpha+\beta).$$
  Adding $(\alpha-\beta)f_2$ to $f_3$ we get an even nicer SAGBI basis:
 $$(x-\alpha)(x-\beta); (x-\alpha)^2(x-\beta).$$

 The last thing that we formally need to do to finish the proof of Conjecture \ref{conj:dim} for codimension one is to check that the found derivations in $\mathcal{D}_\alpha$ are linearly independent. Even if that is quite obvious here we want to show how to do it. We simply find the values of the derivations on equally many  elements of the SAGBI basis and calculate the determinant. In the first case, for our derivations $f''(\alpha),f'''(\alpha)$, we get
 $$\left|
     \begin{array}{cc}
       2 & 0 \\
       0 & 6 \\
     \end{array}
   \right|\neq 0.$$ In the second case, for $f'(\alpha),f'(\beta)$, we get
 $$\left|
     \begin{array}{cc}
       \alpha-\beta& \beta-\alpha \\
       0 & (\beta-\alpha)^2 \\

     \end{array}
   \right|=(\alpha-\beta)^3\neq 0.$$

\section{Subalgebras of codimension two} \label{sec:deg2}
We now turn to subalgebras of codimension two. By Theorem \ref{th:codimGorin} they can be obtained by applying one extra condition to a subalgebra $B$ of codimension one. This means we need to study how those conditions look. In the case when the extra condition is $f(\alpha)=f(\beta)$ we simply add one or two elements to the spectrum and obtain the algebra $A.$
This is an easy case. A more difficult case is when we need to describe a kernel of some derivation. But since we already have proven Conjecture \ref{conj:dim} for codimension one we know the derivations in each of the two cases considered above. Thus we are prepared to make a classification of all codimension two subalgebras:

\begin{theorem} \label{th:codim2} Let $A$ be a subalgebra of codimension two. Then it is either type $(2,5)$  or  type $(3,4,5)$.
The spectrum contains  $s\le 4$ elements and depending on $s$ we have the following possibilities:
\begin{description}
  \item[s=1] $A=\{f(x)|f'(\alpha)=0; af''(\alpha)+bf'''(\alpha)=0\}.$ \\If $a=0,b\neq 0$ then $T(A)=(2,5)$ and  if $a\neq 0$ then $T(A)=(3,4,5).$
  \item[s=2] $A=\{f(x)|f(\alpha)=f(\beta);af'(\alpha)+bf'(\beta)=0\}.$\\If $a=b\neq 0$ then $T(A)=(2,5)$ and if $a\neq b$ then $T(A)=(3,4,5).$
  \item[s=2] $A=\{f(x)|f'(\alpha)=f'(\beta)=0\}.$\\ In this case $T(A)$ is always $(3,4,5).$
   \item[s=3] $A=\{f(x)|f(\alpha)=f(\beta);f'(\gamma)=0\};$\\ If $\alpha+\beta=2\gamma$ then $T(A)=(2,5),$ and if $\alpha+\beta\neq 2\gamma$ then $T(A)=(3,4,5).$
   \item[s=3] $A=\{f(x)|f(\alpha)=f(\beta)=f(\gamma)\}.$ \\ In this case $T(A)$ is always $(3,4,5).$
    \item[s=4] $A=\{f(x)|f(\alpha)=f(\beta);f(\gamma)=f(\delta)\}.$\\
    If $\alpha+\beta=\gamma+\delta $ then $T(A)=(2,5)$ and if $\alpha+\beta\neq\gamma+\delta$ then $T(A)=(3,4,5)$.
\end{description}
Here $\alpha,\beta,\gamma,\delta$ are different elements of the spectrum.

\end{theorem}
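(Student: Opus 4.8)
The plan is to combine the codimension reduction of Theorem~\ref{th:codimGorin} with the explicit description of codimension-one subalgebras and their derivations from the previous section, and then to settle the type of each resulting family by one elementary computation.

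First I would pin down the possible types. The degrees of the non-constant elements of $A$ form a numerical semigroup $S$ whose number of gaps equals the codimension, here two. Since $1\notin S$ (otherwise $A=\MK[x]$), it is a gap, and the second gap must be $2$ or $3$: any gap $\geq 4$ would force $2,3\in S$ and hence all integers $\geq 2$ into $S$, leaving only one gap. The two resulting semigroups are $\langle 2,5\rangle$ and $\langle 3,4,5\rangle$, giving types $(2,5)$ and $(3,4,5)$ as claimed.

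Next I would enumerate the forms. By Theorem~\ref{th:codimGorin} there is a codimension-one $B$ with $A=\ker L$, where $L$ is either $f\mapsto f(\mu)-f(\nu)$ or a $\mu$-derivation of $B$. By Theorem~\ref{th:codm1} either $B=\{f\mid f'(\gamma)=0\}$, in which case the previous section gives $\mathcal{D}^B_\gamma=\langle f''(\gamma),f'''(\gamma)\rangle$, or $B=\{f\mid f(\alpha)=f(\beta)\}$, in which case $\mathcal{D}^B_\alpha=\langle f'(\alpha),f'(\beta)\rangle$; for a $\mu$-derivation with $\mu\notin Sp(B)$ Theorem~\ref{th:derivativenotinspectrum} forces $L=cf'(\mu)$. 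Running through the four combinations and applying the cluster-merging rules together with Theorem~\ref{th:ghost} to track $Sp(A)$ produces exactly the listed forms, sorted by $s=|Sp(A)|\in\{1,2,3,4\}$. I would record in particular that a derivation-type $B$ augmented by $f(\gamma)=f(\nu)$ lands in the $s=2$ form with $(a,b)=(1,0)$, and that a $\gamma$-derivation of such a $B$ forces $L=af''(\gamma)+bf'''(\gamma)$, giving the $s=1$ form; Theorem~\ref{th:spectrumsize} guarantees $s\leq 4$ and that nothing outside this list occurs.

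Finally I would read off the type of each form by asking whether $A$ contains a monic quadratic $x^2+cx+d$. Each condition becomes linear in $c$ (for instance $f(\alpha)=f(\beta)$ forces $c=-(\alpha+\beta)$, while $f'(\gamma)=0$ forces $c=-2\gamma$), so such a quadratic exists if and only if the resulting linear system is consistent. Consistency is exactly $a=0$, $a=b$, $\alpha+\beta=2\gamma$, and $\alpha+\beta=\gamma+\delta$ in the four relevant cases, and is impossible in the two \emph{pure} cases $f'(\alpha)=f'(\beta)=0$ and $f(\alpha)=f(\beta)=f(\gamma)$; by the first paragraph, the presence of a quadratic forces type $(2,5)$ and its absence forces type $(3,4,5)$. (Alternatively one may invoke Theorem~\ref{th:52} for the $(2,5)$ cases directly.) The main obstacle is the bookkeeping of the second step: verifying that the cluster-merging and ghost analysis really exhaust the list with no omissions and no spurious overlaps, especially degenerate coincidences such as $(a,b)=(1,0)$; the type computations themselves are routine.
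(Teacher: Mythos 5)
Your proposal is correct and follows essentially the same route as the paper's proof: reduce via Theorem~\ref{th:codimGorin} to a codimension-one subalgebra $B$ plus one extra condition, use Theorem~\ref{th:codm1} and the known derivation spaces of codimension-one subalgebras (with Theorem~\ref{th:derivativenotinspectrum} handling trivial derivations and Theorem~\ref{th:spectrumsize} bounding $s$) to enumerate the forms, and decide each type by testing for a degree-two element. The only cosmetic differences are that the paper invokes semi-commutativity to assume any derivation condition is imposed last, whereas you enumerate all four $(B,L)$ combinations directly, and the paper checks the degree-two SAGBI element against the added condition (or cites Theorem~\ref{th:52}) where you solve the linear consistency condition for a monic quadratic --- the underlying computations are identical.
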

\begin{proof} We know that the spectrum has at most four elements. We start with the case where there are no derivations in the subalgebra conditions. Either we have two clusters and get the only case with $s=4$ or we have only one cluster of size $3$ and get the second case with $s=3.$

If some $\gamma-$derivation is used then by semi-commutativity we can suppose that it was added to a subalgebra of codimension one.
If $\gamma$ was not in the spectrum of this codimension one subalgebra, then $\gamma$ is a trivial derivation $f\rightarrow f'(\gamma)$ and we get either the second case with $s=2$ (with $\gamma=\beta$) or the first case with $s=3.$

At last if $\gamma$ belongs to the spectrum we can WLOG suppose that $\gamma=\alpha$ and use that we know all $\alpha-$derivations. We get cases with $s=1,2.$

It is easy to check that $(2,5)$ and $(3,4,5)$ are the only choices for the numerical semigroup of degrees. To see which choice is valid we only need to check if the element of degree $2$  in the SAGBI basis satisfies the added condition. If so we get type $(2,5)$, otherwise type $(3,4,5)$. Alternatively we can use Theorem \ref{th:52} which tells us exactly when $T(A)=(2,5)$.

\end{proof}

\section[Subalgebras of codimension three]{General plan for classifying Subalgebras of codimension three} \label{sec:plancodim3}
Our plan now is to prove the main conjecture  for subalgebras $A$ of codimension three and get  descriptions of them including SAGBI bases. It is not hard to verify that there are exactly four numerical semigroups of genus three: $$(2,7),\quad(3,4),\quad(3,5,7),\quad(4,5,6,7).$$ Thus the type of a subalgebra of codimension three must be one of these four listed types.

We use our  natural way to get $A$ from a subalgebra $B$ of codimension two using Theorem \ref{th:codimGorin}. So let $A$ be obtained from $B$ by a condition $L=0.$

 We choose a SAGBI basis $\{g_i\}$ in $B$ and choose $g$ from this basis of minimal degree among the basis elements with $L(g)\neq 0.$ Subtracting suitable multiples $d_jg$ from each $g_j$ we can suppose that $L(g_j)=0$ for all other elements in the basis. Thus all of them belong to $A$ and can be chosen as a part of SAGBI basis of $A.$

  To detect the type and complete them to a SAGBI basis in $A$ we have the following alternatives.

 If $T(B)=(2,5)$ and $\deg g =2$ then $T(A)=(4,5,6,7)$ because $A$ has no elements of degree $3.$ If $g_2\in \ker L$ is the remaining element in the SAGBI basis then the SAGBI basis of $A$ consists of
 $$L(g)g^2-L(g^2)g,\, g_2,\, L(g)g^3-L(g^3)g,\, L(g)gg_2-L(gg_2)g.$$

 If $T(B)=(2,5)$ and $\deg g =5$ then $T(A)=(2,7)$ and we can use Theorem  \ref{th:deg2}.

 Now suppose $T(B)=(3,4,5)$ and $\deg g_i=i+2$ for $ i=1,2,3.$

 If $g_1,g_2\in \ker L$ then $T(A)=(3,4)$ and they form a SAGBI basis.

 If $g_1,g_3\in \ker L,$ but $g=g_2\not\in \ker L$ then  $T(A)=(3,5,7)$ and
 $$g_1,\, g_3,\, L(g)gg_3-L(gg_3)g$$ form a SAGBI basis.

 At last, if $g=g_1\not\in \ker L,$ but $g_2,g_3\in \ker L$ then $T(A)=(4,5,6,7)$ and

 $$g_2,\, g_3,\, L(g)g^2-L(g^2)g,\, L(g)gg_2-L(gg_2)g$$
 form a SAGBI basis.

 To realise this plan we first need to prove Conjecture \ref{conj:dim} for codimension two and find the corresponding SAGBI bases. After that we will be able to give a more detailed classification. In many case we can find elements in a SAGBI bases for $A$ explicitly.

Each size of the spectrum,  $s=|Sp(A)|$, is considered separately, but a common approach is the following. Either $A$ is obtained without derivations (this is possible when $s\ge 4$ only) and we can use Theorem \ref{th:noderivations} or we can use semi-commutativity and suppose that
$L$ is some $\alpha-$derivation. If so we have that $|Sp(A)-Sp(B)|\le 1.$ If we have equality then $\alpha\not\in Sp(B)$ and $L$ is a trivial derivation.

\section[type $(2,2k+1)$]{Derivations of subalgebras of type $(2,2k+1)$}\label{sec:22k+1}

In this section we will to study $B$ with $T(B)=(2,5).$ But in fact we can get a more general, for type $(2,2k+1)$ with $k\ge 1$, without much extra work, since we already have a full description of such algebras (including their SAGBI basis) in Theorem  \ref{th:deg2}. We will use the same notations as in this theorem.

 As we already mentioned it is sufficient to prove Conjecture \ref{conj:dim} for the elements in the spectrum, so start from $\alpha_i$ with $i \leq 1.$ We only have two elements in SAGBI basis, and as a consequence we have $k_{\alpha_i}\le 2$. Thus it is sufficient to find two derivations in $\mathcal{D}_{\alpha_i}.$ One of them is obviously $f(x)\rightarrow f'(\alpha_i)$. Another one
is $$D:f(x)\rightarrow f^{(m_i+1)}(\alpha_i)-(-1)^{m_i+1}f^{(m_i+1)}(\beta_i).$$ Indeed,

$$(fg)^{(m_i+1)}(\alpha_i)=$$
$$f^{(m_i+1)}(\alpha_i)g(\alpha_i)+f(\alpha_i)g^{(m_i+1)}(\alpha_i)+$$
$$\sum_{j=1}^m\binom{m_i+1} j f^{(m_i+1-j)}(\alpha_i)g^{(j)}(\alpha_i)=$$
$$f^{(m_i+1)}(\alpha_i)g(\alpha_i)+f(\alpha_i)g^{(m_i+1)}(\alpha_i)+$$
$$\sum_{j=1}^m(-1)^{m_i+1-j+j}\binom{m_i+1} j f^{(m_i+1-j)}(\beta_i)g^{(j)}(\beta_i).$$

On the other hand
$$(-1)^{m_i+1}(fg)^{(m_i+1)}(\beta_i)=$$
$$(-1)^{m_i+1}f^{(m_i+1)}(\beta_i)g(\alpha_i)+f(\alpha_i)(-1)^{m_i+1}g^{(m_i+1)}(\beta_i)+$$
$$\sum_{j=1}^m(-1)^{m_i+1}\binom{m_i+1} j f^{(m_i+1-j)}(\beta_i)g^{(j)}(\beta_i)$$
and we see that $D(fg)=D(f)g(\alpha_i)+f(\alpha_i)D(g).$

To check that those two derivations are linearly independent is sufficient to check their values on the SAGBI basis. We skip the details and restrict ourselves to the observation that
$(\alpha_i-\alpha_0)^{2m_0+1}(\alpha_i-\beta_i)^{m_i+1}=-(-1)^{m_i+1}(\beta_i-\alpha_0)^{2m_0+1}(\beta_i-\alpha_i)^{m_i+1}$ and $\alpha_i-\beta_j=\alpha_j-\beta_i.$

The situation with $\beta_i$ is similar and it only remains to look at $\alpha_0$ (when $m_0>1.$) Again one derivation $f(x)\rightarrow f''(\alpha_0)$ is trivial. We here get the second derivation as
$D:f(x)\rightarrow f^{(2m_0+1)}(\alpha_0)$. To verify that it is a derivation we use that $2m_0+1$ is odd and all smaller odd derivatives are already zero. Linear independence between those two derivations is easy to check.

\section[Type $(3,4,5)$]{SAGBI bases and derivations of subalgebras of type $(3,4,5)$}\label{sec:345}
It now remains to prove  Conjecture \ref{conj:dim} for algebras of  type $(3,4,5).$ Again the only interesting case is when $\alpha$ belongs to the spectrum. When we have only one cluster the conjecture follows from Theorem \ref{th:main2}.
The case $s=4$ is covered by Theorem \ref{th:noderivations}. Only two cases remain.

The case where $s=2$ and $A$ is defined by the condition
$$f'(\alpha) = f'(\beta)=0$$
is attacked in a straightforward way - the condition means that $f \rightarrow f'(\alpha)$ does not work as it equals zero, but higher derivatives work well. Let $D_1 : f \rightarrow f''(\alpha)$, $D_2 : f \rightarrow f^{(3)}(\alpha)$. We first confirm that these are in fact $\alpha$-derivations:
\begin{align*}
D_1(fg) &= f''(\alpha)g(\alpha) + 2f'(\alpha)g'(\alpha) + f(\alpha)g''(\alpha) \\
&= f''(\alpha)g(\alpha) + f(\alpha)g''(\alpha) \\
&= D_1(f)g(\alpha) + f(\alpha)D_1(g)
\end{align*}
and similarly for $D_2$ since any terms containing $f'(\alpha)$ vanish due to the condition on $A.$ Next, we show that $k_\alpha \leq 2$. To do this, pick a SAGBI basis in $M_\alpha$:
\begin{align*}
q =& (x-\alpha)^2(2x+\alpha-3\beta)\\
p =& (x-\alpha)^2(x-\beta)^2\\
r =& (x-\alpha)^3(x-\beta)^2.
\end{align*}
It is obvious that $q'(\alpha) = p'(\alpha) = r'(\alpha) = 0$, and easy to verify that $q'(\beta) = p'(\beta) = r'(\beta) = 0$. Then, by subduction, we find the relation
\begin{align*}
4\left(4p^2 - 2rq - (\alpha-\beta)pq\right) - (\alpha-\beta)^2q^2 \\
= 4(\beta-\alpha)^3r + 3(\beta-\alpha)^4p + (\beta-\alpha)^5q
\end{align*}

and applying any $\alpha$-derivation $D$ to this relation gives
$$
4(\beta-\alpha)^3Dr + 3(\beta-\alpha)^4Dp + (\beta-\alpha)^5Dq = 0.
$$
Since $\alpha \neq \beta$ this means $Dq$, $Dp$, $Dq$ are linearly dependent and hence $k_\alpha \leq 2$. Thus $D_1$, $D_2$ are sufficiently many $\alpha$-derivations and we are done because they are obviously linearly independent.
(Check from their values on the SAGBI basis!)

 It remains to consider the subalgebras
$$ \{f(x) | f(\alpha)=f(\beta), f'(\gamma)=0 \}. $$ with $\alpha+\beta\neq 2\gamma.$ But here we can apply Theorem \ref{oneclusternoderivandderiv}.

From the theorem we also get a SAGBI basis for the algebra, but it is also quite easy to find the basis directly in the following way: As elements of degree four and five we can choose
$$(x-\alpha)(x-\beta)(x-\gamma)^i,\. i=2,3.$$
For degree three we take
$$(x-\alpha)(x-\beta)\left(x-\frac{3\gamma^2-2(\alpha+\beta)\gamma+\alpha\beta}{2\gamma-\beta-\alpha}\right).$$ The only thing we need to compute in order to check that this is in fact a SAGBI basis is the derivative in $\gamma$ of the basis element of degree three.
\
In almost all cases we can easily describe the possible derivations, but we need to do  more work when higher derivatives are involved.

Note first that we can easily describe the derivations for any monomial algebra: if $A=\langle (x-\alpha)^s,s\in S\rangle$ where $S$ is a semigroup and $\{s_1, s_2, \ldots s_k\}$ a minimal generating set of $S$. Then all the maps $D_{s_i}:f(x)\rightarrow f^{(s_i)}(\alpha)$ are derivations. A SAGBI basis for $A$ is given by $$\{(x-\alpha)^{s_1}, (x-\alpha)^{s_2}, \ldots, (x-\alpha)^{s_k}\}$$ and by applying the derivations to the elements of the SAGBI basis we find that they are independent.

Next we need to study the case $s=1$ more carefully, so
 let $A$ be an algebra with a single element $\alpha$ in the spectrum. WLOG we can suppose that $\alpha=0.$ We know according to Theorem \ref{th:codim2} that $A$ is defined by the conditions $f'(0)=af''(0)+bf'''(0)=0, a\neq 0.$ If $b=0$ we get a monomial case, otherwise we can suppose  (in order to get a nice SAGBI basis) that $b\neq 0, a=3,$
 thus $$bf'''(0)+3f''(0)=0.$$
Then we can choose $p=x^4,q=x^3-bx^2, r=x^5.$ as generators of $A$. Note that
$$p^2-rq-bpq=bx^7-bpq=b^2x^6=b^2(q^2+2br-b^2p).$$ As $p(0)=q(0)=r(0)=0$ and $b\neq 0$ we get $2Dr=bDp.$
Thus we can take $k_0=2$ which means that we only need to find two derivations of the desired form.

One is obviously the second derivative, $D_1: f(x)\rightarrow f''(0),$
but we cannot use the third derivative because in our algebra it is proportional to $D_1.$ So we need to try higher derivatives $D_2:f(x)\rightarrow cf^{(4)}(0)+df^{(5)}(0).$ Our condition
   $2D_2r=bD_2p$ is equivalent to $2\cdot d\cdot 5! =bc\cdot 4!$ so we can try $c=10,d=b$ and only have to check that this is a derivation in $A.$
   We have (skipping terms that obviously equal zero)
   $$10(fg)^{(4)}(0)+b(fg)^{(5)}(0)-$$
   $$\left(10f^{(4)}(0)+bf^{(5)}(0)\right)g(0)-
   f(0)\left(10g^{(4)}(0)+bg^{(5)}(0)\right)=$$
   $$10\binom 4 2 f''(0)g''(0)+b\binom 5 2 f'''(0)g''(0)+b\binom 5 3 f''(0)g'''(0)= $$
   $$10\cdot 6 f''(0)g''(0)-10\cdot 3 f''(0)g''(0)-10\cdot 3 f''(0)g''(0)=0$$ and we are done with this case.

   The next case is when $A$ is defined by the conditions
   $$f(\alpha)=f(\beta), af'(\alpha)+bf'(\beta)=0, a-b\neq 0.$$
   We choose a SAGBI basis of the form $$g_k=(x-\alpha)(x-\beta)(x^k-\gamma_k), k=1,2,3.$$ The conditions give us that
   $$a(\alpha-\beta)(\alpha^k-\gamma_k)+b(\beta-\alpha)(\beta^k-\gamma_k)=0\Leftrightarrow \gamma_k=\frac{a\alpha^k-b\beta^k}{a-b},$$

 We already have a derivation $f(x)\rightarrow a'f'(\alpha)+b'f'(\beta)$, when $(a',b')$ is not proportional to $(a,b).$ If $b=0$ we have two more, defined by $f''(\alpha)$ and $f'''(\alpha).$
Similarly, if $a=0$ we get that the derivations defined by $f''(\beta)$ and $f'''(\beta)$ are two new $\alpha-$derivations as well. So in those cases we have the three necessary derivations and WLOG we can from now on assume that
$a=1,b\neq 0,$ or in other words $$f'(\alpha)=-bf'(\beta).$$

First we try to create a new derivation $D$ of the form $f(x)\rightarrow cf''(\alpha)+df''(\beta).$
We have (using that $f(\alpha)=f(\beta))$ and $g(\alpha)=g(\beta)):$
$$D(f(x)g(x))-D(f(x))g(\alpha)-f(\alpha)D(g(x))=$$
$$c(f''(\alpha)g(\alpha)+2f'(\alpha)g'(\alpha)+f(\alpha)g''(\alpha))+$$
$$d(f''(\beta)g(\beta)+2f'(\beta)g'(\beta)+f(\beta)g''(\beta))-$$
$$(cf''(\alpha)+df''(\beta))g(\alpha)-f(\alpha)(cg''(\alpha)+dg''(\beta))=$$
$$2cf'(\alpha)g'(\alpha)+2df'(\beta)g'(\beta)=2(b^2c+d)f'(\beta)g'(\beta)$$
so we can choose $c=1,d=-b^2$ to get a new derivation.

It remains to show that $k_\alpha\le 2$ and for this we need to study the relations between our generators. Using Maple we find that (for $a=1$) we have:
\begin{equation}\label{eq:subdcase345}
	g_2^2-g_1g_3-c_1g_1g_2-c_2g_1^2=c_3g_3+c_4g_2+c_5g_1
\end{equation} where
$$c_1=\frac{\alpha-\beta b}{b-1};c_2=\frac{2b(\alpha^2+\beta^2)-(\alpha+\beta b)^2}{(b-1)^2};$$
$$c_3=\frac{b(b+1)(\alpha-\beta)^3}{(1-b)^3};$$
$$c_4=\frac{b(\alpha-\beta)^3(b^2(2\beta+\alpha)-(2\alpha+\beta))}{(b-1)^4};$$
$$c_5=\frac{b(\alpha-\beta)^3(\alpha^2+2\alpha\beta-b^2(2\alpha\beta+\beta^2))}{(b-1)^4}.$$

It follows from \ref{eq:subdcase345} that $$c_3Dg_3+c_4Dg_2+c_5Dg_1=0$$ for any $\alpha-$derivation  $D$ so if at least one of $c_3,c_4,c_5$ is non-zero we get $k_\alpha\le 2.$
Because we already supposed that $b\neq 0$ we see from $c_3$ that the only interesting case is $b=-1.$
But in this case $c_4\neq 0$ and we are finished with our last case.

\section[Codimension $3$, $s=1.$]{Subalgebras of codimension three with a single element in the spectrum.}
Now we can apply the information about the derivations obtained above to classify the subalgebras $A$ of codimension three. Their spectra contain $s\le 6$ elements and in this section we consider  the case $s=1.$

\begin{theorem}
If an algebra $A$ of codimension $3$ has a spectrum consisting of single element $\alpha$ then $A$ is one of the following algebras
\begin{enumerate}
\item $A=\{f(x)|f'(\alpha)=f''(\alpha)=af'''(\alpha)+bf^{(4)}(\alpha)+cf^{(5)}(\alpha)=0\}.$

If $a\neq 0$ then $T(A)=(4,5,6,7)$ and for $a=1$ a
SAGBI basis is:
  $$(x-\alpha)^4-4b(x-\alpha)^{3},(x-\alpha)^{5}-20c(x-\alpha)^{3},(x-\alpha)^6,(x-\alpha)^7.$$

If $a=0$ and $b\neq 0$ then $T(A)=(3,5,7)$  and for  $b=1$ a
SAGBI basis is:
  $$ (x-\alpha)^3,(x-\alpha)^5-5c(x-\alpha)^4,(x-\alpha)^7.$$

For $a=b=0,c\neq 0$ the type is $(3,4)$ and a SAGBI basis is
  $$(x-\alpha)^3,(x-\alpha)^4.$$

If $a=b=c=d=0$ the codimension is $2$.

  \item $A=\{f(x)|f'(\alpha)=f'''(\alpha)+3af''(\alpha)=f^{(5)}(\alpha)+10af^{(4)}(\alpha)+df''(\alpha)=0\}.$

  with $a\neq 0.$ If $d\neq 0$ then $T(A)=(4,5,6,7)$ and a
SAGBI basis is:
$$d(x-\alpha)^4-120(x-\alpha)^3+120a(x-\alpha)^2,$$
$$ad(x-\alpha)^5-60(x-\alpha)^3+60a(x-\alpha)^2,(x-\alpha)^6,(x-\alpha)^7.$$

If $d=0$ then $T(A)=(3,5,7)$  and a
SAGBI basis is:
  $$ (x-\alpha)^3-a(x-\alpha)^2,2a(x-\alpha)^5-(x-\alpha)^4,(x-\alpha)^7.$$

\item $A=\{f(x)|f'(\alpha)=f'''(\alpha)=cf^{(5)}(\alpha)+df''(\alpha)=0\}.$
If $d\neq 0$ then $T(A)=(4,5,6,7)$ and a
SAGBI basis is:

  $$(x-\alpha)^4,d(x-\alpha)^{5}-60c(x-\alpha)^2,(x-\alpha)^6,(x-\alpha)^7.$$

If $c\neq 0,d=0$ then $T(A)=(2,7)$ and a SAGBI basis is:

  $$ (x-\alpha)^2,(x-\alpha)^7.$$

If $c= 0,d= 0$ we get codimension $2.$
\end{enumerate}

\end{theorem}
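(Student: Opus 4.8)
The plan is to use the inductive structure of Theorem \ref{th:codimGorin} together with the derivations of codimension-two algebras that were computed earlier. After translating so that $\alpha=0$, observe that since $Sp(A)=\{\alpha\}$ is a single cluster there is no $\beta\neq\alpha$ with $f(\alpha)=f(\beta)$ on $A$, so $\alpha\in Sp(A)$ forces $f'(\alpha)=0$ for every $f\in A$; this is the first condition in all three families. By Theorem \ref{th:codimGorin} we may write $A=\ker D$ for a condition $D$ on a subalgebra $B\supseteq A$ of codimension two. By Theorem \ref{lm:reversing} and properness $Sp(B)=\{\alpha\}$, and $D$ cannot be of the form $f(\gamma)-f(\delta)$ (this would enlarge the spectrum beyond $\{\alpha\}$), nor a $\gamma$-derivation with $\gamma\neq\alpha$ (which would add $\gamma$ to the spectrum); hence $D$ is an $\alpha$-derivation of $B$. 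By Theorem \ref{th:codim2} the only candidates for a one-point spectrum are the type $(2,5)$ algebra $\{f\mid f'(\alpha)=f'''(\alpha)=0\}$ and the type $(3,4,5)$ algebras $\{f\mid f'(\alpha)=0,\ af''(\alpha)+bf'''(\alpha)=0\}$ with $a\neq0$.

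Next I would record the $\alpha$-derivations of each candidate $B$. For the type $(2,5)$ algebra, Section~\ref{sec:22k+1} (with $m_0=2$) gives the two-dimensional space spanned by $f\mapsto f''(\alpha)$ and $f\mapsto f^{(5)}(\alpha)$, so adding the single extra condition $cf^{(5)}(\alpha)+df''(\alpha)=0$ produces family (3). For type $(3,4,5)$ the derivation space depends on $b$: when $b=0$ the algebra is the monomial algebra $\langle(x-\alpha)^3,(x-\alpha)^4,(x-\alpha)^5\rangle$, for which $M_\alpha/M_\alpha^2$ is three-dimensional and the derivations are spanned by $f\mapsto f^{(j)}(\alpha)$, $j=3,4,5$; the extra condition $af'''(\alpha)+bf^{(4)}(\alpha)+cf^{(5)}(\alpha)=0$ then yields family (1). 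When $b\neq0$, Section~\ref{sec:345} shows the derivation space is only two-dimensional, spanned by $f\mapsto f''(\alpha)$ and $f\mapsto 10f^{(4)}(\alpha)+bf^{(5)}(\alpha)$; normalising gives the condition $f^{(5)}(\alpha)+10af^{(4)}(\alpha)+df''(\alpha)=0$ and hence family (2). Since every codimension-two $B$ falls under one of these and $D$ ranges over the corresponding derivation space, the three families are exhaustive, and each is automatically a subalgebra, being the kernel of a derivation.

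It remains to read off $T(A)$ and a SAGBI basis in each subcase, which I would do exactly as in Section~\ref{sec:plancodim3}: fixing $B$ with its SAGBI basis, I identify the minimal-degree basis element $g$ with $D(g)\neq0$; its degree is the one that disappears from the semigroup and so determines $T(A)$, and Theorem \ref{th:SAGBIbasis} builds the basis from the elements $L(g)h_s-L(h_s)g$ of the remaining degrees. For instance, in family (1) with $a\neq0$ one has $g=(x-\alpha)^3$, so degree $3$ is lost and $T(A)=(4,5,6,7)$; computing $(x-\alpha)^4-\tfrac{D((x-\alpha)^4)}{D((x-\alpha)^3)}(x-\alpha)^3=(x-\alpha)^4-4b(x-\alpha)^3$ and similarly in degree $5$ recovers the stated basis. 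The same bookkeeping in the remaining subcases (which element of $B$ fails the new condition according to whether $a$, $d$, etc.\ vanish) produces the listed types $(3,5,7)$, $(3,4)$, $(2,7)$ with their bases, and shows when the codimension drops back to two.

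The main obstacle is not a single deep step but the organisation of the case analysis. The subtle conceptual point is that the single type $(3,4,5)$ splits into two genuinely different situations according to whether $b=0$: the monomial case carries a three-parameter family of derivations (hence family (1)), whereas $b\neq0$ carries only a two-parameter family (hence family (2)), and one must invoke the $k_\alpha$ computations of Section~\ref{sec:345} to see this. After that the work is the routine but lengthy evaluation of the derivations $L$ on the monomials $(x-\alpha)^j$ and the assembly of the SAGBI bases via Theorem \ref{th:SAGBIbasis}, together with tracking which degree is removed in each parameter regime.
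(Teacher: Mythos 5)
Your proposal is correct and takes essentially the same route as the paper's own proof: both realise $A$ as the kernel of an $\alpha$-derivation on a codimension-two subalgebra $B$ with $Sp(B)=\{\alpha\}$ classified by Theorem \ref{th:codim2}, then obtain the three families from the derivation spaces established in Sections \ref{sec:22k+1} and \ref{sec:345} (including the monomial case), and build the types and SAGBI bases by the method of Section \ref{sec:SAGBI} and Theorem \ref{th:SAGBIbasis}. The only difference is organisational — you branch on the type of $B$ (namely $(2,5)$ versus $(3,4,5)$ with $b=0$ or $b\neq 0$) whereas the paper branches on the coefficients of the defining condition of $B$ — and this does not change the substance of the argument.
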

\begin{proof} The subalgebra $A$ is contained in a subalgebra $B$ of codimension $2$. Because the spectrum of $B$ is a subset of the spectrum of $A$ the subalgebra $B$ should have a single (and the same) element $\alpha$ in the spectrum. Moreover, $A$ is obtained from $B$ as a kernel of some $\alpha-$derivation (all other possibilities would lead to a larger spectrum). So the result will follow from the description of all derivations of
the subalgebra $B=\{f(x)|f'(\alpha)=0;a_1f'''(\alpha)+b_1f''(\alpha)=0\}$ by adding an extra derivation. If $a_1=0$ we put $b_1=1$ and get case $1.$

 If $b_1\neq 0$ and $a_1\neq 0$ we put $a_1=3$ and $b_1=b$ first. The derivation, as we have proved above, is a linear combination of $f(x)\rightarrow f''(\alpha)$ and $f(x)\rightarrow bf^{(5)}(\alpha)+10f^{(4)}(\alpha)$ and we get the  case $2$ if we simply substitute $b=\frac{1}{a}$ and multiply by $a$ where necessary.  If $a_1=0$ (which corresponds to  $T(B)=(2,5)$) we put $b_1=1$ and  get the case $3.$

 When we get a description there is a straightforward way described above to get a SAGBI bases: we know the possible degrees and need only to search for elements of the degrees generating the semigroup that satisfy the subalgebra conditions. If we need to we may use the methods from section \ref{sec:SAGBI} to create such polynomials from the elements of a SAGBI basis of $B$.
\end{proof}

\section[Codimension $3$, $s=2$]{Subalgebras of codimension three with two elements in the spectrum.}

\begin{theorem} If algebra $A$ of codimension $3$ has a spectrum consisting of two elements  then $A$ is one of the following subalgebras:
\begin{enumerate}
\item  $A=\{f(x)|f'(\alpha)=f'(\beta)=0; af''(\alpha)+bf'''(\alpha)=0\},$ \\
If  $a(\alpha-\beta)\neq 6b$ then $T(A)=(4,5,6,7).$ For $a=0$
a SAGBI basis is:
  $$2(x-\alpha)^k-k(\beta-\alpha)^{k-1}(x-\alpha)^2,k=4,5,6,7.$$

For $a\neq 0$ a SAGBI basis is (for $a=3$ which can be suppose WLOG)
   $$K(x-\alpha)^k-k(\beta-\alpha)^{k-2}[(x-\alpha)^3-3b(x-\alpha)^2],k=4,5,6,7,$$
   where $K=3(\beta-\alpha-2b).$

If  $a(\alpha-\beta)=6b$ then $T(A)=(3,5,7).$ If $a\neq 0$ then WLOG $a=3$ and
 a SAGBI basis is
 $$ (x-\alpha)^3-3b(x-\alpha)^2, 4(x-\alpha)^k-k(\beta-\alpha)^{k-4}(x-\alpha)^4,k=5,7.$$

If $a=b=0$ the codimension is $2$.

\item $A=\{f(x)|f(\alpha)=f(\beta);f'(\alpha)=0,\\
af''(\alpha)+bf'''(\alpha)+cf'(\beta)=0\}.$

If $b=c(\beta-\alpha), a=c(\beta-\alpha)^2$ and $c\neq 0$ then $T(A)=(3,4)$ and a
SAGBI basis is:
$$(x-\alpha)^2(x-\beta),(x-\alpha)^3(x-\beta).$$

If $K=6a+2(\alpha-\beta)b+(\alpha-\beta)^2c\neq 0$\\ then $T(A)=(4,5,6,7)$ and a
SAGBI basis is:
$$K(x-\alpha)^3(x-\beta)-[6(\alpha-\beta)a-(\alpha-\beta)^3c](x-\alpha)^2(x-\beta),$$
$$K(x-\alpha)^4(x-\beta)-(\alpha-\beta)^4(x-\alpha)^2(x-\beta),$$
  $$(x-\alpha)^4(x-\beta)^2,(x-\alpha)^5(x-\beta)^2.$$

If $a=b=c=0$ we get codimension $2.$

Otherwise  the type is (3,5,7) and a SAGBI basis is:

  $$ (x-\alpha)^2(x-\beta),(x-\alpha)^5(x-\beta)^2.$$
  $$(x-\alpha)^4(x-\beta)[6a-(\alpha-\beta)^2c]-(\alpha-\beta)^3(x-\alpha)^3(x-\beta)c.$$

\item $A=\{f(x)|f(\alpha)=f(\beta);f'(\alpha)+f'(\beta)=0,\\
af'(\alpha)+bf''(\alpha)-bf''(\beta)=0\}.$

If $a=0,b\neq 0$ then $T(A)=(2,7)$ and a SAGBI basis is:
$(x-\alpha)(x-\beta),  (x-\alpha)^4(x-\beta)^3.$

If $a\neq 0$ then $T(A)=(4,5,6,7)$ and a SAGBI basis is:
$$(x-\alpha)^2(x-\beta)^2,(x-\alpha)^3(x-\beta)^3,  (x-\alpha)^4(x-\beta)^3,$$
$$a(x-\alpha)^3(x-\beta)^2-2b(\alpha-\beta)^2(x-\alpha)(x-\beta).$$

\item $A=\{f(x)|f(\alpha)=f(\beta);f'(\alpha)+bf'(\beta)=0,\\
af'(\beta)+cf''(\alpha)-cb^2f''(\beta)=0\}.$ Here $b \neq 1$.

If $b=-1,12c=a(\beta-\alpha)$ then $T(A)=(3,4)$ and a SAGBI basis is:
$$(x-\alpha)(x-\beta)\left(x^k-\frac{\alpha^k+\beta^k}{2}\right), k=1,2 .$$

If $K=a(\beta-\alpha)-2c(b-1)(b^2-b-1)\neq 0$ then the type is (4,5,6,7) and a SAGBI basis is:
$$g_kD(g_3)-D(g_k)g_3,\quad k=4,5$$
 $$(x-\alpha)^3(x-\beta)^3;$$
$$(x-\alpha)^4(x-\beta)^3.$$
with  $$g_k=(x-\alpha)(x-\beta)(x^k-\gamma_k)$$ and
   $$ \gamma_k=\frac{\alpha^k-b\beta^k}{1-b}.$$
If $a=c=0$ then codimension is $2.$ In the remaining case ($K=0,b+1\neq 0$) the type is (3,5,7) and a SAGBI basis is:
$$(x-\alpha)(x-\beta)\left(x-\frac{\alpha-\beta b}{1-b}\right),$$
$$(x-\alpha)^4(x-\beta)^3,$$
$$g_5D(g_4)-g_4D(g_5).$$
\end{enumerate}

\end{theorem}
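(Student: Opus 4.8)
The plan is to run the inductive scheme of Section \ref{sec:plancodim3}: by Theorem \ref{th:codimGorin} the algebra $A$ is the kernel of a single condition $L$ on a codimension-two algebra $B$, and by Theorem \ref{lm:reversing} we have $Sp(B)\subseteq Sp(A)$, so $1\le|Sp(B)|\le 2$. Since a codimension-three algebra with only two spectrum points cannot be cut out by equality conditions $f(\alpha_i)=f(\beta_i)$ alone (two points give at most one such independent condition, hence codimension at most one), at least one defining condition is a derivation; by semi-commutativity I may take $L$ to be that derivation, and after reordering I may arrange $Sp(B)=Sp(A)$ with $L$ an $\alpha$-derivation at a point $\alpha\in Sp(B)$. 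The first branching is then dictated by the cluster structure of the two spectrum points.

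If the two points form two distinct clusters, $B$ must be the two-cluster algebra $\{f'(\alpha)=f'(\beta)=0\}$ and $L$ a genuine second-order $\alpha$-derivation; by the analysis of Section \ref{sec:345} every such $L$ is $af''(\alpha)+bf'''(\alpha)$, which produces exactly item (1). If instead the two points form a single cluster, then $f(\alpha)=f(\beta)$ holds on all of $A$ and $B$ is a one-cluster algebra $\{f(\alpha)=f(\beta);\,af'(\alpha)+bf'(\beta)=0\}$; by Theorem \ref{th:codim2} this $B$ has type $(2,5)$ precisely in the symmetric case $a=b$ and type $(3,4,5)$ otherwise, and I would read off the admissible $L$ from the derivation descriptions of Section \ref{sec:22k+1} (symmetric case) and Section \ref{sec:345} (non-symmetric case). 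After normalizing the surviving first-order condition to $f'(\alpha)+bf'(\beta)=0$, the value $b=0$ gives item (2), the symmetric value $b=1$ gives item (3), and the remaining values $b\neq 0,1$ give item (4); the precise shapes $cf''(\alpha)-cb^2f''(\beta)$ of the second derivation are exactly the nontrivial $\alpha$-derivations exhibited in Section \ref{sec:345}.

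With the cases in place, the type of each $A$ follows from the degree bookkeeping of Section \ref{sec:plancodim3}: starting from the known type of $B$ and the minimal degree $d$ of a SAGBI generator $g$ of $B$ with $L(g)\neq 0$, the new semigroup is $S_B\setminus\{d\}$, and the drops from $(4,5,6,7)$ down to $(3,5,7)$, $(3,4)$ or $(2,7)$ occur exactly when $L$ kills the lowest available generator. A direct evaluation identifies the governing vanishing conditions — for instance $a(\alpha-\beta)=6b$ in item (1) is precisely $L\big((x-\alpha)^2(2x+\alpha-3\beta)\big)=0$, and the conditions $K\neq 0$ in items (2) and (4) record that $L$ is nonzero on the relevant degree-three generator. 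For each surviving type I would then build the SAGBI basis from Theorem \ref{th:SAGBIbasis}, forming the combinations $L(g)h_j-L(h_j)g$ and $L(g)g^k-L(g^k)g$, and simplifying by taking linear combinations with lower-degree basis elements to clear factors of $x-\alpha$; this yields the closed forms displayed.

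The main obstacle will be the explicit SAGBI computations in the degenerate rows, where the naively constructed generator of degree $d$ collapses to lower degree: there one must verify both that the reduced list still generates the stated numerical semigroup and that the compact expressions given are correct. These checks reduce to finite subductions (of the kind recorded in relation (\ref{eq:subdcase345})) together with verifying, via their values on the basis, that the produced polynomials lie in $A$ and realize the required degrees; the bound $k_\alpha\le 2$ needed to guarantee that the two listed $\alpha$-derivations suffice comes from the relation among the degree three, four and five generators, exactly as in the codimension-two arguments.
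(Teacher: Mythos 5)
You follow essentially the same route as the paper: reduce via Theorem \ref{th:codimGorin} and semi-commutativity to $A=\ker L$ with $L$ a derivation on a codimension-two algebra $B$, identify $B$ from Theorem \ref{th:codim2} and the admissible $L$ from the derivation descriptions of Sections \ref{sec:22k+1} and \ref{sec:345}, detect the type by which minimal-degree SAGBI generator of $B$ is killed by $L$, and assemble SAGBI bases from Theorem \ref{th:SAGBIbasis}. Your split into items (1)--(4) (two clusters; one cluster with $b=0$, $b=1$, $b\neq 0,1$) is the paper's case analysis, taken in a slightly different order.

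The one step you assert without adequate justification is that ``after reordering I may arrange $Sp(B)=Sp(A)$ with $L$ an $\alpha$-derivation at a point $\alpha\in Sp(B)$.'' Semi-commutativity, as defined in Section \ref{sec:mainplan}, only permits moving a derivation condition past an equality condition $f(\beta)=f(\gamma)$; it does not permit interchanging two derivation conditions. In the two-cluster case the decomposition furnished by Theorem \ref{th:codimGorin} may end with a \emph{trivial} derivation $f\mapsto f'(\beta)$ imposed on a $B$ with $Sp(B)=\{\alpha\}\subsetneq Sp(A)$ --- exactly the case the paper treats first --- and your rerouting through $B'=\{f\mid f'(\alpha)=f'(\beta)=0\}$ then needs the extra observation that $f\mapsto af''(\alpha)+bf'''(\alpha)$ is still an $\alpha$-derivation of $B'$ (its derivation identity uses only $f'(\alpha)=g'(\alpha)=0$), so that $A$ really is the kernel of a nontrivial derivation on $B'$; this is precisely the paper's remark that the two routes to item (1) coincide. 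For the one-cluster items (2)--(4) your reduction is automatic, since a trivial derivation would create a new singleton cluster, contradicting $|Sp(A)|=2$ with one cluster. Finally, be careful with constants in your ``direct evaluation'' step: with the literal parametrization $L=af''(\alpha)+bf'''(\alpha)$ one computes $L\bigl((x-\alpha)^2(2x+\alpha-3\beta)\bigr)=6\bigl(a(\alpha-\beta)+2b\bigr)$, which agrees with the displayed condition $a(\alpha-\beta)=6b$ only after the rescaling of $b$ implicit in the paper's choice of the generator $(x-\alpha)^3-3b(x-\alpha)^2$ (which satisfies $f''(\alpha)+bf'''(\alpha)=0$, not $3f''(\alpha)+bf'''(\alpha)=0$) for the intermediate algebra; your method --- type $(3,5,7)$ exactly when $L$ annihilates the degree-three generator --- is nonetheless the correct one and is what the paper's proof does.
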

\begin{proof} The subalgebra $A$ is contained in a subalgebra $B$ of codimension $2$. To get two elements in the spectrum we need at least one derivation. Using the semi-commutativity
 we can suppose that it was the derivation $D$ that was used to obtain $A$ from $B.$

 Consider first the case when $D$ is a trivial derivation (outside the spectrum). Then $B$ has a single element $\alpha$ in the spectrum and is defined by the conditions
 $f'(\alpha)=af''(\alpha)+bf'''(\alpha)=0.$ We only need to add $f'(\beta)=0$ to get case 1.

 If  $a=0$ then $B$ contains an element of degree two and it does not belong to $\ker D$ so two disappears from the semigroup and hence $T(A)=(4,5,6,7).$ The SAGBI basis is constructed directly using Theorem \ref{th:SAGBIbasis}
 because $(x-\alpha)^k\in B$ for $k\ge 4.$

If $a\neq 0$ then we can WLOG suppose $a=3$ and we know that $B$ has SAGBI basis $$r=(x-\alpha)^5,p=(x-\alpha)^4,q= (x-\alpha)^3-3b(x-\alpha)^2.$$
If $$q'(\beta)=3(\beta-\alpha)^2-6b(\beta-\alpha)\neq 0\Leftrightarrow (\beta-\alpha)\neq 2b$$ then it is three that disappears from the semigroup and $T(A)=(4,5,6,7).$ The SAGBI basis is  constructed  using Theorem \ref{th:SAGBIbasis}
 because again $(x-\alpha)^k\in B$ for $k\ge 4.$ We can cancel $\beta-\alpha$ to make it shorter. This case can be joined with the previous one using the common condition  $a(\alpha-\beta)\neq 6b$ which is obvious for $a=0$
 and works for $a=3$. The general case is reduced to this after division by $\frac{a}{3}.$

If $(\beta-\alpha)=2b$ then $b\neq 0$ and $p'(\beta)=4(\beta-\alpha)^3\neq 0$ so it is four that disappears from the semigroup and hence the obtained type is $(3,5,7).$
 The SAGBI basis is again constructed  using Theorem \ref{th:SAGBIbasis}.

When $D$ is a non-trivial $\alpha-$derivation we know that $B$ already has two elements $\alpha,\beta$ in the spectrum and we need to add one of the derivations which we have studied in sections \ref{sec:22k+1} and \ref{sec:345}.
Note that in the case when $B$ is defined by $f'(\alpha)=f'(\beta)=0$ we can suppose WLOG that $D$ is an $\alpha-$ derivation and this is the same case as above with trivial $\beta-$derivation applied to subalgebra $B'$ defined by
$f'(\alpha)=0, D(f)=0.$

Thus we only need to consider the case when $B$ is defined by $f(\alpha)=f(\beta),af'(\alpha)+bf'(\beta)=0.$ We may WLOG assume that $a=1$, since we may first assume that $a \neq 0$ (if not interchange $\alpha$ and $\beta$), and then divide by $a$. Then $f'(\beta)$ is one of the derivations and we have described two others. If $b=0$  then  $T(A)=(3,4,5)$ and we get that $D(f)=a'f'''(\alpha)+b'f''(\alpha)+cf'(\beta)$ and we can use $a,b$ instead for $a',b'.$  The SAGBI basis for $B$ in this case can be chosen as $$g=(x-\alpha)^2(x-\beta),(x-\alpha)g,(x-\alpha)^2g.$$ Let $$K=D(g)=6a+2(\alpha-\beta)b+(\alpha-\beta)^2c.$$ If $K\neq 0$ then three disappears from the semigroup and the obtained type is $(4,5,6,7).$
As $D(g^2)=0$ and $D(g^2)(x-\alpha)=0$ they can be included in the SAGBI basis directly. We get the remaining two elements using Theorem \ref{th:SAGBIbasis}.

If $K=0$ we need to look at $$D((x-\alpha)g)=6(\alpha-\beta)a-(\alpha-\beta)^3c=$$
$$(\alpha-\beta)(6a-(\alpha-\beta)^2c).$$
Suppose first that $$6a-(\alpha-\beta)^2c=0.$$ Then both the polynomials of degree three and four belong to $A=\ker D.$ This corresponds to type $(3,4)$.  Subtracting $K$ and dividing by $\alpha-\beta$ we get that $b=(\beta-\alpha)c.$ Note that
$c\neq 0$ in this case otherwise we get codimension $2$.

In the remaining case four disappears from the semigroup and $T(A)=(3,5,7).$ We include $g$ in SAGBI basis directly, check that $(x-\alpha)^5(x-\beta)^2$ belongs to $\ker D$ as well and need only to get the polynomial of degree five.

Now we go back to the old $a,b$ and consider the only remaining case when $af'(\alpha)+bf'(\beta)=0$ with $b\neq 0.$ We can suppose $a\neq 0$ (otherwise we get up to notation the previous case) and WLOG put $a=1.$
If $b=1$ then $B$ has type $(2,5),$ otherwise  $T(B)=(3,4,5).$ In both cases we know what the derivations are.

 We begin with the case $b=1.$ Then as we know from section \ref{sec:22k+1}- that $D=a'f'(\alpha)+b'(f''(\alpha)-f''(\beta)).$  If $a=0$ we get type $(2,7)$ and as a SAGBI basis we can choose
  $(x-\alpha)(x-\beta),  (x-\alpha)^4(x-\beta)^3.$  (Both polynomials satisfy the subalgebra conditions.)
  Otherwise WLOG $a=1$ and we can use $b$ instead of $b'$
 so now $A$ is defined by
 $$f(\alpha)=f(\beta), f'(\alpha)=-f'(\beta); f'(\alpha)+b(f''(\alpha)-f''(\beta))=0.$$ Degree two disappears from the semigroup and we get $T(A)=(4,5,6,7).$
 If we choose $(x-\alpha)(x-\beta),  (x-\alpha)^3(x-\beta)^2$ as a SAGBI basis for $B$ (both polynomials satisfy the subalgebra conditions) then $(x-\alpha)^2(x-\beta)^2,(x-\alpha)^3(x-\beta)^3,(x-\alpha)^4(x-\beta)^3$ can be included in the
  SAGBI basis for $A$ directly and it remains to use Theorem \ref{th:SAGBIbasis} to find a polynomial of degree five.

  Now recover the old variable $b$ and consider the remaining case $f'(\alpha)+bf'(\beta)=0$ with $b\neq 1.$ As we have found in the section \ref{sec:345} we can describe $D$ as
  $$D(f)=a'f'(\beta)+c\left(f''(\alpha)-b^2f''(\beta)\right).$$ The type of $B$ is now $(3,4,5)$ and  the SAGBI basis for $B$ is more complicated:
   $$g_k=(x-\alpha)(x-\beta)(x^k-\gamma_k), k=1,2,3$$ with
   $$ \gamma_k=\frac{\alpha^k-b\beta^k}{1-b}.$$

   First we need to study the case when
   $D(g_1)=D(g_2)=0$ which  gives the type $(3,4)$ and  $\{g_1,g_2\}$ as a SAGBI basis. We replace $a'$ by $a$ as usual. Using Maple we get a system
   \begin{equation}\label{eq:s2case4} -a\alpha-4cb+a\beta+2c+4cb^2-2cb^3=0;\end{equation}
   $$a\alpha^2-6cb\alpha+2cb^2\alpha+4c\alpha-4cb^3\beta-2cb\beta+a\beta^2+6cb^2\beta=0.$$

  Solving the system (using the package Groebner) we find that the only solution of interest to us is
     $$b=-1,12c=a(\beta-\alpha).$$

   If we do not have both these conditions satisfies, but only \ref{eq:s2case4} is valid then four disappears from the semigroup and we get $T(A)=(3,5,7)$.
   If the first equation fails we get $T(A)=(4,5,6,7).$ It remains to calculate SAGBI bases that is not specially nice.
   But we can at least find nice polynomials in degree six and seven:
   $$(x-\alpha)^k(x-\beta)^3, k=3,4$$ They belong to $A$
as we use at most second order derivatives in our subalgebra conditions.
The remaining  polynomials are ugly and  we get them using Theorem \ref{th:SAGBIbasis}.

\end{proof}
\section[Codimension $3$, $s=3$]{Subalgebras of codimension three with three elements in the spectrum.}

\begin{theorem}
	If the algebra $A$ has codimension $3$ and a spectrum of three elements $\alpha, \beta, \gamma$ then
	$A$ is one of the following algebras:

	\begin{enumerate}
		\item $A = \{f(x) | f'(\alpha) = f'(\beta) = f'(\gamma) = 0\}$.

By symmetry we can suppose that $2\gamma\neq \alpha+\beta.$
The type of $A$ is $T(A) = (4, 5, 6, 7)$ and a SAGBI basis is given by
			$$
				 \frac{x^4}{4} - \frac{(\alpha + \beta + \gamma) x^3}{3}
					+ \frac{(\alpha\beta + \alpha\gamma + \beta\gamma)x^2}{2} -
					\alpha\beta\gamma x, $$
			$$	 (x - \alpha)^2 (x - \beta)^2 \left(x - \left(\gamma +
					\frac{(\gamma - \alpha)(\gamma - \beta)}{4\gamma - 2(\alpha +
					\beta)}\right)\right), $$
 		$$ (x - \alpha)^2 (x - \beta)^2 (x-\gamma)^k,\quad k=2,3.$$
		
\item $A = \{f(x) | f(\alpha) = f(\beta); af'(\alpha) + bf'(\beta)=
			f'(\gamma) = 0\}$, where $a\neq 0.$  The structure of $A$ depends on the value of
			$\gamma$.  We have the following cases.
			\begin{itemize}
				\item $\gamma = \frac{\alpha + \beta}{2}$: If $a = b$ then
					$T(A) = (2, 7)$ and a SAGBI basis is
$$(x-\alpha)(x-\beta),\, (x-\alpha)^2(x-\beta)^2(x-\gamma)^3.$$
  Otherwise $T(A) =
					(4, 5, 6, 7)$ and  a SAGBI basis is given by $p_0, g, p_2,
					p_3$ where
					\begin{align*}
						p_i &= (x - \alpha)^2 (x - \beta)^2 (x - \gamma) ^ i, \\
						g &= (x - \alpha) (x - \beta) (x-\gamma)^2 \left(x - \frac{b \beta - a\alpha}{b - a}\right).
					\end{align*}
				\item $\gamma \not = \frac{\alpha + \beta}{2}$: Here  $T(A)=(3,4)$ is impossible.

                 If 	$b=
		\frac{a(\alpha-\gamma)(\alpha + 2 \beta - 3 \gamma)}{(\beta - \gamma) (\beta + 2 \alpha - 3 \gamma)}
	$ then $T(A)=(3,5,7)$ and a SAGBI basis is given by
$$q(x)=(x-\alpha)(x-\beta)\left(x-\frac{a\alpha-b\beta}{a-b}\right),$$
$$r(x)=(x - \alpha) (x - \beta) (x -\gamma)^2\times$$
$$\times \left(x
								- \frac{b \beta(\beta - \gamma)^{2} -
									a\alpha(\alpha - \gamma)^2}{b (\beta - \gamma)^{2} -
									a(\alpha - \gamma)^2}\right),$$
 $$(x-\alpha)^2(x-\beta)^2(x-\gamma)^{3}.$$
 The denominators are always non-zero in this case.

 Otherwise $T(A)=(4,5,6,7)$ and if
 \begin{align*}
						D(f) &= af'(\alpha) + bf'(\beta), \\
												p(x) &= (x-\gamma)^2 (x-\alpha)(x-\beta).
					\end{align*}
 then  a SAGBI basis is given by
							\begin{align*}
								&D(q)p(x) - q(x)D(p), \\
														&(x - \alpha)^2(x - \beta)^2(x - \gamma)^2, \\
								&(x - \alpha)^2(x - \beta)^2(x - \gamma)^3,
							\end{align*}
and either $p(x)$ or $r(x)$
														depending on whether $b(\beta - \gamma)^{2} = a{(\alpha - \gamma)^{2}}{}$ or not.

 \end{itemize}

		\item $A = \{f(x) | f(\alpha) = f(\beta) = f(\gamma); af'(\alpha) +
			bf'(\beta) + cf'(\gamma) = 0\}$, where at least one of $a,b,c$ is different from zero.
 Let
			\begin{align*}
				D(f) &= af'(\alpha) + bf'(\beta) + cf'(\gamma), \\
				p_i &= (x - \alpha) (x - \beta) (x - \gamma)^i.
			\end{align*}

 If $\quad \quad\quad\frac{a}{(\beta-\gamma)^2}=\frac{b}{(\gamma-\alpha)^2}=\frac{c}{(\alpha-\beta)^2}$\\

then $T(A) = (3, 4)$ and a SAGBI basis is given by $$p_1(x),\quad p_2(x).$$
				
 If  $a(\alpha-\beta)(\alpha
					- \gamma)+  b(\beta -\alpha)(\beta-\gamma)+c
					(\gamma - \alpha)(\alpha - \beta)\neq 0$\\

then $T(A) = (4,5,6,7)$ and  a SAGBI basis for $A$
					is given by
					\begin{align*}
					&D(p_1)p_2(x) - p_1(x)D(p_2), \\
					&D(p_1)p_3(x) - p_1(x)D(p_3), \\
					&(x - \alpha)^2 (x - \beta)^2 (x - \gamma)^2, \\
					&(x - \alpha)^2 (x - \beta)^2 (x - \gamma)^3.
					\end{align*}

		In the remaining case $T(A) = (3, 5, 7)$ and a SAGBI
					basis for $A$ is given by
					\begin{align*}
					&p_1(x), \\
					&D(p_2)p_3(x) - p_2(x)D(p_3), \\
					&(x - \alpha)^2 (x - \beta)^2 (x - \gamma)^3.
					\end{align*}

		\item $A = \{f(x) | f(\alpha) = f(\beta); f'(\gamma) = af''(\gamma) + b
			f'''(\gamma) = 0\}$. The structure of $A$ depends on the value of
			$\gamma$. We have the following cases
			\begin{itemize}
				\item $\gamma = \frac{\alpha + \beta}{2}$:
					If $a = 0$ then $T(A) = (2, 7)$ and a SAGBI basis is
$$(x-\alpha)(x-\beta),\, (x-\alpha)(x-\beta)(x-\gamma)^5.$$

 Otherwise 					$T(A) = (4, 5, 6, 7)$ and (for  $a=1$) a SAGBI basis is given by
					\begin{align*}
						 & (x - \gamma)^4, \\
						& (x - \alpha) (x - \beta) (x - \gamma)^2 (x - \gamma - 3 b), \\
						 & (x - \alpha) (x - \beta) (x - \gamma)^4, \\
						 & (x - \alpha) (x - \beta) (x - \gamma)^5.
					\end{align*}
				\item $\gamma \not = \frac{\alpha + \beta}{2}$: Here the
					structure of $A$ depends on the values of $a, b$ and
					we branch on weather $b$ is zero or not. Let
					\begin{align*}
						D(f) &= af''(\gamma) + bf'''(\gamma) \\
						q(x) &= (x-\gamma)^2
						\left(
							x -
							\frac{\alpha^{2} + \alpha \beta + \beta^{2} - 2 \, {\left(\alpha + \beta\right)} \gamma + \gamma^{2}}
							{\alpha + \beta - 2 \, \gamma}
						\right), \\
						p_i(x) &= (x-\gamma)^2 (x-\alpha)(x-\beta)^i.
					\end{align*}

					If $b = 0$ then we may assume that $a = 1$ and get the following cases
					\begin{itemize}

						\item If $3(\gamma - \alpha)(\gamma - \beta) \not = -(\alpha - \beta)^2$ then
							$T(A) = (4, 5, 6, 7)$ and
							\begin{align*}
								&(x - \alpha)(x - \beta)(x-\gamma)\left(
								x-\gamma -
								\frac{(\alpha - \gamma)(\beta - \gamma)}{	(\alpha + \beta - 2\gamma)}\right),\\
								&(x - \alpha)(x - \beta)(x - \gamma)^k,\quad k=3,4,5
													\end{align*}
							is a SAGBI basis for $A$.

						\item Otherwise $T(A) =
							(3, 5, 7)$ and
							\begin{align*}
								&q, \\
								&(x - \alpha)(x - \beta)(x - \gamma)^3, \\
								&(x - \alpha)(x - \beta)(x - \gamma)^5.
							\end{align*}
							is a SAGBI basis for $A$.					\end{itemize}

					Else, if $b \not = 0$ we may assume that $b = 1$ and get the following cases,
					\begin{itemize}
\item If $(\alpha-\gamma)^2+(\beta-\gamma)^2=0 $ and $a = \frac{3(\beta + \alpha - 2\gamma)}{(\gamma - \alpha) (\gamma - \beta)}$ then  $T(A) = (3, 4)$ and a SAGBI basis is given by $q, p_1$.
						\item If $a \not = \frac{3 \, {\left(\alpha + \beta - 2 \,
							\gamma\right)}}{(\alpha - \beta)^2 + 3(\gamma -
							\alpha)(\gamma - \beta)}$ then $T(A) = (4,5,6,7)$.
							A SAGBI basis for $A$ is given by
							\begin{align*}
								&D(q)p_2(x) - q(x)D(p_2), \\
								&D(q)p_3(x) - q(x)D(p_3), \\
								&(x - \alpha)(x - \beta)(x - \gamma)^4, \\
								&(x - \alpha)(x - \beta)(x - \gamma)^5.
							\end{align*}

						\item  If the remaining case $T(A) = (3, 5, 7)$. A SAGBI basis for
							$A$ is given by
							\begin{align*}
								&q, \\
								&D(q)p_3(x) - q(x)D(p_3), \\
								&(x - \alpha)(x - \beta)(x - \gamma)^5.
							\end{align*}

			\end{itemize}

			\end{itemize}
	\end{enumerate}
\end{theorem}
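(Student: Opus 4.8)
The plan is to follow the general inductive scheme of Section~\ref{sec:plancodim3}: realise $A$ as the kernel of a single extra condition $L=0$ imposed on a subalgebra $B$ of codimension two, exploit the complete classification of such $B$ from Theorem~\ref{th:codim2} together with the derivation descriptions of Sections~\ref{sec:22k+1} and~\ref{sec:345}, and finally read off the type and a SAGBI basis via Theorem~\ref{th:SAGBIbasis}. First I would note that three spectrum elements cannot be produced by conditions of the form $f(\alpha)=f(\beta)$ alone: such conditions give $A=A(C_1)\cap\cdots\cap A(C_t)$ of codimension $\sum(|C_i|-1)$ with every $|C_i|\ge 2$, so codimension three forces at least four spectrum elements and hence $s\ge 4$, outside the present theorem. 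Therefore in every case at least one genuine derivation occurs, and by semi-commutativity I may assume that the \emph{last} condition defining $A$ is an $\alpha$-derivation $D$ of $B$.

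Next I would organise the argument by the cluster partition of $Sp(A)=\{\alpha,\beta,\gamma\}$, which by Theorem~\ref{th:ghost} is inherited in a controlled way from $Sp(B)$. The admissible partitions---three singletons; a pair together with a singleton (in two flavours, according to whether the singleton carries only $f'(\gamma)=0$ or in addition a higher-derivative condition); and a single triple cluster---correspond exactly to the four enumerated cases. For each I would distinguish whether $D$ is \textbf{trivial} (so $\alpha\notin Sp(B)$ and, by Theorem~\ref{th:derivativenotinspectrum}, $D(f)=cf'(\alpha)$, merely adjoining a fresh singleton to the spectrum) or \textbf{non-trivial} (so $\alpha\in Sp(B)$). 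In the non-trivial subcases the shape of $D$ is pinned down by the explicit bases of $\mathcal{D}_\alpha^B$ computed in Sections~\ref{sec:22k+1} and~\ref{sec:345}: a two-element cluster of $B$ contributes combinations $af'(\alpha)+bf'(\beta)$ together with the second-order correction, while a triple cluster or a higher-order point $\gamma$ contributes the $f''(\gamma),f'''(\gamma)$ combinations already seen in Theorem~\ref{oneclusternoderivandderiv}. Assembling these yields precisely the defining conditions displayed in each item.

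For the SAGBI bases I would follow the degree bookkeeping of Section~\ref{sec:plancodim3}. Starting from a SAGBI basis of $B$ chosen inside the appropriate $M_\alpha$ or $M_\gamma$, I locate the element $g$ of minimal degree with $L(g)\neq 0$; then $T(A)=S_B\setminus\{\deg g\}$, and Theorem~\ref{th:SAGBIbasis} supplies the remaining generators as $L(g)h_i-L(h_i)g$. The numerous numerical branches---for instance $\gamma=\frac{\alpha+\beta}{2}$ versus $\gamma\neq\frac{\alpha+\beta}{2}$, or the exact relation $b=\frac{a(\alpha-\gamma)(\alpha+2\beta-3\gamma)}{(\beta-\gamma)(\beta+2\alpha-3\gamma)}$---arise solely from determining \emph{which} degree $\deg g$ drops out, i.e.\ whether a candidate low-degree generator of $B$ happens to lie in $\ker D$; this is decided by evaluating one or two derivatives of that generator at the spectrum points.

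The main obstacle is exactly this last point: verifying, in each of the many subcases, the precise algebraic condition on $\alpha,\beta,\gamma,a,b,c$ that governs whether the degree-three (or degree-two) generator survives, and then confirming that the exhibited polynomials really satisfy the subalgebra conditions and have the claimed degrees. These verifications are elementary but lengthy evaluations of derivatives and subductions, most safely carried out with computer algebra as elsewhere in the paper; the conceptual content lies entirely in the reduction above.
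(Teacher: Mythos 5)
Your proposal is correct and follows essentially the same route as the paper: reduce via semi-commutativity to $A=\ker D$ for an $\alpha$-derivation $D$ of a codimension-two subalgebra $B$, split according to whether $D$ is trivial ($|Sp(B)|=2$) or non-trivial ($|Sp(B)|=3$), invoke the derivation classifications of Sections~\ref{sec:22k+1} and~\ref{sec:345} to pin down $D$, and then determine the type and SAGBI basis by checking which minimal-degree generator of $B$ leaves $\ker D$, using Theorem~\ref{th:SAGBIbasis}. Your preliminary observation that three spectrum elements force at least one derivation condition, and your identification of the four cluster patterns with the four enumerated cases, match the paper's case decomposition exactly, with the remaining branch conditions settled by the same elementary (computer-assisted) derivative evaluations.
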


\begin{proof} We follow the plan that was pointed out in section \ref{sec:plancodim3} and can suppose that $A=\ker D$ for some subalgebra $B$ of codimension two.

If $D$ is a trivial derivation in $\gamma$ then $|Sp(B)|=2$ and we have two alternatives for $B.$

	In the first alternative $B = \{f(x) |
	f'(\alpha) = f'(\beta) = 0\}$, thus
$A = \{f(x) |
	f'(\alpha) = f'(\beta) =f'(\gamma)= 0\}.$

	 No non-constant polynomial of degree less than four has a
	derivative equal to zero in three points or more. Hence $T(A) = (4,5,6,7)$ and
	a it is easily verified that the given basis satisfies the conditions. \\

In the second alternative  $B= \{f(x) | f(\alpha) =
	f(\beta); af'(\alpha) +bf'(\beta) =0\}$, where WLOG $a\neq 0.$ We have
$$A = \{f(x) | f(\alpha) =
	f(\beta); af'(\alpha) +bf'(\beta)= f'(\gamma) = 0\}$$ and two subcases.

Suppose first that $a=b,$ thus $T(B)=(2,5)$ and $q=(x-\alpha)(x-\beta)\in B.$
If $$D(q)=0\Leftrightarrow \alpha+\beta=2\gamma$$ we get $T(A)=(2,7)$ and can use Theorem \ref{th:deg2}.

Otherwise $T(A)=(4,5,6,7)$ and we only need to check that the chosen elements for the SAGBI basis satisfy the conditions.

Now suppose that $a\neq b$. It follows that $T(B)=(3,4,5)$ and
 a SAGBI basis for $B$  can be chosen as
   $$g_1=(x-\alpha)(x-\beta)(x-\frac{a\alpha-b\beta}{a-b}),$$
   $$g_k=(x-\alpha)^2(x-\beta)^2(x-\gamma)^{k-2}, k=2,3.$$

  The first basis element is annihilated when
	$$
		D(g_1) = 0
		\Leftrightarrow
				b=
		\frac{a(\alpha-\gamma)(\alpha + 2 \beta - 3 \gamma)}{(\beta - \gamma) (\beta + 2 \alpha - 3 \gamma)}.
	$$

The second basis element is  annihilated when
	\begin{align*}
		D(g_2) = 0
		\Leftrightarrow&
		 2( \gamma-\alpha)(\gamma-\beta )^2 + 2(\gamma-\alpha)^2(\gamma - \beta) = 0 \\
			\Leftrightarrow&
		2\gamma=\alpha+\beta\Leftrightarrow \gamma-\alpha=\beta-\gamma. \\
	\end{align*}

Thus $T(A)=(3,4)$ is impossible because we get $b=a.$  We can easily construct a SAGBI basis when $T(A)=(3,5,7)\Leftrightarrow D(g_1)=0.$ Note that the denominator of $r(x)$ cannot be zero, otherwise we would have $D(g_2)=0.$

Otherwise $T(A)=(4,5,6,7)$ and here the denominator can be zero which gives us two cases.

Now suppose that $D$ is a non-trivial derivation, thus $|Sp(B)|=3.$ Here we again have two alternatives.

	Let first $B = \{f(x) | f(\alpha) = f(\beta) = f(\gamma) \}$. We know that
	$T(B) = (3, 4, 5)$ from Theorem \ref{th:codim2}, and choose a SAGBI basis as $p_i =
	(x-\alpha)(x-\beta)(x-\gamma)^i$ for $i=1,2,3.$   By
	Theorem \ref{th:noderivations} any non-trivial derivation $D$
	of $B$ will be of the form $D = af'(\alpha) + bf'(\beta) + cf'(\gamma)$, where at least one of $a,b,c$ is non-zero.

	The first basis element $p_1$ is annihilated when
	$$
	a(\alpha-\beta)(\alpha-\gamma) +
		b(\beta - \alpha)(\beta-\gamma) +
		(\gamma - \alpha)(\gamma-\beta)=0.
			$$
	The second basis element $p_2$ is annihilated when
	\begin{align*}
		D(p_2) = 0
		&\Leftrightarrow
		a(\alpha-\beta)(\alpha-\gamma)^2 +
		b(\beta - \alpha)(\beta-\gamma)^2
		= 0.
			\end{align*}
	Solving the system of equations $D(p_1)=D(p_2)=0$ by Maple, we get the conditions for $T(A)=(3,4).$
If $T(p_1)\neq 0$ we get $T(A)=(4,5,6,7).$
In the remaining case we have $T(A)=(3,5,7).$\\

The last remaining  case is a non-trivial derivation $D$ of
 $B = \{f(x) | f(\alpha) = f(\beta); f'(\gamma) = 0\}$. If it is an $\alpha-$ derivation we can assume this condition is imposed before the $\gamma-$derivation and in this way reduce the current case to a previous one.
 So it is sufficient to consider a $\gamma-$derivation $$D(f)= af''(\gamma) + bf'''(\gamma).$$

  This one is
	a bit trickier and we need to branch on the value of $\gamma$.

	We begin with the case when $\alpha + \beta = 2\gamma$, thus $T(B)=(2,5).$  $D$ annihilates the element of degree two if and only if $a=0.$  Then $T(A) = (2, 7)$
	and we can use Theorem \ref{th:deg2}. If  $a\neq 0$
	 then $A$ will not contain
	any polynomials of degree two and hence $T(A) = (4, 5, 6, 7)$. Moreover, as
	we are interested in the kernel of each derivation, we may assume that
	$a=1$. One may verify that the given basis satisfies the type
	and subalgebra conditions. \\

	In the case when $\alpha + \beta \not = 2\gamma$, we have $T(B) = (3, 4, 5)$
	and a SAGBI basis for $B$ is given by
	\begin{align*}
		q(x) &= (x-\gamma)^2
		\left(
			x -
			\frac{\alpha^{2} + \alpha \beta + \beta^{2} - 2 \, {\left(\alpha + \beta\right)} \gamma + \gamma^{2}}
			{\alpha + \beta - 2 \, \gamma}
		\right), \\
		p_1(x) &= (x-\gamma)^2 (x-\alpha)(x-\beta)^k,\quad k=1,2 \\
	\end{align*}
	 To simplify the
	computations, we consider two cases. First when $b = 0$ and then when $b \not =
	0$. In the first case we can assume $a=1$ and we have $A = \{f(x) |
	f(\alpha) = f(\beta); f''(\gamma) = f'(\gamma) = 0\}$. We now check conditions
	on the spectral elements for when different basis elements of $B$ are annihilated.
	The first basis element is annihilated when
	\begin{align*}
		D(q) = 0
		\Leftrightarrow&
		2\left(\gamma - \frac{\alpha^{2} + \alpha \beta + \beta^{2} - 2 \, {\left(\alpha + \beta\right)} \gamma + \gamma^{2}}
			{\alpha + \beta - 2 \, \gamma}
		\right) = 0 \\
		\Leftrightarrow&
		\frac{-\alpha^{2} - \alpha \beta - \beta^{2} + 3 \, {\left(\alpha + \beta\right)} \gamma - 3\gamma^{2}}
			{\alpha + \beta - 2 \, \gamma}
		= 0 \\
		\Leftrightarrow&
		\frac{-(\alpha - \beta)^2 - 3(\alpha - \gamma)(\beta - \gamma)}
			{\alpha + \beta - 2 \, \gamma}
		= 0 \\
	\end{align*}
	and since $\alpha + \beta - 2\gamma \not = 0$,
	$$
	D(q) = 0 \Leftrightarrow (\alpha - \beta)^2 = -3(\alpha - \gamma)(\beta - \gamma).
	$$
	The second basis element is annihilated when
	\begin{align}
		D(p_1) = 0
		\Leftrightarrow&
		2(\gamma - \alpha) (\gamma - \beta) = 0,
	\end{align}
	which is not possible as the spectral elements are not equal. The same
	holds for the third basis element. Hence we only get two cases, when $q$ is
	annihilated and when it is not. \\

	Now we treat the case when $b \not = 0$, where we are free to assume that
	$b=1$ and hence $A = \{f(x) | f(\alpha) = f(\beta); af''(\gamma) +
	f'''(\gamma) = f'(\gamma) = 0\}$. We check for conditions on $a$ that
	leads to annihilation of basis elements. The first basis element is
	annihilated when
	\begin{align*}
		D(q) = 0
		\Leftrightarrow&
		2a\left(\gamma - \frac{\alpha^{2} + \alpha \beta + \beta^{2} - 2 \, {\left(\alpha + \beta\right)} \gamma + \gamma^{2}}
			{\alpha + \beta - 2 \, \gamma}
		\right) + 6 = 0 \\
		\Leftrightarrow&
		a\frac{(\alpha - \beta)^2 + 3(\alpha - \gamma)(\beta - \gamma)}
			{\alpha + \beta - 2 \, \gamma}
		= 3 \\
		\Leftrightarrow&
		a = \frac{3(\alpha + \beta - 2 \, \gamma)}{(\alpha - \beta)^2 + 3(\alpha - \gamma)(\beta - \gamma)}.
	\end{align*}
	The second basis element is annihilated when
	\begin{align*}
		D(p_1) = 0
		\Leftrightarrow&
		2a(\gamma - \alpha) (\gamma - \beta)
		+ 6(\gamma - \beta) + 6(\gamma - \alpha)
		= 0 \\
		\Leftrightarrow&
		a = \frac{3(\beta + \alpha - 2\gamma)}{(\gamma - \alpha) (\gamma - \beta)}.
	\end{align*}

It is easy to see that to annihilate both elements simultaneously and get $T(A)=(3,4)$ we need
$$(\alpha - \beta)^2+2(\gamma - \alpha) (\gamma - \beta)=0\Leftrightarrow $$
$$(\alpha-\gamma)^2+(\beta-\gamma)^2=0.$$

	This concludes all the different cases.
\end{proof}

\section[Codimension $3$, $s=4$]{Subalgebras of codimension three with four elements in the spectrum.}
\begin{theorem}
	If the algebra $A$ of codimension three has a spectrum consisting of four elements $\alpha, \beta, \gamma, \delta$ then $A$ is one of the following algebras
	\begin{enumerate}
		\item $A=\{f(x)|f(\alpha)=f(\beta)=f(\gamma)=f(\delta)\}.$

		The type of this algebra is (4,5,6,7) and a SAGBI basis is:
		$$ (x-\alpha)(x-\beta)(x-\gamma)(x-\delta), (x-\alpha)^2(x-\beta)(x-\gamma)(x-\delta),$$ $$(x-\alpha)^3(x-\beta)(x-\gamma)(x-\delta), (x-\alpha)^4(x-\beta)(x-\gamma)(x-\delta) $$

		\item  $A=\{f(x)|f(\alpha)=f(\beta); f(\gamma)=f(\delta); af'(\alpha)+bf'(\beta)=0\}.$
		
		Case I $\alpha+\beta=\gamma+\delta$
		
		If $a=b \neq 0$ then $A$ is type (2,7) and a SAGBI basis is given by
		$$(x-\alpha)(x-\beta), (x-\alpha)^2(x-\beta)^3(x-\gamma)(x-\delta)$$
		
		If $a \neq b$, $b \neq 0$ then $A$ is type (4,5,6,7) and a SAGBI basis is given by
		$$(x-\alpha)^2(x-\beta)^2,$$ $$(x-\alpha)(x-\beta) \left[ (a-b)(x-\beta)(x-\gamma)(x-\delta)-a(\alpha-\beta)(\alpha-\gamma)(\alpha-\delta)\right]$$
		$$ (x-\alpha)^3(x-\beta)^3,$$ $$(x-\alpha)^2(x-\beta)^3(x-\gamma)(x-\delta).$$
		
		Case II $\Delta={\gamma+\delta-\alpha-\beta} \neq 0$. Let $\tau$ be defined by
		$$\tau=\frac {\gamma^2+\gamma\delta+\delta^2+\alpha\beta-(\alpha+\beta)(\gamma+\delta)}{\Delta}$$
		If $a=0$ then $A$ is of type (4,5,6,7) and a SAGBI basis is given by
		$$(x-\alpha)(x-\beta)\left[ (\beta-\tau)(x-\gamma)(x-\delta)+(\beta-\gamma)(\beta-\delta)(x-\tau)\right]$$
		$$(x-\alpha)(x-\beta)^2(x-\gamma)(x-\delta)$$
		$$(x-\alpha)^2(x-\beta)^2(x-\tau)^2$$
		$$(x-\alpha)^2(x-\beta)^2(x-\gamma)(x-\delta)(x-\tau)$$
		
		If $a=b \neq 0$ then $A$ also type (4,5,6,7) and the SAGBI basis has elements
		$$(x-\alpha)(x-\beta)\left[ (x-\gamma)(x-\delta)-(\alpha+\beta-\gamma-\delta)(x-\tau)\right],$$
		$$(x-\alpha)(x-\beta)\left[ (x-\beta)(x-\gamma)(x-\delta)-(\alpha-\gamma)(\alpha-\delta)(x-\tau)\right]$$
		together with the degree six and seven elements from the previous case.
		
		$a \neq b$, $a \neq 0$, $b \neq 0$. We may assume WLOG that $b=1$. In this case the type of $A$ depends both on a spectrum condition $C=0$ where $C=(\alpha+\beta)(\gamma+\delta)-2\alpha\beta-\gamma^2-\delta^2$ and on the value of $a$.
		
		If both $C=0$ and $a=\frac{\tau-\beta}{\tau-\alpha}$ hold then T(A)=(3,4) and $$(x-\alpha)(x-\beta)(x-\tau),$$ $$(x-\alpha)(x-\beta)(x-\gamma)(x-\delta)$$ is a SAGBI basis.
		
		If $a=\frac{\tau-\beta}{\tau-\alpha}$ but $C \neq 0$
		then T(A)=(3,5,7) and a SAGBI basis is given by
		$$(x-\alpha)(x-\beta)(x-\tau)$$
		$$(x-\alpha)(x-\beta)(x-\gamma)(x-\delta)(c(x-\alpha)+d)$$
		$$(x-\alpha)^3(x-\beta)^2(x-\gamma)(x-\delta)$$ where $c=a(\alpha-\gamma)(\alpha-\delta)-(\beta-\gamma)(\beta-\delta)$ and $d=-(\alpha-\beta)(\gamma-\beta)(\delta-\beta)$
		
		If $a \neq
		\frac{\tau-\beta}{\tau-\alpha}$ then T(A)=(4,5,6,7) and a SAGBI basis is given by
		$$(x-\alpha)(x-\beta)(c(x-\gamma)(x-\delta)+d(x-\tau))$$
		$$(x-\alpha)(x-\beta)(c(x-\alpha)(x-\gamma)(x-\delta)+e(x-\tau))$$
		$$(x-\alpha)^k (x-\beta)^2(x-\gamma)(x-\delta), k=2,3$$
		Here $c=a(\alpha-\tau)-(\beta-\tau)$, $d=(\beta-\gamma)(\beta-\delta)$ and $e=-(\beta-\alpha)(\beta-\gamma)(\beta-\delta)$
		\item $A=\{f(x)| f(\alpha)=f(\beta), f'(\gamma)=f'(\delta)=0\}.$
		
		If $\frac{2\beta+\gamma-3\delta}{2\alpha+\gamma-3\delta}=\frac{(\alpha-\gamma)^2}{(\beta-\gamma)^2}=\frac{(\beta-\delta)^2}{(\alpha-\delta)^2}$ then T(A)=(3,4) and a SAGBI basis is given by
		$$(x-\gamma)^2(2x+\gamma-3\delta), (x-\gamma)^2(x-\delta)^2$$

		If $\frac{2\beta+\gamma-3\delta}{2\alpha+\gamma-3\delta}=\frac{(\alpha-\gamma)^2}{(\beta-\gamma)^2} \neq\frac{(\beta-\delta)^2}{(\alpha-\delta)^2} $ then T(A)=(3,5,7) and a SAGBI basis is given by
		$$(x-\gamma)^2(2x+\gamma-3\delta),
		(x-\gamma)^2(x-\delta)^2(c+d(x-\gamma))$$
		$$(x-\gamma)^3(x-\delta)^2(x-\alpha)(x-\beta)$$
		where $c= \left( \alpha-\gamma \right) ^{3} \left( \alpha-\delta \right) ^{2}-
		\left( -\gamma+\beta \right) ^{3} \left( \beta-\delta \right) ^{2}$ and $d=\left( \alpha-\gamma \right) ^{2} \left( \alpha-\delta \right) ^{2}-
		\left( -\gamma+\beta \right) ^{2} \left( \beta-\delta \right) ^{2}
		$

		If $\frac{2\beta+\gamma-3\delta}{2\alpha+\gamma-3\delta} \neq \frac{(\alpha-\gamma)^2}{(\beta-\gamma)^2} $ then T(A)=(4,5,6,7)
		and a SAGBI basis is given by
		$$(x-\gamma)^2((x-\delta)^2+c(2x+\gamma-3\delta)),$$
		$$(x-\gamma)^2((x-\gamma)(x-\delta)+d(2x+\gamma-3\delta))$$
		$$(x-\alpha)(x-\beta)(x-\gamma)^2((x-\delta)^2$$
		$$(x-\alpha)(x-\beta)^2(x-\gamma)^2((x-\delta)^2$$
		where $c=-\frac{-{\alpha}^{2}+ \left( \gamma+\delta \right) \alpha-{\beta}^{2}+\left( \gamma+\delta \right) \beta-2\,\delta\,\gamma}{2\,{\alpha}^{2}+ \left( 2\,\beta-3\,\delta-3\,\gamma \right) \alpha+2\,{\beta}^{2}+ \left( -3\,\delta-3\,\gamma \right) \beta+6\,\delta\,\gamma}$ and $$d=({\alpha}^{4}+ \left( \beta-2\,\delta-3\,\gamma \right) {\alpha}^{3}+$$ $$+\left( {\beta}^{2}+ \left( -2\,\delta-3\,\gamma \right) \beta+{\delta}^{2}+6\,\delta\,\gamma+3\,{\gamma}^{2} \right) {\alpha}^{2}+$$
		$$ \left( {\beta}^{3}+ \left( -2\,\delta-3\,\gamma \right) {\beta}^{2}+
\left( {\delta}^{2}+6\,\delta\,\gamma+3\,{\gamma}^{2} \right) \beta-3\,
{\delta}^{2}\gamma-6\,{\gamma}^{2}\delta-{\gamma}^{3} \right) \alpha+$$ $${\beta}^{4}+\left(-2\,\delta-3\,\gamma \right) {\beta}^{3}+ \left( {\delta}^{2}+6\,\delta\,\gamma+3\,{\gamma}^{2} \right) {\beta}^{2}+$$ $$+ \left( -3\,
		{\delta}^{2}\gamma-6\,{\gamma}^{2}\delta-{\gamma}^{3} \right) \beta+3
		\,{\delta}^{2}{\gamma}^{2}+2\,\delta\,{\gamma}^{3}
		) / $$
		$$ (-2\,{\alpha}^{2}+ \left( 3\,\gamma-2\,\beta+2\,\delta \right) \alpha-2 {\beta}^{2}+\left( 3\,\gamma+2\,\delta \right) \beta-4\,\delta\,\gamma))$$
	
	\item $A=\{f(x)| f(\alpha)=f(\beta)=f(\gamma), f'(\delta)=0\}.$
	
	If $\frac{1}{\alpha-\delta}+\frac{1}{\beta-\delta}+\frac{1}{\gamma-\delta} \neq 0$ then T(A)=(4,5,6,7) and a SAGBI basis is given by
	$$(x-\alpha)(x-\beta)(x-\gamma)g_j(x)$$ where
	$$g_j(x)= 1+(\frac{1}{\alpha-\delta}+\frac{1}{\beta-\delta}+\frac{1}{\gamma-\delta})(x-\delta)+(x-\delta)^j,$$ $j=0,2,3,4$
	
	If $\frac{1}{\alpha-\delta}+\frac{1}{\beta-\delta}+\frac{1}{\gamma-\delta} = 0$ then T(A)=(3,5,7) and a SAGBI basis is given by
	$$(x-\alpha)(x-\beta)(x-\gamma)\left[1+(x-\delta))^j\right]$$ where $j=0,2,4$
	
\end{enumerate}
\end{theorem}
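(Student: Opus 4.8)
The plan is to follow the inductive scheme of section~\ref{sec:plancodim3}: write $A=\ker L$ for a codimension-two subalgebra $B\supseteq A$, read off the type of $A$ from the SAGBI basis of $B$, and build the SAGBI basis of $A$ by Theorem~\ref{th:SAGBIbasis}. The first step, however, is purely combinatorial. The equivalence $\sim$ partitions the four-element spectrum into clusters; a cluster of size $k$ contributes $k-1$ conditions of the form $f(\alpha_i)=f(\alpha_j)$, while a singleton cluster $\{\epsilon\}$ forces one condition $f'(\epsilon)=0$. Since the total number of conditions equals the codimension three, the admissible partitions of $4$ are exactly $4$, $3+1$, $2+2$ and $2+1+1$; the partition $1+1+1+1$ is excluded because it would need four derivative conditions and hence codimension four. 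These four partitions are precisely the four cases of the theorem (Case 1, Case 4, Case 2, Case 3 respectively), so the enumeration is complete.

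Case 1 involves no derivation: $A=A(C)$ for the single four-element cluster $C$, and Theorem~\ref{oneclusternoderiv} (or Theorem~\ref{th:noderivations}) immediately gives the type $(4,5,6,7)$ and the displayed SAGBI basis $(x-\alpha)^i(x-\beta)(x-\gamma)(x-\delta)$. For the remaining three cases I would use semi-commutativity to arrange the conditions so that $L$ is a derivation $D$ of a codimension-two subalgebra $B$ whose classification and SAGBI bases are already known from Theorem~\ref{th:codim2}. In Cases 3 and 4 the derivation is \emph{trivial}, $D(f)=f'(\delta)$ with $\delta\notin Sp(B)$: here $B=A(\{\alpha,\beta,\gamma\})$ (Case 4, SAGBI basis from Theorem~\ref{oneclusternoderiv}) or $B=\{f\mid f(\alpha)=f(\beta),\,f'(\gamma)=0\}$ (Case 3, SAGBI basis from Theorem~\ref{oneclusternoderivandderiv}). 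In Case 2 the derivation is non-trivial and $B=A(\{\alpha,\beta\})\cap A(\{\gamma,\delta\})$, so Theorem~\ref{th:noderivations} guarantees that every $\alpha$-derivation of $B$ has the form $af'(\alpha)+bf'(\beta)$, which justifies the stated condition.

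With $B$ fixed, the type of $A$ is determined by locating the SAGBI basis element $g_i$ of $B$ of least degree with $D(g_i)\neq0$: exactly that degree leaves the numerical semigroup of $B$, yielding $T(A)$, and the SAGBI basis of $A$ is then the list of surviving $g_j\in\ker D$ together with the new generators $D(g)g_j-D(g_j)g$ and $g^k-(D(g^k)/D(g))g$ supplied by Theorem~\ref{th:SAGBIbasis}. The subcases in the statement are nothing but the alternatives $D(g_i)=0$ versus $D(g_i)\neq0$ for the low-degree basis elements, and each vanishing condition unwinds into the explicit polynomial relation among $\alpha,\beta,\gamma,\delta$ (and $a,b,c$) recorded in the theorem; the sporadic type-$(2,7)$ subcases (Case 2, Case I with $a=b$, and Case 4 with $\gamma=\frac{\alpha+\beta}{2}$, $a=0$) are exactly those in which a degree-two element of $B$ survives, and there I would invoke Theorem~\ref{th:deg2} directly.

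The main obstacle is not conceptual but computational and organizational. Computing $D(g_i)$ for every basis element and simplifying the resulting vanishing loci is lengthy, and in the generic subcases of Cases 2 and 3 the SAGBI basis elements are unwieldy rational expressions in the spectral data; there I would rely on Theorem~\ref{th:SAGBIbasis} to certify that the assembled list is a SAGBI basis rather than checking membership and subduction by hand. The most delicate bookkeeping is confirming that the denominators appearing in the basis elements are nonzero precisely outside the degenerate subcases (for instance, that the denominator of $r(x)$ in Case 2 vanishes only when $D(g_2)=0$), so that the case division is genuinely exhaustive and non-overlapping.
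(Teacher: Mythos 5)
Your overall strategy is the paper's own: realize $A$ as $\ker L$ for a codimension-two algebra $B$, identify $L$ using the known classification of codimension-two algebras and of their derivations (Theorems \ref{th:codim2}, \ref{th:noderivations}, \ref{th:derivativenotinspectrum}), read off $T(A)$ from the least degree in a SAGBI basis of $B$ not annihilated by $L$, and build the basis of $A$ with Theorem \ref{th:SAGBIbasis}. Your deviations from the paper are legitimate: for Case 3 the paper prefers $B=\{f\mid f'(\gamma)=f'(\delta)=0\}$ plus the equality condition $f(\alpha)=f(\beta)$ (so that its stated basis elements $(x-\gamma)^2(2x+\gamma-3\delta)$, $(x-\gamma)^2(x-\delta)^2$, $(x-\gamma)^3(x-\delta)^2$ can be reused), while you take $B=\{f\mid f(\alpha)=f(\beta),\ f'(\gamma)=0\}$ plus the trivial derivation $f'(\delta)$, which works but obliges you to verify separately that the explicit polynomials in the statement lie in $A$ and have the semigroup-generating degrees; and for Case 4 your appeal to Theorem \ref{oneclusternoderivandderiv} with $m=3$ is cleaner than the paper's direct substitution $y=x-\delta$, since it yields at once the dichotomy $\frac{1}{\alpha-\delta}+\frac{1}{\beta-\delta}+\frac{1}{\gamma-\delta}\neq 0$ (type $(4,5,6,7)$) versus $=0$ (type $(3,5,7)$).

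Two slips should be fixed. First, your enumeration principle is stated too strongly: by your own counting rule the partition $2{+}2$ contributes only two conditions, so ``the total number of conditions equals the codimension three'' would wrongly exclude it. The correct argument is that the cluster structure gives a lower bound on the number of conditions: $1{+}1{+}1{+}1$ is excluded because it already forces four, and for $2{+}2$ the missing third condition must be a derivation at a point of the spectrum (a derivation at a point outside the spectrum is trivial by Theorem \ref{th:derivativenotinspectrum} and would enlarge the spectrum to five elements), hence of the form $af'(\alpha)+bf'(\beta)$ by Theorem \ref{th:noderivations}; this is exactly what your Case 2 paragraph invokes, so the repair is implicit in your own text, but the enumeration sentence as written is internally inconsistent. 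Second, your list of type-$(2,7)$ subcases is wrong: in the present theorem the only one is Case 2, Case I, with $a=b\neq 0$. The subcase you cite as ``Case 4 with $\gamma=\frac{\alpha+\beta}{2}$, $a=0$'' belongs to the $s=3$ classification, not here; in Cases 3 and 4 of this theorem a degree-two element is impossible, because a quadratic can neither have two distinct critical points nor take equal values at three distinct points, so these cases never produce type $(2,7)$.
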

\begin{proof} Either $A$ is defined without any derivations which results in case 1, or $A$ is obtained by adding a derivation to a subalgebra $B$ of codimension two. In the second case we either add an $\alpha$-derivation where $\alpha \in Sp(B)$ to the case $s=4$ in Theorem \ref{th:codim2} which results in case 2, or we add an $\alpha$-derivation where $\alpha \not \in Sp(B)$ to one of the $s=3$ cases in codimension two resulting in cases 3 and 4.

For the first case it is obvious that $A$ contains exactly all polynomials of the form $c+g(x)(x-\alpha)(x-\beta)(x-\gamma)(x-\delta)$ where $c$ is any constant and $g(x)$ any polynomial. This algebra is clearly of type (4,5,6,7), and it is easy to verify that the given set of polynomials are in $A$ and of the exactly those degrees, and hence constitute a SAGBI basis.

For the second case we have two subcases depending on the type of $B$. If $\alpha+\beta=\gamma+\delta$ then $B$ is of type (2,5). We start from a SAGBI basis for $B$: $g_1=(x-\alpha)(x-\beta)$, $g_2=(x-\alpha)(x-\beta)^2(x-\gamma)(x-\delta)$ and then obtain $A$ from $B$ by adding a condition $D(f)=0$. We may (after interchanging labels of the elements in $Sp(B)$) WLOG assume that $D$ is an $\alpha$-derivation. From section \ref{sec:22k+1} we know that such a derivation is of the form $D(f)=af'(\alpha)+bf'(\beta)$. Now $D(g_1)=(\alpha-\beta)(a-b)$. Thus $A$ contains an element of degree two if and only if $a=b$. If $a=0$ we may assume $b \neq 0$ since we otherwise get codimension two. Thus, for $a=0$, degree two is missing in $A$ and hence its type (4,5,6,7) is obtained by removing two from the semigroup of $B$. A direct check shows that the given basis polynomials are all in $A$ and we conclude that they are a SAGBI basis since they are of appropriate degrees. (If $b$ but not $a$ equals zero we get the same case $A$ by letting $\alpha$ and $\beta$ switch names.)

If $a=b \neq 0$ then $A$ does contain $g_1$ and the only type of codimension two containing degree two is (2,7). Again a direct check shows that the polynomials given in the theorem are a SAGBI basis.

If $a \neq b$ then $A$ does not contain an element of degree two and as $B$ is type (2,5) $A$ must be of type (4,5,6,7). Applying $D$ to the SAGBI basis for the case $a=0$ we find that all elements except the second belong to $A$. Using the recipe from section \ref{sec:SAGBI} we replace this element by one belonging to $A$.

Let us now turn to case II, that is when $\Delta \neq 0$. This corresponds to $B$ being of type (3,4,5). We then have a SAGBI basis for B: $g_1=(x-\alpha)(x-\beta)(x-\tau)$, where $\tau$ is uniquely determined to satisfy $g_1(\gamma)=g_1(\delta)$, $g_2=(x-\alpha)(x-\beta)(x-\gamma)(x-\delta)$ and $g_3=(x-\alpha)(x-\beta)^2(x-\gamma)(x-\delta)$.

Again, we obtain $A$ from $B$ by adding a condition $D(f)=0$ of the form $D(f)=af'(\alpha)+bf'(\beta)$. The possible types of $A$ are (4,5,6,7), (3,5,7) or (3,4).

Note that if $A$ contains an element $g$ of degree three then $A$ must contain $g_1$ since $g$ is also in $B$ and the only way to build $g$ from the SAGBI basis of $B$ is as $g_1$ (up to multiplication and addition of a constant).

$A$ contains polynomials of degree three $\Leftrightarrow$ $a(\tau-\alpha)-b(\tau-\beta)=0$
Moreover $A$ contains polynomials of degree four $\Leftrightarrow$ $g_2+cg_1 \in A$ for some $c$ $\Leftrightarrow$ $D(g_2+cg_1)=0$ for some $c$, so clearly if $A$ does not contain degree three, then it will contain degree four. (We already knew this from the list of possible types however.) If $A$ does contain degree three then it may contain degree four or not. It will depend on the derivation used in the definition of $A$. Again we have cases:

If $a=0$ we may WLOG assume that $b=1$. Then we do not have any element of degree three in $A$ so its type is (4,5,6,7). We construct a SAGBI basis by forming elements of degree 4,5,6,7 from the known SAGBI basis $g_1=(x-\alpha)(x-\beta)(x-\tau)$,$g_2=(x-\alpha)(x-\beta)(x-\gamma)(x-\delta)$, $g_3=(x-\alpha)(x-\beta)^2(x-\gamma)(x-\delta)$ of $B$. All new basis elements except the one of degree four are in $A$ and we modify that element using, once again, the method from section \ref{sec:SAGBI}.

$a=b \neq 0$ In this case $A$ does not contain degree three, so the type of A must be (4,5,6,7). We proceed as in the above case with the exception that both the generators of degree four and five need to be modified.

$a \neq b$, $a \neq 0$, $b \neq 0$. We may assume WLOG that $b=1$. In this case the type of $A$ depends both on a spectrum condition $C=0$ where $C=(\alpha+\beta)(\gamma+\delta)-2\alpha\beta-\gamma^2-\delta^2$ and on the value of $a$.

If both $C=0$ and $a=\frac{\tau-\beta}{\tau-\alpha}$ hold then T(A)=(3,4) and $$(x-\alpha)(x-\beta)(x-\tau),$$ $$(x-\alpha)(x-\beta)(x-\gamma)(x-\delta)$$ is a SAGBI basis.

If $a=\frac{\tau-\beta}{\tau-\alpha}$ but $C \neq 0$ then T(A)=(3,5,7) and a SAGBI basis is given by
$$(x-\alpha)(x-\beta)(x-\tau)$$
$$(x-\alpha)(x-\beta)(x-\gamma)(x-\delta)(c(x-\alpha)+d)$$
$$(x-\alpha)^3(x-\beta)^2(x-\gamma)(x-\delta)$$ where $c=a(\alpha-\gamma)(\alpha-\delta)-(\beta-\gamma)(\beta-\delta)$ and $d=-(\alpha-\beta)(\gamma-\beta)(\delta-\beta)$

If $b \neq
\frac{\tau-\alpha}{\tau-\beta}$ T(A)=(4,5,6,7). A SAGBI basis is given by
$$(x-\alpha)(x-\beta)(c(x-\gamma)(x-\delta)+d(x-\tau))$$
$$(x-\alpha)(x-\beta)(c(x-\alpha)(x-\gamma)(x-\delta)+e(x-\tau))$$
$$(x-\alpha)^k (x-\beta)^2(x-\gamma)(x-\delta), k=2,3$$
Here $c=a(\alpha-\tau)-(\beta-\tau)$, $d=(\beta-\gamma)(\beta-\delta)$ and $e=-(\beta-\alpha)(\beta-\gamma)(\beta-\delta)$

For case three we start from $B=\{f(x)| f'(\gamma)=0, f'(\delta)=0\}$ and add a condition $L(f)=f(\alpha)-f(\beta)=0$

For $B$ we know that $g_1=(x-\gamma)^2(2x+\gamma-3\delta), g_2=(x-\gamma)^2(x-\delta)^2, g_3=(x-\gamma)^3(x-\delta)^2$ constitute a SAGBI basis. The conditions given in the first case are exactly those needed for both $g_1$ and $g_2$ to be in A, and the second case corresponds to $g_1$ but not $g_2$ being in $A$. From this the given types of $A$ follow. To find SAGBI bases for A we use elements from the basis of $B$ in the first case. In the second case we can use $g_1$ but need to modify $g_2$. The third basis element is obviously in $A$. When the type is (4,5,6,7) we proceed in the same way but need to modify both $g_2$ and $g_3$ while the two highest degree elements are obviously in A.

In the fourth case the elements of the subalgebra can be found explicitly. To simplify computations we change variables to $y=x-\delta$ so that we may assume $\delta=0$. Then note that an element is in $A$ if and only if it is of the form $(y-\alpha)(y-\beta)(y-\gamma)g(y)$ with $g(y)$ any polynomial satisfying $g'(0)=0$. This is equivalent to $g(y)=c(1+(\frac{1}{\alpha-\delta}+\frac{1}{\beta-\delta}+\frac{1}{\gamma-\delta})y)$ plus any terms of degree two or more in $y$. Unless $\frac{1}{\alpha-\delta}+\frac{1}{\beta-\delta}+\frac{1}{\gamma-\delta}=0$ such $g$ exist of all degrees from one and up, showing that T(A)=(4,5,6,7). We pick such elements of the required degrees and then change variables back to $x$. In the exceptional case $\frac{1}{\alpha-\delta}+\frac{1}{\beta-\delta}+\frac{1}{\gamma-\delta}=0$ we find that the type of A is (3,5,7) and a basis can again easily be picked in the set of polynomials in $A$.

\end{proof}

\section[Codimension $3$, $s=5,6$]{Subalgebras of codimension three with large spectrum.}
It remains to consider large values of $s,$ namely five and six. Theorem \ref{th:spectrumsize} gives us a direct description of those subalgebras and we only need to detect their types and construct SAGBI bases.

\begin{theorem}If the algebra $A$ of codimension three has a spectrum consisting six elements then $$A=\{f(x)|f(\alpha)=f(\beta); f(\gamma)=f(\delta); f(\lambda)=f(\mu).$$  Here $\alpha,\beta,\gamma,\delta,\lambda,\mu$ are pairwise different numbers from $\MK.$ Depending on the relations between them we have the following alternatives:

A) If $\alpha+\beta=\gamma+\delta=\lambda+\mu$ then the type is (2,7) and a
SAGBI basis can be chosen as:
  $$(x-\alpha)(x-\beta), (x-\alpha)^2(x-\beta)(x-\gamma)(x-\delta)(x-\lambda)(x-\mu).$$

If $\Delta=\gamma+\delta-\alpha-\beta\neq 0$ let
$q(x)=(x-\alpha)(x-\beta)(x-\tau),$ $$ p_i(x)=(x-\alpha)^i(x-\beta)(x-\gamma)(x-\delta),$$ with
$$\tau=
\frac{\gamma^2+\gamma\delta+\delta^2+\alpha\beta-
(\alpha+\beta)(\gamma+\delta)}{\Delta}.$$

B) If $q(\lambda)=q(\mu)$ and $p_1(\lambda)=p_1(\mu)$ then the type is $(3,4)$ and $p_1,q$ form a SAGBI basis. Example \ref{ex:spectere4} shows that such subalgebras really exist.

C) If $q(\lambda)=q(\mu)$ but $p_1(\lambda)\neq p_1(\mu)$
then the type is (3,5,7) and a
SAGBI basis can be chosen as:
  $$q,\quad (x-\alpha)(x-\beta)(x-\gamma)(x-\delta)(x-c)$$
 $$ r=(x-\alpha)^2(x-\beta)(x-\gamma)(x-\delta)(x-\lambda)(x-\mu),$$
 where the constant $c$ can be found from the condition $f(\lambda)=f(\mu).$

D) If $q(\lambda)\neq q(\mu)$ then the type is $(4,5,6,7)$ and a (non-normalized)
SAGBI basis can be chosen as:
  $$(q(\lambda)-q(\mu)p_i(x)-(p_i(\lambda)-p_i(\mu))q(x);\quad i=1,2,3,4.$$

 \end{theorem}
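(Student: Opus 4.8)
The plan is to realise $A$ through the inductive scheme of Section~\ref{sec:plancodim3}, writing $A=\ker L$ for a codimension-two subalgebra $B$ and the extra condition $L(f)=f(\lambda)-f(\mu)$. Since all three defining conditions of $A$ are of the pure equality type, $A$ is the intersection $A(C_1)\cap A(C_2)\cap A(C_3)$ of the three two-element clusters $C_1=\{\alpha,\beta\}$, $C_2=\{\gamma,\delta\}$, $C_3=\{\lambda,\mu\}$, so $|Sp(A)|=6=2\cdot 3$ and we sit exactly in the extremal case of Theorem~\ref{th:spectrumsize}. First I would note that the labelling of the three clusters is free: either all three pair-sums $\alpha+\beta,\gamma+\delta,\lambda+\mu$ coincide, which is alternative A, or at least two of them differ, and after renaming the clusters we may assume $\Delta=\gamma+\delta-\alpha-\beta\neq 0$. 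This makes the four listed alternatives exhaustive.

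In alternative A I would take $B=A(C_1)\cap A(C_2)$. By the $s=4$ case of Theorem~\ref{th:codim2}, the equality $\alpha+\beta=\gamma+\delta$ forces $T(B)=(2,5)$, with $(x-\alpha)(x-\beta)$ the degree-two generator, since this polynomial takes equal values on $\{\gamma,\delta\}$ precisely because the sums agree. As also $\lambda+\mu=\alpha+\beta$, the same degree-two polynomial lies in $\ker L$ and survives into $A$; because $(2,7)$ is the only genus-three numerical semigroup containing $2$, we get $T(A)=(2,7)$ and may invoke Theorem~\ref{th:deg2} to read off the stated basis, after checking that $(x-\alpha)^2(x-\beta)(x-\gamma)(x-\delta)(x-\lambda)(x-\mu)$ vanishes on every cluster and hence belongs to $A$.

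When $\Delta\neq 0$, Theorem~\ref{th:codim2} gives $T(B)=(3,4,5)$, and I would exhibit the SAGBI basis $q,p_1,p_2$ of $B$, where $p_i=(x-\alpha)^i(x-\beta)(x-\gamma)(x-\delta)$ (degrees $4$ and $5$) and $q=(x-\alpha)(x-\beta)(x-\tau)$; the given value of $\tau$ is exactly the unique choice making $q(\gamma)=q(\delta)$, its denominator being $\Delta$, which is why $\Delta\neq 0$ is required for $q\in B$. Adding $L$ and applying the recipe of Section~\ref{sec:plancodim3}, the sub-cases are governed by $L(q)=q(\lambda)-q(\mu)$ and $L(p_1)=p_1(\lambda)-p_1(\mu)$: if both vanish the degrees $3,4$ persist and $T(A)=(3,4)$ with basis $q,p_1$ (alternative B); if $L(q)=0$ but $L(p_1)\neq 0$ then degree $4$ disappears from the semigroup and $T(A)=(3,5,7)$ (alternative C); if $L(q)\neq 0$ then degree $3$ disappears and $T(A)=(4,5,6,7)$ (alternative D). In case D the basis elements are manufactured as the $L$-annihilated combinations $L(q)p_i-L(p_i)q$ for $i=1,2,3,4$, whose leading terms give degrees $4,5,6,7$, and in case C one uses $q$, a degree-five element, and the degree-seven polynomial $r=(x-\alpha)^2(x-\beta)(x-\gamma)(x-\delta)(x-\lambda)(x-\mu)$, each lying in $\ker L=A$ by construction.

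The routine closing step is to confirm that in each sub-case the produced polynomials genuinely form a SAGBI basis: their degrees generate the asserted numerical semigroup, and membership in $A$ holds because every listed polynomial either vanishes on all three clusters or is an explicit $L$-annihilated combination. The one delicate point, and the main obstacle, is alternative C: there I must check that the degree-five element $(x-\alpha)(x-\beta)(x-\gamma)(x-\delta)(x-c)$ can be forced into $A$ by solving for $c$, which uses precisely the hypothesis $p_1(\lambda)\neq p_1(\mu)$ to guarantee the relevant coefficient is non-zero, and that $7$ rather than a smaller integer is the correct third generator, so that $\langle 3,5,7\rangle$ indeed has genus three. Everything else follows directly from Theorems~\ref{th:codim2}, \ref{th:deg2} and~\ref{th:SAGBIbasis} together with the reduction already set up in Section~\ref{sec:plancodim3}.
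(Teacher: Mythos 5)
Your proposal is correct and follows essentially the same route as the paper: realise $A$ as the kernel of $L(f)=f(\lambda)-f(\mu)$ on $B=A(\{\alpha,\beta\})\cap A(\{\gamma,\delta\})$, split on whether a degree-two element survives (giving type $(2,7)$ via Theorem \ref{th:deg2}), and otherwise use the SAGBI basis $q,p_1,p_2$ of the type-$(3,4,5)$ algebra $B$ together with the construction of Section \ref{sec:plancodim3} to see which degree disappears and to build the basis elements $L(q)p_i-L(p_i)q$. Your write-up is in fact more detailed than the paper's own proof, in particular in justifying exhaustiveness of the four alternatives and in solving for the constant $c$ in case C using $p_1(\lambda)\neq p_1(\mu)$.
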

\begin{proof} We can get the algebra $A$ from $B$ defined by $f(\alpha)=f(\beta); f(\gamma)=f(\delta)$ so the only question is how to choose a SAGBI basis in both and how to find the type.
If we have a polynomial of degree two in $A$ we can use Theorem \ref{th:deg2} and only slightly modify it by choosing another element of degree seven (that obviously belongs to $A.$)
Otherwise the type of $B$ can be chosen as $(3,4,5)$ and we need to check which of those three degrees disappears. Because $p_i(x)$ obviously belong to $B$ the result follows from the algorithm of constructing SAGBI basis described above, though for the case $(3,5,7)$ we can choose the basis more explicitly. (The chosen elements are obviously in $A$ and have the right degrees.) This would be possible for type $(4,5,6,7)$ as well
e.g. choosing $r,\frac{r}{x-\alpha}$ for the degrees seven and six, but the elements of degree four and five hardly look nice explicitly. We prefer a shorter description.
\end{proof}

Now we consider the case when we have five elements in the spectrum.

\begin{theorem}\label{th:codim3-s5}
	Let $A$ be a subalgebra of $\mathbb{K}[x]$ of codimension three with the spectrum $\{\alpha, \beta, \gamma, \delta, \lambda\}$ and let
	\begin{align*}
		p_i(x) &= (x - \alpha)(x - \beta)(x - \gamma)(x - \delta)(x-\lambda)^i, \\
		g(x) &= \left( x - \frac{\alpha + \beta}{2}\right)^2, \\
		q(x) &= (x-\alpha)(x-\beta)(x-\tau), \\
		\tau &= \frac{\gamma^2 + \gamma \delta + \delta^2 + \alpha \beta - (\alpha + \beta)(\gamma + \delta)}{\Delta}, \\
		\Delta &= \alpha + \beta - \gamma - \delta.
	\end{align*}
	Then $A$ can be categorized as one of the following.
	\begin{itemize}
		\item $A = \{f(x) | f(\alpha) = f(\beta) = f(\lambda); f(\gamma) = f(\delta)\}$.
			In this case let $L(f) = f(\alpha) - f(\lambda)$. The possible types of $A$ are given as follows.
			\begin{itemize}
				\item $T(A) = (4,5,6,7)$. This occurs when $\Delta = 0$ or when $\Delta \not = 0$ and $L(q) \not = 0$. A possible SAGBI basis for the first case is given by
					$$
					L(p_0)g(x) - p_0(x)L(g),\ p_1(x),\ p_2(x),\ p_3(x),
					$$
					and one for the second case it is given by
					$$
					L(q)p_0(x) - q(x)L(p_0),\ p_1(x),\ p_2(x),\ p_3(x).
					$$
				\item $T(A) = (3, 5, 7)$. This occurs when $\Delta \not = 0$, $L(q) = 0$ and $L(p_0) \not = 0$. A possible SAGBI basis for this case is given by
					$$
					q(x), p_1(x), p_3(x).
					$$
				\item $T(A) = (3, 4)$. This occurs when $\Delta \not = 0$ and $L(q) = L(p_0) = 0$. A possible SAGBI basis for this case is given by
					$$
					q(x), p_0(x).
					$$
			\end{itemize}
		\item $A = \{f(x) | f(\alpha) = f(\beta); f(\gamma) = f(\delta); f'(\lambda) = 0\}$
			In this case let $D(f) = f'(\lambda).$
			The possible types of $A$ are given as follows.
			\begin{itemize}
				\item $T(A) = (2, 7)$. This occurs when $\Delta = 0$ and $\lambda = \frac{\alpha + \beta}{2}$. A possible SAGBI basis for this case is given by
					$$
					g(x), p_3(x).
					$$
				\item $T(A) = (4,5,6,7)$. This occurs when $\Delta = 0$ and $\lambda \not = \frac{\alpha + \beta}{2}$,
					or when $\Delta \not = 0$ and $L(q) \not = 0$. A possible SAGBI basis for the first case is given by
					$$
					L(p_0)g(x) - p_0(x)L(g),\ $$
                   $$L(p_1)g(x) - p_1(x)L(g),\ p_2(x),\ p_3(x),
					$$
					and one for the second case is given by
					$$
					L(p_0)q(x) - p_0(x)L(q), $$
                   $$ L(p_1)q(x) - p_1(x)L(q),\ p_2(x),\ p_3(x),
					$$
				\item $T(A) = (3, 5, 7)$. This occurs when $\Delta \not = 0$, $L(q) = 0$ and $L(p_0) \not = 0$. A possible SAGBI basis for this case is given by
					$$
					q(x), L(p_0)p_1(x) + p_0(x)L(p_1), p_3(x).
					$$
				\item $T(A) = (3, 4)$. This occurs when $\Delta \not = 0$ and $L(q) = L(p_0) = 0$. A possible SAGBI basis for this case is given by
					$$
					q(x), p_0(x).
					$$
			\end{itemize}
	\end{itemize}
	\end{theorem}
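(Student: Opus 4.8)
The plan is to run the inductive strategy of Section~\ref{sec:plancodim3}: realise $A$ as the kernel of one extra condition imposed on a subalgebra $B$ of codimension two, and transport the type and a SAGBI basis from $B$ to $A$ through Theorem~\ref{th:SAGBIbasis}. First I would pin down the dichotomy into the two families. Since $A$ has codimension $n=3$ and $|Sp(A)|=5=2n-1$, Theorem~\ref{th:spectrumsize} guarantees that $A$ is described by two conditions $f(\alpha_i)=f(\beta_i)$ plus one extra condition which is either $f'(\alpha_0)=0$ or $f(\alpha_0)=f(\alpha_1)$. In the derivative alternative the spectrum is the disjoint union of clusters $\{\alpha_1,\beta_1\}$, $\{\alpha_2,\beta_2\}$, $\{\alpha_0\}$, which, after setting $\lambda=\alpha_0$, is precisely the second family; in the equality alternative $\lambda:=\alpha_0$ joins the cluster of $\alpha_1$, creating the three-element cluster $\{\alpha_1,\beta_1,\lambda\}$ and hence the first family. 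So these are the only two possibilities.

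In either family I would take $B=\{f(x)\mid f(\alpha)=f(\beta),\ f(\gamma)=f(\delta)\}$, the codimension-two subalgebra with $|Sp(B)|=4$ classified in Theorem~\ref{th:codim2}: there $T(B)=(2,5)$ when $\alpha+\beta=\gamma+\delta$, i.e.\ $\Delta=0$, and $T(B)=(3,4,5)$ when $\Delta\neq0$. Accordingly a SAGBI basis of $B$ is $\{g,p_1\}$ of degrees $2,5$ in the first case and $\{q,p_0,p_1\}$ of degrees $3,4,5$ in the second; the number $\tau$ is the unique value making $q(\gamma)=q(\delta)$, the defining linear equation having leading coefficient $(\gamma-\delta)\Delta$, which explains the denominator $\Delta$. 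For the first family the added condition is $L(f)=f(\alpha)-f(\lambda)$, absorbing $\lambda$ into the cluster of $\alpha$; for the second it is the trivial $\lambda$-derivation $L(f)=f'(\lambda)$, admissible because $\lambda\notin Sp(B)$ (Theorem~\ref{th:derivativenotinspectrum}).

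Next I would invoke Theorem~\ref{th:SAGBIbasis}: the single degree that leaves the numerical semigroup of $B$ is the least degree of a SAGBI generator $g_i$ of $B$ with $L(g_i)\neq0$, and a SAGBI basis of $A$ is assembled from the generators already lying in $\ker L$ together with combinations $L(b)a-L(a)b\in\ker L$ supplying the missing degrees. Everything then hinges on evaluating $L$ on the generators. For the first family $L(g)=g(\alpha)-g(\lambda)\neq0$ (when $\Delta=0$), $L(q)=-q(\lambda)$, $L(p_0)=-p_0(\lambda)\neq0$, and $L(p_i)=0$ for $i\ge1$; for the second $L(g)=g'(\lambda)$, $L(p_0)=p_0'(\lambda)$, $L(p_1)=p_0(\lambda)\neq0$, and $L(p_2)=L(p_3)=0$ (double and triple roots at $\lambda$). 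Reading off the smallest surviving nonzero value yields $T(A)$ in each branch, and inserting the corresponding combinations --- using Theorem~\ref{th:deg2} for the degree-two branches --- produces the displayed bases.

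The point requiring the most care is the contrast that admits type $(3,4)$ in the second family but forbids it in the first. In the first family $\lambda\notin\{\alpha,\beta,\gamma,\delta\}$ forces $p_0(\lambda)\neq0$, hence $L(p_0)\neq0$; equivalently every $p_i$ with $i\ge1$ lies in $A$, so degree $5$ can never drop out and the branch $L(q)=L(p_0)=0$ is vacuous, leaving only $(4,5,6,7)$ and $(3,5,7)$. In the second family, however, $p_0'(\lambda)$ may vanish while $p_1\notin A$, so degrees $3$ and $4$ may both persist as degree $5$ disappears, giving $T(A)=(3,4)$. The remaining verifications --- that each exhibited combination is genuinely annihilated by $L$, has the claimed degree, and that the surviving leading degrees generate the asserted semigroup --- are routine once these evaluations are recorded.
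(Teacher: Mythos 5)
Your proposal is correct and follows essentially the same route as the paper's proof: both realise $A$ as the kernel of one extra condition imposed on $B=\{f\mid f(\alpha)=f(\beta),\ f(\gamma)=f(\delta)\}$, read off $T(B)$ and its SAGBI bases from Theorem~\ref{th:codim2} according to whether $\Delta=0$, and transport type and basis to $A$ via Theorem~\ref{th:SAGBIbasis} by evaluating the added condition $L$ on the generators $g,q,p_i$. Your additional observation that $L(p_0)=-p_0(\lambda)\neq 0$ in the first family, so that every $p_i$ with $i\ge 1$ lies in $A$ and the $(3,4)$ branch there is vacuous, is sound and in fact sharpens the paper's statement, which lists that branch under a condition that can never hold.
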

	\begin{proof}
		First note that there is no other combination of conditions that
		specify a subalgebra of codimension three with a spectrum of size five. We
		must have three conditions and as each condition only can contribute with at most two elements to the spectrum, two of these conditions
		must contribute with two elements to the spectrum. There is only one
		type of condition that adds two new elements to the spectrum, namely
		conditions of the form $L(f) = f(x) - f(y)$ where $x, y$ are not
		previously part of the spectrum. Thus we are really only free in
		setting the third condition on $A$. Either we can add a condition of
		type $L(f) = f(x) - f(y)$ where $x$ previously belonged to the
		spectrum, or we can add an $\alpha$-derivation. Note that such an
		$\alpha$-derivation must be trivial, as Theorem \ref{th:noderivations}
		tells us that no non-trivial $\alpha$-derivations add more elements to the spectrum. Now, it remains to justify each case. But
		first, some additional
		notation. \\

		Let $B = \{f(x) | f(\alpha) = f(\beta); f(\gamma) = f(\delta)\}$. We
		will consider $B$ as the subalgebra from which $A$ is created by adding an extra condition. Note that $p_i \in B$ for all positive
		$i$. Furthermore, Theorem \ref{th:codim2} states that when $\Delta = 0$,
		we have $T(B) = (2,5)$ and $g(x), p_1(x)$ is a SAGBI basis of $B$. If
		however $\Delta \not = 0$, then $T(B) = (3,4,5)$ and $q(x), p_0(x),
		p_1(x)$ is a SAGBI basis of $B$. \\

		We begin by treating all cases when $A$ is defined by equality
		conditions only, no derivations. If $\Delta = 0$ then $T(B) = (2,5)$
		and as $A$ does not satisfy the conditions outlined in Theorem
		\ref{th:deg2}, we must have $T(A) = (4,5,6,7)$.  It is easily verified
		that the constructed SAGBI basis resides in $B$, satisfies the extra
		condition, and generates an algebra of the specified type. \\

		If instead $\Delta \not = 0$, then $T(B) = (3, 4, 5)$ and we can use
		the SAGBI basis of $B$, namely $q(x), p_0(x), p_1(x)$, to construct a
		SAGBI basis for $A$. We include and modify basis elements depending on
		which of them satisfy the added condition. \\

		Now, we treat the cases when $A$ is derived as the kernel of some
		derivation on $B$. Here we also branch on wether $\Delta$ is zero or
		not. \\

		If $\Delta = 0$ and $T(B) = (2,5)$ then $A$ satisfies Theorem
		\ref{th:deg2} only if $A = \text{ker}\ D$ where $D(f) = af(\lambda)$
		and $\lambda = \frac{\alpha + \beta}{2}$.  If $\lambda$ is given as
		such, then it follows that $T(A) = (2,7)$ and the basis is easily
		verified.  If however $\lambda \not =  \frac{\alpha + \beta}{2}$, then
		$T(A) = (4,5,6,7)$ and again, it is easy to verify the basis.

		When $\Delta \not = 0$ we proceed in the same fashion as before and
		construct a SAGBI basis from $q(x)$ and the $p_i(x)$ depending on which
		of the polynomials satisfy the added condition.
	\end{proof}

\chapter{Creating derivations}

To prove the main conjecture we probably need to understand the nature of derivations: how new derivations are obtained when some former derivations are turned into subalgebra conditions. This is far from trivial and here we discuss some observations.

\section{Integral}
Let $A$ be obtained from $B$ as the kernel of an $\alpha-$derivation $L.$ We call a polynomial $a$ an integral if for any $f\in A$ we have that $af'\in B.$
For example, if $B=\MK[x]$ then any $a$ is an integral. For $B=<x^2,x^3>$ and $A=<x^2,x^5>$ we find that $x$ is an integral.
\begin{theorem}If $a$ is an integral then the map $D:f\rightarrow L(af')$ is an $\alpha-$derivation of the subalgebra $A.$
\end{theorem}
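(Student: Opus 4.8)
The plan is to verify the two defining properties of an $\alpha$-derivation of $A$ directly: linearity, which is immediate from the linearity of $L$ and of the derivative, and the twisted Leibniz rule $D(fg)=D(f)g(\alpha)+f(\alpha)D(g)$ for $f,g\in A$. First I would check that $D$ is even well-defined on a product. Since $A$ is a subalgebra we have $fg\in A$, and since $a$ is an integral we get $a(fg)'\in B$, so $D(fg)=L(a(fg)')$ makes sense.

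The key algebraic observation is the splitting
$$a(fg)'=a(f'g+fg')=(af')g+f(ag'),$$
which expresses $a(fg)'$ as a sum of two products of elements of $B$. Indeed $af'\in B$ and $ag'\in B$ because $a$ is an integral and $f,g\in A$, while $f,g\in A\subseteq B$. Thus every factor below genuinely lies in $B$, and I may legitimately apply the $\alpha$-derivation $L$ to each product.

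Applying $L$ and expanding each product by the $\alpha$-derivation property of $L$ on $B$ gives
$$D(fg)=L(af')g(\alpha)+(af')(\alpha)L(g)+L(f)(ag')(\alpha)+f(\alpha)L(ag').$$
Recognizing $L(af')=D(f)$ and $L(ag')=D(g)$, this reads
$$D(fg)=D(f)g(\alpha)+f(\alpha)D(g)+(af')(\alpha)L(g)+L(f)(ag')(\alpha).$$
The final step, and the only place where the hypothesis $A=\ker L$ enters, is to observe that $f,g\in A$ forces $L(f)=L(g)=0$, so the last two terms vanish and we obtain exactly the required identity. Linearity follows the same way: $D(f+g)=L(af'+ag')=L(af')+L(ag')=D(f)+D(g)$ and $D(cf)=cD(f)$, using that $af',ag'\in B$.

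I expect no genuine obstacle here; the whole argument is routine once the splitting $a(fg)'=(af')g+f(ag')$ is in place. The only point requiring care is bookkeeping: ensuring that each expression to which $L$ is applied really belongs to $B$ (which is precisely what being an integral guarantees), and then noticing that the two surviving $L$-values are exactly the $D$-terms, whereas the two cross terms carrying a stray $L(f)$ or $L(g)$ are annihilated by membership in $\ker L=A$.
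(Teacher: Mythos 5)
Your proof is correct and follows essentially the same route as the paper: the splitting $a(fg)'=(af')g+f(ag')$, applying the $\alpha$-derivation property of $L$ on $B$ to each product, and killing the cross terms via $L(f)=L(g)=0$ since $f,g\in A=\ker L$. The only difference is that you spell out the well-definedness and the vanishing terms more explicitly than the paper does, which is fine.
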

\begin{proof}
We have $$af'\in B$$ thus $L$ is well defined and linear. Besides that if $f,g\in A$ then $L(f)=L(g)=0$ and
$$D(fg)=L(a(fg)')=L(af'g)+L(ag'f)=$$
$$L(af')g(\alpha)+ 0+L(ag')f(\alpha)+0=D(f)g(\alpha)+f(\alpha)D(g).$$
\end{proof}

This idea can be generalized. Consider a map $F:A\rightarrow B$ such that $F(pq)=F(p)q+pF(q).$
\begin{theorem}The map $D= L\circ F$ is an $\alpha-$derivation of the subalgebra $A.$
\end{theorem}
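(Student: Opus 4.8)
The plan is to verify the $\alpha$-derivation identity for $D=L\circ F$ by a direct computation, feeding the Leibniz rule for $F$ into the $\alpha$-derivation rule for $L$ and then exploiting that $A=\ker L$. First I would record that $D$ is linear, being the composition of the two linear maps $F\colon A\to B$ and $L\colon B\to\MK$; this is needed because by definition an $\alpha$-derivation must be a linear map, so the only substantive point is the product rule.

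Next I would take arbitrary $p,q\in A$ and compute
$$D(pq)=L(F(pq))=L\bigl(F(p)q+pF(q)\bigr)=L(F(p)q)+L(pF(q)),$$
using the hypothesis $F(pq)=F(p)q+pF(q)$ together with the linearity of $L$. The crucial observation is that $F(p),F(q)\in B$ while $p,q\in A\subseteq B$, so both $F(p)q$ and $pF(q)$ are genuine products of elements of $B$, and the $\alpha$-derivation property of $L$ on $B$ applies to each:
$$L(F(p)q)=L(F(p))\,q(\alpha)+F(p)(\alpha)\,L(q),\qquad L(pF(q))=L(p)\,F(q)(\alpha)+p(\alpha)\,L(F(q)).$$

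The final step is the cancellation that collapses everything to the desired shape. Since $A=\ker L$ and $p,q\in A$, we have $L(p)=L(q)=0$, which kills the middle terms $F(p)(\alpha)L(q)$ and $L(p)F(q)(\alpha)$. What remains is
$$D(pq)=L(F(p))\,q(\alpha)+p(\alpha)\,L(F(q))=D(p)\,q(\alpha)+p(\alpha)\,D(q),$$
which is precisely the $\alpha$-derivation identity. There is no real obstacle here; the argument is the same bookkeeping as in the preceding integral theorem, of which this is the natural generalisation (taking $F(f)=af'$ for an integral $a$ recovers that case). The single point that deserves care is to confirm that the evaluation at $\alpha$ produced by $L$ is consistent across both terms and that the vanishing terms are exactly those in which $L$ is applied to an element of $A$; once this is checked, the identity follows immediately.
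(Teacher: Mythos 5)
Your proof is correct and follows exactly the same route as the paper: expand $D(pq)=L(F(p)q+pF(q))$ via the Leibniz property of $F$, apply the $\alpha$-derivation rule of $L$ to each product in $B$, and cancel the terms $F(p)(\alpha)L(q)$ and $L(p)F(q)(\alpha)$ using $p,q\in A=\ker L$. The only difference is that you spell out the intermediate bookkeeping (linearity of $D$, membership of $F(p)q$ and $pF(q)$ in $B$) which the paper leaves implicit.
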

\begin{proof} We have for $p,q\in A=\ker L$
$$D(pq)=L(F(p)q+pF(q))=$$
$$L(F(p))q(\alpha))+F(p)(\alpha)L(q)+L(p)F(q)(\alpha)+p(\alpha)L(F(q))=$$
$$D(p)q(\alpha)+p(\alpha)D(q).$$
\end{proof}

\section{Single element in the spectrum}
Unfortunately not each derivation can be created using integrals. To understand how the derivations can appear  we want to study a special concrete case. When $A$ has a single element $\alpha$ in the spectrum,
where already have proved the main conjecture.
 First of all if $p'(\alpha)=0$ for any $p\in A$, then $D_2:p\rightarrow \frac{p''(\alpha)}{2!}$ and $D_3:p\rightarrow \frac{p'''(\alpha)}{3!}$ are two $\alpha-$derivations. Consider the following list of  the maps created with the help of Maple:
$$                                   D_1      $$
$$                                D_3 - c D_2;$$
$$                               D_5 - 2 c D_4;$$
$$                                 D_7 - 3 cD_6 + 3 c^3D_4;$$
$$                  D_9 - 4 cD_8 + 11 c^3D_6 - 11 c^5D_4;$$
$$          D_{11} - 5 cD_{10} + 26 c^3D_8 - 78 c^5D_6 + 78 c^7D_4;$$
$$  D_{13} - 6 cD_{12} + 50 c^3D_{10} - 294 c^5D_8 + 882 c^7D_6 - 882 c^9D_4;$$
$$D_{15} - 7 c D_{14} + 85 c^3D_{12}- 816 c^5D_{10} + 4811 cD_8^7 - 14433 c^9D_6
   + 14433 c^{11}D_4;$$
$$     D_{17} - 8 cD_{16} + 133 c^3D_{14} - 1881 c^5D_{12} + 18145 c^7D_{10}$$
 $$        - 106989 c^9  D_8 + 320967 c^{11}   D_6 - 320967 c^{13}   D_4;$$
$$D_{19} - 9 cD_{18} + 196 c^3D_{16} - 3822 c^5D_{14} + 54399 c^7D_{12}$$
$$   - 524880 c^9  D_{10} + 3094881 c^{11}  D_8 - 9284643 c^{13}   D_6 + 9284643 c^{15}   D_4.$$
Here $D_k$ is the map $D_k:p\rightarrow \frac{p^{(k)}(\alpha)}{k!}$ and $c$ is a constant.

We know that the first map is an $\alpha-$derivation. But what is more interesting is that if the first $k$ maps defines a subalgebra inside $A$ (as the intersection $C$ of their kernels with $A$)
then the next map will be a derivation of $C.$

The numerical coefficients $C_i$ with $D_n$ have an interesting property:
$$C_0=0$$
$$C_2+C_3=0;$$
$$C_4+2C_5+C_6=0$$
$$C_6+3C_7+3C_8+C_9=0$$
$$C_8+4C_9+6C_{10}+4C_{11}+C_{12}=0$$
$$\ldots$$
$$C_{2m}+\binom m 1C_{2m+1}+\binom m2C_{2m+2}+\cdots+\binom m{m-1}C_{3m-1}+C_{3m}=0$$
\begin{theorem} Let $n=2k+1$ be an odd number. If we demand
\begin{itemize}
  \item $C_n^{(n)}=1$ and $C_i^{(n)}=0$ for all other odd $i$;
  \item  $C_i^{(n)}=0$ for all even $i>n;$
  \item $C_{2m}^{(n)}+\binom m 1C_{2m+1}^{(n)}+\binom m2C_{2m+2}^{(n)}+\binom m3C_{2m+3}^{(n)}+\cdots\\ +\binom m{m-1}C_{3m-1}^{(n)}+C_{3m}=0$ for all $m$
\end{itemize} then the numbers $C_i^{(n)}$ are uniquely determined.
\end{theorem}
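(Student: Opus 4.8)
The plan is to expose the triangular structure hidden in the third condition. First I would record what the other two conditions already supply: the first condition pins down every odd-indexed coefficient (namely $C_n^{(n)}=1$ and all remaining odd $C_i^{(n)}=0$), and the second condition forces every even-indexed coefficient of index exceeding $n$ to vanish. Writing $n=2k+1$, the only coefficients left free are therefore the even-indexed ones $C_0^{(n)},C_2^{(n)},\dots,C_{2k}^{(n)}$, so it suffices to prove that the third condition determines these $k+1$ numbers uniquely.

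Next I would split the $m$-th relation
$$\sum_{i=0}^{m}\binom{m}{i}C_{2m+i}^{(n)}=0$$
according to the parity of $i$. Since $2m+i$ is even exactly when $i$ is even, all terms with $i$ odd involve odd-indexed coefficients, which are already known; moving them to the right-hand side turns the $m$-th relation into a linear equation purely in the even unknowns,
$$\sum_{l=0}^{\lfloor m/2\rfloor}\binom{m}{2l}\,C_{2(m+l)}^{(n)}=-\binom{m}{\,n-2m\,},$$
where the right-hand side is read as $0$ unless $n-2m$ is an integer in $[0,m]$ (necessarily odd, as $n$ is odd), in which case the sole surviving odd contribution is $C_n^{(n)}=1$.

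The decisive observation is that in this equation the smallest even index appearing is $2m$, attained at $l=0$ with coefficient $\binom{m}{0}=1$, while every other even index $2(m+l)$ with $l\ge 1$ is strictly larger. Reading the relations for $m=k,k-1,\dots,1,0$ in that order then gives a downward recursion with no freedom: the relation for $m=k$ expresses $C_{2k}^{(n)}$ through coefficients of index $\ge 2k+2>n$, all zero by the second condition, so $C_{2k}^{(n)}$ is forced; inductively, once $C_{2m'}^{(n)}$ is known for every $m'>m$, the $m$-th relation determines $C_{2m}^{(n)}$, and the relation for $m=0$ merely reconfirms $C_0^{(n)}=0$. Because the newly solved unknown always carries coefficient $1$, the solution is unique, which is exactly the claim.

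The only delicate point, and the step I would carry out most carefully, is the index bookkeeping: one must verify that after discarding the known odd contributions the index $2m$ really is the minimal even index in the $m$-th relation, and that every larger even index invoked is either greater than $n$ (hence zero by the second condition) or of the form $2m'$ with $m'>m$ (hence already solved for). Once this is checked the proof is a clean triangular back-substitution, which incidentally also yields existence, even though only uniqueness is asserted. I would note explicitly that the relations with $m>k$ are irrelevant to uniqueness; they encode consistency conditions that matter only for existence.
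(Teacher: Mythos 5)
Your proof is correct and takes essentially the same route as the paper's: both exploit the triangular structure of the relations and determine the even-indexed coefficients by downward recursion, starting from the $m=k$ relation (which gives $C_{2k}^{(n)}=-k$) and using that every other term in the $m$-th relation has index larger than $2m$, hence is already known. The only cosmetic slip is describing the $m=0$ relation as \emph{reconfirming} $C_0^{(n)}=0$ — nothing fixes $C_0^{(n)}$ beforehand, so that relation is precisely what determines it, exactly as in the paper's list of equations.
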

\begin{proof} For odd numbers it is trivial. For even numbers $i>n$we have zeros and they satisfies the equations with $2m>n.$
For $C_{2k}^{(n)}$ we have
$$C_{2k}^{(n)}+\binom k 1C_{2k+1}+0+\cdots=0\Rightarrow C_{2k}^{(n)}=-\binom k 1=-k.$$
If $C_i^{(n)}$ is defined for all $i>2m$ then we have
$$C_{2m}^{(n)}=-\left[\binom m 1C_{2m+1}^{(n)}+\binom m2C_{2m+2}^{(n)}+\cdots +\binom m{m-1}C_{3m-1}^{(n)}+C_{3m}\right]$$
and all is uniquely defined by induction.
\end{proof}

Now for each odd $n$ we can define
$$L_n=\sum_{i=0}^n C_i^{(n)}c^iD_{n-i}.$$
\begin{Conj}If $L_1(f)=L_3(f)=\cdots=L_{n-2}(f)=0$ for each $f\in A$ then $L_n$ is an $\alpha-$derivation in $A.$
\end{Conj}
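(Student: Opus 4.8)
The plan is to reduce the derivation identity to a purely combinatorial statement about the coefficients of the $L_j$, and then to recognise that statement as exactly the binomial relations established in the preceding theorem. First I would normalise $\alpha=0$ (as in the surrounding discussion) and record the one structural fact that drives everything: since $D_k(f)$ is the coefficient of $x^k$ in $f$, the product rule for power series gives the Cauchy convolution
\[
D_k(fg)=\sum_{i+j=k}D_i(f)\,D_j(g),\qquad f,g\in A .
\]
Writing $L_n=\sum_j\mu_j D_j$, where $\mu_j$ is the coefficient of $D_j$ (a fixed power of $c$ times a rational number, with $\mu_0=\mu_1=0$ for $n\ge 3$), a direct expansion shows that the derivation defect
\[
\Delta(f,g):=L_n(fg)-L_n(f)g(0)-f(0)L_n(g)=\sum_{i,j\ge 1}\mu_{i+j}\,D_i(f)\,D_j(g).
\]
Thus $L_n$ is an $\alpha$-derivation on $A$ precisely when this Hankel-type bilinear form in the Taylor coefficients vanishes for all $f,g\in A$; conceptually the target is to exhibit $\Delta(f,g)=\sum_{m}\bigl(L_m(f)R_m(g)+R_m(f)L_m(g)\bigr)$ for suitable functionals $R_m$ indexed by the odd $m\le n-2$, which makes the vanishing on $A$ immediate. (The case $n=1$ is the already noted fact that $D_1$ is a derivation.)

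Next I would feed in the hypotheses. The conditions $L_1(f)=L_3(f)=\dots=L_{n-2}(f)=0$ express each odd coefficient $D_1(f),D_3(f),\dots,D_{n-2}(f)$ as an explicit linear combination of the even coefficients (and likewise for $g$); crucially only coefficients of index $\le n-1$ enter $\Delta$, and all relevant odd ones are pinned down by exactly these hypotheses. Substituting turns $\Delta(f,g)$ into a bilinear form in the free even coefficients $D_{2p}(f),D_{2q}(g)$, and the claim to prove is that the coefficient of each monomial $D_{2p}(f)D_{2q}(g)$ is then zero. A short calculation in low degree shows precisely this: for $n=7$, using $D_3=cD_2$ and $D_5=2cD_4$, the terms split into three groups that cancel as $3c^3-3c^3=0$ and $-3c+3c=0$, the numerical weights being binomial coefficients. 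In general I would show that the coefficient of $D_{2p}(f)D_{2q}(g)$ equals, up to a power of $c$, a sum of the form $\sum_r\binom{m}{r}C_{2m+r}$, whose vanishing is literally one of the relations
\[
C_{2m}+\tbinom{m}{1}C_{2m+1}+\tbinom{m}{2}C_{2m+2}+\dots+C_{3m}=0
\]
proved above; hence every coefficient of $\Delta$ vanishes and $L_n$ is a derivation.

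The main obstacle is exactly this last bookkeeping in full generality: matching the coefficients produced by the solved constraints against the binomial relations for arbitrary $p,q$. The cleanest route I can see is to obtain a closed form for the solution of the constraint system, that is, to express the odd part $\sum_m D_{2m+1}(f)x^{2m+1}$ of $f\in A$ through its even part by a single operator; the low-order truncation already suggests $\tfrac{c}{2}x^2\tfrac{d}{dx}$ acting on the even part, modulo higher powers of $c$. With such a generating-function description the substituted defect should collapse to the binomial identity in one stroke, the relations of the preceding theorem being exactly the compatibility that makes the collapse work. As a consistency check one notes that in the stable range $L_n$ automatically annihilates $\bigl(M_\alpha\bigr)^2$, in accordance with every $\alpha$-derivation killing $M_\alpha^2$, so the only content is the cancellation recorded above.
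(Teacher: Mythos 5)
First, a point of order: the paper itself contains no proof of this statement --- it is stated and left as a Conjecture (the authors prove only that the coefficients $C_i^{(n)}$ are uniquely determined by the binomial relations), so your proposal has to stand entirely on its own. Its first half does stand: taking $\alpha=0$, writing $L_n=\sum_j\mu_jD_j$ with $\mu_0=\mu_1=0$, the Cauchy convolution gives the defect $\Delta(f,g)=\sum_{i,j\ge 1}\mu_{i+j}D_i(f)D_j(g)$; only indices $\le n-1$ occur in it, and the hypotheses $L_1(f)=L_3(f)=\dots=L_{n-2}(f)=0$ pin down exactly the odd coefficients $D_1(f),D_3(f),\dots,D_{n-2}(f)$ in terms of the even ones, so it suffices to show that the substituted bilinear form in the free even coefficients vanishes identically. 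Your $n=7$ verification of that vanishing is also correct.

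The gap is your central claim that, after substitution, the coefficient of $D_{2p}(f)D_{2q}(g)$ is ``up to a power of $c$ \dots\ literally one of the relations'' $C_{2m}+\binom{m}{1}C_{2m+1}+\dots+C_{3m}=0$. That is false, already at $n=9$. There $L_9=D_9-4cD_8+11c^3D_6-11c^5D_4$, while on $A$ one has $D_3=cD_2$, $D_5=2cD_4$, $D_7=3cD_6-3c^3D_4$. Collecting the coefficient of $D_2(f)D_4(g)$ (pairs $(i,j)=(2,4),(2,5),(3,4),(3,5),(2,7)$; the pair $(3,7)$ carries $\mu_{10}=0$) gives
\[
\mu_6+3c\,\mu_7+2c^2\mu_8-3c^3\mu_9=c^3\bigl(C_6+3C_7+2C_8-3C_9\bigr),
\]
whereas the paper's relation for $m=3$ reads $C_6+3C_7+3C_8+C_9=0$. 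The functional $[1,3,2,-3]$ is not any of the relations; it vanishes on $(C_6,C_7,C_8,C_9)=(11,0,-4,1)$ only because it happens to equal the $m=3$ relation minus the $m=4$ relation (the latter usable only after truncating with $C_{10}=C_{11}=C_{12}=0$). The structural reason for the mismatch is that the substitution factors $c$, $2c$, $3c$, $-3c^3$, $11c^5,\dots$ are coefficients of the \emph{lower} maps $L_3,\dots,L_{n-2}$, about which the binomial relations for $C^{(n)}$ say nothing; moreover for larger $n$ both $i$ and $j$ can be odd, so products of lower-map coefficients enter and the identity to be proved is quadratic in the whole family $\{C^{(2r+1)}\}_r$. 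Exhibiting each substituted coefficient as a valid combination of the relations, uniformly in $p$, $q$ and $n$, is precisely the combinatorial heart of the conjecture; your closing ``cleanest route'' paragraph acknowledges this but offers only a hoped-for generating-function identity, not an argument. As it stands, the proposal is a correct reduction plus one verified instance --- the conjecture remains open after it, exactly as it is in the paper.
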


\chapter{Further development}

Here we want to discuss some possible ways to generalize the obtained results. We have several restrictions. Can we skip them?

First of all we can consider subalgebras of infinite codimension. Then we need infinitely many conditions, so spectra can be infinite as well.
But there are many interesting questions here.

Next we have restrictions on the field. Characteristics zero seems to be important, otherwise we have problem already for monomial algebras.
But we probably can work with the divided powers.

The demand that the field is algebraically closed probably is less restrictive, at least if we allow the spectrum elements to belong to the algebraic closure of the field.
But to understand under what circumstances a subalgebra over the field of complex numbers consists of real elements is an interesting question here.

Constructing the SAGBI basis is also interesting, because
the main tool - the subalgebra $B$ codimension one less is absent, even though we in the real case can find a subalgebra  of codimension two less.
This is an interesting area for further investigation.

Perhaps, the most interesting generalization is to allow more than one variable. Here we need to use partial derivatives and for example the monomial subalgebras get a similar description as in the univariate case. So there is a realistic hope for the theory to be extendable to several variables. One problem is that it is not clear that the spectrum cannot contain ghost elements if we increasing the number of variables.

The main tool - the subalgebra B still exists but now we need to speak about $(\alpha,\beta)-$ derivations. The SAGBI bases seem to  be constructed in a similar way and therefore should still be finite.
But there are many differences. First of all $f(\alpha,\beta)=0$ does not give us a factor in $f(x,y)$ which is a fact that we have relied substantially on in the one-dimensional case.
Therefore we have no direct analogs of the proofs for theorems corresponding to theorems \ref{th:power}, \ref{th:main2}, \ref{th:derivativenotinspectrum}. It would be interesting to know if they are still valid.

Another difference is that there exists proper subalgebras in $\MK[x,y]$ with empty spectrum.
An example inspired by \cite{newman} is the subalgebra
$A=<x,xy,xy^2-y>.$

Indeed, $$f(\alpha,\beta)=f(\gamma,\delta)$$
applied to $x$ gives $\alpha=\gamma.$ Then $\beta\neq\delta$ and $\alpha\beta=\alpha\delta$ implies $\alpha=0.$ Now
$$\alpha\beta^2-\beta=\alpha\delta^2-\delta\Rightarrow \beta=\delta,$$ a contradiction.

Similarly  $$af'_x(\alpha,\beta)+bf'_y(\alpha,\beta)=0$$ applied to $x$ gives $a=0.$ Thus $b\neq 0$ and
application to $xy$ gives $\alpha=0.$ But then
$$b(xy^2-y)'_y(0,\beta)=-b\neq 0.$$

To check that it is a proper subalgebra suppose that
$$y=F(x,xy,xy^2-y).$$ If we put $y=\frac{1}{x}$ here then we obtain $\frac{1}{x}=F(x,1,0)$ - a contradiction.

In fact no $y^k$ belongs to $A$ and we have, as expected, infinite codimension while $\MK[x,y]$ is the only subalgebra of finite codimension that contains $A.$

But it is impossible to construct such examples with finite codimension or in the one-variable case.

An interesting question is to find a homological interpretation. Some kind of homological algebra should be here.

The characteristic polynomial is especially interesting. What is the correct definition?  Can it be introduced for several variables?
Can it be interpreted as the characteristic polynomial  of some operator on $V^2$ or $V\times V^*$, where $V=\MK[x]/A?$

The size of spectrum. Is it an inner property of subalgebra?   Because $<x^2>$ has an infinite spectrum, probably the size of spectrum depends  on the embedding of the subalgebra in  $\MK[x].$ But maybe  this is not the case if we restrict ourselves by finite codimension only.

Applications. The spectrum open many possible applications. As exciting example  we can consider is the Jacobian conjecture. What we need to prove first is that the spectrum of the subalgebra the polynomials define has zero spectrum.
Probably an equivalent condition is (as in one variable case) that all derivations are trivial. Then a non-zero jacobian could be another equivalent condition.
\chapter{Acknowledgements}
We are thankful to our mathematical department which gave us an opportunity to work on this  project despite the difficult pandemic situation.
The starting point of the project  was the Master degree defence of the first author, where the last author was the scientific adviser ant the third was the opponent. It was the observation that
the subalgebra $<x^3-x,x^2>$ can be defined by the conditions $f(1)=f(-1)$ the gave the last author the idea to study subalgebra conditions. He suggested that the main theorem should be true, introduced the main definitions and plan for how the theorem can be proven. During one year we divided between us different parts of the work to carry out this plan and discussed how to develop the ideas.

Trying to classify together the subalgebras of type $(3,4)$ we got the idea of the characteristic polynomial and the spectrum. The idea to use derivations came much later but became a main tool in the induction approach. SAGBI bases was always the important tool.

Prof. Arne Meurman was always participating in our regular meetings and we are very thankful him for his valuable remarks. Another student, Hugo Eberhard, was participating in part of discussions as well. We are thankful him as well.

Later another student, the second author, joined the project and actively participated in the classification part.

We were glad to share the joy to be a mathematician and do not consider the project as finished. But somewhere we need to set a point and publish the result obtained so far.

\printindex
\vspace{2mm}
\begin{center}
\begin{parbox}{118mm}{\footnotesize
Victor Ufnarovski

\vspace{3mm}

\noindent Lund Institute of Technology/ Centre for Mathematical Sciences.

\noindent Email: victor.ufnarovski@math.lu.se, anna.torstensson@math.lu.se

}

\end{parbox}
\end{center}

\end{document}